\newtheorem{proposition}{Proposition}[section]
\theoremstyle{remark}
\theoremstyle{definition}
\newcommand{\abs}[1]{\lvert#1\rvert}
\newcommand{\ve}{\varepsilon}
\newcommand{\wt}{\widetilde}
\newcommand{\acr}{\accentset{\circ}}
\newcommand{\mc}{\mathcal}
\numberwithin{equation}{section}
\numberwithin{table}{section}
\numberwithin{figure}{section}
\def\tblhead#1{\hline\\[-9pt]#1\\[1.5pt]\hline\\[-9.75pt]}
\newcounter{tblcap}
\newsavebox{\tabbox}
\newlength{\tablen}
\begin{document}
	\title{NOTES ON GALERKIN-FINITE ELEMENT METHODS FOR THE SHALLOW WATER EQUATIONS WITH 
		CHARACTERISTIC  BOUNDARY  CONDITIONS.}
	\author{D.C. Antonopoulos}
	\author{V.A. Dougalis}
	\address{Department of Mathematics, University of Athens, 15784 Zographou, Greece, and
		Institute of Applied and Computational Mathematics, FORTH, 70013 Heraklion, Greece}
	\email{antonod@math.uoa.gr\,,  doug@math.uoa.gr}
	\thanks{Work supported by  the project PEFYKA of the action KRIPIS of GSRT at IACM/FORTH. The project is
		funded by Greece and the European Development Fund of EU under the NSRF and the O.P. Competitiveness 
		and Entepreneurship}
	\subjclass[2010]{65M60,35L60}
	\keywords{Shallow water equations, characteristic boundary conditions,Galerkin methods, error estimates.}
	\begin{abstract}
		We consider the shallow water equations in the supercritical and subcritical cases in one space variable,
		posed in a finite spatial interval with characteristic boundary conditions at the endpoints, which, as is well known,
		are transparent, i.e. allow outgoing waves to exit without generating spurious reflected waves. Assuming that the 
		resulting  initial-boundary-value problems have smooth solutions, we approximate them in  space using standard
		Galerkin-finite element methods  and prove $L^{2}$ error estimates for the semidiscrete problems on 
		quasiuniform meshes.  We discretize the problems in the temporal variable using an explicit, fourth-order accurate 
		Runge-Kutta scheme and check, by means of numerical experiment, that the resulting fully discrete schemes have
		excellent absorption properties.
	\end{abstract}
	\maketitle
\section{Introduction}
In this paper we  consider the system of {\it{shallow water equations}} 
\begin{equation}
	\begin{aligned}
		\eta_{t} & + u_{x} + (\eta u)_{x} = 0\,,\\
		u_{t} & + \eta_{x} + uu_{x}  = 0\,,
	\end{aligned}
	\label{eq11}
\end{equation}
a well known approximation of the two-dimensional Euler equations of water-wave theory, modelling two-way
propagation of long surface waves of finite amplitude in a uniform horizontal channel of finite depth, \cite{wh}. The
variables in (\ref{eq11}) are non-dimensional and unscaled; $x\in \mathbb{R}$ and $t\geq 0$ are proportional 
to position along the channel and time, respectively, while $\eta=\eta(x,t)$ and $u=u(x,t)$ are proportional to the 
elevation of the free surface above a level of rest and to the horizontal velocity of the fluid, respectively. The latter 
is depth-independent to the order of approximation represented by the scaled analog of (\ref{eq11}). In the
variables of (\ref{eq11}) the bottom of the channel lies at a depth equal to $-1$. \par
It is well known that the initial-value problem for  (\ref{eq11}), posed with smooth initial data $\eta(x,0)$ and
$u(x,0)$ for $x \in \mathbb{R}$, has, in general, smooth solutions only locally in $t$, cf. e.g. \cite{ma}, Ch. 2. 
In this paper we will pose (\ref{eq11}) in the finite `channel' $[0,L]$ with given initial values at $t=0$, 
\begin{equation}
	\eta(x,0)=\eta^{0}(x), \quad u(x,0) = u^{0}(x), \quad 0\leq x\leq L\,,
	\label{eq12}
\end{equation}
and consider \emph{transparent boundary conditions}  at $x=0$ and $x=L$, i.e. conditions that permit  the 
waves to exit the `computational' domain $[0,L]$ without generating spurious reflected waves that pollute
the solution inside $[0,L]$. The transparent boundary conditions that we will use are 
\emph{nonlinear characteristic boundary conditions} for subcritical and supercritical flows governed by (\ref{eq11}).
Such conditions were first used, to our knowledge, by Nycander \emph{et al.}, \cite{nmf}, in numerical experiments
with finite difference discretizations of shallow water models. In the paper at hand we will 
analyze Galerkin-finite element approximations for smooth solutions of the initial-boundary-value problems (ibvp's)
resulting from the application of characteristic boundary conditions to (\ref{eq11}). The well-posedness of these 
ibvp's was analyzed in Petcu \& Temam, \cite{pt1} and Huang \emph{et al.}, \cite{hpt}. \par
The characteristic boundary conditions may be derived as follows. We write the system (\ref{eq11}) as
\[
\begin{pmatrix}\eta_{t}\\u_{t}\end{pmatrix} + A\begin{pmatrix}\eta_{x} \\ u_{x}\end{pmatrix} = 
\begin{pmatrix}0\\0\end{pmatrix}.
\]
The matrix  $A=\begin{pmatrix}u & 1+\eta \\ 1 & u \end{pmatrix}$ has  eigenvalues 
$\lambda_{1}=u+\sqrt{1+\eta}$,  $\lambda_{2} = u-\sqrt{1+\eta}$. Assuming always that $\eta > -1$, we 
consider two types  of flows: 
\begin{align*}
	\text{\emph{Supercritical}} :  \quad & u>\sqrt{1+\eta}\,, \quad 0<\lambda_{2} < \lambda_{1}\,, \\
	\text{\emph{Subcritical}} : \quad & u < \sqrt{1+\eta}\,, \quad \lambda_{2}<0<\lambda_{1}\,.
\end{align*} 
It is well known , cf. e.g. \cite{wh}, that along the family of \emph{characteristic curves} with 
$\dot{x}(t)=\lambda_{1}=u+\sqrt{1+\eta}$, the quantity $r_{1}:=u+2\sqrt{1+\eta}$ is constant, while
along the curves with $\dot{x}(t)=\lambda_{2}=u-\sqrt{1+\eta}$, $r_{2}:=u-2\sqrt{1+\eta}$ is preserved. If
$\eta$ and $u$ are expressed in terms of the \emph{Riemann invariants} $r_{1}$, $r_{2}$ one obtains the system
of ordinary differential equations (ode's) 
\begin{align*}
	\frac{dr_{1}}{dt} = 0 & \quad \text{on}\quad x(t) : \quad \frac{dx}{dt}=\lambda_{1}(r_{1},r_{2})=\frac{3r_{1}+r_{2}}{4},\\ 
	\frac{dr_{2}}{dt}=0 & \quad \text{on}\quad x(t) :   \quad \frac{dx}{dt}=\lambda_{2}(r_{1},r_{2})=\frac{r_{1}+3r_{2}}{4},
\end{align*}
which is  equivalent to the original pde system (\ref{eq11}) and whose discretization yields the classical 
\emph{method of characteristics}  for solving  (\ref{eq11}). If we pose (\ref{eq11}) in the spatial interval $[0,L]$ it is 
straightforword to see, cf. \cite{wh}, Section 5.4, that the temporal integration of the ode system requires in the
supercritical case that $r_{1}(0,t)$ and $r_{2}(0,t)$ be given for $t\geq 0$, as both  families of characteristics are 
incoming at $x=0$; this is equivalent to prescribing $u(0,t)$ and $\eta(0,t)$ for $t\geq 0$. In the subcritical case
$r_{1}(0,t)$ and $r_{2}(L,t)$ should be given for $t\geq 0$, since they correspond to the incoming characteristics
at $x=0$ and at $x=L$ respectively.\par
Following \cite{nmf} we assume that outside the interval $[0,L]$ the flow is uniform and is given by 
$\eta(x,t)=\eta_{0}$, $u(x,t)=u_{0}$, where $\eta_{0}$, $u_{0}$ are known constants. Therefore, in the 
\emph{supercritical case} the characteristic boundary conditions are simply 
\begin{equation}
	\eta(0,t) = \eta_{0}\,, \quad u(0,t)=u_{0}\,, 
	\label{eq13}
\end{equation} 
with $u_{0}>\sqrt{1+\eta_{0}}$ ,  while, in the \emph{subcritical case}, they are of the form 
\begin{equation}
	\begin{aligned}
		u(0,t)  + 2\sqrt{1 + \eta(0,t)} & = u_{0} + 2\sqrt{1+\eta_{0}},\\ 
		u(L,t)  -  2\sqrt{1 +  \eta(L,t)} & = u_{0} - 2\sqrt{1+\eta_{0}},
	\end{aligned}
	\label{eq14}
\end{equation}
where now it is assumed that $u_{0}^{2} < 1+\eta_{0}$. In both cases we may view the solution 
$(\eta,u)$ of (\ref{eq11}) for  $0\leq x\leq L$, $t\geq 0$, generated by the initial conditions (\ref{eq12}),
as a perturbation of the uniform flow $(\eta_{0},u_{0})$  to which the solution inside the computational
domain $[0,L]$ will revert once the waves generated by the initial conditions exit this interval. It is 
straightforward to check, using the definitions of characteristics and Riemann invariants and considering
e.g. initial conditions that differ from $\eta_{0}$, $u_{0}$ in a subinterval of $[0,L]$, that the boundary conditions
(\ref{eq13}) and (\ref{eq14}) are transparent. \par
As previously mentioned, the characteristic boundary conditions (\ref{eq13}) and (\ref{eq14}) were used by Nycander \emph{et al.}, 
 \cite{nmf}, in finite difference simulations of the shallow water equations, in one 
space dimension, actually in more complicated instances of hydraulic and geophysical interest, including single-
and two-layer flows in channels of variable width and variable bottom topography, time-dependent forcing
in the boundary conditions, examples where transcritical flows develop, \emph{et al.} (In the case of two-layer
flows an approximate SW system was used in which the barotropic and baroclinic modes are decoupled;
this allows using the analogs of the (local) characteristic boundary conditions in this case too. Also, for reasons
of numerical stability, a diffusive term with a small viscosity coefficient was added in the momentum equations.)
The finite difference spatial discretization was effected on a staggered grid and the leap-frog scheme was used 
for time stepping. Similar model equations and characteristic boundary conditions were applied in simulations 
of two-layer hydraulic exchange flows in \cite{fm}.\par
The characteristic boundary conditions (\ref{eq13}) and (\ref{eq14}) were also used by Shiue \emph{et al.}, 
\cite{sltt}, in the case of the one-dimensional, single-layer shallow water equations in channels of variable
bottom topography in the presence of Coriolis terms and with the addition of a cross-velocity variable that 
depends only on $x$. The system, written in balance law form, was discretized in space using midpoint quadrature   
for the source cell integral and a `central-upwind', \cite{knp}, \cite{kp}, Godunov-type approximation of the flux
term; a second-order, explicit Runge-Kutta method was used for time stepping. Many numerical experiments
performed with this scheme are reported in \cite{sltt}; they simulate interesting cases of subcritical,
transcritical and supercritical flows over variable-bottom topographies. The characteristic boundary conditions
and the same numerical scheme were subsequently used in \cite{bpstt} in the case of two-layer problems
in one dimension under the decoupling assumptions of \cite{nmf}. (The local well-posedness of this
two-layer problem was studied in \cite{pt2}.)
 \par 
In case the elevation of the free surface $\eta$ is a small perturbation of the steady state $\eta_{0}$ one may
derive \emph{linearized} approximations to the characteristic boundary conditions (\ref{eq14}) in the 
subcritical case. These linearized conditions are also considered
in \cite{nmf} and in \cite{sltt}, where they are compared to the nonlinear exact conditions and found in general
to cause spurious reflections that enter the computational domain. (The linearized boundary conditions are
easily seem to be (exactly) transparent for the \emph{linearized} shallow water equations obtained by linearizing
(\ref{eq11}) about the steady state $(\eta_{0},u_{0})$. In \cite{sltt} it is shown that the ibvp for the linearized system
supplemented by the linearized boundary conditions is well posed.) As pointed out in \cite{nmf}, \cite{sltt}, and in
\cite{nd}, the linearized characteristic boundary conditions have been extensively used in the computational 
fluid dynamics literature; the last reference contains a review of several other absorbing boundary conditions 
for the shallow water equations at artificial boundaries, including absorbing (`sponge') layer 
conditions \emph{et al.}.   \par
Of particular interest for our purposes is the rigorous analysis of the well-posedness of the ibvp's (locally
in time) for the
shallow water equations with characteristic boundary conditions carried out in Huang \emph{et al.}, \cite{hpt}, and Petcu \& Temam, \cite{pt1}.  
The ibvp in the supercritical case, was studied by Huang \emph{et al.}, 
\cite{hpt}, in fact in the more general setting of shallow water supercritical flows
over a variable bottom in the presence of Coriolis terms and a lateral component of the horizontal velocity 
depending on $x$, and also with nonhomogeneous boundary conditions satisfying 
appropriate compatibility conditions. 
 The hypotheses of \cite{hpt} on $u_{0}$, $\eta_{0}$ and the initial data, briefly reviewed in 
section 2 in the sequel, guarantee the existence and uniqueness, locally in time, of a smooth solution of the ibvp
(\ref{eq11})-(\ref{eq13}) with positive $1+\eta$, satisfying the strong supercriticality property 
$u^{2} - (1+\eta)\geq c_{0}^{2}$ for some positive constant $c_{0}$. The well-posedness of the ibvp in the 
subcritical case, i.e. of the ibvp (\ref{eq11}), (\ref{eq12}),  (\ref{eq14}), was studied by Petcu \& Temam, \cite{pt1}.
The assumptions of \cite{pt1} (reviewed in section 3 below) imply the existence and uniqueness, locally in time,
of a smooth solution of the ibvp with positive $1+\eta$ and satisfying the strong subcriticality condition
$u^{2} - (1+\eta) \leq - c_{0}^{2}$,  where $c_{0}$ is a positive constant.\par
In this paper we will analyze standard Galerkin-finite element spatial discretizations of the ibvp's (\ref{eq11})-(\ref{eq13})
and (\ref{eq11}), (\ref{eq12}), (\ref{eq14}), under the hypothesis that they have smooth solutions. In both cases 
the basic approximation will be effected by $C^{r-2}$ functions which are piecewise polynomials of degree $r-1$, 
$r\geq 2$, on quasiuniform partitions of $[0,L]$. In section 2 we consider the supercritical case and prove an $L^{2}$-error 
estimate of $O(h^{r-1})$ accuracy for the Galerkin approximations of $\eta$ and $u$. (It is well known that this
is the expected best order of convergence in $L^{2}$ for standard Galerkin semidiscretizations of first-order
hyperbolic problems on general quasiuniform meshes. For \emph{uniform meshes}, better results hold, cf. \cite{d1}
for the analysis in the case of a linear model problem. In \cite{ad2} it was proved that the order of convergence
in $L^{2}$ for piecewise linear continuous elements on a uniform mesh is equal to 2 in the case of an ibvp for
(\ref{eq11}) with the homogeneous boundary conditions $u(0,t)=u(L,t)=0$. This superaccuracy result is expected
to hold for the ibvp's under consideration as well and this is indeed what the numerical experiments of section 4
indicate.) For the proof of the error estimate we assume that a strengthened supercriticality condition holds for
the solution of (\ref{eq11})-(\ref{eq13}); cf. (H1)-(H3) in section 2.  The proof also requires that $r\geq 3$ so that 
a certain bootstrap argument, based on the boundedness of the $\|\cdot\|_{1,\infty}$ norm of an error term, goes 
through as in \cite{d2}, \cite{ad2}. \par
In section 3 we turn to the subcritical case. We write the ibvp (\ref{eq11}), (\ref{eq12}), (\ref{eq14}) in its classical
diagonal form in which the new unknowns are analogs of the two Riemann invariants in the context of the ibvp
at hand and satisfy homogeneous Dirichlet boundary conditions one at $x=0$ and the other at $x=L$.
The diagonal system is discretized in space on a quasiuniform mesh by the same type of standard Galerkin 
method as before, and a $L^{2}$ error estimate of $O(h^{r-1})$ is proved for both components of the solution.
A change of variables of this semidiscrete approximation yields approximations of the original unknowns $\eta$
and $u$ of $O(h^{r-1})$ accuracy. The proof requires that a strengthened form of the subcriticality property holds
for the solution of the ibvp (\ref{eq11}), (\ref{eq12}), (\ref{eq14})  (cf. (\ref{eqy1}), (\ref{eqy2}) in section 3), and 
the technical assumption that $r\geq 3$. \par
Section 4 is a report of various numerical experiments that we performed with the Galerkin-finite element 
methods of sections 2 and 3 and some of their variants. We use spatial discretizations with piecewise linear 
continuous functions on uniform meshes and discretize them in the temporal variable by the `classical', explicit, 
four-stage, fourth-order Runge-Kutta scheme. The resulting fully discrete methods are stable under a Courant 
number restriction. (Stability and convergence of high order explicit Runge-Kutta methods was established for 
closely related pde systems in \cite{ad1} and \cite{ad2}.)   Our main purpose in the numerical experiments is to 
check the stability and the numerical order of convergence of the fully discrete Galerkin methods and study by computational 
means their absorption properties. Although the full discretizations of the characteristic boundary 
conditions are not exactly transparent of course, the numerical experiments show that they are practically 
transparent, in contrast to the analogous Galerkin schemes with the linearized boundary conditions that we also implement in the subcritical case; the latter are absorbing but in general allow spurious reflections to form and enter the computational domain. 
In the subcritical case we also implement the analogous fully discrete
Galerkin method for the original, nondiagonal form (\ref{eq11}), (\ref{eq12}), (\ref{eq14}) of the system and check
that it gives results close but somewhat inferior to those of the analogous discretization of the diagonal form of the system. \par
In the error estimates in the sequel, we let the spatial interval be $[0,1]$ for simplicity. We let $C^{k}=C^{k}[0,1]$,
$k=0,1,2,\dots$, be the space of $k$ times continuously differentiable functions on $[0,1]$. The norm and inner 
product on $L^{2}=L^{2}(0,1)$ are denoted by $\|\cdot\|$, $(\cdot,\cdot)$, respectively. For integer $k\geq 0$,
$H^{k}$, $\|\cdot\|_{k}$ will denote the usual, $L^{2}$-based Sobolev spaces of classes of functions and the 
associated norms. The norms on $L^{\infty}=L^{\infty}(0,1)$ and on the $L^{\infty}$-based Sobolev spaces
$W_{\infty}^{k}=W_{\infty}^{k}(0,1)$ will be denoted by $\|\cdot\|_{\infty}$, $\|\cdot\|_{k,\infty}$, respectively.
Finally, we let $\mathbb{P}_{r}$ be the polynomials of degree $\leq r$.
\section{Semidiscretization of the supercritical shallow water equations}
In this section we consider the shallow water equations with characteristic boundary conditions in the
\emph{supercritical} case. Specifically, for $(x,t)\in [0,1]\times[0,T]$ we seek $\eta=\eta(x,t)$ and $u=u(x,t)$
satisfying the ibvp
\begin{equation}
	\begin{aligned} 
		\begin{aligned}
			\eta_{t} & + u_{x} + (\eta u)_{x} = 0, 
			\\
			u_{t} & + \eta_{x} + uu_{x} = 0,  
		\end{aligned}
		\quad  0\leq x\leq 1,\,\, & \, 0\leq t\leq T,  
		\\
		\eta(x,0) =\eta^{0}(x), \quad u(x,0)=u^{0}(x), \quad 0 & \leq x\leq 1, 
		\\
		\eta(0,t) = \eta_{0}, \quad u(0,t)=u_{0}, \quad 0  \leq t\leq T, \hspace{-7pt}  &
	\end{aligned}
	\tag{SW1}
	\label{eqsw1}
\end{equation}
where $\eta^{0}$, $u^{0}$ are given functions on $[0,1]$ and $\eta_{0}$, $u_{0}$ constants such that
$1+\eta_{0}>0$, $u_{0}>0$, $u_{0}>\sqrt{1+\eta_{0}}$.  \par
As mentioned in the Introduction, the ibvp (\ref{eqsw1}) was studied by Huang \emph{et al.}, \cite{hpt}, in
fact in the more general case of a shallow water supercritical flow with nonhomogeneous boundary conditions over a variable bottom for a nonzero 
Coriolis parameter and also in the presence of a lateral component of the horizontal velocity depending
on $x$ only. In the simpler case of (\ref{eqsw1}), the proof of the main result of \cite{hpt} amounts to the
selection of a suitable constant solution $(\eta_{0},u_{0})$ of (\ref{eqsw1}) and of
sufficiently smooth initial conditions close to the constant solution and satisfying appropriate compatibility 
relations at $x=0$. Under these hypotheses the conclusion of \cite{hpt} is that given positive constants
$c_{0}$, $\alpha_{0}$, $\underline{\zeta}_{0}$, and $\overline{\zeta}_{0}$, there exists a $T>0$ such that a
sufficiently smooth solution of (\ref{eqsw1}) exists satisfying for $(x,t)\in [0,1]\times[0,T]$ the strong
supercriticality properties 
\begin{align}
	& u^{2} - (1+\eta)  \geq c_{0}^{2}, \tag{P1} \label{eqp1}\\
	& u  \geq \alpha_{0},   \tag{P2} \label{eqp2} \\
	& \underline{\zeta}_{0}  \leq (1+\eta)\leq \overline{\zeta}_{0}. \tag{P3} \label{eqp3} 
\end{align}
\indent
For the purposes of the error estimation to follow we will assume that (\ref{eqsw1}) has a sufficiently smooth 
solution $(\eta,u)$ that satisfies a strengthened supercriticality condition of the following form: There exist 
positive constants $\alpha$ and $\beta$, such that for $(x,t)\in [0,1]\times [0,T]$ it holds that 
\begin{align}
	1 & +\eta  \geq \beta, \tag{H1} \label{eqh1}\\
	& u  \geq 2\alpha,   \tag{H2} \label{eqh2} \\
	1& +\eta \leq (u-\alpha)(u-\tfrac{2\alpha}{3}). \tag{H3} \label{eqh3} 
\end{align}
Obviously (\ref{eqh1}) and   (\ref{eqh3})  imply that $u>\sqrt{1+\eta}$. It is not hard to see that  (\ref{eqh3})
follows from  (\ref{eqp1})-(\ref{eqp3}) if e.g. $\alpha_{0}$ is taken sufficiently small and $c_{0}$ sufficiently large.
We also remark here that in the error estimates to follow  (\ref{eqh3}) will be needed only at $x=1$ for $t\in [0,T]$. \par
We will approximate the solution of  (\ref{eqsw1}) in a slightly tranformed form. We let $\wt{\eta}=\eta-\eta_{0}$,
$\wt{u}=u-u_{0}$ and rewrite (\ref{eqsw1}) as an ibvp for $\wt{\eta}$ and $\wt{u}$ with homogeneous boundary
conditions. Dropping the tildes we obtain the system
\begin{equation}
	\begin{aligned} 
		\begin{aligned}
			\eta_{t} & + u_{0}\eta_{x} + (1+\eta_{0})u_{x} + (\eta u)_{x} = 0, 
			\\
			u_{t} & + \eta_{x} + u_{0}u_{x} + uu_{x} = 0,  
		\end{aligned}
		\quad  0\leq x\leq 1,\,\, & \, 0\leq t\leq T,  
		\\
		\eta(x,0) =\eta^{0}(x)-\eta_{0}, \hspace{15pt}  u(x,0)=u^{0}(x)-u_{0}, \quad 0 & \leq x\leq 1, 
		\\
		\eta(0,t) = 0, \quad u(0,t)=0, \quad 0  \leq t\leq T. \hspace{50pt}  &
	\end{aligned}
	\tag{SW1a}
	\label{eqsw1a}
\end{equation}
In terms of the new variables $\eta$ and $u$, our hypotheses (\ref{eqh1})-(\ref{eqh3}) become 
\begin{align}
	1 & +\eta  + \eta_{0} \geq \beta, \tag{H1a} \label{eqh1a}\\
	u & + u_{0} \geq 2\alpha,   \tag{H2a} \label{eqh2a} \\
	1& +\eta + \eta_{0} \leq (u+u_{0}-\alpha)(u+u_{0}-\tfrac{2\alpha}{3}). \tag{H3a} \label{eqh3a} 
\end{align}
\indent
In the sequel, for integer $k\geq 0$, let $\acr{C}^{k}=\{v\in C^{k}[0,1]:  v(0)=0\}$,  and
$\acr{H}^{k+1}=\{v\in H^{k+1}(0,1) : v(0)=0\}$. For a positive integer $N$ let  
$0=x_{1}<x_{2}<\dots<x_{N+1}=1$ be a quasiuniform partition of $[0,1]$ with
$h:=\max_{i}(x_{i+1}-x_{i})$, and for integer $r\geq 2$ define 
$\acr{S}_{h}=\{\phi \in \acr{C}^{r-2} : \phi\big|_{[x_{j},x_{j+1}]} \in \mathbb{P}_{r-1}\,, 1\leq j\leq N\}$.
It is well known that if $v\in \acr{H}^{r}$ , there exists $\chi \in \acr{S}_{h}$ such that
\begin{equation}
	\|v-\chi\| + h\|v' - \chi'\| \leq Ch^{r}\|v\|_{r},
	\label{eq21}
\end{equation}
and, cf. \cite{sch}, if $r\geq 3$, 
\begin{equation}
	\|v - \chi\|_{2} \leq C h^{r-2}\|v\|_{r}. 
	\label{eq22}
\end{equation}
(Here and in the sequel $C$ will denote a generic constant independent of $h$.)  In addition, if $P$ is the
$L^{2}$-projection operator onto $\acr{S}_{h}$, then it follows that, cf. \cite{ddw},  
\begin{align}
	& \|Pv\|_{\infty} \leq C \|v\|_{\infty}, \quad \text{if} \quad v \in L^{\infty}, 
	\label{eq23} \\
	& \|Pv - v\|_{\infty} \leq C h^{r} \|v\|_{r,\infty}, \quad \text{if} \quad v \in W^{r,\infty}\cap\acr{H}^{1}.
	\label{eq24}
\end{align}
As a consequence of the quasiuniformity of the mesh the inverse inequalities 
\begin{align}
	\|\chi\|_{1} & \leq Ch^{-1} \|\chi\|,
	\label{eq25} \\
	\|\chi\|_{j,\infty} & \leq C h^{-(j+1/2)} \|\chi\|, \quad j=0,1,
	\label{eq26}
\end{align}
hold for  $\chi\in \acr{S}_{h}$. (In (\ref{eq26}) $\|\cdot\|_{0,\infty}=\|\cdot\|_{\infty}$.) \par
The standard Galerkin semidiscretization of (\ref{eqsw1a}) is defined as follows: We seek 
$\eta_{h}$, $u_{h}: [0,T]\to \acr{S}_{h}$ such that for $0\leq t\leq T$ 
\begin{align}
	(\eta_{ht},\phi) + (u_{0}\eta_{hx},\phi) + ((1+\eta_{0})u_{hx},\phi) +  ((\eta_{h}u_{h})_{x},\phi) & = 0, \quad 
	\forall \phi \in \acr{S}_{h}, 
	\label{eq27}    \\
	(u_{ht},\phi) + (\eta_{hx},\phi) + (u_{0}u_{hx},\phi) + (u_{h}u_{hx},\phi) & = 0,\quad 
	\forall \phi \in \acr{S}_{h},
	\label{eq28}
\end{align}
with 
\begin{equation}
	\eta_{h}(0) = P(\eta^{0}(\cdot)-\eta_{0}), \quad u_{h}(0)=P(u^{0}(\cdot)-u_{0}).
	\label{eq29}
\end{equation}
The main result of this  section is:
\begin{proposition}
	Let $(\eta,u)$ be the solution of (\ref{eqsw1a}), and assume that the hypotheses (\ref{eqh1a}),  (\ref{eqh2a}),
	(\ref{eqh3a}) hold, that $r\geq 3$, and $h$ is sufficiently small.  Then the semidiscrete ivp 
	(\ref{eq27})-(\ref{eq29}) has a unique solution $(\eta_{h},u_{h})$ for $0\leq t\leq T$ satisfying
	\begin{equation}
		\max_{0\leq t\leq T}(\|\eta(t) - \eta_{h}(t)\|  +  \|u(t) - u_{h}(t)\|) \leq C h^{r-1}.
		\label{eq210} 
	\end{equation}
\end{proposition}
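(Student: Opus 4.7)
The plan is to follow the standard Galerkin energy-method framework and close the nonlinear estimates with a continuation/bootstrap argument based on the $\|\cdot\|_{1,\infty}$ norm, as in \cite{d2},\cite{ad2}. First, I split the errors by means of the $L^{2}$-projection $P$ onto $\acr{S}_{h}$: write $\eta-\eta_{h}=\rho_{\eta}+\theta_{\eta}$ and $u-u_{h}=\rho_{u}+\theta_{u}$ with $\rho_{y}:=y-Py$ and $\theta_{y}:=Py-y_{h}\in\acr{S}_{h}$. The choice of $P$ is natural: by (\ref{eq29}) one has $\theta_{\eta}(0)=\theta_{u}(0)=0$, and the orthogonality $(\rho_{y},\phi)=0$ for $\phi\in\acr{S}_{h}$ eliminates several $\rho_{t}$-contributions. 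From (\ref{eq21}), (\ref{eq22}), (\ref{eq24}) and the smoothness of $(\eta,u)$ one obtains $\|\rho_{y}(t)\|+\|\rho_{yt}(t)\|\le Ch^{r}$ and $\|\rho_{y}(t)\|_{1,\infty}\le Ch^{r-1}$ uniformly in $t\in[0,T]$.

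Next, subtracting (\ref{eq27})--(\ref{eq28}) from the weak form of (\ref{eqsw1a}) and expanding the nonlinear differences as $\eta u-\eta_{h}u_{h}=\eta(\rho_{u}+\theta_{u})+u_{h}(\rho_{\eta}+\theta_{\eta})$ and $uu_{x}-u_{h}u_{hx}=\tfrac{1}{2}\partial_{x}\bigl((u+u_{h})(\rho_{u}+\theta_{u})\bigr)$, I obtain evolution equations for $\theta_{\eta},\theta_{u}$. I then test the first error equation with $\theta_{\eta}$ and the second with $(1+\eta_{0})\theta_{u}$, the symmetrizer of the constant-coefficient principal matrix at the reference state $(\eta_{0},u_{0})$. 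Because $\theta_{\eta},\theta_{u}\in\acr{S}_{h}$ vanish at $x=0$, the integrations by parts of the convection terms produce interior cancellations and leave only boundary contributions at $x=1$. Collecting all linear and nonlinear contributions at $x=1$ and keeping only the pieces quadratic in $(\theta_{\eta}(1,t),\theta_{u}(1,t))$ (the $\rho$-traces being absorbable by Cauchy--Schwarz), one finds, under a bootstrap making $u_{h}(1,t),\eta_{h}(1,t)$ close to $u(1,t),\eta(1,t)$, a quadratic form with matrix
\begin{equation*}
M(t)=\begin{pmatrix} \tfrac{u_{0}}{2}+u_{h}(1,t) & \tfrac{1}{2}\bigl(1+\eta_{0}+\eta(1,t)\bigr) \\[2pt] \tfrac{1}{2}\bigl(1+\eta_{0}+\eta(1,t)\bigr) & \tfrac{1+\eta_{0}}{2}\bigl(u_{0}+u(1,t)+u_{h}(1,t)\bigr) \end{pmatrix}.
\end{equation*}
The strengthened supercriticality (\ref{eqh3a}), which, as remarked after (\ref{eqh3}), is needed only at $x=1$, is designed precisely so that $M(t)$ is uniformly positive definite on $[0,T]$; basic supercriticality $u_{0}^{2}>1+\eta_{0}$ by itself only controls the constant-coefficient part of $M$. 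Consequently the boundary contribution can be discarded with a favorable sign.

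With the boundary term controlled, the interior estimates proceed routinely. Using Cauchy--Schwarz, the $L^{\infty}$ bounds on $\eta,u,\eta_{x},u_{x}$ coming from the smooth solution, and the bootstrap hypothesis $\|\theta_{\eta}\|_{1,\infty}+\|\theta_{u}\|_{1,\infty}\le 1$, both the nonlinear interior contributions and the $\rho$-driven forcing are bounded by $C(\|\theta_{\eta}\|^{2}+\|\theta_{u}\|^{2})+Ch^{2(r-1)}$. Gronwall's lemma then yields $\max_{[0,T]}(\|\theta_{\eta}\|+\|\theta_{u}\|)\le Ch^{r-1}$ \emph{conditional on} the bootstrap. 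The inverse inequality (\ref{eq26}) gives $\|\theta\|_{1,\infty}\le Ch^{-3/2}\|\theta\|\le Ch^{r-5/2}=o(1)$ precisely when $r\ge 3$ and $h$ is sufficiently small, so the bootstrap closes and the estimate becomes unconditional; the triangle inequality together with $\|\rho\|\le Ch^{r}$ then gives (\ref{eq210}). The main obstacle I anticipate is the careful bookkeeping at $x=1$: one must identify all boundary contributions coming from the linear and nonlinear convection terms, verify that their sum matches (up to $o(1)$ bootstrap perturbations) the matrix $M(t)$ above, and check that (\ref{eqh3a}) provides a positive-definiteness margin large enough to absorb both the bootstrap perturbation and the $\rho$-traces picked up at the outflow boundary.
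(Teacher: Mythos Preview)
Your overall architecture (projection splitting, energy method, bootstrap on $\|\theta\|_{1,\infty}$, closing via $r\ge 3$) matches the paper, but there is a genuine gap in the choice of test function for the $u$-error equation. You test with the \emph{constant} symmetrizer $(1+\eta_{0})\theta_{u}$ and assert that ``the integrations by parts of the convection terms produce interior cancellations and leave only boundary contributions at $x=1$.'' That is not true here. After your expansion, the $\theta_{u}$-coupling in the $\eta$-error equation is $((1+\eta_{0}+\eta)\theta_{u})_{x}$, while the $\theta_{\eta}$-coupling in the $u$-error equation has coefficient $1$. Testing the first with $\theta_{\eta}$ and the second with $(1+\eta_{0})\theta_{u}$, the interior cross terms combine to
\[
\bigl(((1+\eta_{0}+\eta)\theta_{u})_{x},\theta_{\eta}\bigr)+(1+\eta_{0})(\theta_{\eta x},\theta_{u})
= -(\eta\,\theta_{u},\theta_{\eta x}) + \text{boundary},
\]
and the residual $-(\eta\,\theta_{u},\theta_{\eta x})$ carries a derivative on $\theta_{\eta}$ with an $O(1)$ coefficient $\eta$. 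Neither the bootstrap $\|\theta\|_{1,\infty}\le 1$ nor inverse inequalities turn this into $C(\|\theta_{\eta}\|^{2}+\|\theta_{u}\|^{2})+Ch^{2(r-1)}$: at best you get $C\|\theta_{\eta}\|$ (bootstrapping $\|\theta_{u x}\|_{\infty}$) or $Ch^{-1}\|\theta_{u}\|\,\|\theta_{\eta}\|$, and Gronwall then fails to give $O(h^{r-1})$.

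The paper avoids this by using the \emph{variable} symmetrizer: it tests the $u$-error equation with $\phi=P\gamma$, $\gamma:=(1+\eta_{0}+\eta)\xi$, so that the interior cross terms cancel exactly, $(\gamma_{x},\theta)+(\theta_{x},\gamma)=[\gamma\theta]_{0}^{1}$. The price is a commutator $(R_{3},P\gamma-\gamma)$, which is controlled by the superapproximation estimate $\|P\gamma-\gamma\|\le Ch\|\xi\|$ (cf.\ (\ref{eq234})) together with inverse inequalities. A second consequence is that the boundary quadratic form you obtain is not the one for which (\ref{eqh3a}) was tailored: in the paper the $(2,2)$ entry carries the full factor $1+\eta_{0}+\eta(1)$, and it is precisely this matrix whose positive semidefiniteness is \emph{equivalent} to (\ref{eqh3a}); with your constant weight the $(2,2)$ entry has only $1+\eta_{0}$, and (\ref{eqh3a}) no longer matches the needed determinant condition. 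In short, replace $(1+\eta_{0})\theta_{u}$ by $P[(1+\eta_{0}+\eta)\theta_{u}]$ and invoke superapproximation; the rest of your outline then goes through essentially as in the paper.
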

\begin{proof}
	Let $\rho = \eta - P\eta$, $\theta = P\eta - \eta_{h}$, $\sigma = u-Pu$, $\xi = Pu - u_{h}$. 
	After choosing a basis for $\acr{S}_{h}$, it is straightforward to see that the semidiscrete problem
	(\ref{eq27})-(\ref{eq29}) represents an ivp for an ode system which has a unique solution locally in time.
	While this solution exists, it follows from (\ref{eq27}), (\ref{eq28}) and the pde's in (\ref{eqsw1a}), that 
	\small
	\begin{align}
		(\theta_{t},\phi) + (u_{0}(\rho_{x}+\theta_{x}),\phi) + ((1+\eta_{0})(\sigma_{x} + \xi_{x}),\phi) +
		((\eta u - \eta_{h}u_{h})_{x},\phi) & = 0, \quad \forall \phi \in \acr{S}_{h},
		\label{eq211} \\
		(\xi_{t},\phi) + (\rho_{x}+\theta_{x},\phi) + (u_{0}(\sigma_{x}+\xi_{x}),\phi) + (uu_{x}-u_{h}u_{hx},\phi)
		& = 0, \quad \forall \phi \in \acr{S}_{h}.
		\label{eq212}
	\end{align}
	\normalsize
	Since $\eta u - \eta_{h}u_{h}=\eta(\sigma+\xi) + u(\rho+\theta) - (\rho+\theta)(\sigma+\xi)$,
	$uu_{x} - u_{h}u_{hx}=(u\sigma)_{x} +(u\xi)_{x} -(\sigma\xi)_{x} -\sigma\sigma_{x} - \xi\xi_{x}$, it
	follows that 
	\begin{align}
		(\eta u - \eta_{h}u_{h})_{x} & = (\eta\xi)_{x} + (u\theta)_{x} - (\theta\xi)_{x} + \wt{R}_{1}, 
		\label{eq213} \\
		uu_{x} - u_{h}u_{hx} &  = (u\xi)_{x} - \xi\xi_{x} + \wt{R}_{2}, 
		\label{eq214}
	\end{align}
	where
	\begin{align}
		\wt{R}_{1} & = (\eta\sigma)_{x} + (u\rho)_{x} - (\rho\sigma)_{x} -(\rho\xi)_{x} - (\theta\sigma)_{x},
		\label{eq215} \\
		\wt{R}_{2} & = (u\sigma)_{x} - (\sigma\xi)_{x} - \sigma\sigma_{x}.
		\label{eq216}
	\end{align}
	Therefore, the equations (\ref{eq211}), (\ref{eq212}) may be written as
	\small
	\begin{align}
		(\theta_{t},\phi) + (u_{0}\theta_{x},\phi) + (\gamma_{x},\phi) + ((u\theta)_{x},\phi) - ((\theta\xi)_{x},\phi)
		& = -(R_{1},\phi), \quad \forall \phi \in \acr{S}_{h},
		\label{eq217} \\
		(\xi_{t},\phi) + (\theta_{x},\phi) + (u_{0}\xi_{x},\phi) + ((u\xi)_{x},\phi)  - (\xi\xi_{x},\phi)
		& = -(R_{2},\phi), \quad \forall \phi \in \acr{S}_{h}.
		\label{eq218}
	\end{align}
	\normalsize
	where  $\gamma=(1+\eta_{0}+\eta)\xi$ and
	\begin{align}
		{R}_{1} & =u_{0}\rho_{x} + (1+\eta_{0})\sigma_{x} + \wt{R}_{1},
		\label{eq219} \\
		{R}_{2} & = \rho_{x} + u_{0}\sigma_{x} + \wt{R}_{2}.
		\label{eq220}
	\end{align}
	Putting $\phi=\theta$ in (\ref{eq217}), using integration by parts, and suppressing the dependence on $t$
	we have
	\small
	\begin{equation}
	\begin{aligned}
		\tfrac{1}{2}\tfrac{d}{dt}\|\theta\|^{2} - (\gamma,\theta_{x}) +\tfrac{1}{2}(u_{0}+u(1))\theta^{2}(1) & + 
		(1+\eta_{0}+\eta(1))\xi(1)\theta(1) \\ & - \tfrac{1}{2} \xi(1)\theta^{2}(1)  = 
		- \tfrac{1}{2} (u_{x}\theta,\theta) + \tfrac{1}{2}(\xi_{x}\theta,\theta) 
		 - (R_{1},\theta).
		\end{aligned}
		\label{eq221}
	\end{equation} 
	\normalsize
	Take now $\phi=P\gamma=P[(1+\eta_{0}+\eta)\xi]$ in (\ref{eq218}) and get 
	\small
	\begin{equation}
		(\xi_{t},\gamma) + (\theta_{x},\gamma) + (u_{0}\xi_{x},\gamma) + ((u\xi)_{x},\gamma)  - (\xi\xi_{x},\gamma) 
		= - (R_{3},P\gamma - \gamma) - (R_{2},P\gamma),
		\label{eq222}
	\end{equation} 
	\normalsize
	where
	\begin{equation}
		R_{3} = \theta_{x} + u_{0}\xi_{x} + (u\xi)_{x} - \xi\xi_{x}.
		\label{eq223}
	\end{equation}
	Integration by parts in various terms in (\ref{eq222}) gives
	\begin{align*}
		(u_{0}\xi_{x},\gamma) & = (u_{0}\xi_{x},(1+\eta_{0}+\eta)\xi) = \tfrac{1}{2}u_{0}(1+\eta_{0}+\eta(1))\xi^{2}(1)
		- \tfrac{1}{2} (u_{0}\eta_{x}\xi,\xi), \\
		((u\xi)_{x},\gamma) & = ((u\xi)_{x},(1+\eta_{0}+\eta)\xi) = (u_{x}\xi,(1+\eta_{0}+\eta)\xi) + (u\xi_{x},(1+\eta_{0}+\eta)\xi) \\
		& = \tfrac{1}{2}u(1)(1+\eta_{0}+\eta(1))\xi^{2}(1) + \tfrac{1}{2}(u_{x}(1+\eta_{0}+\eta),\xi^{2})
		- \tfrac{1}{2} (u\eta_{x}\xi,\xi), \\
		(\xi\xi_{x},\gamma) & = (\xi\xi_{x},(1+\eta_{0}+\eta)\xi) = \tfrac{1}{3}(1+\eta_{0}+\eta(1))\xi^{3}(1)-\tfrac{1}{3}(\eta_{x}\xi^{2},\xi).
	\end{align*}
	Hence (\ref{eq222})  becomes 
	\begin{equation}
		\begin{aligned}
			(\xi_{t},\gamma) + (\theta_{x},\gamma) & + \tfrac{1}{2}(u_{0}+u(1))(1+\eta_{0}+\eta(1))\xi^{2}(1) 
			- \tfrac{1}{3}(1+\eta_{0}+\eta(1))\xi^{3}(1) \\
			& = (R_{4},\xi) - (R_{3},P\gamma-\gamma) - (R_{2},P\gamma),
		\end{aligned}
		\label{eq224}
	\end{equation}
	where 
	\begin{equation}
		R_{4} = \tfrac{1}{2}u_{0}\eta_{x}\xi - \tfrac{1}{2}u_{x}(1+\eta_{0}+\eta)\xi + \tfrac{1}{2}u\eta_{x}\xi 
		- \tfrac{1}{3}\eta_{x}\xi^{2}.
		\label{eq225}
	\end{equation}
	Adding now (\ref{eq221}) and (\ref{eq224}) we obtain 
	\begin{equation}
		\begin{aligned}
			\tfrac{1}{2}\tfrac{d}{dt}[\|\theta\|^{2} & + ((1+\eta_{0}+\eta)\xi,\xi)] + \omega = \tfrac{1}{2}(\eta_{t}\xi,\xi)
			- \tfrac{1}{2}(u_{x}\theta,\theta) \\
			& + \tfrac{1}{2}(\xi_{x}\theta,\theta) - (R_{1},\theta) + (R_{4},\xi) - (R_{3},P\gamma-\gamma) - (R_{2},P\gamma),
		\end{aligned}
		\label{eq226}
	\end{equation}
	where 
	\begin{equation}
		\begin{aligned}
			\omega  = & \tfrac{1}{2}(u_{0}+u(1))\theta^{2}(1) + \tfrac{1}{2}(u_{0}+u(1))(1+\eta_{0}+\eta(1))\xi^{2}(1) \\
			& + (1+\eta_{0}+\eta(1))\xi(1)\theta(1) - \tfrac{1}{2}\xi(1)\theta^{2}(1) - \tfrac{1}{3}(1+\eta_{0}+\eta(1))\xi^{3}(1).
		\end{aligned}
		\label{eq227}
	\end{equation}
	In view of (\ref{eq29}), by continuity we conclude that there exists a maximal temporal instance $t_{h}>0$ 
	such that $(\eta_{h},u_{h})$ exist and $\|\xi_{x}\|_{\infty}\leq \alpha$ for $t\leq t_{h}$. Suppose that $t_{h}<T$.
	Then, since $\|\xi\|_{\infty}\leq \|\xi_{x}\|_{\infty}$, it follows from (\ref{eq227}) that for $t\in[0,t_{h}]$
	\begin{equation}
		\begin{aligned}
			\omega  \geq & \tfrac{1}{2}(u_{0}+u(1)-\alpha)\theta^{2}(1) 
			+ \tfrac{1}{2}(1+\eta_{0}+\eta(1))(u_{0}+u(1)-\tfrac{2\alpha}{3})\xi^{2}(1) \\
			& + (1+\eta_{0}+\eta(1))\xi(1)\theta(1) 
			= \tfrac{1}{2}(\theta(1),\xi(1))^{T}\begin{pmatrix} \mu & \lambda \\ \lambda & \lambda\nu \end{pmatrix}
			\begin{pmatrix} \theta(1) \\ \xi(1) \end{pmatrix},
		\end{aligned}
		\label{eq228}
	\end{equation}
	where $\mu=u_{0} + u(1) - \alpha$, $\lambda=1+\eta_{0}+\eta(1)$, $\nu=u_{0} + u(1) - \tfrac{2\alpha}{3}$.
	The hypotheses (\ref{eqh1a}) and (\ref{eqh2a}) give that $0<\mu<\nu$, $\lambda>0$. It is easy to see
	then that the matrix in (\ref{eq228}) will be positive semidefinite precisely when (\ref{eqh3a}) holds. We conclude
	from (\ref{eq228}) that $\omega \geq 0$. \\
	We now estimate the various terms in the right-hand side of (\ref{eq226}) for $0\leq t\leq t_{h}$. We obviously have
	\begin{equation}
		\abs{(\eta_{t}\xi,\xi)} + \abs{(u_{x}\theta,\theta)} \leq C(\|\xi\|^{2} + \|\theta\|^{2}),
		\label{eq229}
	\end{equation}
	and
	\begin{equation}
		\abs{(\xi_{x}\theta,\theta)} \leq \alpha \|\theta\|^{2}.
		\label{eq230}
	\end{equation}
	In addition, from (\ref{eq219}), (\ref{eq214}), and the inverse and approximation properties of $\acr{S}_{h}$ and
	(\ref{eq23}), (\ref{eq24}) we have 
	\begin{equation}
		\begin{aligned}
			\abs{(R_{1},\theta)} \leq & Ch^{r-1}\|\theta\| + \|\rho\|_{\infty}\|\xi_{x}\| \|\theta\| + \|\rho_{x}\|_{\infty} \|\xi\| \|\theta\| \\
			& + \|\sigma_{x}\|_{\infty} \|\theta\|^{2} + \|\sigma\|_{\infty} \|\theta_{x}\| \|\theta\| \\
			\leq & Ch^{r-1} \|\theta\| + C(\|\theta\|^{2} + \|\xi\|^{2}).
		\end{aligned}
		\label{eq231}
	\end{equation} 
	Also, from (\ref{eq225}) 
	\begin{equation}
		\abs{(R_{4},\xi)} \leq C\|\xi\|^{2} + C\|\xi\|_{\infty} \|\xi\|^{2} \leq C(1+\alpha)\|\xi\|^{2}.
		\label{eq232}
	\end{equation}
	By (\ref{eq220}), (\ref{eq216}), (\ref{eq23}), (\ref{eq24}) and the inverse and approximation properties of $\acr{S}_{h}$
	we have
	\begin{equation}
		\begin{aligned}
			\abs{(R_{2},P\gamma)} & \leq Ch^{r-1}\|\xi\| + C \|\sigma\|_{\infty} \|\xi_{x}\| \|\xi\| + C\|\sigma_{x}\|_{\infty} \|\xi\|^{2}\\
			&\leq Ch^{r-1} \|\xi\| + C \|\xi\|^{2}.
		\end{aligned}
		\label{eq233}
	\end{equation}
	Finally, using a well-known \emph{superapproximation} property of $\acr{S}_{h}$, cf. \cite{ddw}, \cite{d2}, 
	in order to estimate the term $P\gamma - \gamma$ by
	\begin{equation}
		\|P\gamma - \gamma\| = \|P[(1+\eta + \eta_{0})\xi] - (1+\eta + \eta_{0})\xi\| \leq Ch \|\xi\|,
		\label{eq234}
	\end{equation}
	we obtain by (\ref{eq223}) and the inverse properties of $\acr{S}_{h}$ that 
	\begin{equation}
		\begin{aligned}
			\abs{(R_{3},P\gamma - \gamma)} & \leq \abs{(\theta_{x},P\gamma - \gamma)} + \abs{(u_{0}\xi_{x},P\gamma - \gamma)} 
			+ \abs{((u\xi)_{x}, P\gamma-\gamma)} + \abs{(\xi\xi_{x},P\gamma-\gamma)} \\
			& \leq Ch\|\theta_{x}\| \|\xi\| + Ch \|\xi_{x}\| \|\xi\| + Ch \|\xi\|^{2} + Ch \|\xi\|_{\infty} \|\xi_{x}\| \|\xi\| \\
			& \leq C\|\theta\| \|\xi\| + C\|\xi\|^{2} + C\alpha \|\xi\|^{2} \\
			& \leq C\|\theta\|^{2} + C(1+\alpha)\|\xi\|^{2}.
		\end{aligned}
		\label{eq235}
	\end{equation} 
	Therefore, (\ref{eq226}), the fact that $\omega\geq 0$, and the inequalities  (\ref{eq230})-(\ref{eq233}), (\ref{eq235}) 
	give for $0\leq t\leq t_{h}$
	\[
	\tfrac{d}{dt}[\|\theta\|^{2} + ((1+\eta_{0}+\eta)\xi,\xi)] \leq Ch^{r-1}(\|\theta\| + \|\xi\|)
	+ C(\|\theta\|^{2} + \|\xi\|^{2}),
	\]
	where $C$ is a constant independent of $h$ and $t_{h}$. By (\ref{eqh1a}) the norm $((1+\eta_{0}+\eta)\cdot,\cdot)^{1/2}$
	is equivalent to that of $L^{2}$ uniformly for $t\in[0,T]$. Hence, Gronwall's inequality and the fact that 
	$\theta(0) =\xi(0)=0$ yield for a constant $C=C(T)$ 
	\begin{equation}
		\|\theta\| + \|\xi\| \leq Ch^{r-1} \quad \text{for} \quad 0\leq t\leq t_{h}.
		\label{eq236}
	\end{equation}
	We conclude from (\ref{eq26}) that $\|\xi_{x}\|_{\infty} \leq Ch^{r-5/2}$ for $0\leq t\leq t_{h}$, and, since $r\geq 3$, if $h$
	is taken sufficiently small, $t_{h}$ is not maximal. Hence we may take $t_{h}=T$ and (\ref{eq210}) follows from (\ref{eq236}).  
\end{proof}
\section{Semidiscretization of the subcritical shallow water equations}
In this section we consider the shallow water equations with characteristic boundary conditions in the 
\emph{subcritical} case. Specifically, for $(x,t)\in [0,1]\times[0,T]$ we seek $\eta=\eta(x,t)$ and $u=u(x,t)$
satisfying the ibvp 
\begin{equation}
	\begin{aligned} 
		\begin{aligned}
			\eta_{t} & + u_{x} + (\eta u)_{x} = 0, 
			\\
			u_{t} & + \eta_{x} + uu_{x} = 0,  
		\end{aligned}
		\quad  0\leq x\leq 1,\,\, & \, 0\leq t\leq T,  
		\\
		\eta(x,0) =\eta^{0}(x), \quad u(x,0)=u^{0}(x), \quad 0 & \leq x\leq 1, 
		\\
		u(0,t) + 2\sqrt{1+\eta(0,t)} = u_{0} + 2\sqrt{1+\eta_{0}},  \quad 0  \leq t\leq T, \hspace{-39pt}  & \\
		u(1,t) -  2\sqrt{1+\eta(1,t)} = u_{0} - 2\sqrt{1+\eta_{0}},  \quad 0  \leq t\leq T, \hspace{-39pt}  & \\
	\end{aligned}
	\tag{SW2}
	\label{eqsw2}
\end{equation}
where $\eta^{0}$, $u^{0}$ are given functions on $[0,1]$ and $\eta_{0}$, $u_{0}$ constants such that 
$1+\eta_{0}>0$ and $u_{0}^{2} < 1+\eta_{0}$. \par
As mentioned in the Introduction, the ibvp (\ref{eqsw2}) was studied by Petcu \& Temam, \cite{pt1}. They used 
the hypotheses that there exists a constant $c_{0}>0$ such that $u_{0}^{2}-(1+\eta_{0}) \leq -c_{0}^{2}$
and that the initial conditions $\eta^{0}(x)$ and $u^{0}(x)$ are sufficiently smooth and satisfy the condition 
$(u^{0}(x))^{2} - (1 + \eta^{0}(x)) \leq - c_{0}^{2}$ (with $1+\eta^{0}(x)$ positive) and suitable compatibility relations 
at $x=0$ and $x=1$. Under these assumptions one may infer from the theory of \cite{pt1} that there exists a $T>0$
such that a sufficiently smooth solution $(\eta,u)$ of (\ref{eqsw2}) exists for $(x,t)\in [0,1]\times[0,T]$ with the
properties that $1 + \eta$ is positive and the strong subcriticality condition 
\begin{equation}{\tag{$\Pi$}}
	u^{2} - (1 + \eta) \leq - c_{0}^{2},
	\label{eqp}
\end{equation}
holds for  $(x,t)\in [0,1]\times[0,T]$. For the purposes of the error estimation to follow we will assume that  
(\ref{eqsw2}) has a sufficiently smooth solution $(\eta,u)$ such that $1 + \eta >0$ and satisfies a stronger 
subcriticality condition. Specifically we assume that for some constant $c_{0}>0$ it holds that
\begin{equation}{\tag{Y1}}
	u_{0} + \sqrt{1 + \eta_{0}} \geq c_{0}, \quad u_{0} - \sqrt{1 + \eta_{0}} \leq - c_{0},
	\label{eqy1}
\end{equation}
and for $(x,t)\in [0,1]\times[0,T]$  that 
\begin{equation}{\tag{Y2}}
	u + \sqrt{1 + \eta} \geq c_{0}, \quad u - \sqrt{1 + \eta} \leq - c_{0}.
	\label{eqy2}
\end{equation}
Obviously (\ref{eqy1}) and (\ref{eqy2}) imply the subcriticality conditions 
$u_{0}^{2} - (1 + \eta_{0}) \leq - c_{0}^{2}$ and (\ref{eqp}) of \cite{pt1}, and approximate the latter better as 
$c_{0}$ decreases.  \par
In this section we will approximate the solution of (\ref{eqsw2}) with a Galerkin-finite element method after
transforming (\ref{eq11}) in its classical diagonal form. As in the Introduction, we write the system as 
\begin{equation}
	\begin{pmatrix} \eta_{t}\\ u_{t} \end{pmatrix} + A\begin{pmatrix} \eta_{x}\\u_{x}\end{pmatrix} = 0,
	\label{eq31}
\end{equation}
where  $A=\begin{pmatrix}  u & 1+\eta \\ 1 & u \end{pmatrix}$. The matrix $A$ has the eigenvalues
$\lambda_{1} = u + \sqrt{1+\eta}$, $\lambda_{2} = u-\sqrt{1+\eta}$, (note that by (\ref{eqy2}) $\lambda_{1}\geq c_{0}$ 
and $\lambda_{2} \leq -c_{0}$ in $[0,1]\times [0,T]$), with associated eigenvectors 
$X_{1} = (\sqrt{1+\eta},1)^{T}$, $X_{2}=(-\sqrt{1+\eta,}1)^{T}$. If $S$ is the matrix with columns $X_{1}$, $X_{2}$
it follows from (\ref{eq31}) that
\begin{equation}
	S^{-1}\begin{pmatrix} \eta_{t}\\u_{t}\end{pmatrix} 
	+ \begin{pmatrix} \lambda_{1} & 0 \\ 0 & \lambda_{2}\end{pmatrix}S^{-1}\begin{pmatrix}\eta_{x}\\u_{x}\end{pmatrix}=0.
	\label{eq32}
\end{equation}
If we try to define now functions $v$, $w$ on $[0,1]\times [0,T]$ by the equations 
$S^{-1}\begin{pmatrix} \eta_{t} \\ u_{t}\end{pmatrix} = \begin{pmatrix} v_{t} \\ w_{t}\end{pmatrix}$, 
$S^{-1}\begin{pmatrix}\eta_{x}\\ u_{x}\end{pmatrix} = \begin{pmatrix} v_{x} \\ w_{x}\end{pmatrix}$, we see that these equations are consistent and their solutions  are given by 
$v=\tfrac{1}{2} (u+2\sqrt{1 + \eta}) + c_{v}$, $w=\tfrac{1}{2}(u-2\sqrt{1 + \eta}) + c_{w}$, for arbitrary constants
$c_{v}$, $c_{w}$. Choosing the constants $c_{v}$, $c_{w}$ so that $v(0,t)=0$, $w(1,t)=0$, and using the boundary
conditions in (\ref{eqsw2}) we get 
\begin{equation}
	v = \tfrac{1}{2}[u-u_{0} + 2(\sqrt{1 + \eta} - \delta_{0})], \quad w = \tfrac{1}{2}[u-u_{0} - 2(\sqrt{1 + \eta} - \delta_{0})],
	\label{eq33}
\end{equation}
where $\delta_{0} = \sqrt{1 + \eta_{0}}$. The original variables $\eta$, $u$ are given in terms of $v$ and $w$ by the formulas
\begin{equation}
	\eta = [\tfrac{1}{2}(v-w) + \delta_{0}]^{2} - 1, \quad u = v + w + u_{0}.
	\label{eq34}
\end{equation}
Since
\begin{equation} 
	\lambda_{1} = u + \sqrt{1 + \eta}=u_{0} + \delta_{0} + \frac{3v + w}{2}, \quad 
	\lambda_{2} = u - \sqrt{1 + \eta} = u_{0} - \delta_{0} + \frac{v + 3w}{2}, 
	\label{eq35}
\end{equation}
we see that the ibvp (\ref{eqsw2}) becomes
\begin{equation}{\tag{SW2a}}
	\begin{aligned}
		\begin{pmatrix} v_{t}\\ w_{t}\end{pmatrix}
		+ \begin{pmatrix} u_{0}+\delta_{0} + \tfrac{3v+w}{2} & 0 \\ 0 & u_{0}-\delta_{0} + \tfrac{v + 3w}{2}\end{pmatrix}
		\begin{pmatrix} v_{x} \\ w_{x}\end{pmatrix} & =0, \quad  0\leq x \leq 1, \,\,\, 0\leq t\leq T,\\
		v(x,0) = v^{0}(x), \quad  w(x,0) = w^{0}(x),   \quad 0\leq x & \leq 1,\\
		v(0,t)=0, \quad w(1,t) = 0,  \quad  0\leq t  \leq T,  \qquad &  
	\end{aligned}
	\label{eqsw2a}
\end{equation} 
where $v^{0}(x)= \tfrac{1}{2}[u^{0}(x) - u_{0} + 2(\sqrt{1 + \eta^{0}(x)} - \delta_{0})]$, 
$w^{0}(x) = \tfrac{1}{2}[u^{0}(x) - u_{0} - 2(\sqrt{1 + \eta^{0}(x)} - \delta_{0})]$. Under our hypotheses (\ref{eqsw2a})
has a unique solution $(v,w)$ on $[0,1]\times[0,T]$ which will be assumed to be smooth enough for the
purposes of the error estimation that follows. Of course, $v$ and $w$ represent analogs of the Riemann invariants
of the shallow water system in the context of the ibvp at hand; the system of pde's in (\ref{eqsw2a}) and (\ref{eq34}),
(\ref{eq35}) imply that the solution $(\eta,u)$ of (\ref{eqsw2}) may be expressed in terms of two waves $v$ and $w$ 
that propagate to the right and left, respectively, with speeds $u + \sqrt{1 + \eta}$ and $u - \sqrt{1 + \eta}$. \par
Given a quasiuniform partition of $[0,1]$ as in section 2, in addition to the spaces defined there, let for
integer $k\geq 0$  \,\,$\acr{\mathcal{C}}^{k}=\{f\in C^{k}[0,1] : f(1)=0\}$, 
$\acr{\mathcal{H}}^{k+1} = \{f\in H^{k+1}(0,1), f(1)=0\}$, and, for integer $r\geq 2$, 
$\acr{\mathcal{S}}_{h} = \{\phi \in \acr{\mathcal{C}}^{r-2} : \phi\big|_{[x_{j},x_{j+1}]} \in \mathbb{P}_{r-1}\,, 1\leq j\leq N\}$.
Note that the analogs of the approximation and inverse properties (\ref{eq21}), (\ref{eq22}), (\ref{eq25}), (\ref{eq26})
hold for $\acr{\mathcal{S}}_{h}$ as well, and that (\ref{eq23}), (\ref{eq24}) are also valid for the $L^{2}$ projection
$\mathcal{P}$ onto $\acr{\mathcal{S}}_{h}$, \emph{mutatis mutandis}. \par
The (standard) Galerkin semidiscretization of (\ref{eqsw2a}) is then defined as follows: Seek 
$v_{h}:[0,T]\to \acr{S}_{h}$, $w_{h}:[0,T]\to \acr{\mc{S}}_{h}$, such that for $t\in [0,T]$
\begin{align}
	(v_{ht},\phi) & + ((u_{0} + \delta_{0})v_{hx},\phi) + \tfrac{3}{2}(v_{h}v_{hx},\phi) + \tfrac{1}{2}(w_{h}v_{hx},\phi)=0,
	\quad \forall  \phi\in \acr{S}_{h}, \label{eq36} \\
	(w_{ht},\chi) & + ((u_{0}-\delta_{0})w_{hx},\chi) + \tfrac{3}{2}(w_{h}w_{hx},\chi) + \tfrac{1}{2}(v_{h}w_{hx},\chi)=0,
	\quad \forall \chi \in \acr{\mc{S}}_{h}, \label{eq37}
\end{align} 
with 
\begin{equation}
	v_{h}(0)=P(v^{0}), \quad w_{h}(0) = \mc{P}(w^{0}). 
	\label{eq38}
\end{equation}
The main result of this section is 
\begin{proposition} Let $(v,w)$ be the solution of (\ref{eqsw2a}) and assume that the hypotheses (\ref{eqy1}) 
	and (\ref{eqy2}) hold, that $r\geq 3$, and that $h$ is sufficiently small. Then the semidiscrete ivp (\ref{eq36})-(\ref{eq38})
	has a unique solution $(v_{h},w_{h})$ for $0\leq t\leq T$ that satisfies
	\begin{equation}
		\max_{0\leq t\leq T} (\|v - v_{h}\| + \|w - w_{h}\|) \leq Ch^{r-1}.
		\label{eq39}
	\end{equation}
	If $(\eta,u)$ is the solution of (\ref{eqsw2}) and we define 
	\begin{equation}
		\eta_{h} = [\tfrac{1}{2}(v_{h} - w_{h}) + \delta_{0}]^{2} - 1, \quad u_{h} = v_{h} + w_{h} + u_{0},
		\label{eq310}
	\end{equation}
	then
	\[
	\max_{0\leq t\leq T} (\|\eta - \eta_{h}\| + \|u - u_{h}\|) \leq Ch^{r-1}.
	\]
\end{proposition}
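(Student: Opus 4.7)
The plan is to parallel the proof of Proposition 2.1, exploiting the fact that the linear part of (\ref{eqsw2a}) is already diagonal, so no weighted test functions (analogs of $P\gamma$) are needed. First, I decompose the errors through the relevant $L^2$ projections: set $\rho = v - Pv$, $\theta = Pv - v_h \in \acr{S}_h$, $\sigma = w - \mc{P}w$, $\xi = \mc{P}w - w_h \in \acr{\mc{S}}_h$, where $\mc{P}$ denotes the $L^2$ projection onto $\acr{\mc{S}}_h$. Subtracting (\ref{eq36}) from the weak form of the $v$-equation in (\ref{eqsw2a}) and using identities of the form
\begin{align*}
vv_x - v_hv_{hx} &= (v\theta)_x - \theta\theta_x + \wt{r}_v,\\
wv_x - w_hv_{hx} &= w\theta_x + v_x\,\xi + \wt{r}_{wv},
\end{align*}
with the $\wt{r}$'s collecting all terms involving $\rho$ and $\sigma$, yields the error equation for $\theta$; an analogous procedure applied to (\ref{eq37}) gives the equation for $\xi$.

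Second, I test these two error equations with $\phi = \theta$ and $\chi = \xi$, respectively, and integrate by parts. Because the linear part is scalar in each equation, the $v$--$w$ coupling appears only through the lower-order terms $v_x\,\xi$ and $w_x\,\theta$, which can be handled by Cauchy--Schwarz. Collecting all boundary contributions, I expect the surface term at $x=1$ from the $\theta$-equation to collapse to $\tfrac{1}{2}\lambda_1(1,t)\theta^2(1) - \tfrac{1}{2}\theta^3(1)$ (using $\theta(0)=0$), and the surface term at $x=0$ from the $\xi$-equation to collapse to $-\tfrac{1}{2}\lambda_2(0,t)\xi^2(0) + \tfrac{1}{2}\xi^3(0)$ (using $\xi(1)=0$ together with the pde boundary values $v(0,t)=w(1,t)=0$). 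Under hypothesis (\ref{eqy2}) one has $\lambda_1\geq c_0$ and $-\lambda_2\geq c_0$, so a bootstrap of the form $\|\theta_x\|_\infty + \|\xi_x\|_\infty \leq c_0/2$ renders both surface contributions nonnegative.

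Third, the volume terms are bounded as in (\ref{eq231})--(\ref{eq235}): the cross-couplings $(v_x\,\xi,\theta)$ and $(w_x\,\theta,\xi)$ are controlled pointwise via the smoothness of $(v,w)$, while the $\wt{r}$-remainders are estimated with the approximation properties (\ref{eq21})--(\ref{eq24}) and their $\mc{P}$-analogs, combined with the inverse inequalities (\ref{eq25}), (\ref{eq26}). Adding the two energy identities, applying Gronwall, and using $\theta(0)=\xi(0)=0$ yield $\|\theta(t)\| + \|\xi(t)\| \leq Ch^{r-1}$ for $t\in[0,t_h]$. The inverse inequality (\ref{eq26}) then gives $\|\theta_x\|_\infty + \|\xi_x\|_\infty \leq Ch^{r-5/2}$, which for $r\geq 3$ and $h$ small closes the bootstrap and forces $t_h = T$; combined with (\ref{eq21}) this produces (\ref{eq39}).

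Finally, to derive the estimates for $(\eta-\eta_h, u-u_h)$, note that $u - u_h = (v - v_h) + (w - w_h)$ gives the $u$-bound immediately. For $\eta$, I would use the factorization
\[
\eta - \eta_h = \tfrac{1}{4}\bigl[(v - w) + (v_h - w_h) + 4\delta_0\bigr]\bigl[(v - v_h) - (w - w_h)\bigr],
\]
bound the first bracket in $L^\infty$ using the smoothness of $(v,w)$ together with $\|v_h\|_\infty + \|w_h\|_\infty = O(1)$ (which follows from (\ref{eq39}), (\ref{eq23}), and (\ref{eq26})), and bound the second bracket in $L^2$ via (\ref{eq39}). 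The main obstacle I anticipate is the careful sign-tracking of the boundary contributions: verifying that the $x=1$ contribution from the $v$-equation and the $x=0$ contribution from the $w$-equation genuinely assemble into the positive quadratic forms $\tfrac{1}{2}\lambda_1(1)\theta^2(1)$ and $\tfrac{1}{2}|\lambda_2(0)|\xi^2(0)$ is precisely what forces the strengthened subcriticality hypothesis (\ref{eqy2}) rather than the weaker (\ref{eqp}).
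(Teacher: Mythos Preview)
Your approach is essentially the paper's: same error splitting, same test functions $\phi=\theta$, $\chi=\xi$, same boundary-term computation (the paper obtains exactly $\tfrac{1}{2}(c_0-\theta(1))\theta^2(1)$ and $\tfrac{1}{2}(c_0+\xi(0))\xi^2(0)$ after invoking (\ref{eqy2}) and $w(1)=v(0)=0$), same $\|\cdot\|_{1,\infty}$ bootstrap closed via $r\ge 3$, and the same factorization for $\eta-\eta_h$. The one imprecision is your claim that the $v$--$w$ coupling appears ``only through the lower-order terms $v_x\xi$ and $w_x\theta$'': the expansion of $wv_x-w_hv_{hx}$ also produces the quadratic cross term $-\theta_x\xi$ (and symmetrically $-\xi_x\theta$ from $vw_x-v_hw_{hx}$), which are not covered by your $\wt r$-remainders since they contain no $\rho,\sigma$; the paper isolates these explicitly in (\ref{eq313})--(\ref{eq314}) and bounds them in (\ref{eq324}) using the bootstrap, and your hypothesis $\|\theta_x\|_\infty+\|\xi_x\|_\infty\le c_0/2$ handles them the same way.
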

\begin{proof}
	Let $\rho = v - Pv$, $\theta = Pv - v_{h}$, $\sigma = w - \mc{P}w$, $\xi = \mc{P}w - w_{h}$. After choosing
	bases for $\acr{S}_{h}$ and $\acr{\mc{S}}_{h}$ we see that the ode ivp (\ref{eq36})-(\ref{eq38}) has a unique solution
	locally in time. From (\ref{eqsw2a}) and (\ref{eq36}), (\ref{eq37}) we obtain, as long as the solution exists, 
	\begin{align}
		(\theta_{t},\phi) + ((u_{0}+\delta_{0})(\theta_{x} + \rho_{x}),\phi) +\tfrac{3}{2} (vv_{x}-v_{h}v_{hx},\phi) +
		\tfrac{1}{2}((wv_{x}-w_{h}v_{hx},\phi) & = 0, \quad \forall \phi \in \acr{S}_{h},
		\label{eq311} \\
		(\xi_{t},\chi) + ((u_{0}-\delta_{0})(\sigma_{x}+\xi_{x}),\chi) + \tfrac{3}{2}(ww_{x}-w_{h}w_{hx},\chi)
		+ \tfrac{1}{2}(vw_{x}-v_{h}w_{hx},\chi) & = 0, \quad \forall \chi \in \acr{\mc{S}}_{h}.
		\label{eq312}
	\end{align}
	Now, since
	\begin{align*}
		vv_{x} - v_{h}v_{hx} & = (v\rho)_{x} + (v\theta)_{x} - (\rho\theta)_{x} - \rho\rho_{x} - \theta\theta_{x}, \\
		wv_{x} - w_{h}v_{hx} & = w(\rho_{x} + \theta_{x}) + v_{x}(\sigma + \xi) - (\rho_{x} + \theta_{x})(\sigma + \xi), \\
		ww_{x} - w_{h}w_{hx} & = (w\sigma)_{x} + (w\xi)_{x} - (\sigma\xi)_{x} - \sigma\sigma_{x} - \xi\xi_{x}, \\
		vw_{x} - v_{h}w_{hx} & = v(\sigma_{x} + \xi_{x}) + w_{x}(\rho + \theta) - (\sigma_{x} + \xi_{x})(\rho + \theta),
	\end{align*} 
	it follows that 
	\begin{align}
		vv_{x} - v_{h}v_{hx} & = (v\theta)_{x} - \theta\theta_{x} + R_{11}, \hspace{17pt}  wv_{x} - w_{h}v_{hx} = -\theta_{x}\xi + R_{12},
		\label{eq313} \\
		ww_{x} - w_{h}w_{hx} & = (w\xi)_{x} - \xi\xi_{x} + R_{21}, \hspace{17pt} vw_{x} - v_{h}w_{hx} = -\xi_{x}\theta + R_{22},
		\label{eq314}
	\end{align} 
	where
	\begin{align}
		R_{11} & = (v\rho)_{x} - (\rho\theta)_{x} - \rho\rho_{x}, \hspace{17pt}  
		R_{12} = w\rho_{x} + w\theta_{x} + v_{x}\sigma + v_{x}\xi - \rho_{x}\sigma - \rho_{x}\xi - \theta_{x}\sigma,
		\label{eq315} \\
		R_{21} & = (w\sigma)_{x} - (\sigma\xi)_{x} - \sigma\sigma_{x}, \hspace{12pt} 
		R_{22} = v\sigma_{x} + v\xi_{x} + w_{x}\rho + w_{x}\theta - \sigma_{x}\rho - \sigma_{x}\theta - \xi_{x}\rho.
		\label{eq316}
	\end{align} 
	Putting now $\phi = \theta$ in (\ref{eq311}) and $\chi = \xi$ in (\ref{eq312}) we obtain
	\begin{equation} 
		\begin{aligned}
			\tfrac{1}{2}\tfrac{d}{dt}\|\theta\|^{2} + &   ((u_{0} + \delta_{0})\theta_{x},\theta) 
			+ \tfrac{3}{2}((v\theta)_{x},\theta) - \tfrac{3}{2}(\theta\theta_{x},\theta)  \\ 
			= &  -((u_{0} + \delta_{0})\rho_{x},\theta) - \tfrac{3}{2}(R_{11},\theta) 
			+ \tfrac{1}{2}(\theta_{x}\xi,\theta) - \tfrac{1}{2}(R_{12},\theta), 
		\end{aligned}
		\label{eq317}
	\end{equation}
	\begin{equation} 
		\begin{aligned}
			\tfrac{1}{2}\tfrac{d}{dt}\|\xi\|^{2} + &   ((u_{0} - \delta_{0})\xi_{x},\xi) 
			+ \tfrac{3}{2}((w\xi)_{x},\xi) - \tfrac{3}{2}(\xi\xi_{x},\xi)  \\ 
			= &  -((u_{0} - \delta_{0})\sigma_{x},\xi) - \tfrac{3}{2}(R_{21},\xi) 
			+ \tfrac{1}{2}(\xi_{x}\theta,\xi) - \tfrac{1}{2}(R_{22},\theta), 
		\end{aligned}
		\label{eq318}
	\end{equation}
	Integration by parts yields (we suppress the $t$-dependence)
	\begin{align*}
		((u_{0} & + \delta_{0})\theta_{x},\theta) = \frac{u_{0}+\delta_{0}}{2}\theta^{2}(1), \quad 
		((v\theta)_{x},\theta) = \frac{1}{2}(v_{x}\theta,\theta) + \frac{1}{2}v(1)\theta^{2}(1), \\
		& (\theta\theta_{x},\theta) = \frac{1}{3}\theta^{3}(1), \quad ((u_{0} - \delta_{0})\xi_{x},\xi) = - \frac{u_{0} - \delta_{0}}{2}\xi^{2}(0),\\
		& ((w\xi)_{x},\xi) = \frac{1}{2}(w_{x}\xi,\xi) - \frac{1}{2}w(0)\xi^{2}(0), \quad (\xi\xi_{x},\xi) = -\frac{1}{3}\xi^{3}(0).
	\end{align*}
	Hence, (\ref{eq317}) becomes
	\begin{align*}
		\tfrac{1}{2}\tfrac{d}{dt}\|\theta\|^{2} & + \tfrac{1}{2}(u_{0} + \delta_{0} + \tfrac{3}{2}v(1) - \theta(1))\theta^{2}(1)\\
		& = - ((u_{0} + \delta_{0})\rho_{x},\theta) - \tfrac{3}{4}(v_{x}\theta,\theta) + \tfrac{1}{2}(\theta_{x}\xi,\theta) 
		- \tfrac{3}{2}(R_{11},\theta) - \tfrac{1}{2}(R_{12},\theta).
	\end{align*}
	By (\ref{eqy2}) and (\ref{eq35}) we see that $u_{0} + \delta_{0} + \tfrac{3}{2}v(1)\geq c_{0}>0$.
	Therefore the above equation gives
	\begin{equation}
		\begin{aligned}
			\tfrac{1}{2}\tfrac{d}{dt}\|\theta\|^{2} & + \tfrac{1}{2}(c_{0} - \theta(1))\theta^{2}(1)
			\leq -((u_{0} + \delta_{0})\rho_{x},\theta)  \\
			& - \tfrac{3}{4}(v_{x}\theta,\theta) + \tfrac{1}{2}(\theta_{x}\xi, \theta) - \tfrac{3}{2}(R_{11},\theta) - \tfrac{1}{2}(R_{12},\theta).
		\end{aligned}
		\label{eq319}
	\end{equation}
	Similarly, from (\ref{eq318}) we obtain 
	\begin{align*}
		\tfrac{1}{2}\tfrac{d}{dt}\|\xi\|^{2} & + \tfrac{1}{2}(-(u_{0} - \delta_{0} + \tfrac{3}{2}w(0)) + \xi(0))\xi^{2}(0)\\
		& = - ((u_{0} - \delta_{0})\sigma_{x},\xi) + \tfrac{1}{2}(\xi_{x}\theta,\xi) - \tfrac{3}{4}(w_{x}\xi,\xi)  
		- \tfrac{3}{2}(R_{21},\xi) - \tfrac{1}{2}(R_{22},\xi).
	\end{align*}
	Again, by (\ref{eqy2}) and (\ref{eq35}) we get $u_{0} - \delta_{0} + \tfrac{3}{2}w(0) \leq - c_{0}<0$.
	We conclude that  
	\begin{equation}
		\begin{aligned}
			\tfrac{1}{2}\tfrac{d}{dt}\|\xi\|^{2} & + \tfrac{1}{2}(c_{0} + \xi(0))\xi^{2}(0)
			\leq -((u_{0} - \alpha)\sigma_{x},\xi)  \\
			& + \tfrac{1}{2}(\xi_{x}\theta, \xi) - \tfrac{3}{4}(w_{x}\xi,\xi)  - \tfrac{3}{2}(R_{21},\xi) - \tfrac{1}{2}(R_{22},\xi).
		\end{aligned}
		\label{eq320}
	\end{equation}
	Finally, adding (\ref{eq319}) and (\ref{eq320}) we get, as long as the solution of (\ref{eq36})-(\ref{eq38}) exists, that
	\begin{equation}
		\begin{aligned}
			\tfrac{1}{2}\tfrac{d}{dt} & (\|\theta\|^{2} + \|\xi\|^{2}) + \tfrac{1}{2}(c_{0} - \theta(1))\theta^{2}(1)
			+ \tfrac{1}{2}(c_{0} + \xi(0))\xi^{2}(0) \\  
			& \leq -((u_{0} + \delta_{0})\rho_{x},\theta) - ((u_{0} - \delta_{0})\sigma_{x},\xi) - \tfrac{3}{4}(v_{x}\theta,\theta)
			- \tfrac{3}{4}(w_{x}\xi,\xi) \\
			& + \tfrac{1}{2}(\theta_{x}\xi,\theta) + \tfrac{1}{2}(\xi_{x}\theta,\xi) - \tfrac{3}{2}(R_{11},\theta)
			- \tfrac{1}{2}(R_{12},\theta) - \tfrac{3}{2}(R_{21},\xi) - \tfrac{1}{2}(R_{22},\xi).
		\end{aligned}
		\label{eq321}
	\end{equation}
	In view of (\ref{eq38}), by continuity we conclude that there exists a maximal temporal instance $t_{h} > 0$ such 
	that $v_{h}$, $w_{h}$ exist for $t\leq t_{h}$ and 
	\begin{equation}
		\|\theta(t)\|_{1,\infty} + \|\xi(t)\|_{1,\infty} \leq c_{0}, \quad t \in [0,t_{h}].
		\label{eq322}
	\end{equation}
	Suppose that $t_{h} < T$. For $t\in [0,t_{h}]$ we have by (\ref{eq322})
	\begin{equation}
		\tfrac{1}{2}(c_{0} - \theta(1))\theta^{2}(1) + \tfrac{1}{2}(c_{0} + \xi(0))\xi^{2}(0) \geq 0,
		\label{eq323}
	\end{equation}
	and
	\begin{equation}
		\tfrac{1}{2}\abs{(\theta_{x}\xi,\theta)} + \tfrac{1}{2}\abs{(\xi_{x}\theta,\xi)} 
		\leq \frac{c_{0}}{2}\|\theta\| \|\xi\|.
		\label{eq324}
	\end{equation}
	We obviously have  
	\begin{equation}
		\abs{(v_{x}\theta,\theta)} + \abs{(w_{x}\xi,\xi)} 
		\leq C(\|\theta\|^{2} +  \|\xi\|^{2}).
		\label{eq325}
	\end{equation}
	Using now the approximation and inverse properties (\ref{eq21})-(\ref{eq26}) for $\acr{S}_{h}$ (and also
	for $\acr{\mc{S}}_{h}$) we estimate the rest of the terms in the right-hand side of (\ref{eq321}) as follows.
	We first clearly have
	\begin{equation}
		\abs{((u_{0} + \delta_{0})\rho_{x},\theta)} + \abs{((u_{0} - \delta_{0})\sigma_{x},\xi)} 
		\leq Ch^{r-1}(\|\theta\| + \|\xi\|).
		\label{eq326}
	\end{equation}
	Integrating by parts we see by (\ref{eq315}) that
	\begin{align*}
		(R_{11},\theta) & = ((v\rho)_{x},\theta) - ((\rho\theta)_{x},\theta) - (\rho\rho_{x},\theta) \\
		& = v(1)\rho(1)\theta(1) - (v\rho,\theta_{x}) - \rho(1)\theta^{2}(1) + (\rho\theta,\theta_{x}) - (\rho\rho_{x},\theta).
	\end{align*} 
	Therefore
	\begin{equation}
		\begin{aligned}
			\abs{(R_{11},\theta)} & \leq C\|\rho\|_{\infty} \|\theta\|_{\infty} + C\|\rho\|_{\infty} \|\theta_{x}\| 
			+ \|\rho\|_{\infty} \|\theta\|_{\infty}^{2} 
			+ \|\rho\|_{\infty} \|\theta\| \|\theta_{x}\| + \|\rho\|_{\infty} \|\rho_{x}\| \|\theta\| \\
			& \leq Ch^{r} \|\theta\|_{\infty} + Ch^{r} \|\theta_{x}\| + Ch^{r}\|\theta\|_{\infty}^{2} 
			+ Ch^{r} \|\theta\| \|\theta_{x}\|  + Ch^{2r-1} \|\theta\|  \\
			& \leq Ch^{r-1}(\|\theta\| + \|\theta\|^{2}).
		\end{aligned}
		\label{eq327}
	\end{equation}
	Integration by parts and (\ref{eq315}) yield for the $R_{12}$ term that 
	\[
	(R_{12},\theta) = (w\rho_{x},\theta) - \tfrac{1}{2}(w_{x}\theta,\theta) + (v_{x}\sigma,\theta)
	+ (v_{x}\xi,\theta)  - (\rho_{x}\sigma,\theta) - (\rho_{x}\xi,\theta) - (\theta_{x}\sigma,\theta).
	\]
	Hence, similarly as above 
	\begin{equation}
		\begin{aligned}
			\abs{(R_{12},\theta)} & \leq C h^{r-1}\|\theta\| + C \|\theta\|^{2} + C h^{r}\|\theta\|  
			+ C \|\xi\| \|\theta\| \\
			& \,\,\quad  + Ch^{2r-1} \|\theta\| + C h^{r-1} \|\xi\|_{\infty} \|\theta\| + C h^{r} \|\theta\|_{\infty} \|\theta_{x}\| \\ 
			& \leq Ch^{r-1}\|\theta\| + C \|\theta\|^{2} + C \|\xi\| \|\theta\|.
		\end{aligned}
		\label{eq328}
	\end{equation}
	Again, using integration by parts and (\ref{eq315}) for the $R_{21}$ term, we obtain
	\[
	(R_{21},\xi) = - (w\sigma,\xi_{x}) - w(0) \sigma(0) \xi(0) + (\sigma\xi,\xi_{x}) + \sigma(0)\xi^{2}(0)
	- (\sigma\sigma_{x},\xi).
	\]
	Therefore
	\begin{equation}
		\begin{aligned}
			\abs{(R_{21},\xi)} & \leq C\|\sigma\| \|\xi_{x}\| + C\|\sigma\|_{\infty} \|\xi\|_{\infty}  + \|\sigma\|_{\infty} \|\xi\| \|\xi_{x}\|  
			+ \|\sigma\|_{\infty} \|\xi\|_{\infty}^{2} + \|\sigma\|_{\infty} \|\sigma_{x}\| \|\xi\| \\
			& \leq Ch^{r} \|\xi_{x}\| + Ch^{r} \|\xi\|_{\infty} + Ch^{r}\|\xi\| \|\xi_{x}\|  + Ch^{r} \|\xi\|_{\infty}^{2}  
			+ Ch^{2r-1} \|\xi\|  \\
			& \leq Ch^{r-1}(\|\xi\| + \|\xi\|^{2}).
		\end{aligned}
		\label{eq329}
	\end{equation}
	Finally, by (\ref{eq315}) and integration by parts we have for the $R_{22}$ term
	\[
	(R_{22},\xi) = (v\sigma_{x},\xi) - \tfrac{1}{2} (v_{x}\xi,\xi) + (w_{x}\rho,\xi) + (w_{x}\theta,\xi)
	- (\sigma_{x}\rho,\xi) - (\sigma_{x}\theta,\xi) - (\rho\xi_{x},\xi).
	\]
	Hence, 
	\begin{equation}
		\begin{aligned}
			\abs{(R_{22},\xi)} & \leq C\|\sigma_{x}\| \|\xi\| + C \|\xi\|^{2} + C \|\rho\| \|\xi\| + C \|\theta\| \|\xi\|  \\
			& \hspace{11pt} + \|\sigma_{x}\| \|\rho\|_{\infty} \|\xi\| + \|\sigma_{x}\| \|\theta\|_{\infty} \|\xi\| + \|\rho\|_{\infty} \|\xi_{x}\| \|\xi\| \\
			& \leq C h^{r-1}\|\xi\| + C \|\xi\|^{2} + C h^{r}\|\xi\|  + C \|\theta\| \|\xi\|  + Ch^{2r-1}\|\xi\|  \\
			& \hspace{11pt} + C h^{r-1} \|\theta\|_{\infty} \|\xi\| + C h^{r} \|\xi_{x}\| \|\xi\| \\ 
			& \leq  Ch^{r-1} \|\xi\| + C\|\xi\|^{2}  + C \|\theta\| \|\xi\|.
		\end{aligned}
		\label{eq330}
	\end{equation}
	By (\ref{eq321}), taking into account (\ref{eq323})-(\ref{eq330}) we see that 
	\[
	\tfrac{1}{2}\tfrac{d}{dt}(\|\theta\|^{2} + \|\xi\|^{2}) \leq Ch^{r-1}(\|\theta\| + \|\xi\|) + C(\|\theta\|^{2} + \|\xi\|^{2}),
	\quad t \in [0,t_{h}].
	\]
	An application of Gronwall's Lemma and (\ref{eq38}) yield
	\begin{equation}
		\|\theta(t)\| + \|\xi(t)\| \leq Ch^{r-1}, \quad t\in [0,t_{h}],
		\label{eq331}
	\end{equation}
	from which by inverse assumptions it follows that $\|\theta\|_{1,\infty} + \|\xi\|_{1,\infty} \leq Ch^{r-5/2}$ for
	$t\in[0,t_{h}]$. Since it was assumed that $r\geq 3$ this contradicts the maximality of $t_{h}$ and (\ref{eq331})
	holds for $0\leq t\leq T$. The estimate (\ref{eq39}) follows. 
	Since now $\|v - v_{h}\|_{\infty} \leq \|\rho\|_{\infty} + \|\theta\|_{\infty} \leq Ch^{r-3/2}$ and similarly 
	$\|w - w_{h}\|_{\infty} \leq Ch^{r-3/2}$, and since 
	\[
	\eta - \eta_{h} = [\delta_{0} + \tfrac{1}{4}((v - w) + (v_{h} - w_{h}))] [(v - w) - (v_{h} - w_{h})],
	\]
	we conclude that $\|\eta - \eta_{h}\| \leq C(\|v - v_{h}\| + \|w - w_{h}\|) \leq Ch^{r-1}$.  Similarly
	$\|u - u_{h}\| \leq \|v - v_{h}\| + \|w - w_{h}\| \leq Ch^{r-1}$, and the proof of Proposition 3.1 is now complete.
\end{proof}
\section{Numerical implementation and experiments}
\subsection{Supercritical case}
We will implement the standard Galerkin method for the shallow water equations with characteristic boundary
conditions in the supercritical case using the space of piecewise linear continuous functions on a uniform 
mesh in $[0,1]$ in the usual manner that problems with nonhomogeneous Dirichlet boundary conditions
are approximated in practice. For this purpose we let $x_{i} = ih$, $0\leq i\leq N$, $Nh=1$, define 
$S_{h} = \{\phi \in C^{0}[0,1] : \phi\big|_{[x_{j},x_{j+1}]} \in \mathbb{P}_{1}\,, 0\leq j\leq N-1\}$, and let 
$\acr{S}_{h}$ consist of the functions in $S_{h}$ that vanish at $x=0$. We seek $\eta_{h}$, 
$u_{h} : [0,T] \to S_{h}$, the semidiscrete approximation of the solution of (\ref{eqsw1}), satisfying 
for $0\leq t\leq T$  $\eta_{h}(0,t)=\eta_{0}$, $u_{h}(0,t) = u_{0}$, and the system of ode's 
\begin{equation}
	\begin{aligned}
		(\eta_{ht},\phi) & + (u_{hx},\phi) + ((\eta_{h}u_{h})_{x},\phi) = 0, \quad \forall \phi\in \acr{S}_{h},\\
		(u_{ht},\phi) & + (\eta_{hx},\phi) + (u_{h}u_{hx},\phi) = 0, \quad \forall \phi\in \acr{S}_{h},
	\end{aligned}
	\label{eq41}
\end{equation}
with initial values  $\eta_{h}(0) = P\eta^{0}$,  $u_{h}(0) = Pu^{0}$, where $P$ is the $L^{2}$ projection
onto $S_{h}$. We discretize this ode ivp in time by the `classical' explicit 4$^{th}$-order accurate 
Runge-Kutta scheme, written in the case of the ode $y' = f(t,y)$, $0\leq t\leq T$, in the form
\begin{equation}
	\begin{aligned}
		y^{n,1} & = y^{n} + \tfrac{k}{2} f(t^{n} + \tfrac{k}{2},y^{n}), \\
		y^{n,2} & = y^{n} + \tfrac{k}{2} f(t^{n} + \tfrac{k}{2},y^{n,1}), \\
		y^{n,3} & = y^{n} + k f(t^{n} + k,y^{n,2}), \\
		y^{n+1} & = y^{n} + k\bigl(\tfrac{1}{6}f(t^{n},y^{n}) + \tfrac{1}{3}f(t^{n}+\tfrac{k}{2},y^{n,1}) \\
		& \hspace{44pt} + \tfrac{1}{3} f(t^{n}+\tfrac{k}{2},y^{n,2}) + \tfrac{1}{6}f(t^{n} + k,y^{n,3})\bigr),
	\end{aligned}
	\label{eq42}
\end{equation}
where $k$ is the time step, $t^{n}=nk$, $n=0,1,\dots,M-1$, $Mk=T$, and $y^{n}$ approximates $y(t^{n})$. Theoretical and numerical evidence from linear stability theory and previous work
by the authors, \cite{ad1}, \cite{ad2}, on similar nonlinear systems, suggests that the resulting
fully discrete scheme is stable under a Courant-number restriction of the form 
 $k/h \leq r_{0}$.  \par 
  In our first numerical experiment we check the spatial rate of convergence of this fully discrete scheme. We consider (\ref{eqsw1}) 
with $\eta_{0}=1$ and $u_{0} = 3$ and add right-hand sides to the pde's so that the exact solution of the 
ibvp is $\eta(x,t) = x\mathrm{e}^{-xt} + \eta_{0}$, $u(x,t) = (1 - x- \cos(\pi x))\mathrm{e}^{2t} + u_{0}$.
With $h=1/N$, $k=h/10$ (so that the temporal error is negligible), we obtain the $L^{2}$ errors and
associated rates of convergence of (essentially) the semidiscrete problem at $T=1$ shown in Table \ref{tbl41}.
\begin{table}[h!]
\setlength\tabcolsep{10.7pt}
\begin{tabular}{@{}ccccc@{}}
\tblhead{$N$   &  $\eta$   &  $order$    &      $u$         &  $order$} 
$40$   &  $1.243098(-3)$   &    --       &  $5.623510(-3)$  &  --          \\ 
$80$   &  $3.110525(-4)$   &  $1.99871$  &  $1.405648(-3)$  &  $2.00024$   \\ 
$160$  &  $7.778520(-5)$   &  $1.99959$  &  $3.513979(-4)$  &  $2.00006$   \\ 
$320$  &  $1.944737(-5)$   &  $1.99992$  &  $8.784876(-5)$  &  $2.00001$   \\ 
$480$  &  $8.643341(-6)$   &  $1.99998$  &  $3.904381(-5)$  &  $2.00001$   \\ 
$520$  &  $7.364768(-6)$   &  $1.99996$  &  $3.326806(-5)$  &  $2.00001$   \\ \hline 
\end{tabular}\vspace{3.1pt}
\small
\caption{$L^{2}$ errors and spatial orders of convergence, supercritical case.}
\label{tbl41}
\end{table}
\normalsize
The experimental rates of convergence for both components of the solution are clearly equal to 2, i.e. 
superaccurate, as the expected rates for a general quasiuniform mesh would be equal to 1. 
A numerical study of stability for this example indicates that the errors remain of the same order 
of magnitude at $T=1$ up to about $k/h = 0.13$. For larger values of $k/h$ blow-up eventually
occurs. It should be noted that for the same test problem the alternative standard Galerkin
formulation (\ref{eq27})-(\ref{eq29}) (analyzed in section 2) coupled with the same Runge-Kutta 
scheme gives $L^{2}$ errors that coincide with those of Table \ref{tbl41} to at least 5 significant
digits; the stability condition was also the same. (Recall that the result of Proposition 2.1
strictly holds for $r\geq 3$ due to the technical requirement in the proof for controlling the
$W^{1,\infty}$ norm of the error. The numerical results suggest that the scheme converges
for $r=2$ as well and that the superaccurate order of convergence for a uniform mesh for
$r=2$, proved in \cite{ad2} for an ibvp for the shallow water equations with homogeneous
Dirichlet boundary conditions on $u$, persists in the case of the ibvp (\ref{eqsw1}) too.)\par
Since the temporal error is much smaller than the spatial one, the experimental estimation
of the temporal order of convergence may be done in the following way, used in \cite{bdmk}.
\begin{table}[h!]
\setlength\tabcolsep{10.7pt}
	\begin{tabular}{@{}ccccc@{}}
\tblhead{$k/h$   &  $E^{*}(T)$   &  $order$    &      $E(T)$}  
$1/35$   &  $2.6618459890(-8)$   &    --       &  $1.9910684230(-4)$   \\  
$1/40$   &  $1.6020860073(-8)$   &  $3.8022$   &  $1.9910680992(-4)$   \\ 
$1/45$   &  $1.0112973048(-8)$   &  $3.9061$   &  $1.9910679532(-4)$   \\ 
$1/50$   &  $6.6717108025(-9)$   &  $3.9478$   &  $1.9910678792(-4)$   \\ 
$1/55$   &  $4.5726218272(-9)$   &  $3.9638$   &  $1.9910678370(-4)$   \\ 
$1/60$   &  $3.2362144361(-9)$   &  $3.9728$   &  $1.9910678102(-4)$   \\ 
$1/64$   &  $2.5020819256(-9)$   &  $3.9865$   &  $1.9910677950(-4)$   \\ 
$1/64.5$ &  $2.4254282105(-9)$   &  $3.9983$   &  $1.9910677934(-4)$   \\ 
$1/65$   &  $2.3516195603(-9)$   &  $4.0020$   &  $1.9910677918(-4)$   \\ \hline
\end{tabular}\vspace{3.1pt}
\small
\caption{Temporal order of convergence, supercritical case, scheme (\ref{eq41})-(\ref{eq42}),
	$h=1/100$, $T=1$, $k_{ref}=h/120$.}
\label{tbl42a}
\end{table}
\normalsize
Let $H_{h}^{n}$ be the fully discrete approximation of $\eta(t^{n})$. For a fixed value of $h$
we make a reference computation with a very small value $k=k_{ref}$. 
The approximate 
solution $H_{h}^{m}=H_{h}^{m}(h,k_{ref})$, where $mk_{ref}=T$, differs from the exact
solution $\eta(\cdot,T)$ by an amount which is practically the error of the spatial 
discretization. For the same value of $h$ we define a modified $L^{2}$ error for small values
of $k$, that are nevertheless considerably  larger than $k_{ref}$, by the formula
$E^{*}(T) = \|H_{h}^{n}(h,k) - H_{h}^{m}(h,k_{ref})\|$, where $nk = T$. Since taking the
difference $H_{h}^{n}(h,k) - H_{h}^{m}(h,k_{ref})$ essentially cancels the spatial error of 
$H_{h}^{n}(h,k)$, we expect that $E^{*}(T)$ will decrease at the temporal order of convergence 
of the scheme as $k$ decreases. This is illustrated in the case of the test problem under
consideration and the fully discrete scheme (\ref{eq41})-(\ref{eq42}) in Table \ref{tbl42a}, where
$h = 1/100$, $T=1$, $k_{ref}=h/120$, and $E(T)$ denotes the $L^{2}$ error 
$\|H_{h}^{n}(h,k) - \eta(t^{n})\|$. For this range of $k$'s the expected temporal order of 
convergence, equal to $4$, clearly emerges. The analogous experiment with the Galerkin 
method (\ref{eq27})-(\ref{eq29}) discretized in time with the same Runge-Kutta scheme yields
fourth-order temporal convergence in $L^{2}$ as well.  \par
In the next numerical experiment  we integrate (\ref{eqsw1}) with the fully discrete
scheme (\ref{eq41})-(\ref{eq42}), taking $h=1/N$, $N=2000$, $k=h/10$, $\eta_{0}=1$, $u_{0}=3$, and initial conditions
$\eta^{0}(x)=0.05\exp (-400(x-0.5)^{2}) + \eta_{0}$, $u^{0}(x)=0.1\exp (-400(x-0.5)^{2}) + u_{0}$,
$0\leq x\leq1$. (Small-amplitude initial conditions were taken to ensure that no discontinuities in the 
derivatives of the solution develop before the wave profiles exit the spatial interval of integration.)
The evolution of the numerical solution is depicted in Figure \ref{fig41}(a)-(f). 
(The approximate $\eta$-profiles are on the left and those of $u$ on the right.) 
The initial Gaussian perturbations of the uniform state $\eta_{0}=1$, $u_{0}=3$ evolve into two unequal pulses
for both $\eta$ and $u$ that travel to the right and exit the computational domain by about $t=0.4$ without
leaving any visible 
residue or backwards-travelling oscillations as is confirmed by Fig. \ref{fig41}(g) that shows
the time history of the quantities $\max_{x}\abs{\eta(x,t) - \eta_{0}}$ and $\max_{x}\abs{u(x,t) - u_{0}}$.
(Here $\eta$, $u$ denote the approximate solution.) Due to the presence of the spatial and temporal discretizations,
the numerical boundary conditions are not expected to be exactly transparent. 
\captionsetup[subfloat]{labelformat=empty,position=bottom,singlelinecheck=false}
\begin{figure}[h!]
	\begin{center}
		\subfloat[]{\includegraphics[totalheight=2.7in,width=1.825in,angle=-90]{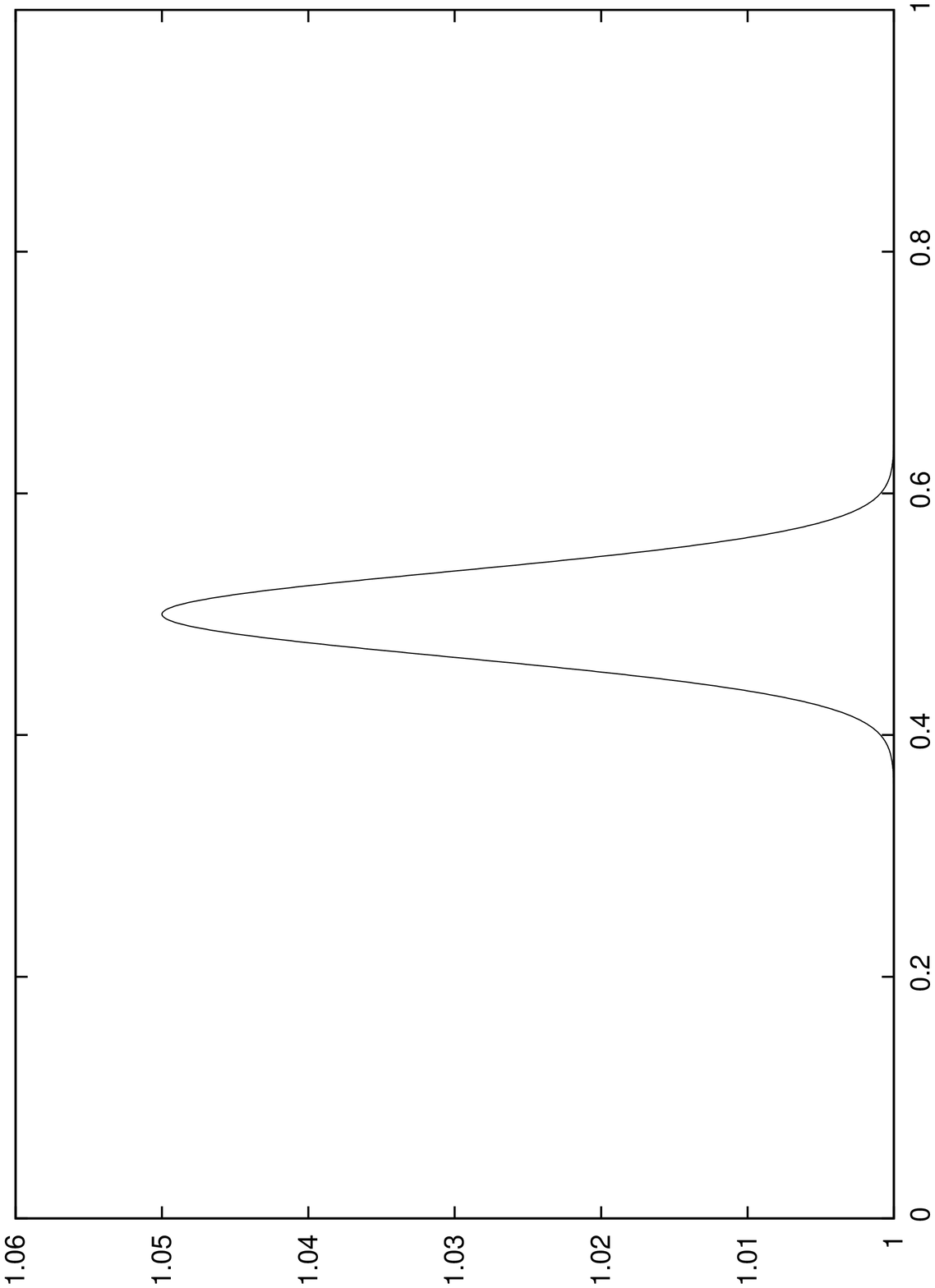}}  \quad \,
		\subfloat[]{\includegraphics[totalheight=2.7in,width=1.825in,angle=-90]{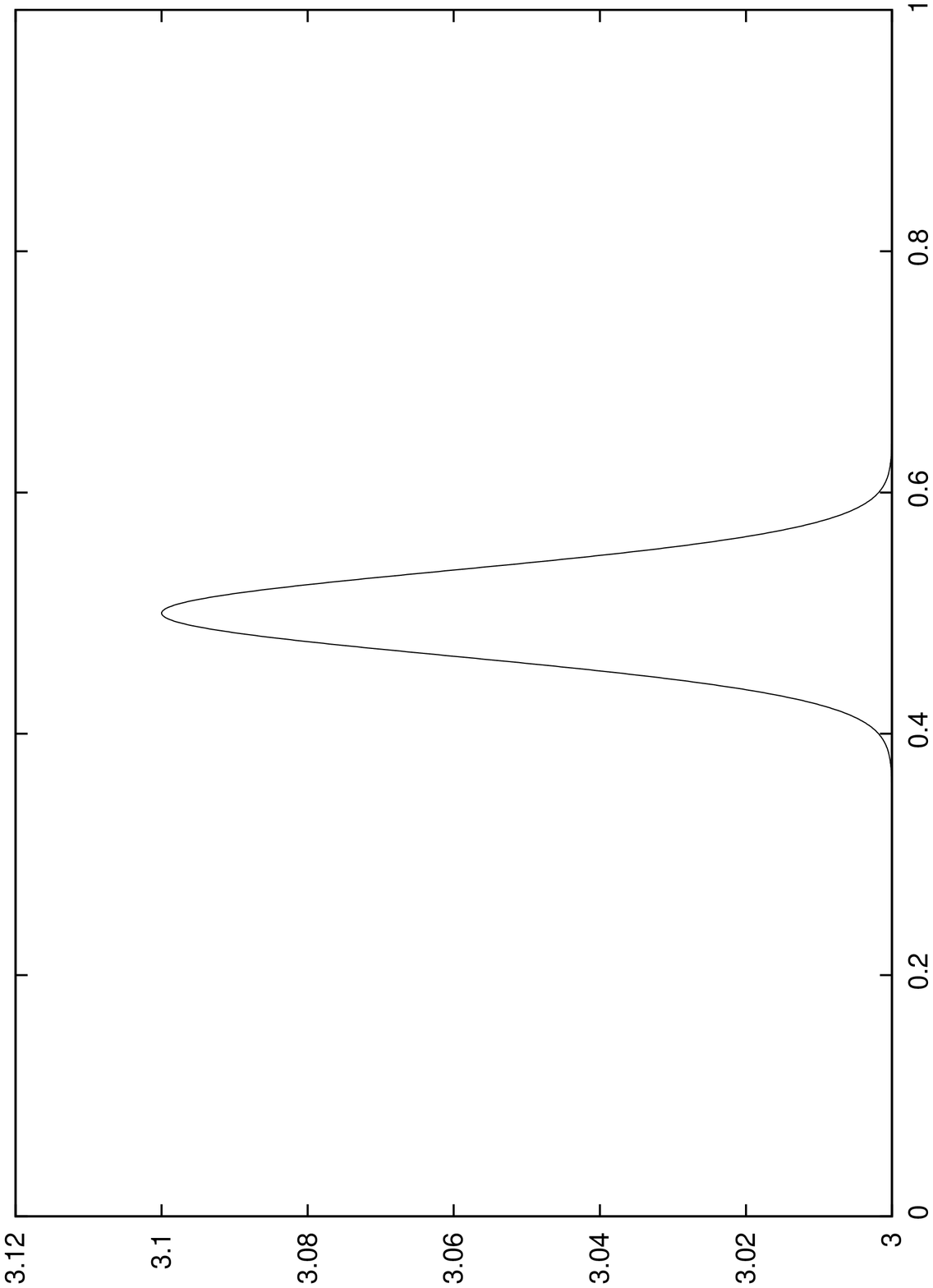}}\\
		(a)\,\,\,\,$\eta$ and $u$ at $t=0.0$  \vspace{12.5pt}\\
		\subfloat[]{\includegraphics[totalheight=1.825in,width=2.7in]{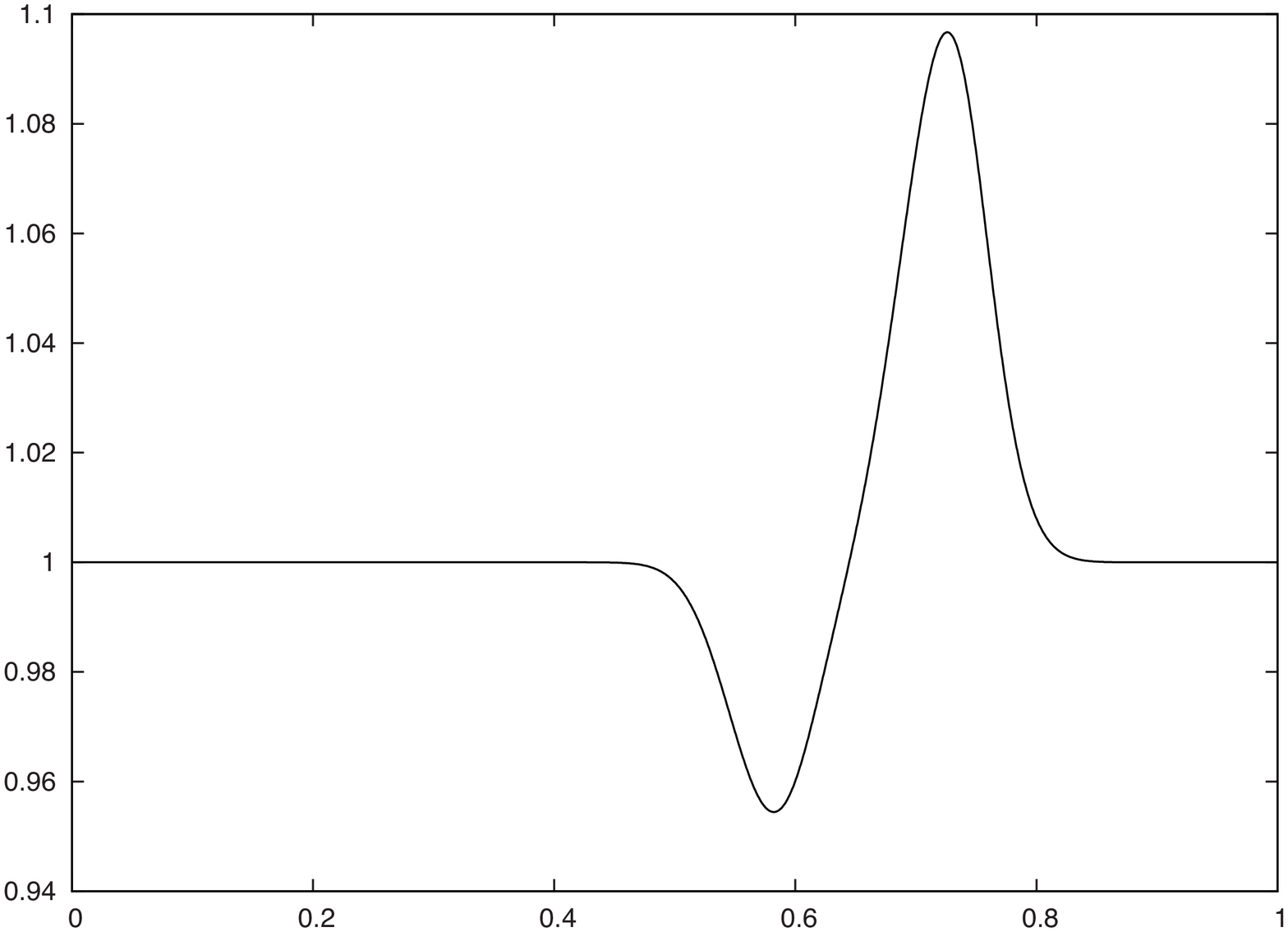}}  \quad \,
		\subfloat[]{\includegraphics[totalheight=1.825in,width=2.7in]{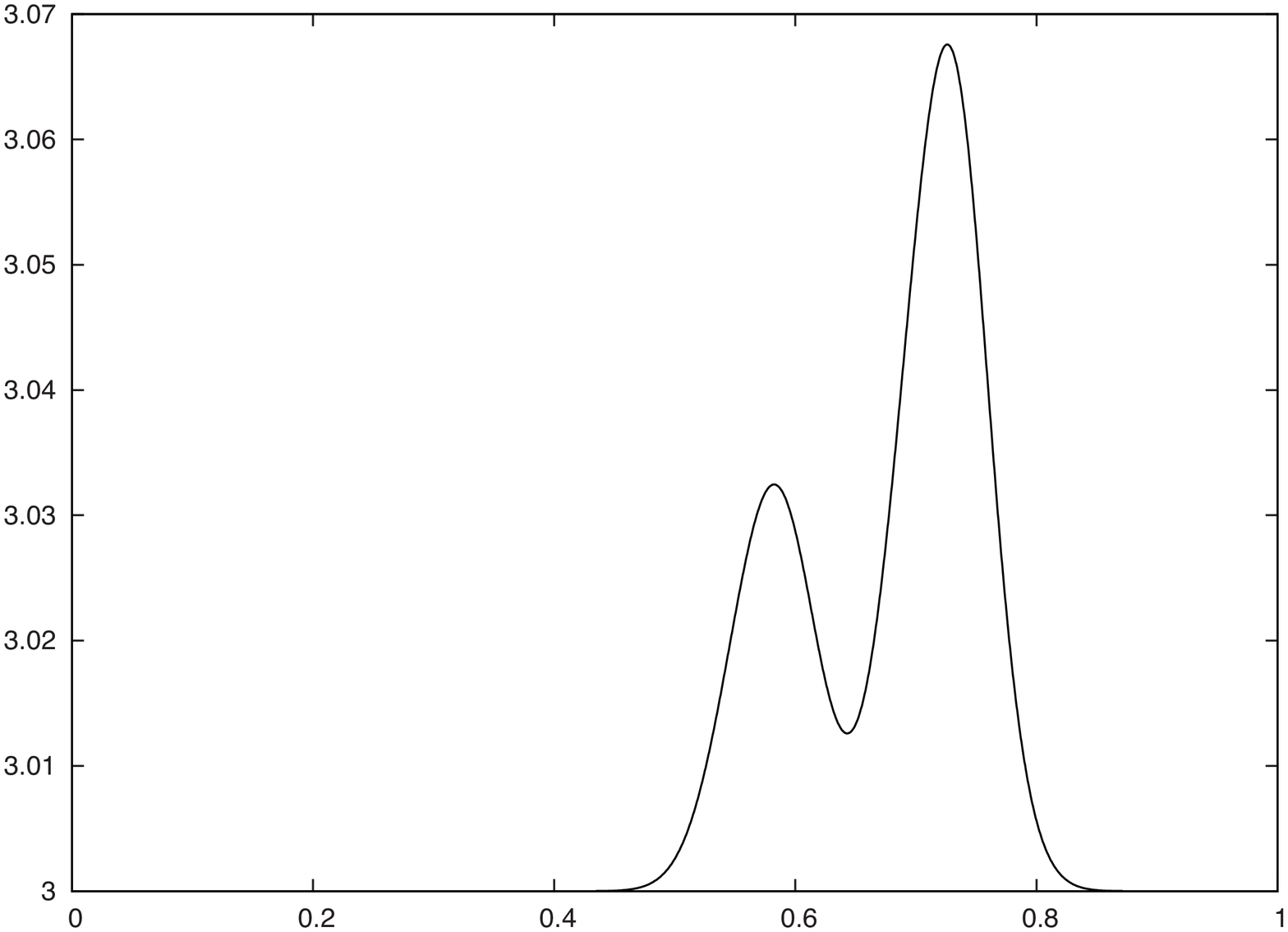}}\\
		(b)\,\,\,\,$\eta$ and $u$ at $t=0.05$  \vspace{12.5pt}\\
		\subfloat[]{\includegraphics[totalheight=1.825in,width=2.7in]{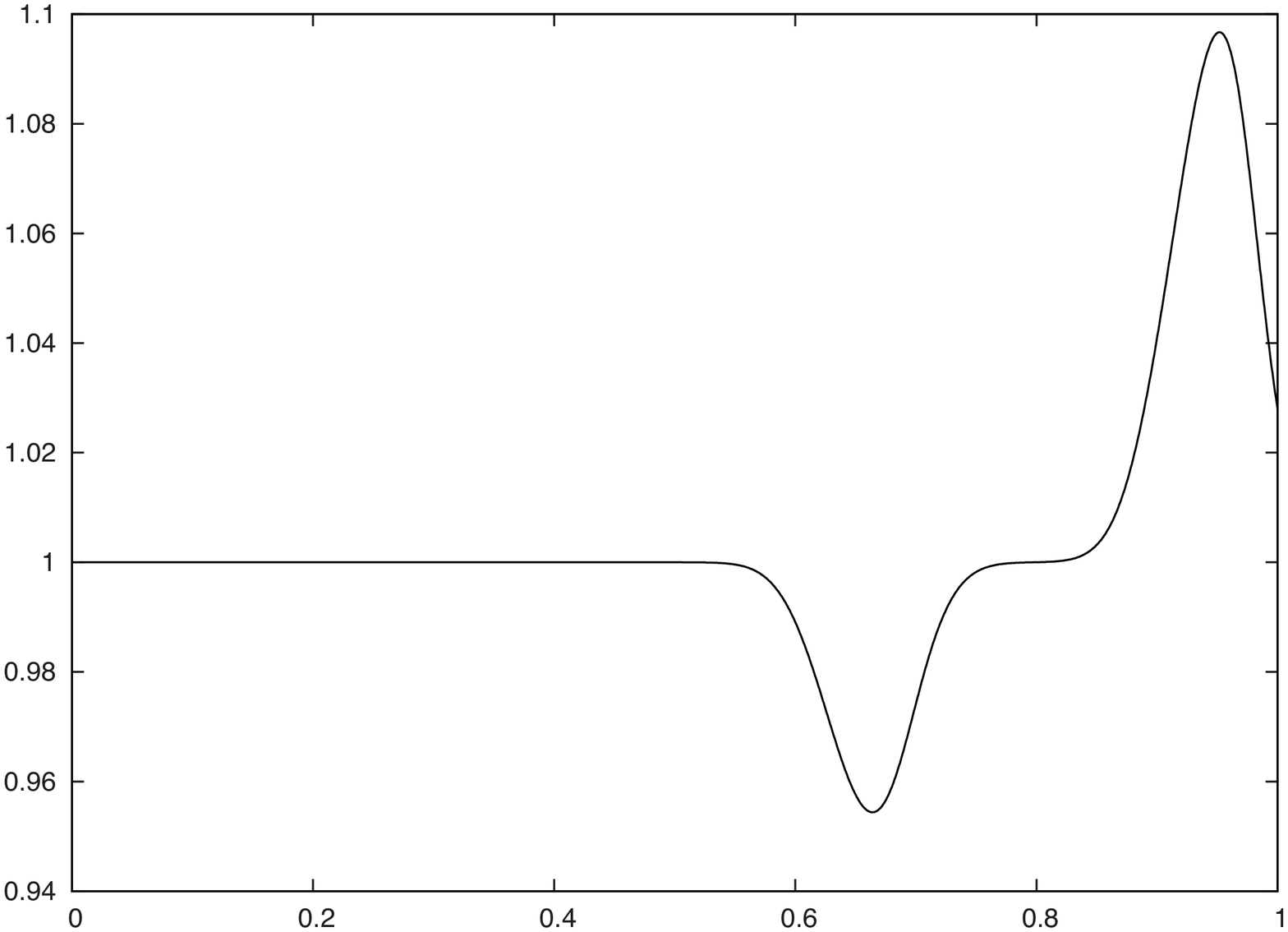}}  \quad \,
		\subfloat[]{\includegraphics[totalheight=1.825in,width=2.7in]{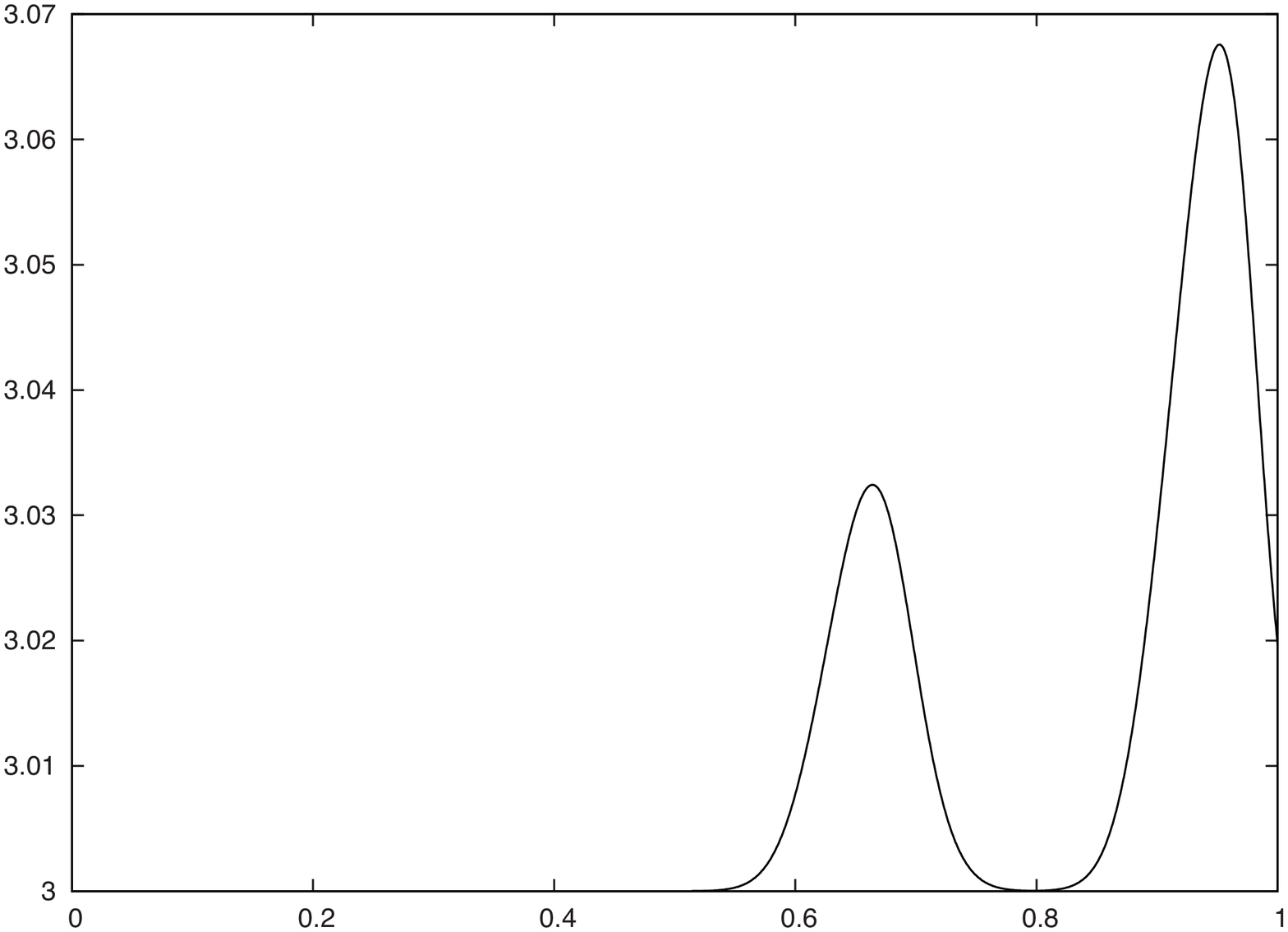}}\\
		(c)\,\,\,\,$\eta$ and $u$ at $t=0.1$ 
 \end{center}
\end{figure} 		
\clearpage
\begin{figure}[t!]
	\begin{center}
		\subfloat[]{\includegraphics[totalheight=1.825in,width=2.7in]{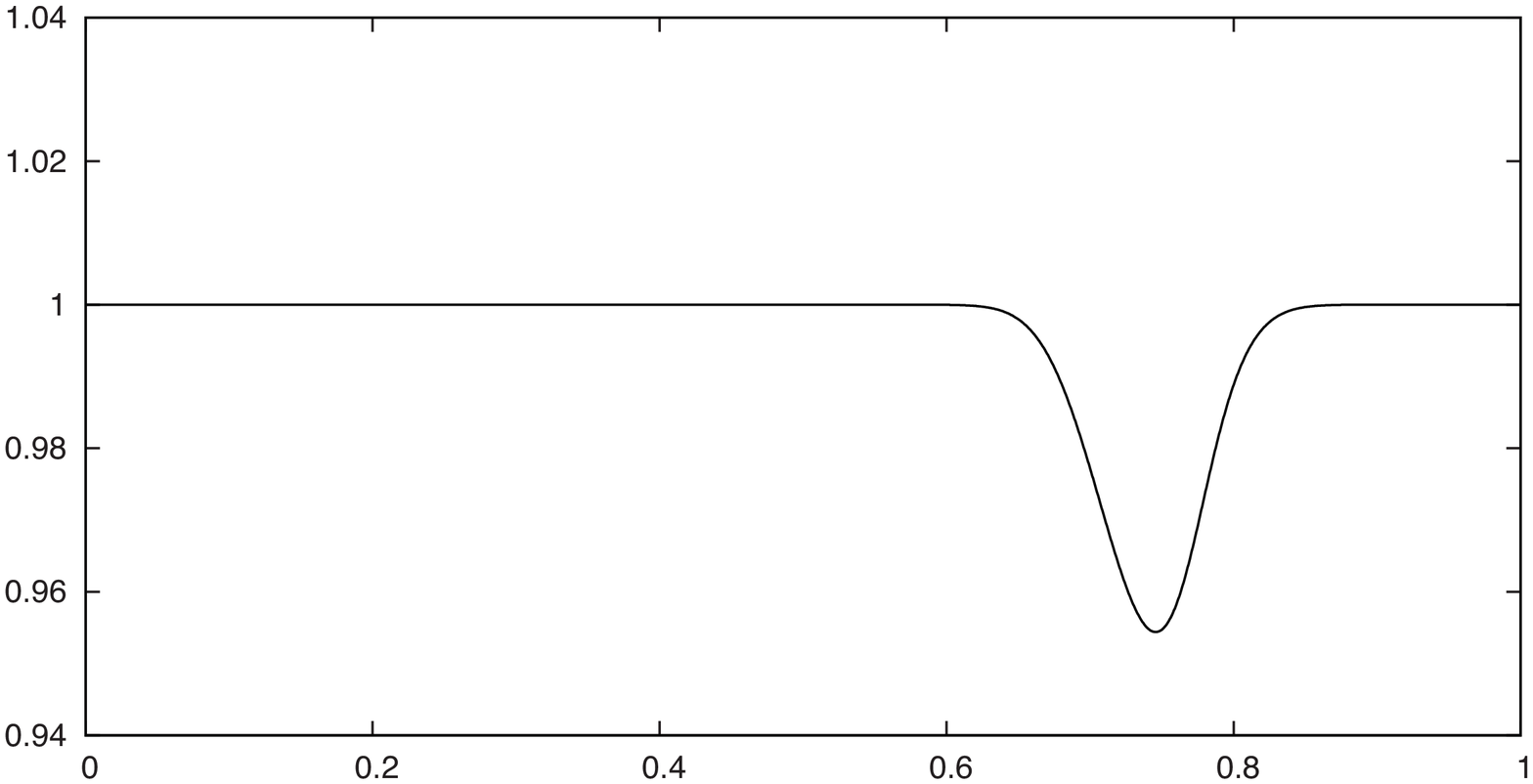}}  \quad \,
		\subfloat[]{\includegraphics[totalheight=1.825in,width=2.7in]{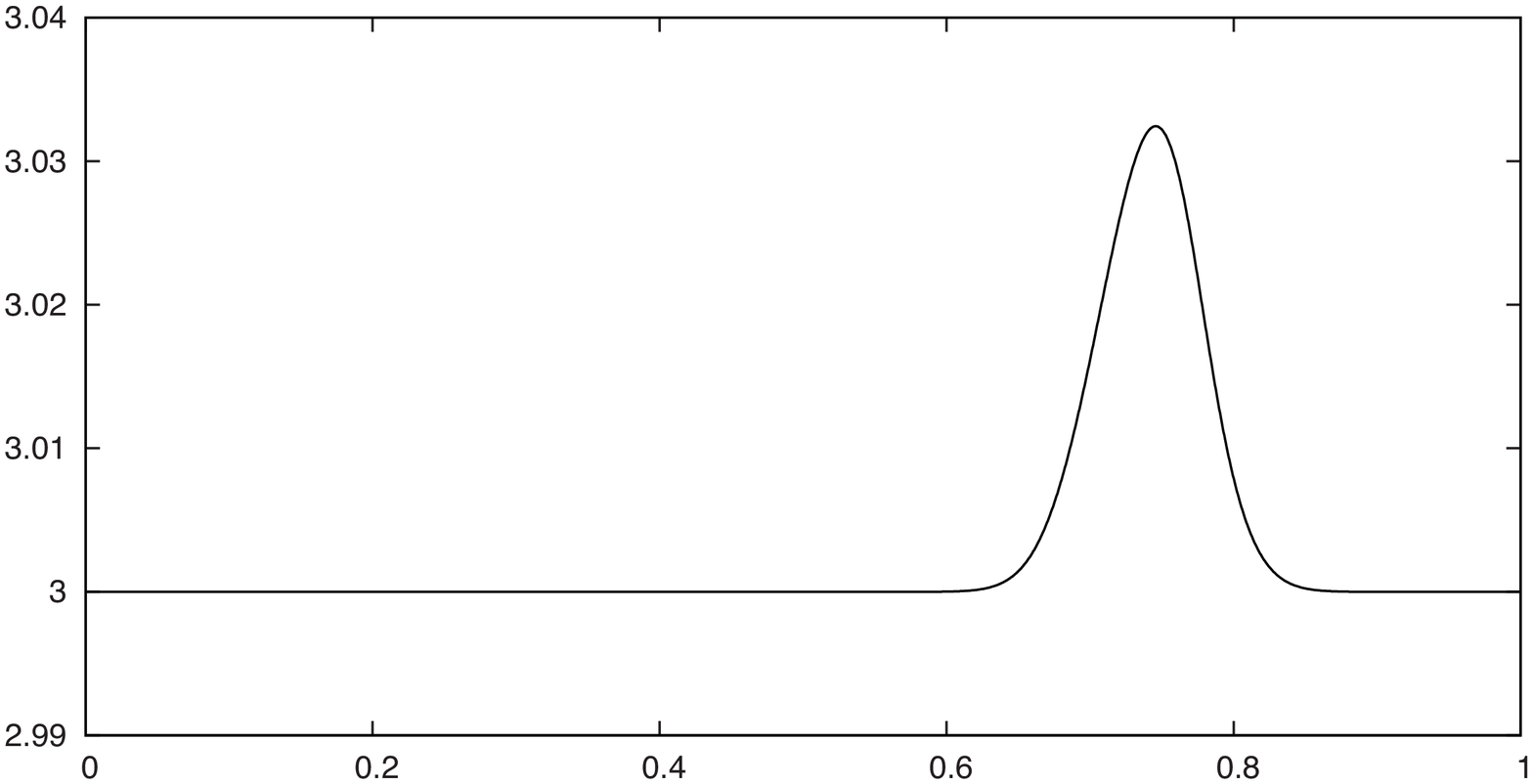}}\\
		(d)\,\,\,\,$\eta$ and $u$ at $t=0.15$ \vspace{12.5pt}\\
  	\subfloat[]{\includegraphics[totalheight=1.825in,width=2.7in]{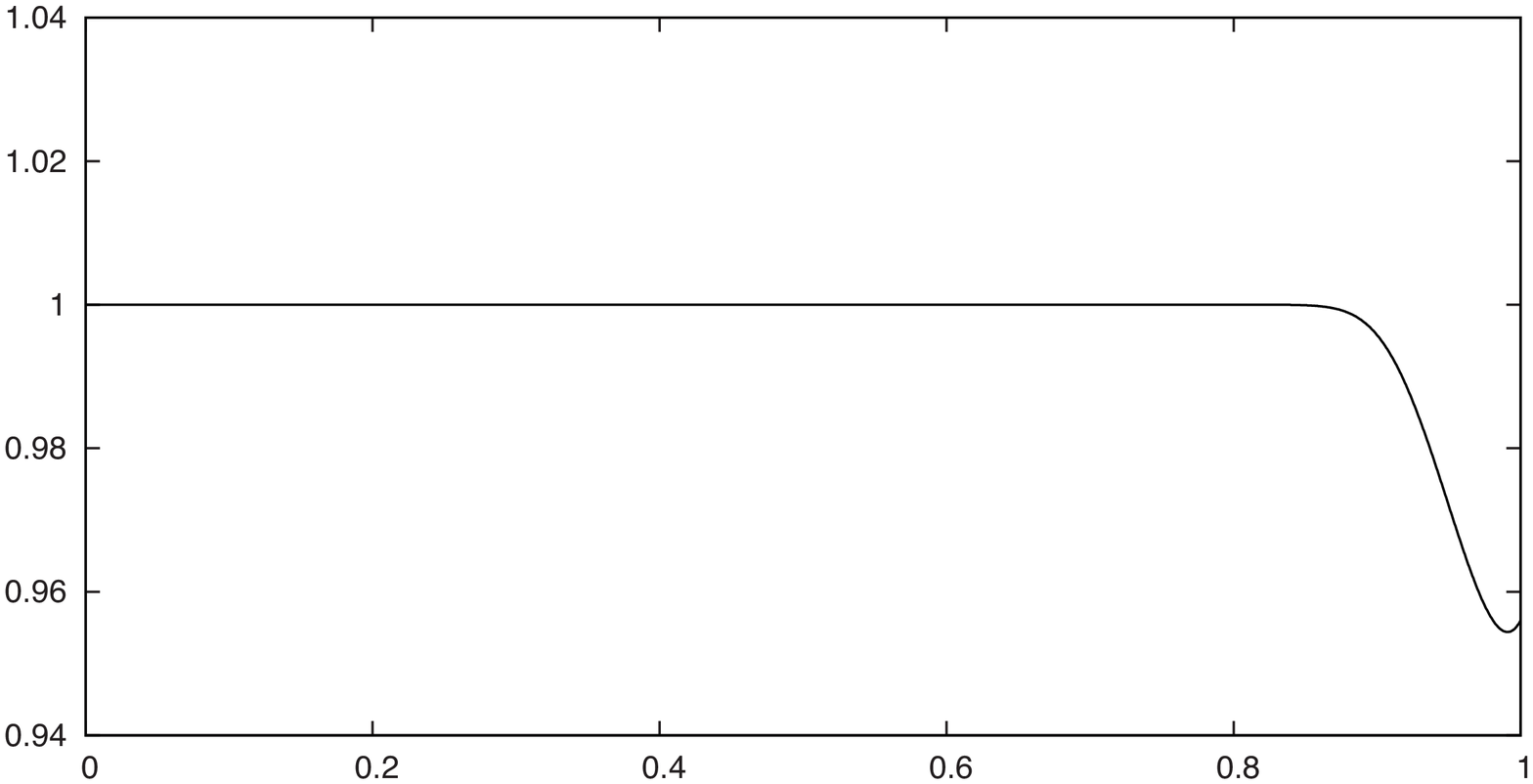}}  \quad \,
  	\subfloat[]{\includegraphics[totalheight=1.825in,width=2.7in]{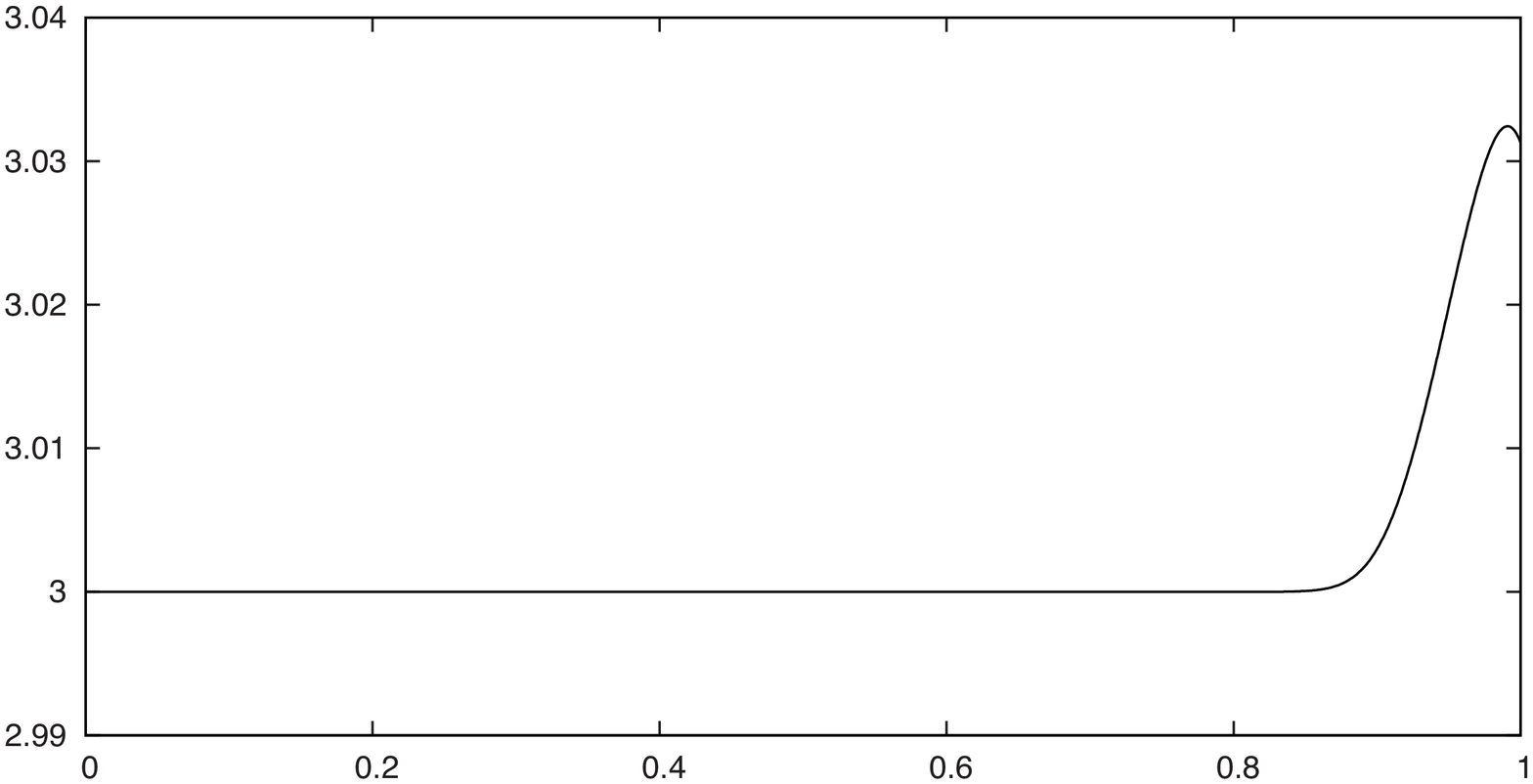}}\\
  	(e)\,\,\,\,$\eta$ and $u$ at $t=0.3$ \vspace{12.5pt}\\
  	\subfloat[]{\includegraphics[totalheight=1.825in,width=2.7in]{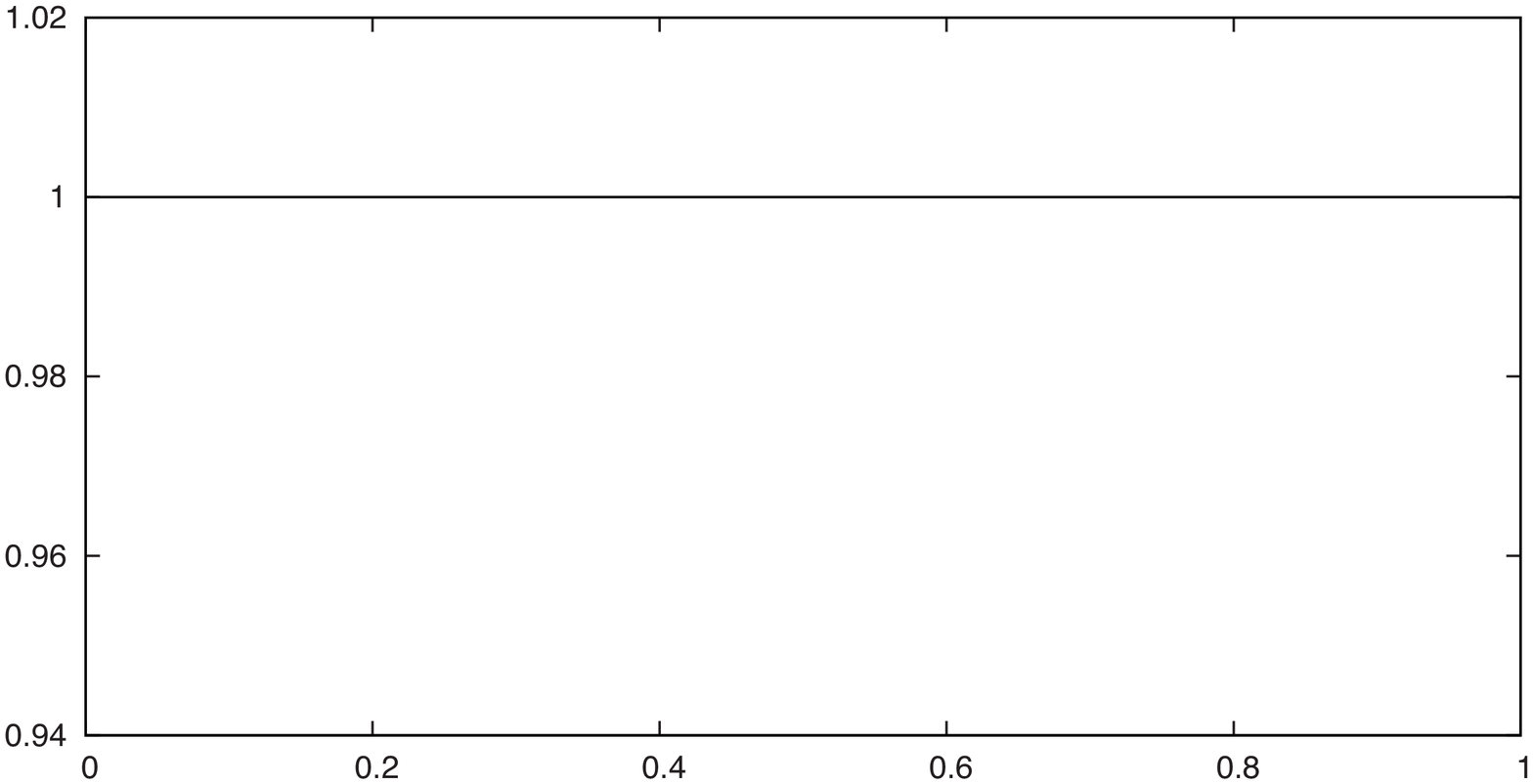}}  \quad \,
		\subfloat[]{\includegraphics[totalheight=1.825in,width=2.7in]{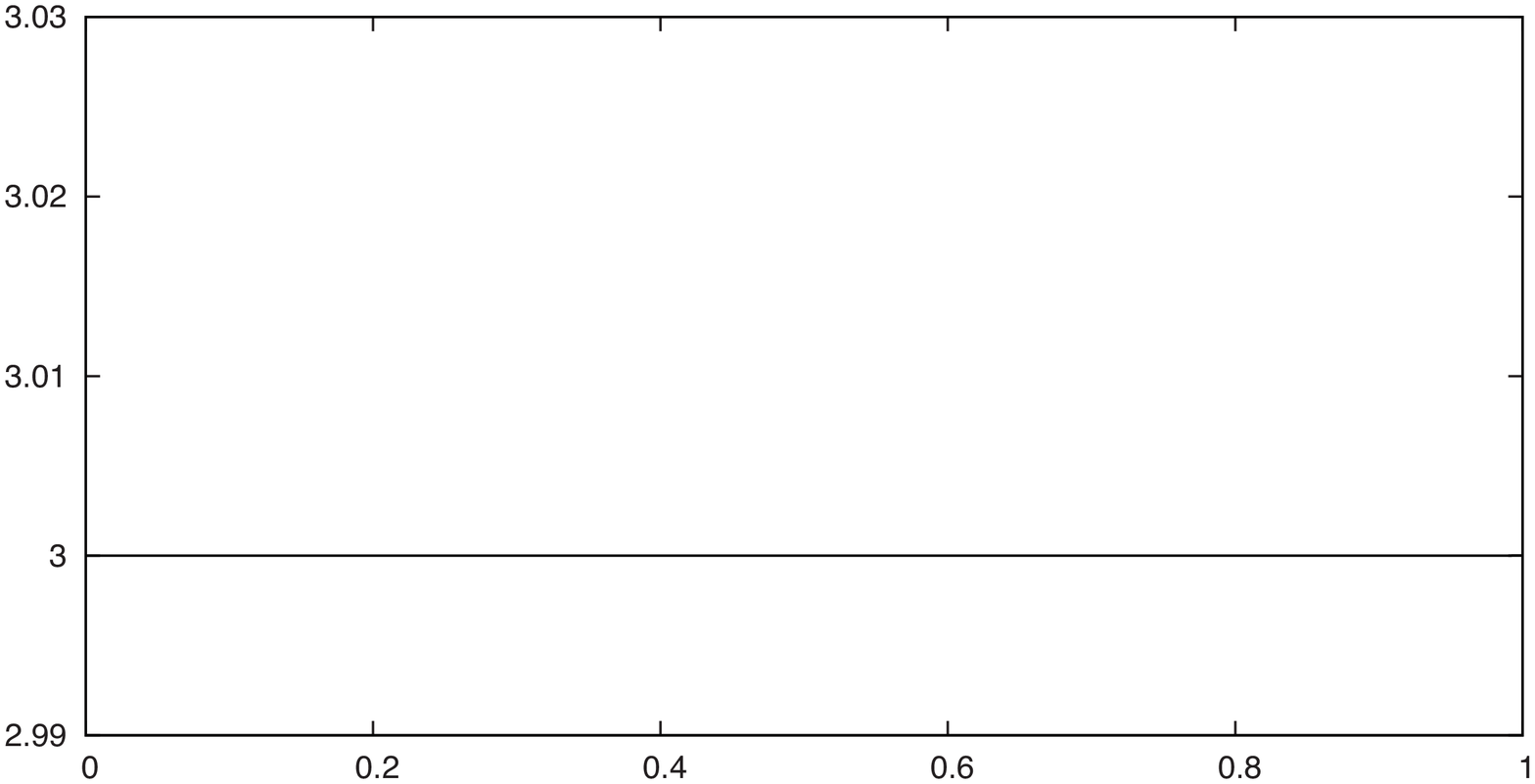}}\\
		(f)\,\,\,\,$\eta$ and $u$ at $t=0.4$ 
	\end{center}
\end{figure}
\clearpage
\begin{figure}[t!]
	\begin{center}
		\subfloat[]{\includegraphics[totalheight=1.825in,width=2.7in]{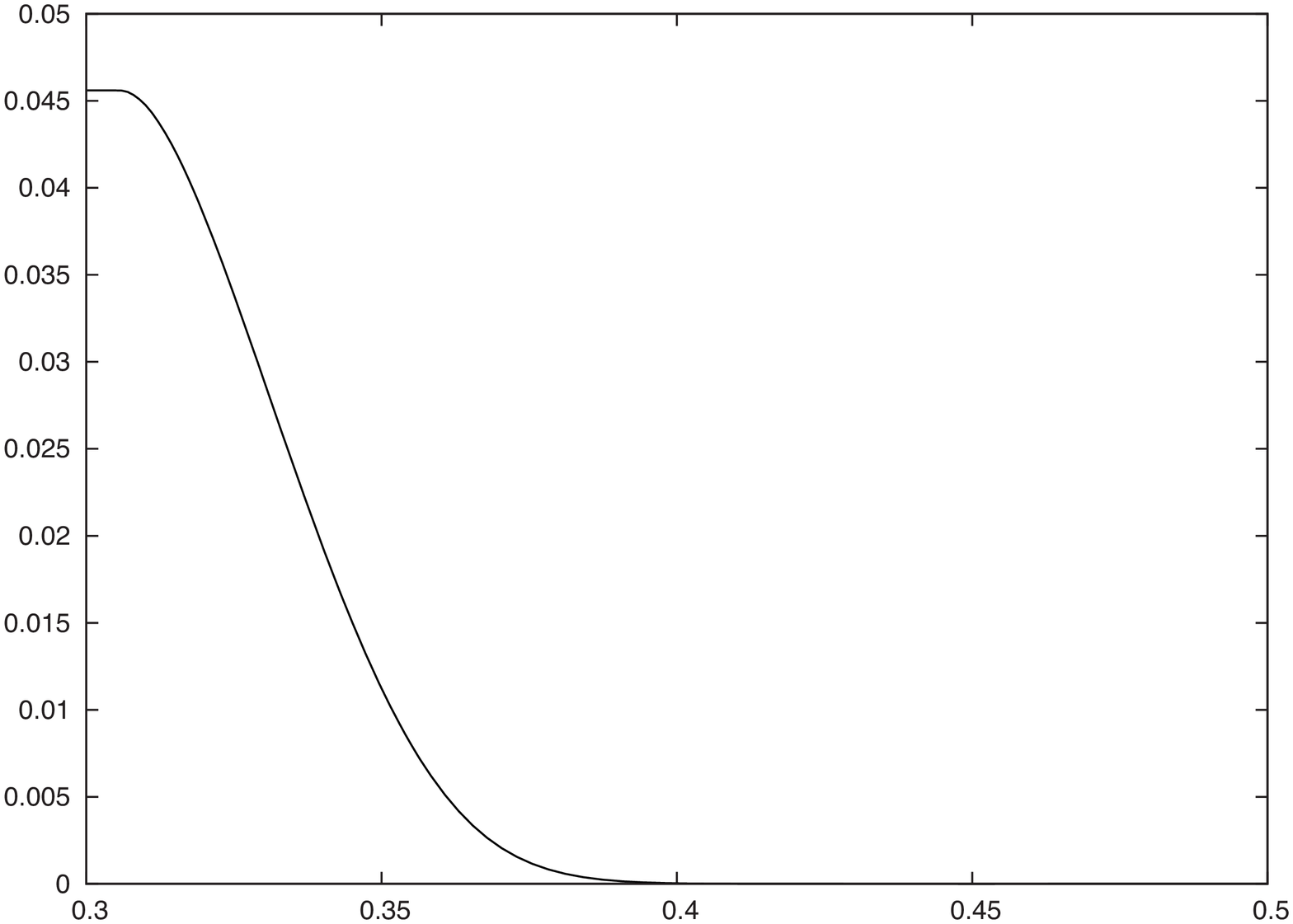}}  \quad \,
		\subfloat[]{\includegraphics[totalheight=1.825in,width=2.7in]{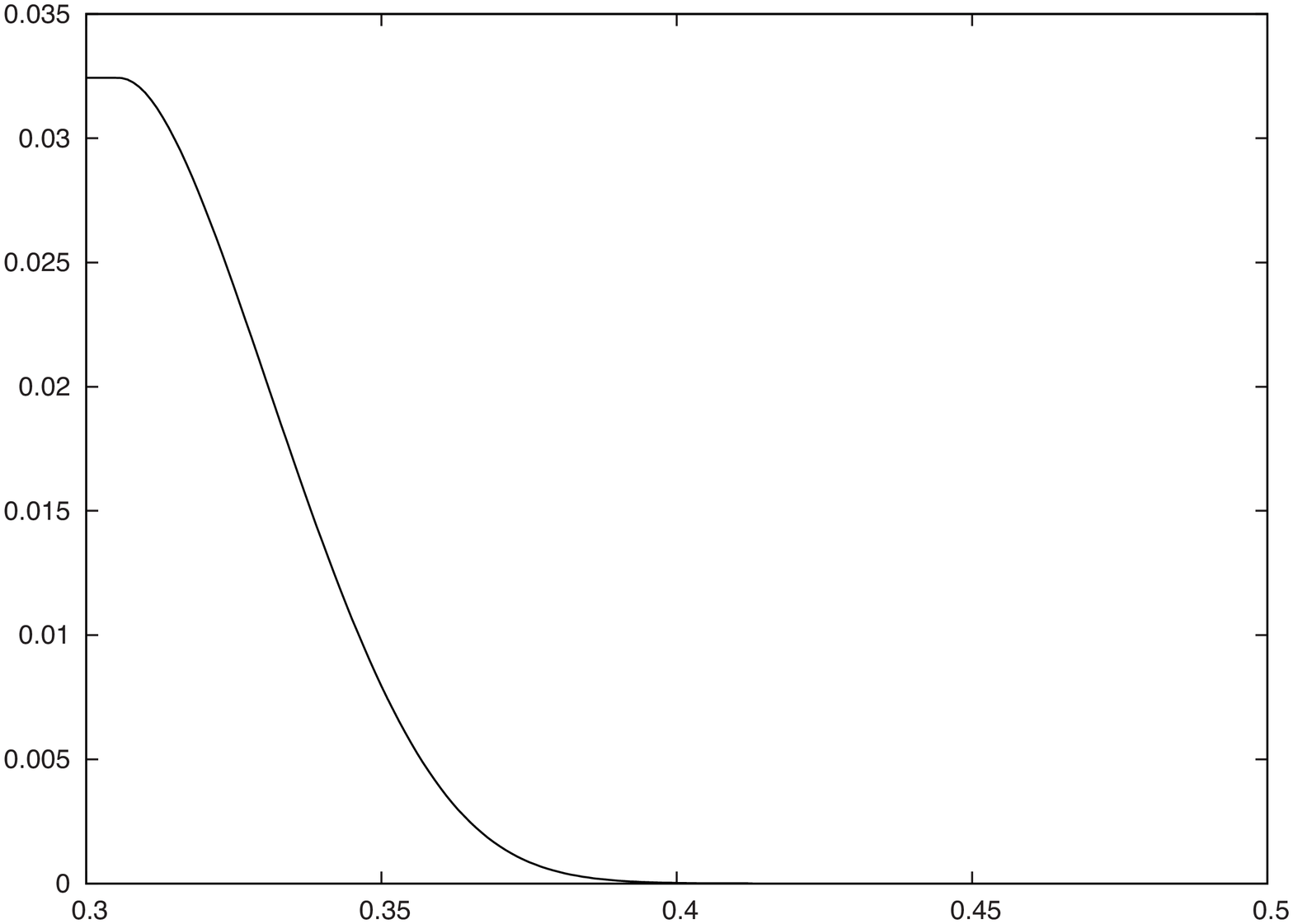}} \\
		(g)\,\,\,\,$\max_{x}\abs{\eta(x,t)-\eta_{0}}$ and $\max_{x}\abs{u(x,t)-u_{0}}$ vs. time \vspace{12.5pt}\\
		\subfloat[]{\includegraphics[totalheight=1.825in,width=2.7in]{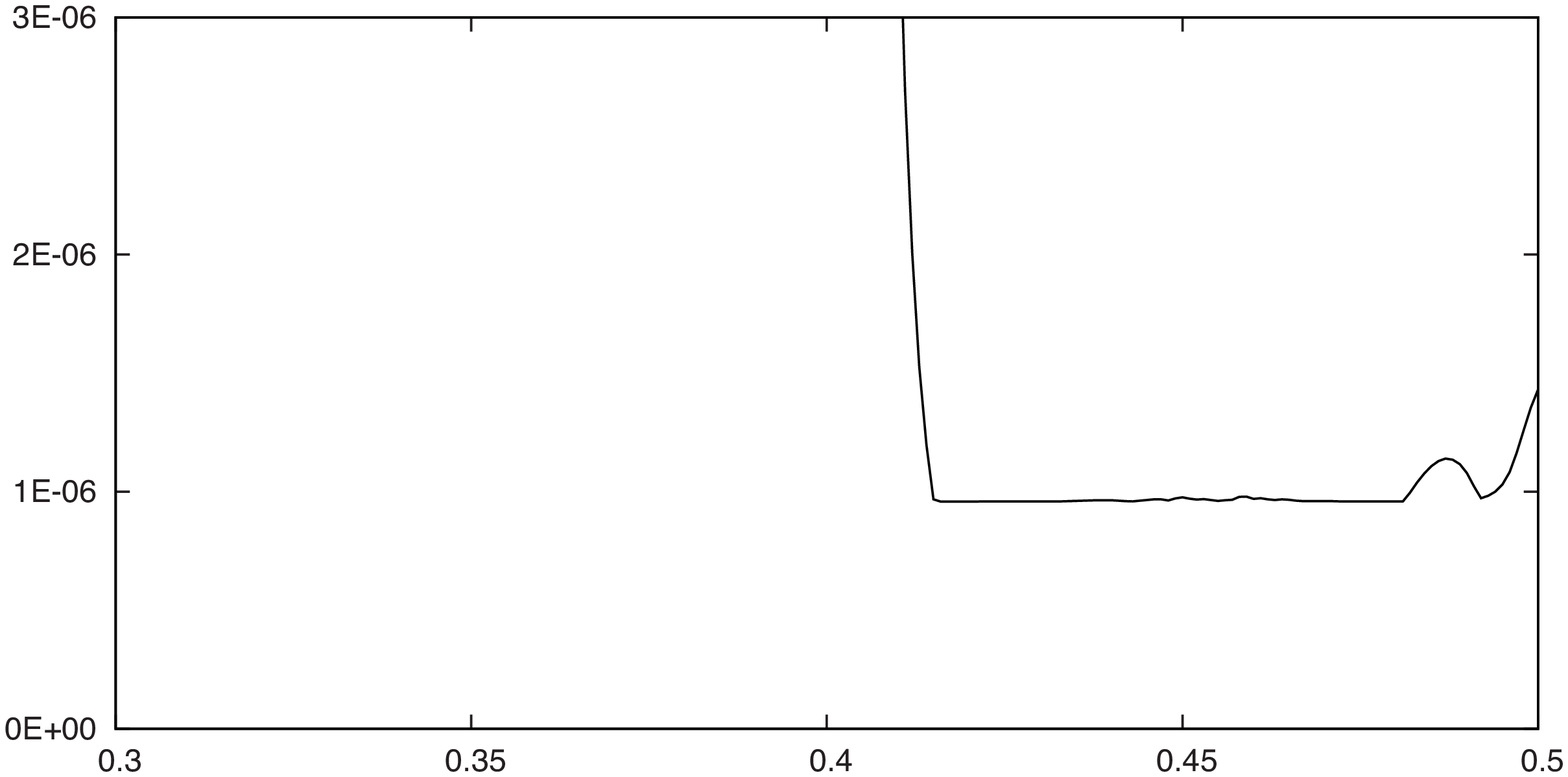}}  \quad \,
		\subfloat[]{\includegraphics[totalheight=1.825in,width=2.7in]{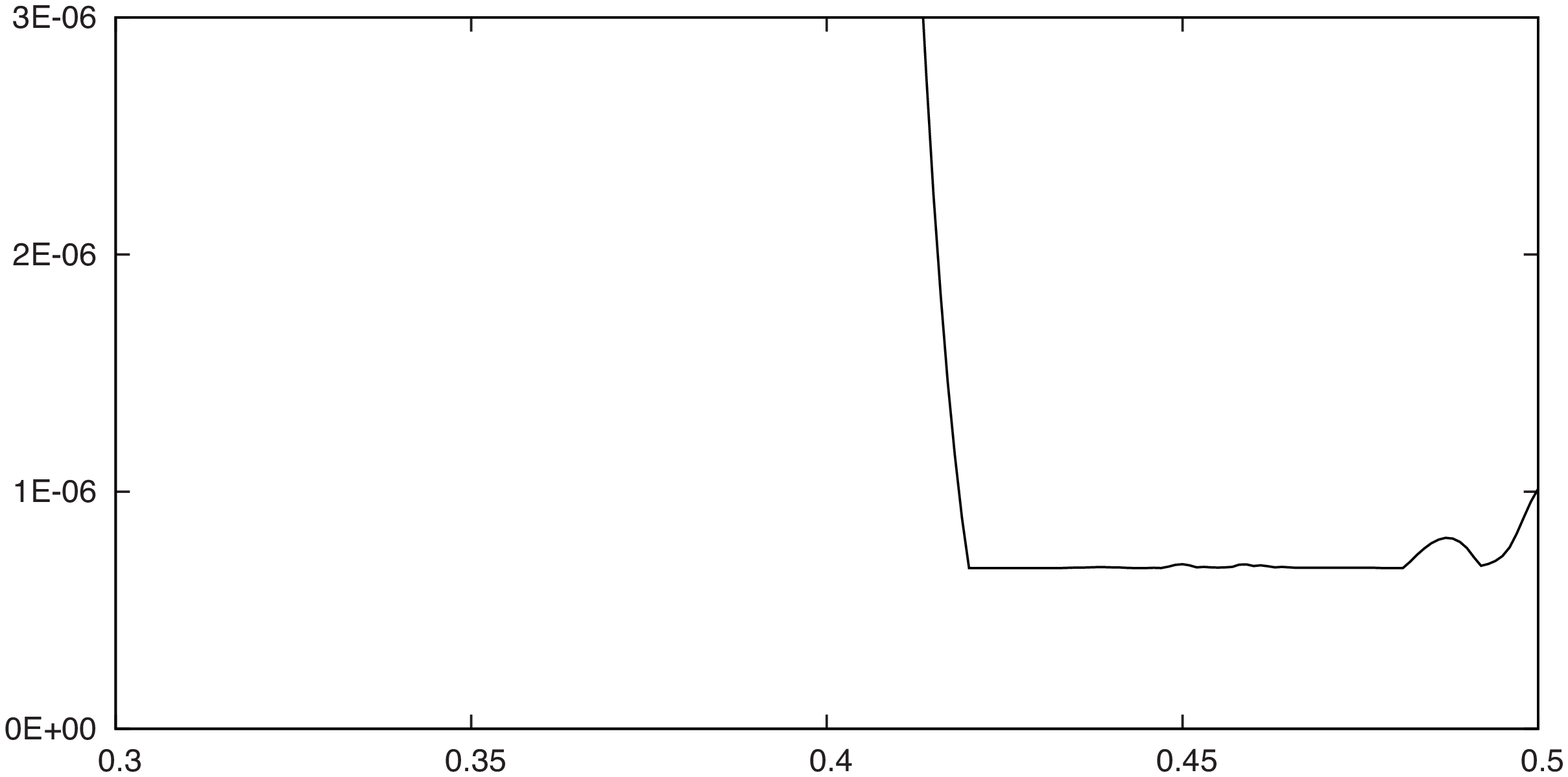}}\\
		(h)\,\,\,\,Magnification of (g)	\vspace{12.5pt} \\
		\subfloat[]{\includegraphics[totalheight=1.825in,width=2.7in]{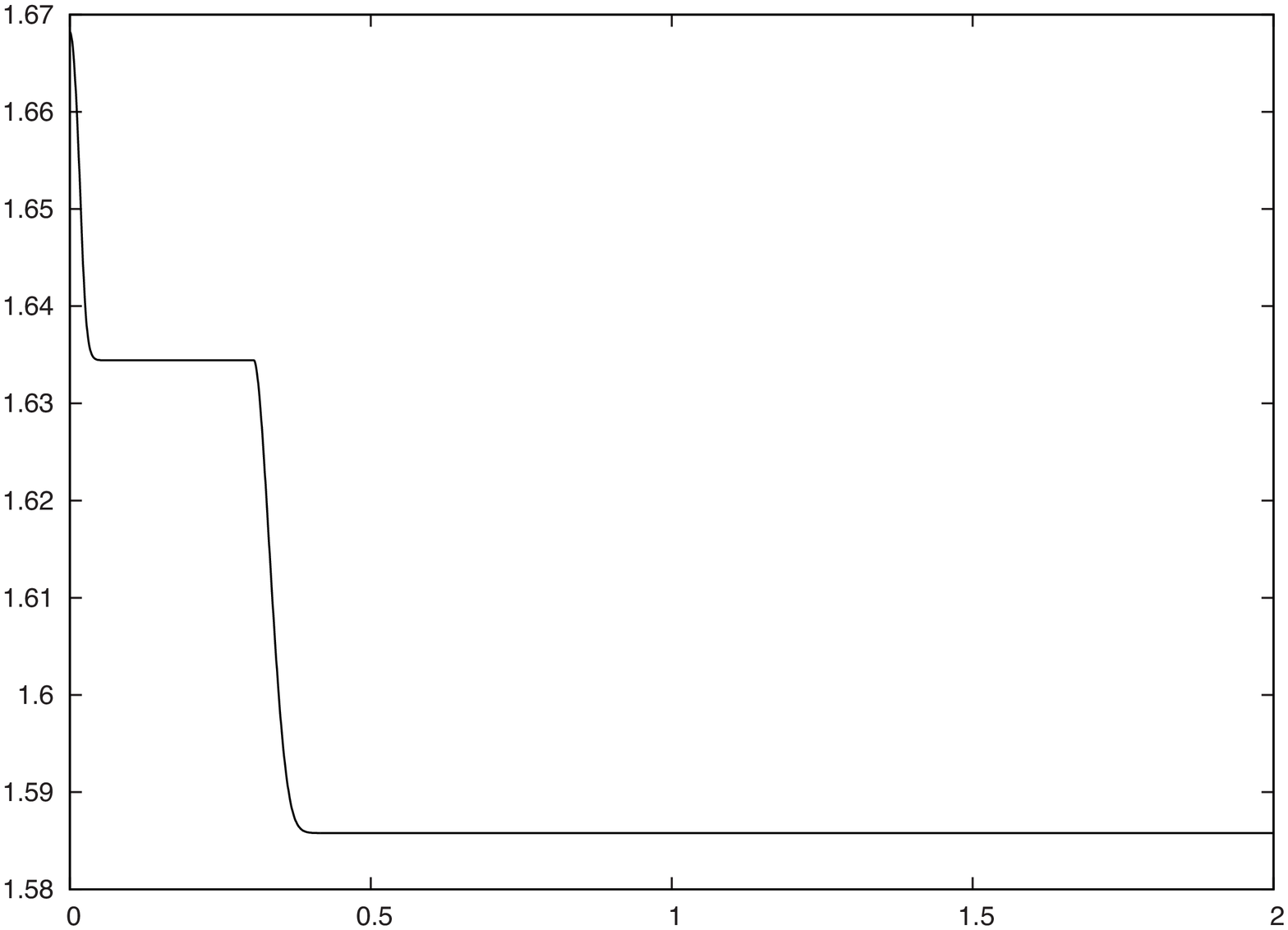}}\\
		(i)\,\,\,\,$\max_{x}(u-\sqrt{1+\eta})$  vs. time
	\end{center}
	\caption{Evolution of Gaussian initial profiles, supercritical case.}
	\label{fig41}
\end{figure}		
\noindent
However, they are highly absorbing; Figure \ref{fig41}(h) reveals  that the  
residue after the waves exit is of $O(10^{-6})$. The positivity of  $\max_{x}(u - \sqrt{1 + \eta})$ for all $t$  
checked in Fig.\ref{fig41}(i)  confirms that the numerical solution has
remained supercritical throughout the evolution. \par
In order to study numerically the stability of the fully discrete scheme (\ref{eq41})-(\ref{eq42}) for this
test problem, as there are no exact solutions, we took as a measure of error the residual quantity
$\max_{x}|\eta - \eta_{0}|$. This is plotted in Figure \ref{fig41}(g) and (h), stabilizes after the waves
exit the computational  domain, and has the value $9.76E-07$ at $t=0.45$. We then increased
$k$ and observed that this residual was conserved up to about $k/h=0.3695$ and started increasing
afterwards. Since the maximum wave speed $c$ is the speed of the higher rightward-travelling
pulse, which is equal to $u+\sqrt{1+\eta}\simeq 4.5$ for the duration of this experiment, we obtain
a Courant number restriction of about $ck/h\leq 1.67$. (Linear stability theory for this method applied
to the model problem $\eta_{t} + c\eta_{x}=0$ with periodic boundary conditions and $c$ constant
would give a Courant number restriction $ck/h \leq \sqrt{8/3}\simeq 1.633$ which is not far from the 
experimental result for this small-amplitude nonlinear propagation problem.) It should be noted that
the scheme (\ref{eq27})-(\ref{eq29}) discretized in time with the Runge-Kutta method (\ref{eq42})
yields practically the same numerical results for this test problem.
\subsection{Subcritical case} 
We implement the standard Galerkin method for the SW with characteristic boundary conditions in the
subcritical case using again piecewise linear continuous functions on a uniform mesh in $[0,1]$. In addition to
the notation introduced in the previous subsection for the mesh on $[0,1]$ and the space $S_{h}$, we let $S_{h,0}$
consist of the functions in $S_{h}$ that vanish at $x=0$ and $x=1$. We first consider the `direct' standard 
Galerkin semidiscretization of (\ref{eqsw2}), i.e. without reducing first the ibvp into the diagonal form (\ref{eqsw2a}).
We seek accordingly $\eta_{h}$, $u_{h} : [0,T] \to S_{h}$, the semidiscrete approximation of the solution of (\ref{eqsw2}),
satisfying for $0\leq t\leq T$ 
\begin{align}
	(\eta_{ht},\phi) & + (u_{hx},\phi) + ((\eta_{h}u_{h})_{x},\phi) = 0, \quad \forall \phi\in S_{h},
	\label{eq43} \\
	(\wt{u}_{ht},\chi) & + (\eta_{hx},\chi) + (\wt{u}_{h}\wt{u}_{hx},\chi) = 0, \quad \forall \chi\in S_{h,0},
	\label{eq44}
\end{align}
where $\wt{u}_{h}\in S_{h,0}$, and $u_{h}(x_{i},t) = \wt{u}_{h}(x_{i},t)$, $1\leq i\leq N-1$,
\begin{align}
	u_{h}(x_{0},t) & = -2\sqrt{1 + \eta_{h}(x_{0},t)} + u_{0} + 2\sqrt{1 + \eta_{0}}, 
	\label{eq45}\\
	u_{h}(x_{N},t) & = 2 \sqrt{1 + \eta_{h}(x_{N},t)} + u_{0} - 2\sqrt{1 + \eta_{0}}.
	\label{eq46}
\end{align}	 	
At $t=0$ we compute $u_{h}(0)$ and $\eta_{h}(0)$ as the $L^{2}$ projections of the  initial data $\eta^{0}$ , $u^{0}$ 
onto $S_{h}$. Although the semidiscrete ivp (\ref{eq43})-(\ref{eq46}) has nonlinear boundary conditions, it is easily 
discretized in time by an explicit scheme such as the 4$^{th}$-order RK method (\ref{eq42}), by first advancing
from $t^{n}$ to $t^{n+1}$ the approximations of $\eta_{h}$ and of the `interior' $\wt{u}_{h}$ using the temporal
discretizations of (\ref{eq43}) and (\ref{eq44}), and then updating the $u_{h}(x_{0},t)$ and $u_{h}(x_{N},t)$ values
at $t=t^{n+1}$ by (\ref{eq45}) and (\ref{eq46}). \par 
The convergence of this scheme was not analyzed in  section 3. The following experiment suggests that in the 
case of uniform mesh its $L^{2}$ errors are of $O(h^{2})$. 
We consider (\ref{eqsw2}) with $\eta_{0}=1$, $u_{0}=1$ 
\begin{table}[h!]
\setlength\tabcolsep{10.7pt}
\begin{tabular}{@{}ccccc@{}}
\tblhead{$N$   &  $\eta$   &  $order$    &      $u$          &  $order$}  
$40$  &  $4.847892(-3)$    &    --       &  $2.932354(-3)$   &    --        \\   
$80$  &  $1.207564(-3)$    &  $2.00526$  &  $7.414336(-4)$   &  $1.98367$   \\ 
$160$ &  $3.017313(-4)$    &  $2.00076$  &  $1.860285(-4)$   &  $1.99479$   \\ 
$320$ &  $7.544641(-5)$    &  $1.99974$  &  $4.657627(-5)$   &  $1.99786$   \\ 
$480$ &  $3.353298(-5)$    &  $1.99991$  &  $2.071174(-5)$   &  $1.99867$   \\ 
$520$ &  $2.857355(-5)$    &  $1.99953$  &  $1.764866(-5)$   &  $1.99944$   \\ \hline
\end{tabular}\vspace{3.1pt}
\small
\caption{$L^{2}$ errors and spatial orders of convergence, subcritical case,
		semidiscretization (\ref{eq43})-(\ref{eq46}).}
	\label{tbl42}
\end{table}
\normalsize
and its semidiscretization (\ref{eq43})-(\ref{eq46}) with piecewise linear continuous functions. We add appropriate 
right-hand sides to the pde's in (\ref{eqsw2}) so that the exact solution of the ibvp is $\eta(x,t)=(x+1)\mathrm{e}^{-xt}$, 
$u(x,t) = (2x + \cos (\pi x) - 1)\mathrm{e}^{t} + xA(t) + (1-x)B(t)$, where
$A(t) = 2\sqrt{1 + \eta(1,t)} + u_{0} - 2\sqrt{1 + \eta_{0}}$, $B(t) = -2\sqrt{1 + \eta(0,t)} + u_{0} + 2\sqrt{1 + \eta_{0}}$.
We consider uniform spatial and temporal  meshes with $h=1/N$, $k=h/10$, and discretize the semidiscrete problem
in time using again the 4$^{th}$-order `classical' RK scheme. (We checked that the temporal error is negligible for 
the range of $N$'s that we tried.)  The resulting $L^{2}$ errors and rates of convergence of (essentially) the
semidiscrete problem at $T=1$ are shown in Table \ref{tbl42}. The rates are practically equal to 2, i.e. superaccurate,
as in the supercritical case. An analogous temporal-order calculation to that appearing in
Table \ref{tbl42a} was not so robust and gave rates between $3.7$ and $3.9$; thus some sort  of
temporal order reduction cannot be ruled out for this scheme. The experimental Courant number 
restriction was $k/h\leq 0.53$. \par
In the following numerical experiment we integrate (\ref{eqsw2}) with the 
same fully discrete scheme, taking $h=1/N$, $N=2000$, $k=h/10$, $\eta_{0}=u_{0}=1$, and initial conditions 
$\eta^{0}(x) = 0.1\exp (-400(x-0.5)^{2}) + \eta_{0}$, $u^{0}(x) = 0.05\exp (-400(x-0.5)^{2}) + u_{0}$.
The evolution of the numerical solution is shown in Figure \ref{fig42} (a)-(j).%
\captionsetup[subfloat]{labelformat=empty,position=bottom,singlelinecheck=false}
\begin{figure}[ht!]
	\begin{center}
		\subfloat[]{\includegraphics[totalheight=2.7in,width=1.825in,angle=-90]{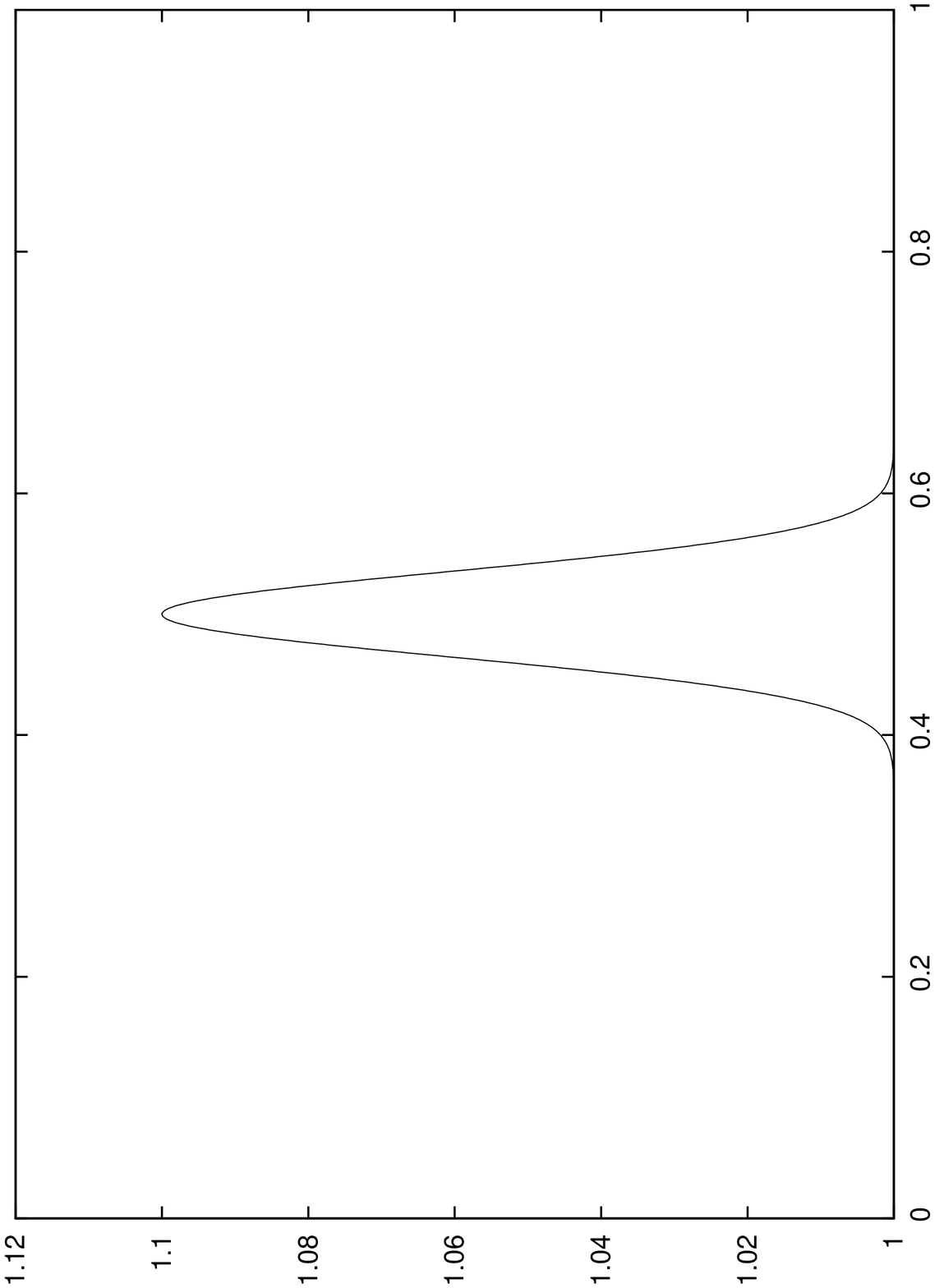}}  \quad \,
		\subfloat[]{\includegraphics[totalheight=2.7in,width=1.825in,angle=-90]{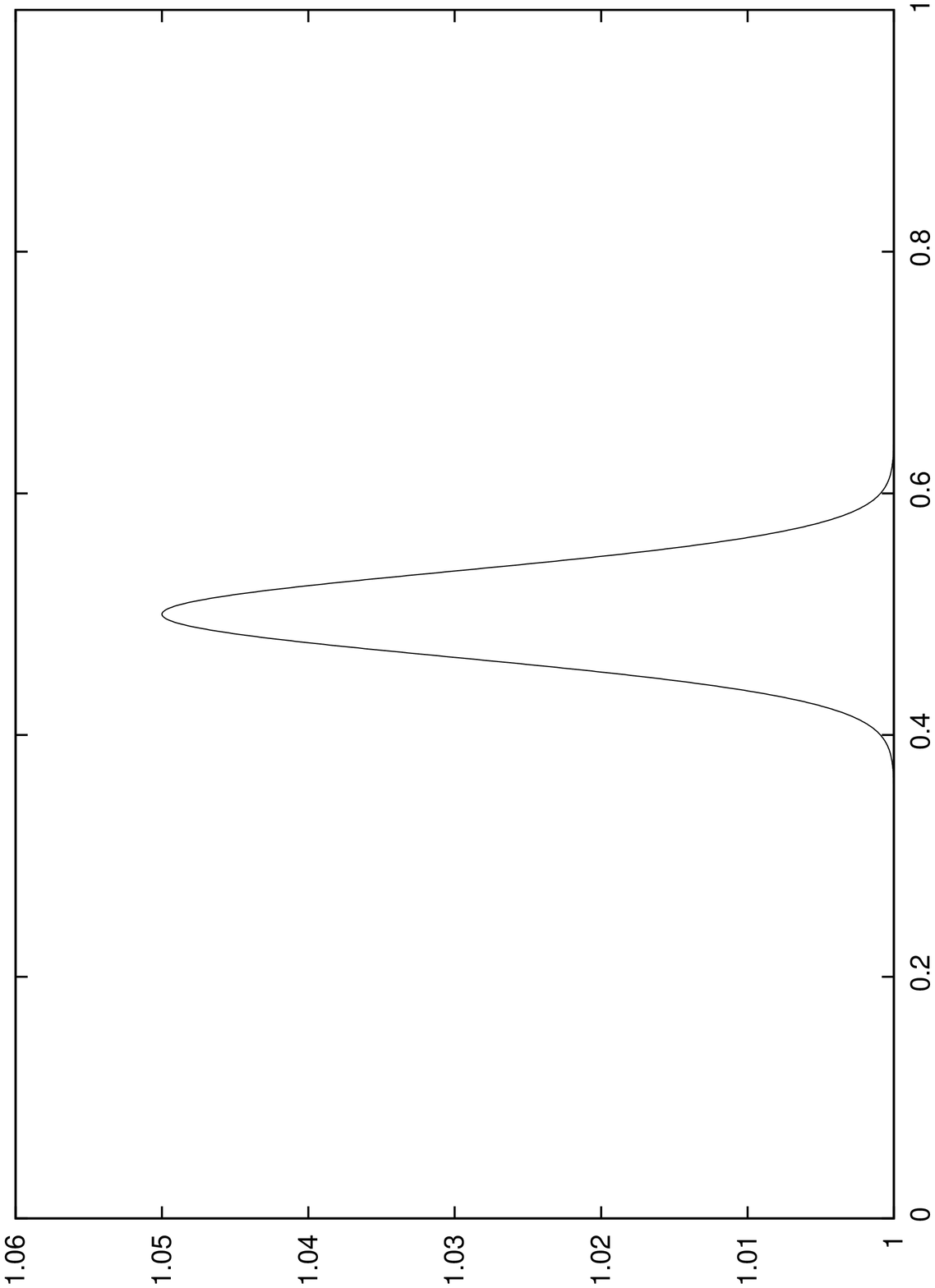}}\\
		(a)\,\,\,\,$\eta$ and $u$ at $t=0.0$  \vspace{3.1pt} \\
			\subfloat[]{\includegraphics[totalheight=2.7in,width=1.825in,angle=-90]{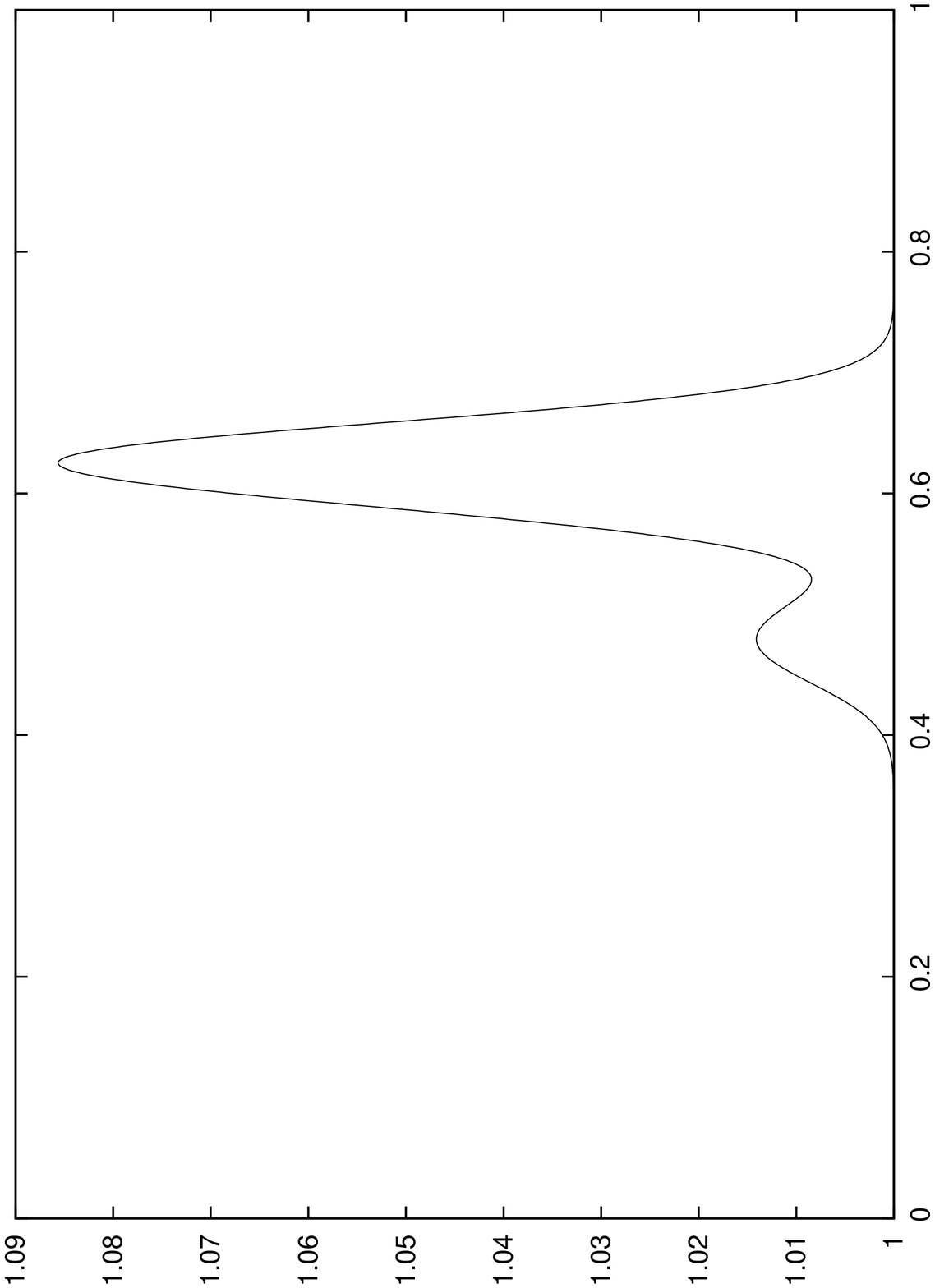}}  \quad \,
			\subfloat[]{\includegraphics[totalheight=2.7in,width=1.825in,angle=-90]{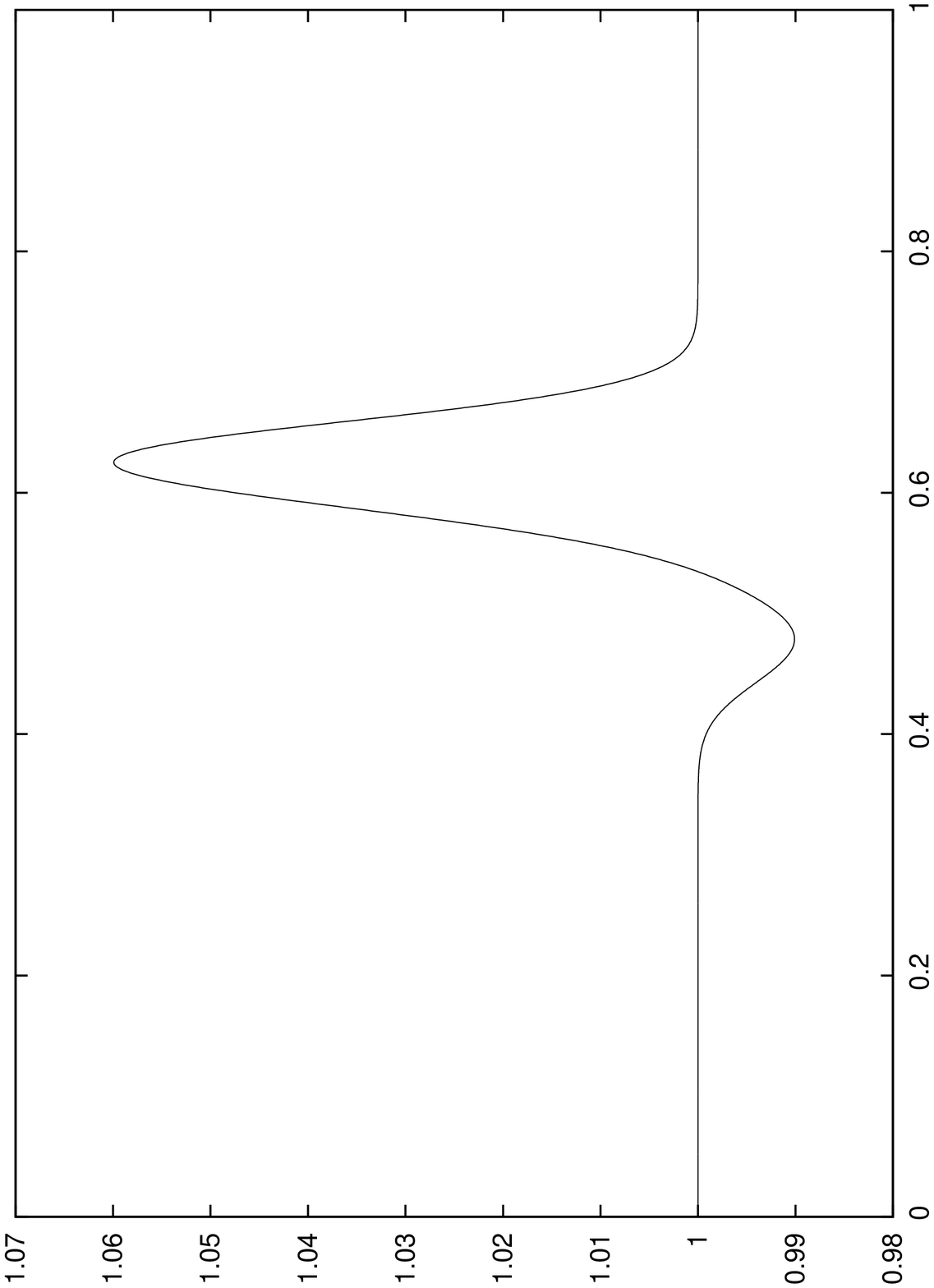}}\\
			(b)\,\,\,\,$\eta$ and $u$ at $t=0.05$ \vspace{3.1pt}\\
			\subfloat[]{\includegraphics[totalheight=2.7in,width=1.825in,angle=-90]{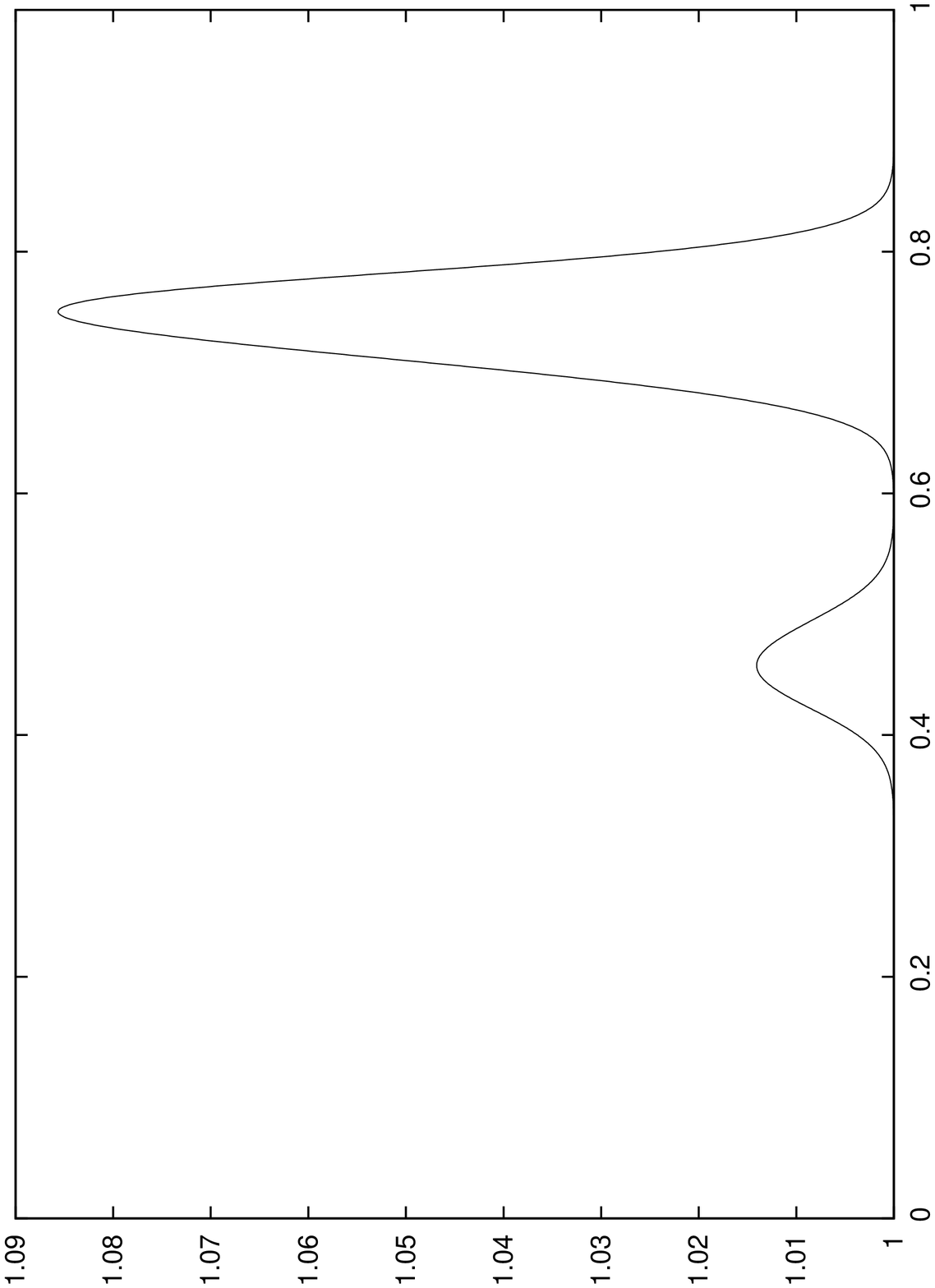}}  \quad \,
			\subfloat[]{\includegraphics[totalheight=2.7in,width=1.825in,angle=-90]{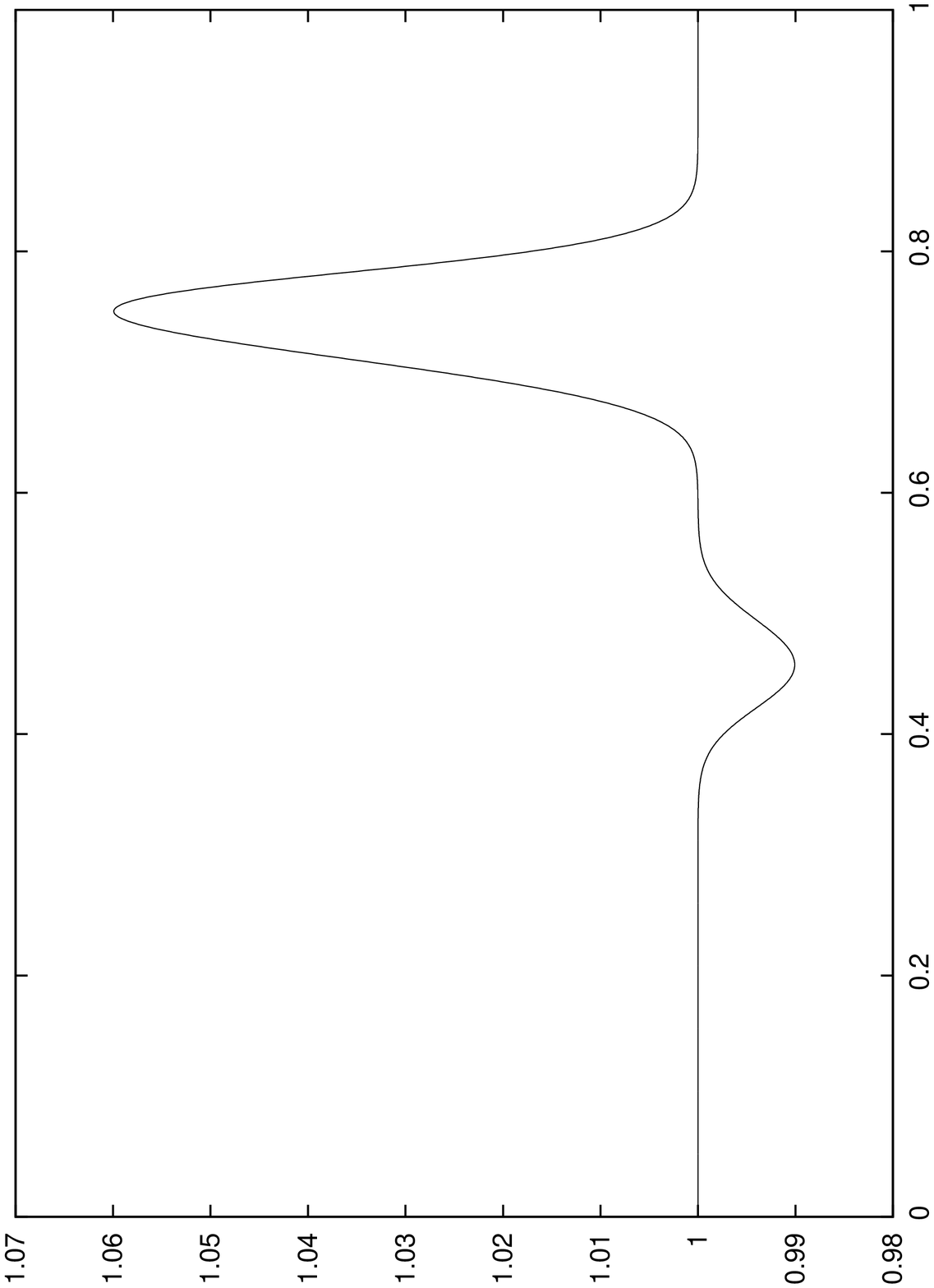}}\\
			(c)\,\,\,\,$\eta$ and $u$ at $t=0.1$ 
	\end{center}
	\end{figure}		
	\clearpage
\captionsetup[subfloat]{labelformat=empty,position=bottom,singlelinecheck=false}
\begin{figure}[t!]
	\begin{center}	
			\subfloat[]{\includegraphics[totalheight=2.7in,width=1.825in,angle=-90]{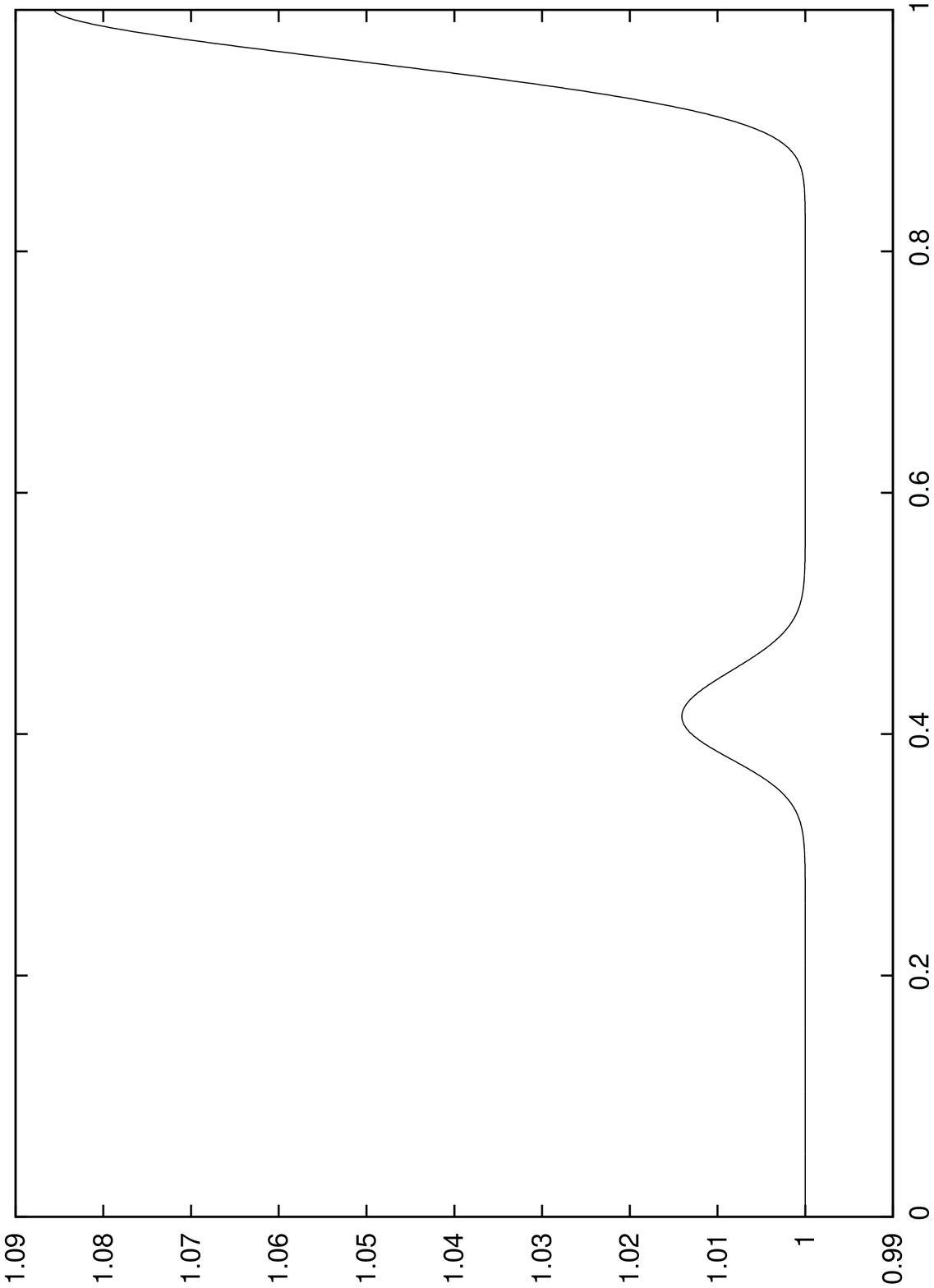}}  \quad \,
			\subfloat[]{\includegraphics[totalheight=2.7in,width=1.825in,angle=-90]{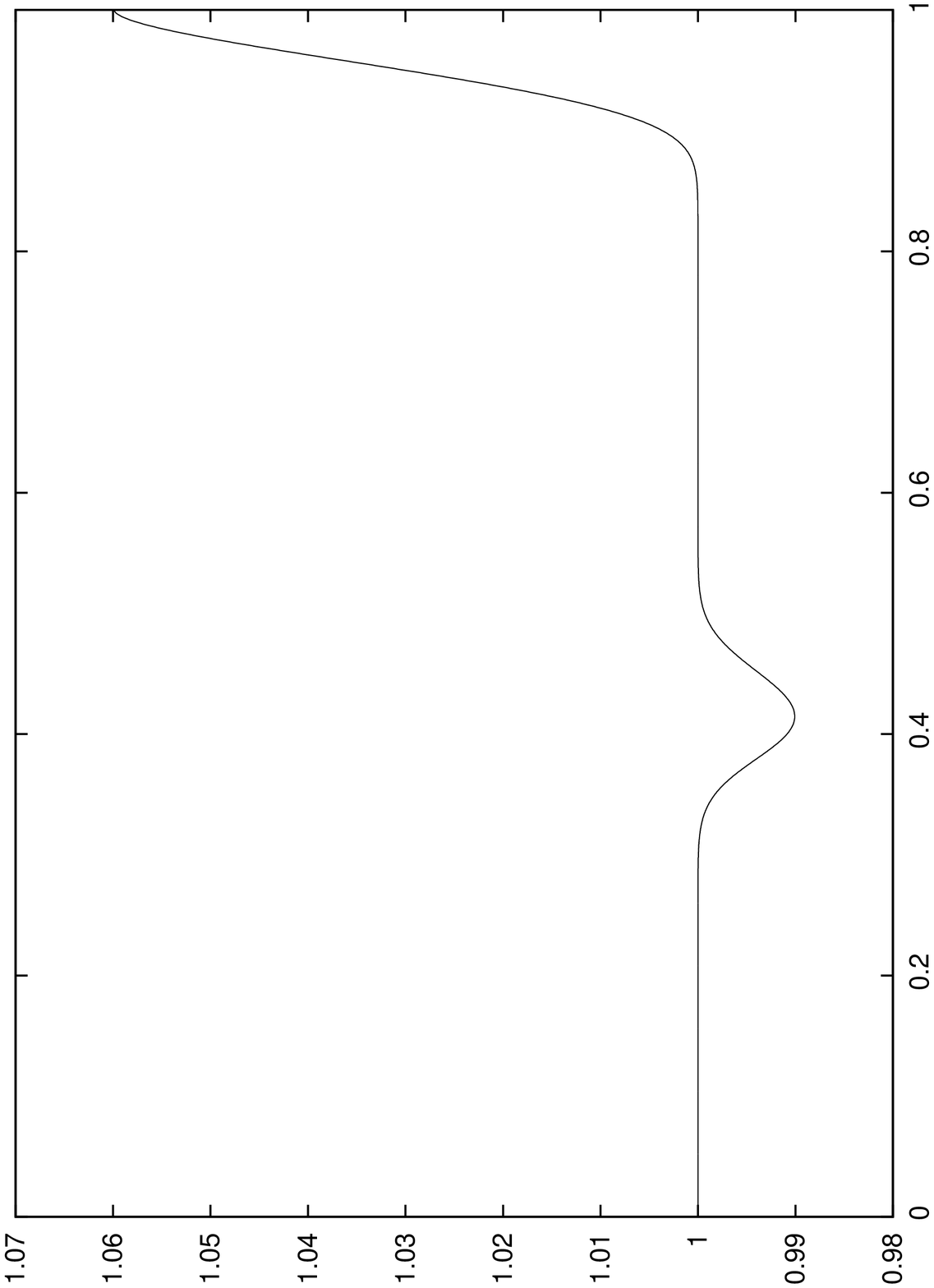}}\\
			(d)\,\,\,\,$\eta$ and $u$ at $t=0.2$\\
			\subfloat[]{\includegraphics[totalheight=2.7in,width=1.2in,angle=-90]{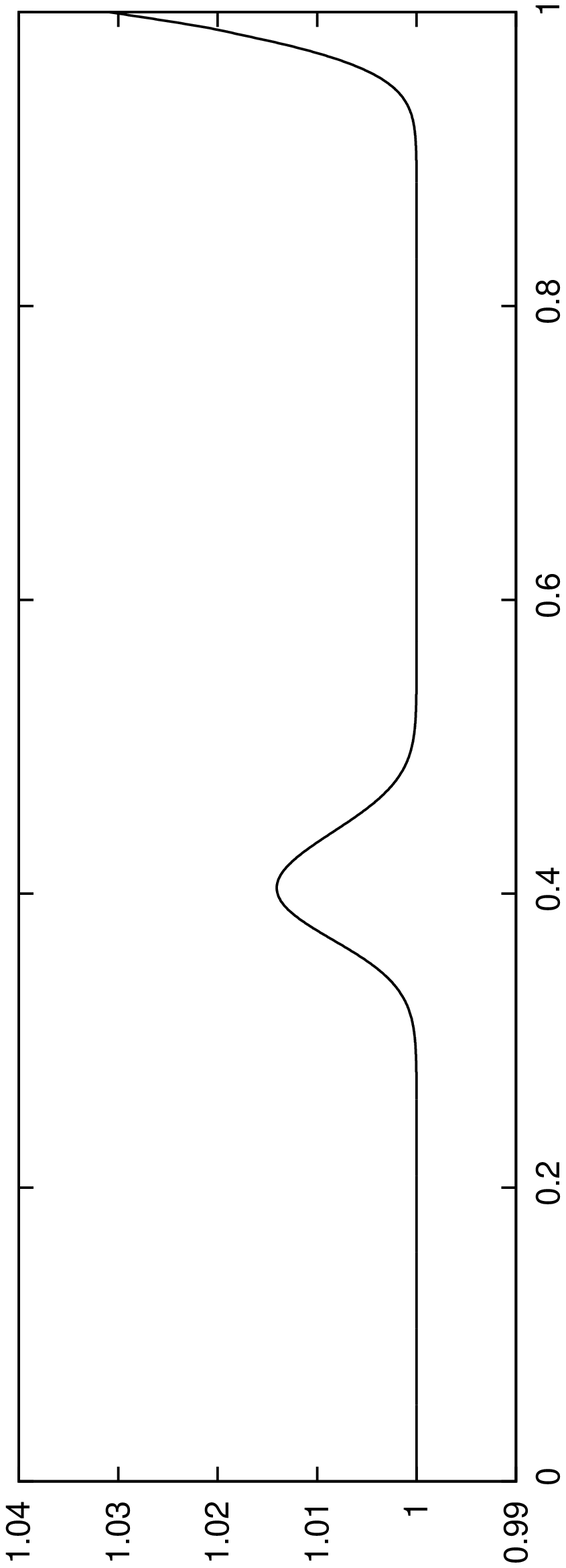}}  \quad \,
			\subfloat[]{\includegraphics[totalheight=2.7in,width=1.2in,angle=-90]{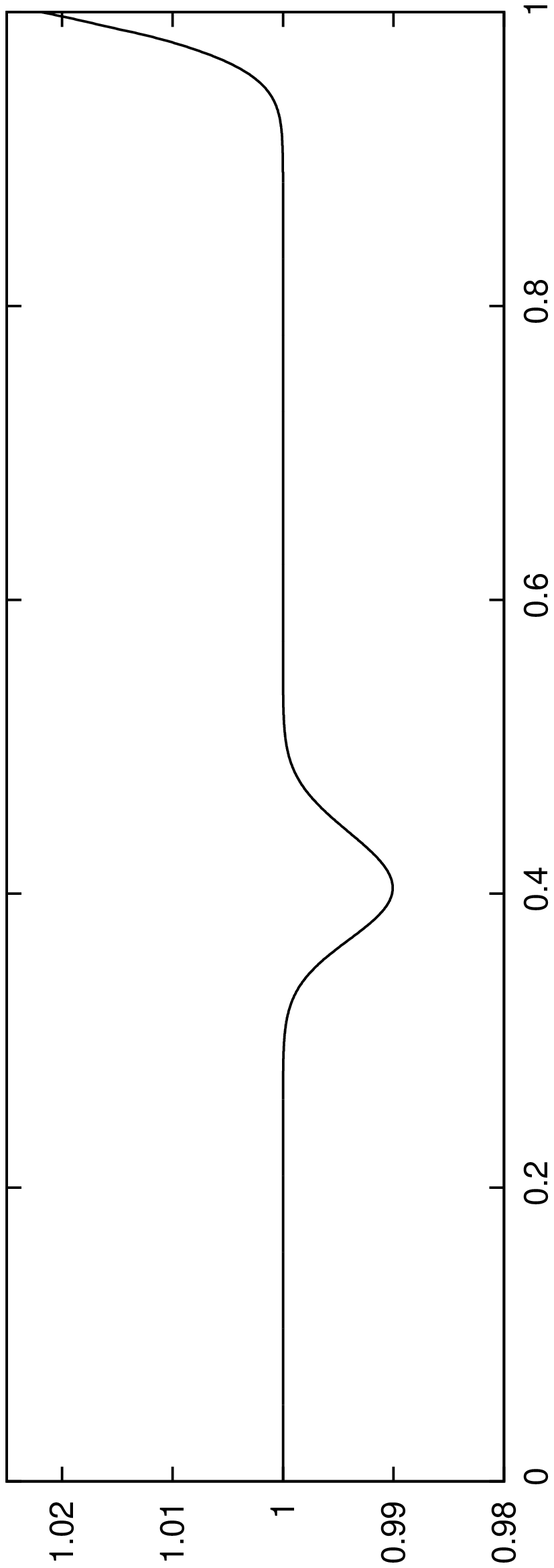}}\\
			(e)\,\,\,\,$\eta$ and $u$ at $t=0.225$ \\
			\subfloat[]{\includegraphics[totalheight=2.7in,width=1.2in,angle=-90]{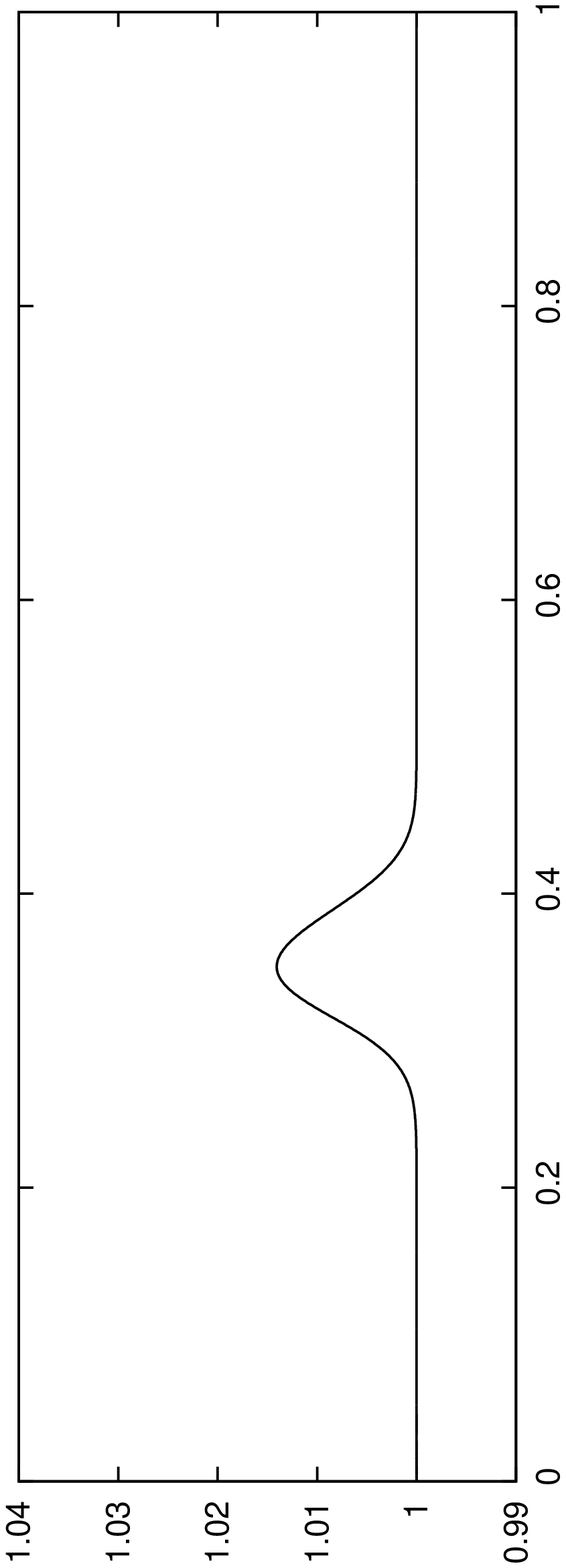}}  \quad \,
			\subfloat[]{\includegraphics[totalheight=2.7in,width=1.2in,angle=-90]{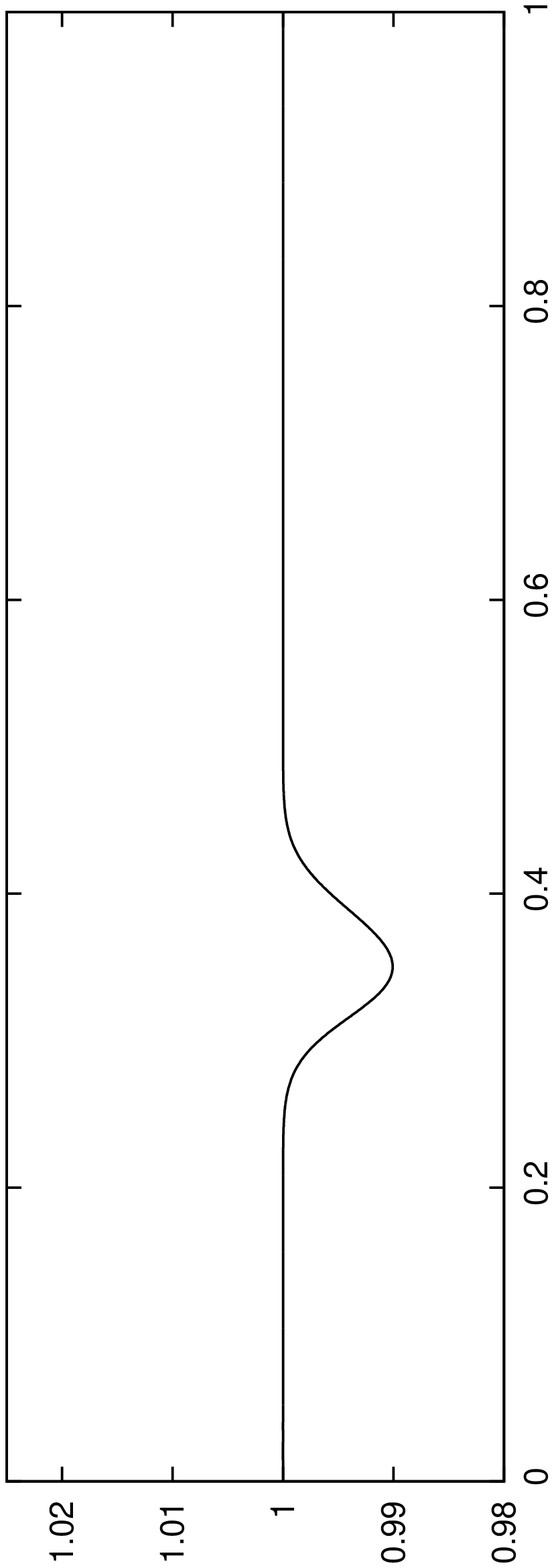}}\\
			(f)\,\,\,\,$\eta$ and $u$ at $t=0.35$	\\
			\subfloat[]{\includegraphics[totalheight=2.7in,width=1.2in,angle=-90]{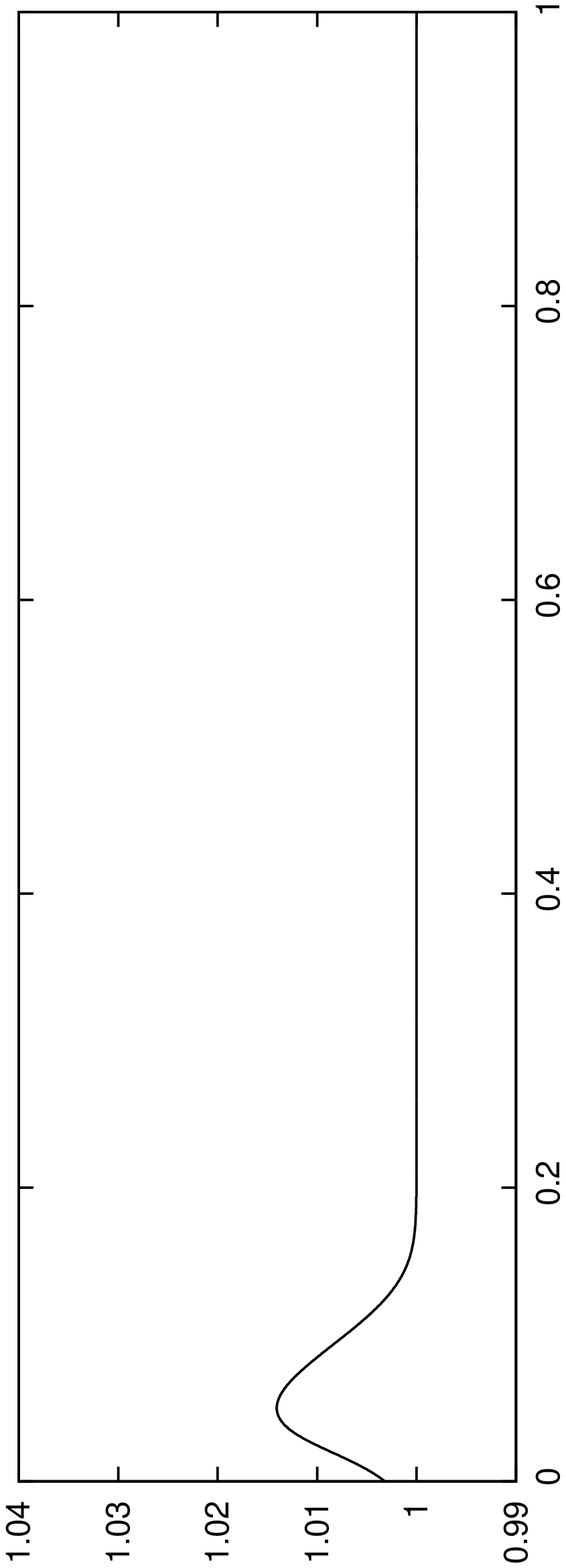}}  \quad \,
				\subfloat[]{\includegraphics[totalheight=2.7in,width=1.2in,angle=-90]{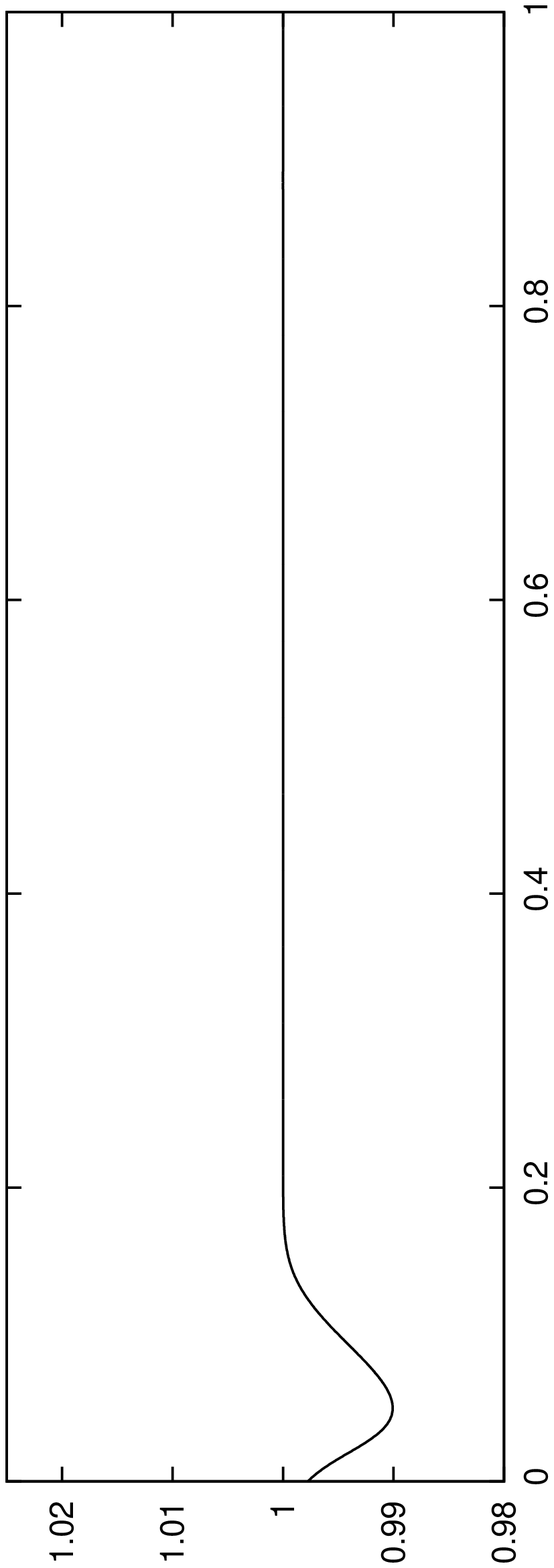}}\\
			(g)\,\,\,\,$\eta$ and $u$ at $t=1.05$ 
	\end{center}
	\end{figure}	
	\captionsetup[subfloat]{labelformat=empty,position=bottom,singlelinecheck=false}	
	\begin{figure}[t]
		\begin{center}
				\subfloat[]{\includegraphics[totalheight=2.7in,width=1.2in,angle=-90]{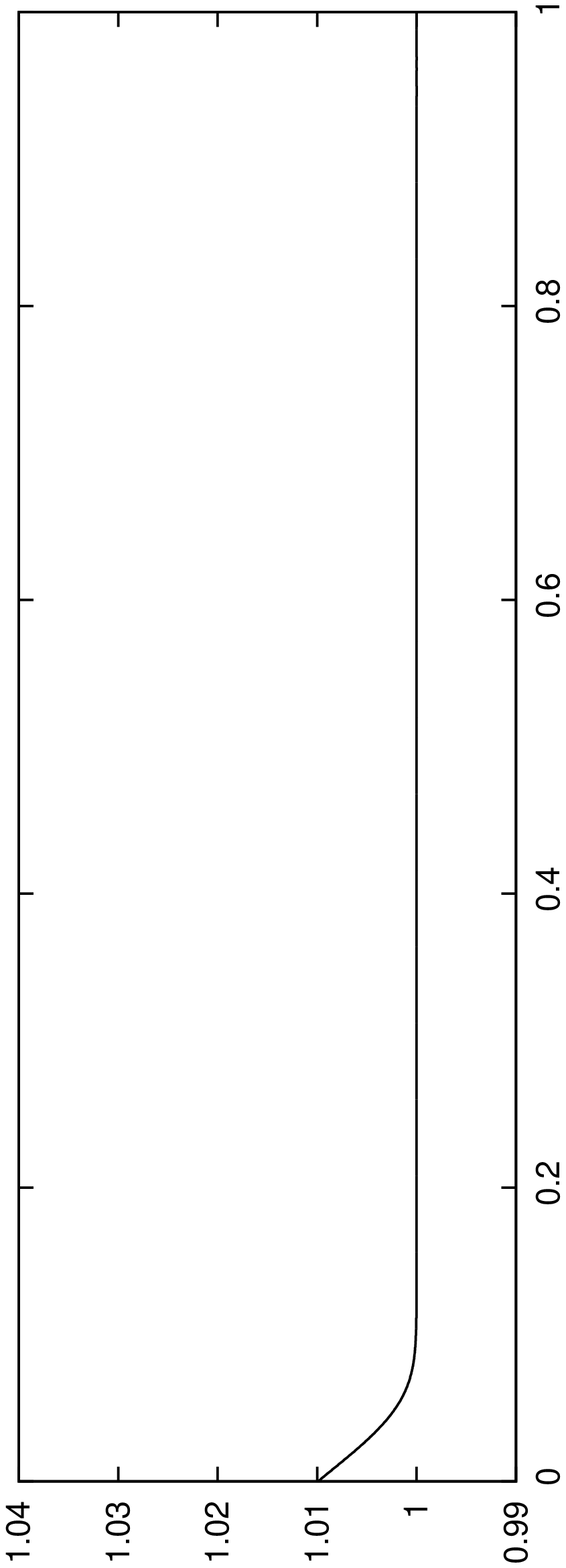}}  \quad \,
				\subfloat[]{\includegraphics[totalheight=2.7in,width=1.2in,angle=-90]{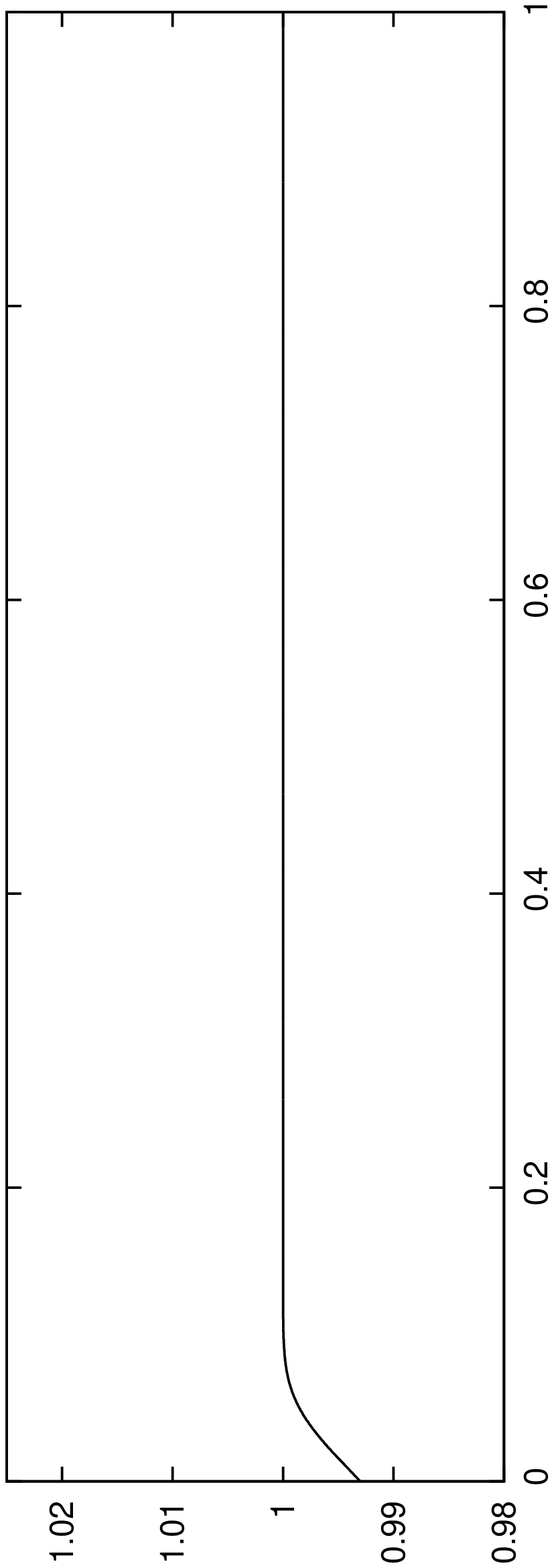}}\\
				(h)\,\,\,\,$\eta$ and $u$ at $t=1.25$ \\
				\subfloat[]{\includegraphics[totalheight=2.7in,width=1.2in,angle=-90]{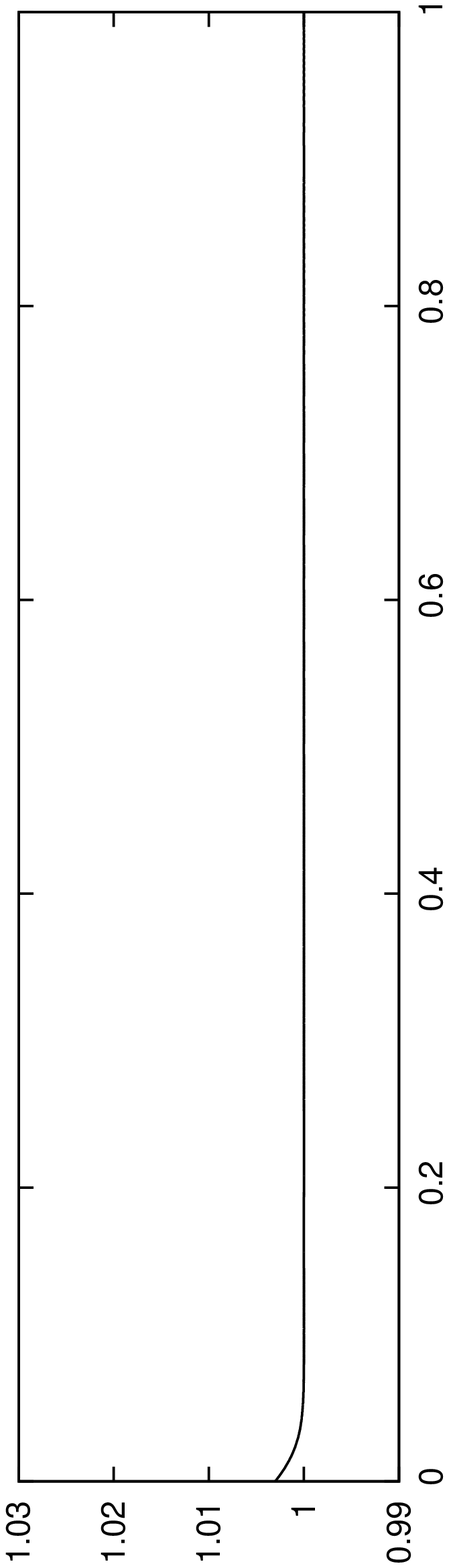}}  \quad \,
				\subfloat[]{\includegraphics[totalheight=2.7in,width=1.2in,angle=-90]{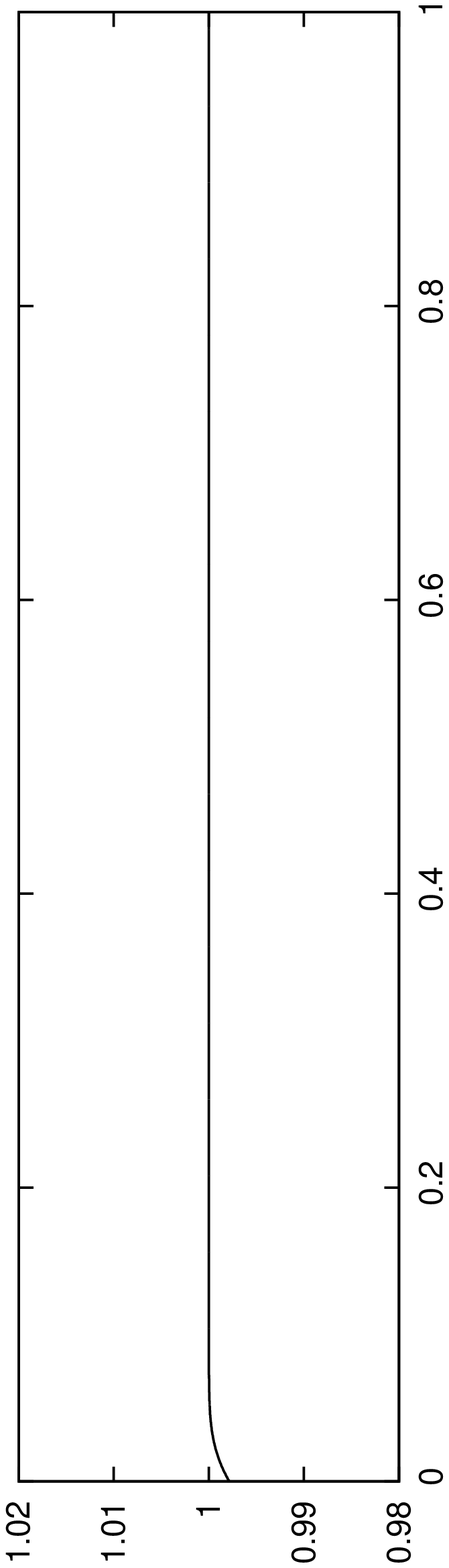}}\\
				(i)\,\,\,\,$\eta$ and $u$ at $t=1.35$ \\
				\subfloat[]{\includegraphics[totalheight=2.7in,width=1.2in,angle=-90]{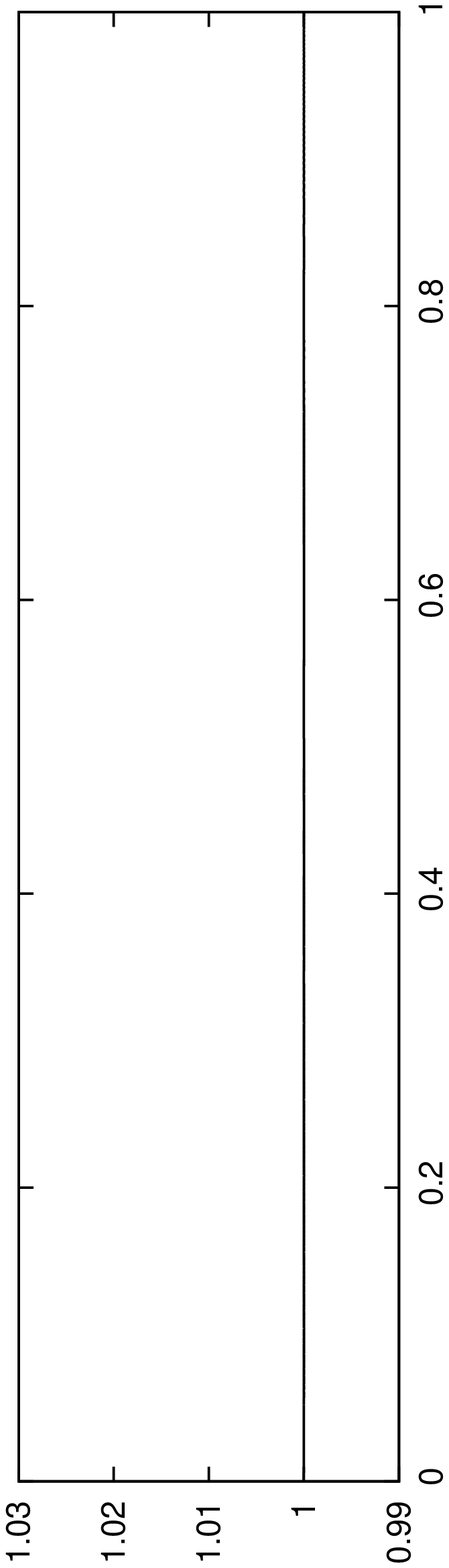}}  \quad \,
				\subfloat[]{\includegraphics[totalheight=2.7in,width=1.2in,angle=-90]{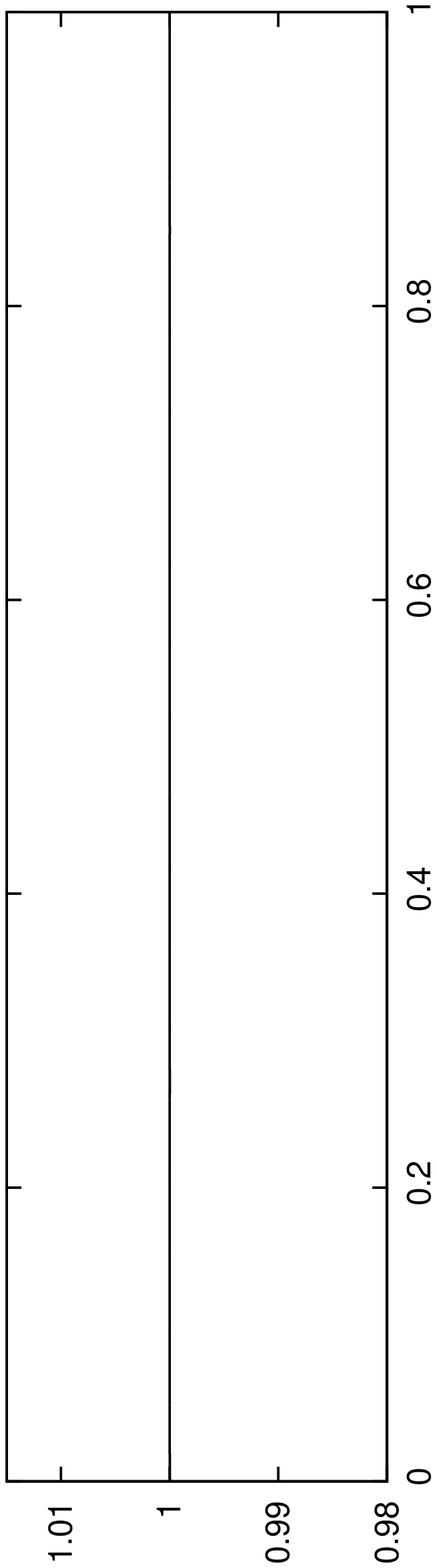}}\\
				(j)\,\,\,\,$\eta$ and $u$ at $t=1.5$\\
				\subfloat[]{\includegraphics[totalheight=2.7in,width=1.825in,angle=-90]{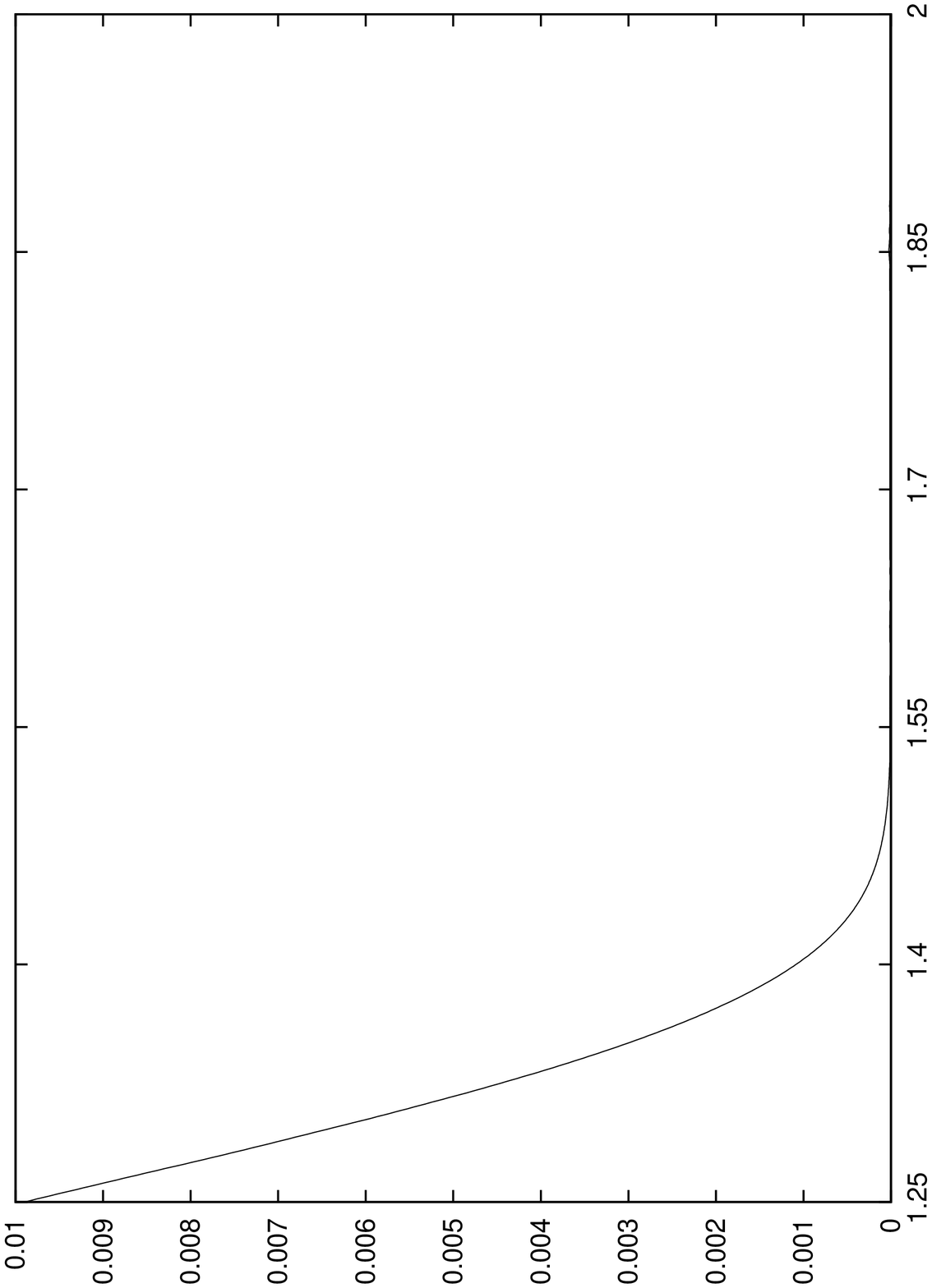}}  \quad \,
			\subfloat[]{\includegraphics[totalheight=2.7in,width=1.825in,angle=-90]{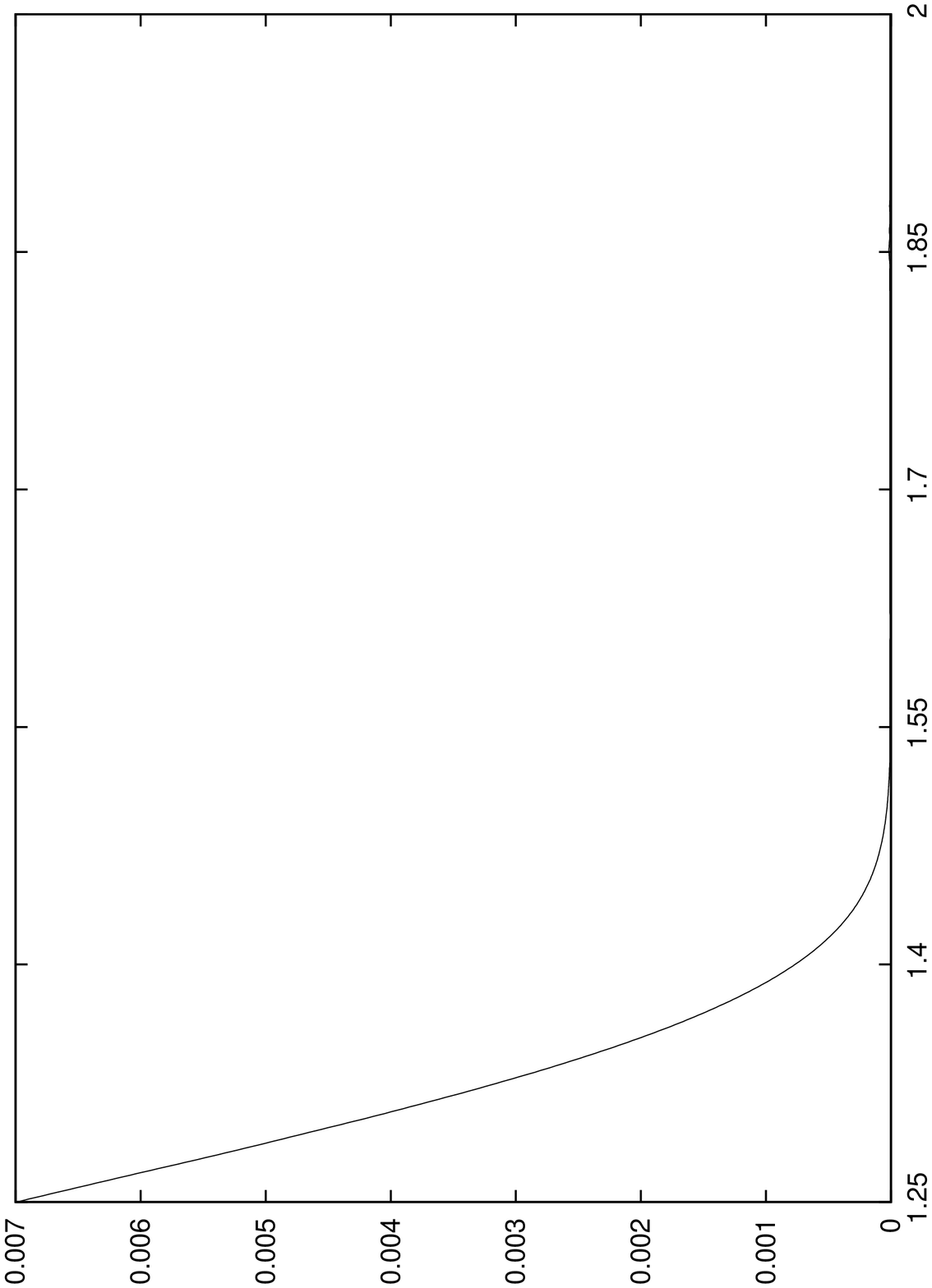}}\\
			(k)\,\,\,\,$\max_{x}\abs{\eta(x,t)-\eta_{0}}$ and $\max_{x}\abs{u(x,t)-u_{0}}$ vs. time  
		\end{center}
	\end{figure}
	\clearpage
	\captionsetup[subfloat]{labelformat=empty,position=bottom,singlelinecheck=false}
	\begin{figure}[t]
		\begin{center}	
			\subfloat[]{\includegraphics[totalheight=2.7in,width=1.825in,angle=-90]{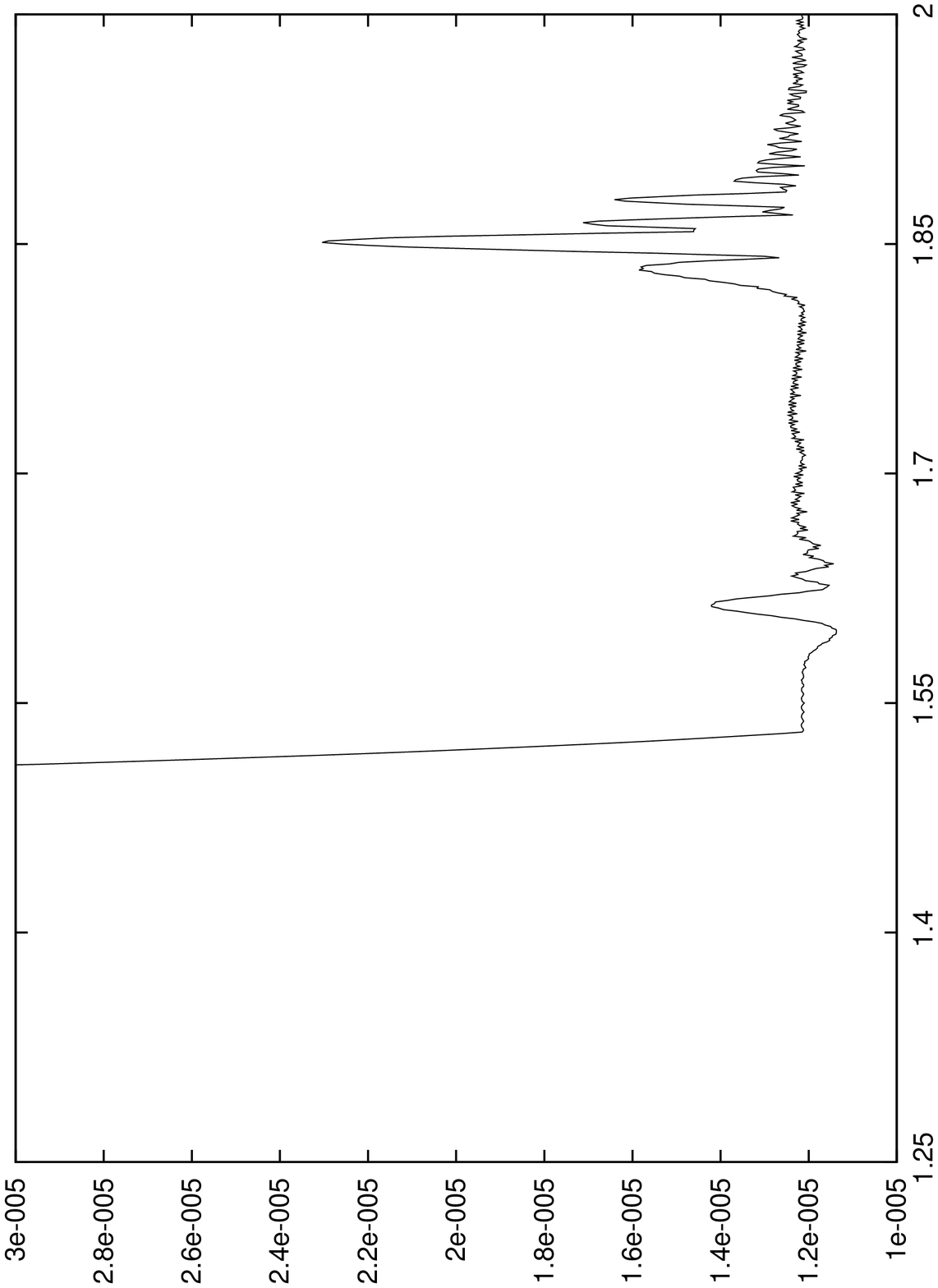}}  \quad \,
			\subfloat[]{\includegraphics[totalheight=2.7in,width=1.825in,angle=-90]{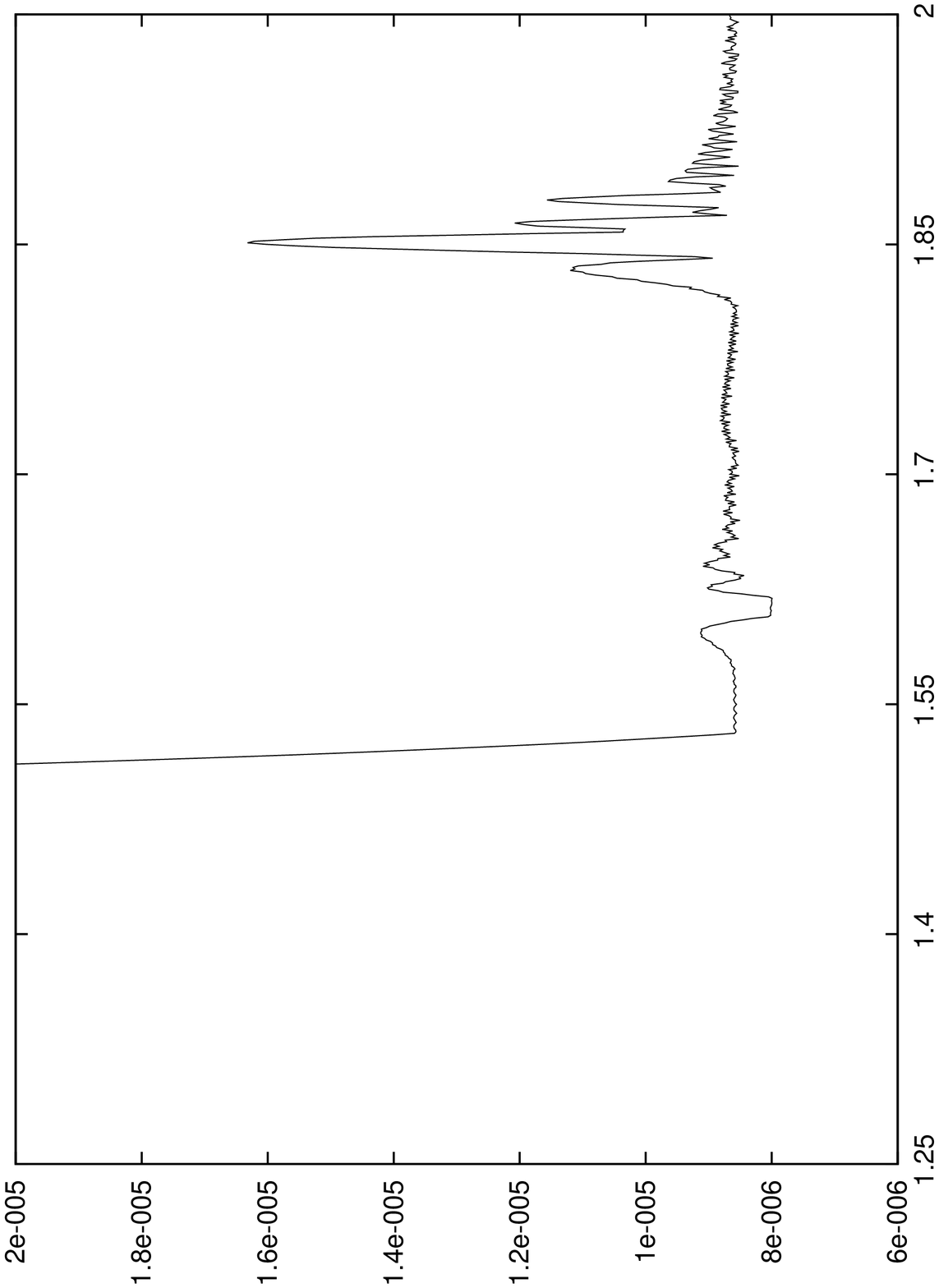}}\\
			(l)\,\,\,\,manification of (k)\\
			\subfloat[]{\includegraphics[totalheight=3in,width=1.9in,angle=-90]{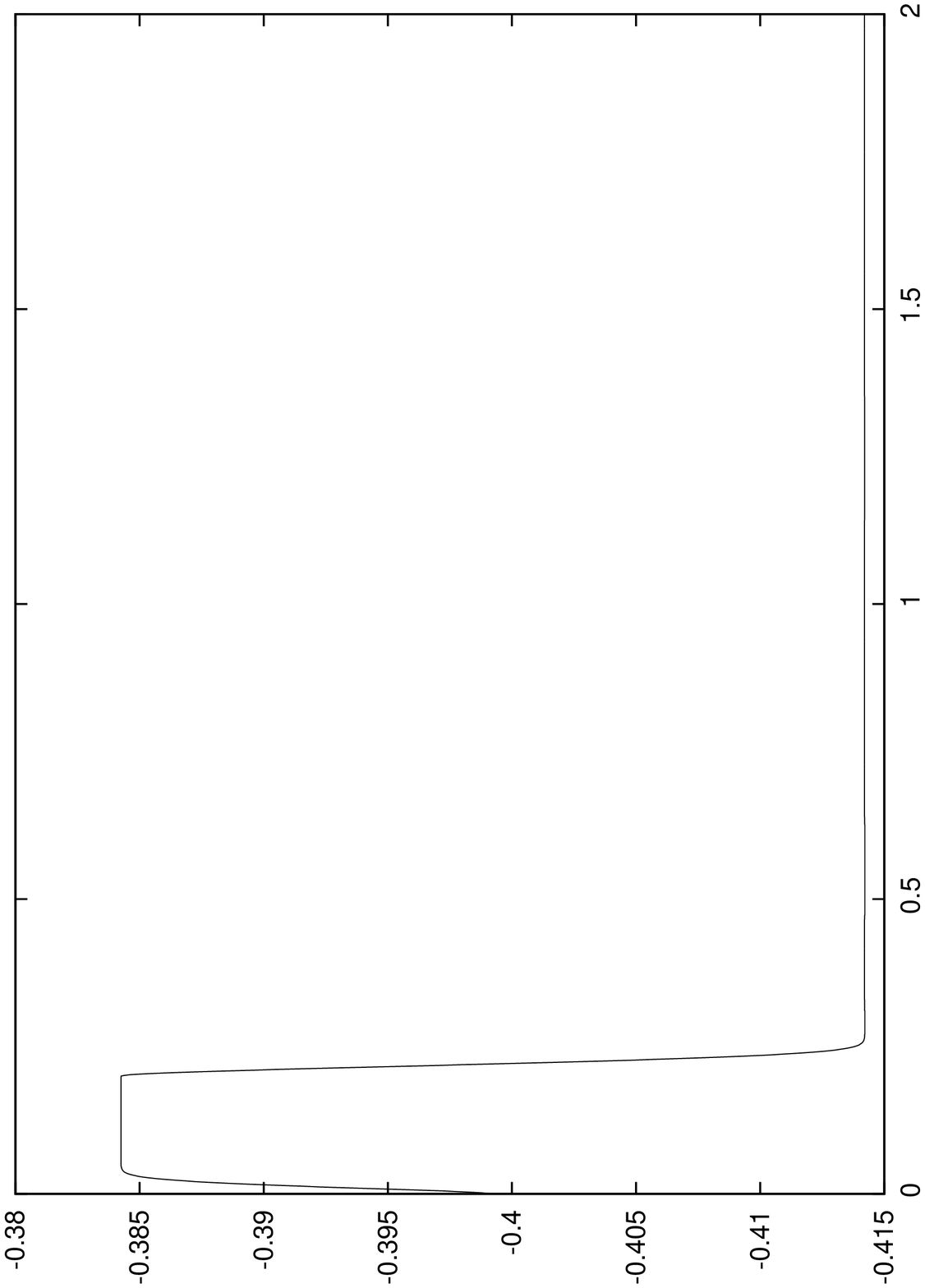}}\\
			(m)\,\,\,\,$\max_{x}(u-\sqrt{1+\eta})$ vs. time
		\end{center}
			\caption{Evolution of Gaussian initial profiles, subcritical case, semidiscretization (\ref{eq43})-(\ref{eq46}).}
			\label{fig42}
	\end{figure}
\noindent %
The initial Gaussian perturbations of the steady state $\eta_{0}=1$, $u_{0}=3$ evolve into 
two unequal pulses for both components of the solution, which travel to the right and left and exit the 
computational  domain
at about $t=0.25$ and $t=1.35$, respectively, without leaving any visible residue as confirmed by 
the temporal  history of the maximum deviations of the approximations of $\eta$ and $u$ from $\eta_{0}$ 
and $u_{0}$ shown in Figures \ref{fig42}(k) and (l). The latter graph shows that the maximum residue after 
exit is of  $O(10^{-5})$, confirming the high degree of absorption of the discrete characteristic boundary 
conditions. The quantity $\max_{x}(u - \sqrt{1 + \eta})$ remains negative, i.e. the numerical solution is 
subcritical, throughout the evolution, cf. Fig. \ref{fig42}(m). We investigated the stability of this fully
discrete scheme by using again as a measure of error the quantity $\max_{x}|\eta - \eta_{0}|$ which
stabilizes at $t=1.55$ to the value $1.21E-05$. The maximum wave speed $c$ for this problem is
about $2.5$, and the observed Courant number restriction was $ck/h\leq 1.67$ in conformity with the
analogous value in the supercritical case. \par
In section 3 we analyzed a different Galerkin semidiscretization of the ibvp under consideration, namely
that given by (\ref{eq36})-(\ref{eq38}), a semidiscrete approximation of the diagonal form of the ibvp, i.e. of
(\ref{eqsw2a}). As the following experiment suggests, this semidiscretization is also $O(h^{2})$ accurate
in $L^{2}$ if we use piecewise linear continuous functions on a uniform mesh. We consider the
inhomogeneous version of (\ref{eqsw2a}) with unknowns $2v$ and $2w$ instead of $v$ and $w$
and take as exact solution the functions $v = u - u_{0} + 2(\sqrt{1 + \eta} - \delta_{0})$,
$w = u - u_{0} - 2(\sqrt{1 + \eta} - \delta_{0})$, where $\delta_{0} = \sqrt{1 + \eta_{0}}$ and 
$\eta(x,t) = (x+1)\mathrm{e}^{-xt}$, $u(x,t) = (2x + \cos (\pi x) - 1)\mathrm{e}^{t} + x A(t) + (1-x)B(t)$, with 
$A(t) = 2\sqrt{1 + \eta(1,t)} + u_{0} - 2\delta_{0}$, $B(t) = -2\sqrt{1 + \eta(0,t)} + u_{0} + 2\delta_{0}$. For
$\eta_{0} = u_{0} =1$ we approximate this ibvp by the nonhomogeneous analog of the semidiscretization
(\ref{eq36})-(\ref{eq38}) (with unknowns $2v_{h}$ and $2w_{h}$) using piecewise linear continuous 
elements with $h=1/N$, and the `classical' 4$^{th}$-order RK scheme for time stepping with $k=h/10$.
\begin{table}[h!]
\setlength\tabcolsep{10.7pt}
\begin{tabular}{@{}ccccc@{}}
\tblhead{$N$   &  $\eta$  &  $order$   &      $u$           &  $order$}  
$40$  &  $2.470369(-3)$   &    --      &   $9.918820(-4)$   &   --        \\ 
$80$  &  $6.172661(-4)$   &  $2.00076$ &   $2.472869(-4)$   &  $2.00398$ \\ 
$160$ &  $1.543038(-4)$   &  $2.00012$ &   $6.179903(-5)$   &  $2.00053$  \\ 
$320$ &  $3.857665(-5)$   &  $1.99997$ &   $1.545737(-5)$   &  $1.99929$ \\  
$480$ &  $1.714531(-5)$   &  $1.99998$ &   $6.870865(-6)$   &  $1.99967$  \\ 
$520$ &  $1.460903(-5)$   &  $1.99999$ &   $5.854663(-6)$   &  $1.99958$ \\ \hline
\end{tabular} \vspace{3.1pt}
\small
\caption{$L^{2}$ errors and spatial orders of convergence, subcritical case,
semidiscretization (\ref{eq36})-(\ref{eq38}).}
\label{tbl43}
 \end{table}
 \normalsize
 The resulting $L^{2}$ errors and rates of convergence at $T=1$ are given in Table \ref{tbl43}. The rates are
 practically equal to 2 as in the previous cases, due to the uniform mesh. The temporal order of convergence
 for this scheme was found to be practically equal to 4; the associated temporal-order calculation with
 $h=1/50$, $k_{ref}=h/200$ was quite robust. 
 \par
 We now compare the solutions of the two semidiscretizations (\ref{eq43})-(\ref{eq46}) and 
 (\ref{eq36})-(\ref{eq38}) by means of a numerical experiment. We consider the ibvp (\ref{eqsw2}) 
 with $\eta_{0}=u_{0}=1$ and initial values $\eta(x,0) = 0.1\exp (-400(x-0.5)^{2}) + \eta_{0}$,
 $u(x,0) = 0.05\exp ( -400(x - 0.5)^{2}) + u_{0}$. (Recall that the temporal evolution of the numerical
 solution of this ibvp generated by (\ref{eq43})-(\ref{eq46}) with $N=2000$, $h=1/N$, $k=h/10$ is
 shown in Figure \ref{fig42}.)  We will compare the numerical solutions of this ibvp with both 
 discretizations, using the same numerical parameters. Let $(\eta_{h},u_{h})$ denote the fully discrete
 approximations at time $t$ produced by (\ref{eq43})-(\ref{eq46}) as underlying semidiscretization. 
 In addition, let $(\eta_{hD},u_{hD})$ be functions in $S_{h}$ with point values computed by the
 formulas (\ref{eq310}), where $v_{h}$, $w_{h}$ are now the fully discrete approximations at $t$
 produced when we use the method (\ref{eq36})-(\ref{eq38}). Define 
 $\ve(t) = \max_{0\leq i\leq N}\abs{\eta_{h}(x_{i},t) - \eta_{hD}(x_{i},t)}$ 
\begin{table}[h!]
		\setlength\tabcolsep{10.7pt}
		\begin{tabular}{@{}cccccccc@{}}
\tblhead{$t$    & $0.0$      & $0.05$     & $0.1$      & $0.15$     & $0.2$     & $0.225$   & $0.25$} 
    $\ve(t)$  & $2.0(-8)$  & $2.0(-8)$  & $2.0(-8)$  & $2.0(-8)$  & $4.0(-8)$ & $3.0(-8)$ & $3.0(-8)$  \\ 
    $e(t)$  & $<10^{-8}$ & $<10^{-8}$ & $<10^{-8}$ & $<10^{-8}$ & $2.0(-8)$ & $2.0(-8)$ & $2.0(-8)$ \\ \hline
   \end{tabular}
\vspace{12pt} \\  
   	\setlength\tabcolsep{6.5pt}	
\begin{tabular}{@{}cccccccc@{}} 
\tblhead{$t$      & $0.35$      &  $0.75$       & $1.05$       & $1.15$     & $1.25$      & $1.35$     & $1.5$} 
   	$\ve(t)$ & $1.232(-5)$ &  $1.301(-5)$  & $1.237(-5)$  & $1.3(-5)$  & $1.224(-5)$ & $1.23(-5)$ & $1.217(-5)$ \\  
   	$e(t)$   & $8.71(-6)$  &  $9.2(-6)$    & $8.75(-6)$   & $9.54(-6)$ & $8.65(-6)$  & $8.75(-6)$ & $8.6(-6)$ \\ \hline
   	\end{tabular}\vspace{3.1pt}
 \small
 	\caption{Comparison of the discretizations (\ref{eq43})-(\ref{eq46}) and (\ref{eq36})-(\ref{eq38}), subcritical case,
 		experiment of Fig. \ref{fig42}}
 	\label{tbl44}
 \end{table}
 \normalsize
  and
 $e(t) = \max_{0\leq i\leq N}\abs{u_{h}(x_{i},t) - u_{hD}(x_{i},t)}$.  The quantities $\ve$ and $e$ 
 for various values of $t=t^{n}\in [0,1]$ are given in Table \ref{tbl44}.  We observe
 that up to about $t=0.25$ the errors are of $O(10^{-8})$ and subsequently increase to $O(10^{-5})$.
 This is due to the fact that about $t=0.25$ the larger, rightwards-travelling wave 
 completes its exit from the
  computational domain (see Figure \ref{fig42}(e), and we expect a small residue to be radiated into $[0,1]$
  because the numerical characteristic boundary conditions of (\ref{eq43})-(\ref{eq46}) are not exactly transparent.
  This residue has a magnitude of $O(10^{-5})$ as evidenced by Fig. \ref{fig42}(l). It is worthwhile to note that
  the `diagonal' approximation (\ref{eq36})-(\ref{eq38}) leaves a practically negligible residue.  
  This is suggested
  by the evidence in Figure \ref{fig43}. In this figure, the two graphs on the left depict what is left in the
   computational domain of the $\eta$-and $u$-components of the numerical solution generated by 
  (\ref{eq43})-(\ref{eq46}) at $t=1.5$, after the waves have exited the domain, while those on the right
  are the analogous $\eta$-and $u$-profiles generated by (\ref{eq36})-(\ref{eq38}). (Thus the graphs
  on the left are magnifications of the graphs of Fig. \ref{fig42}(j).) We observe that the residues
  of the usual Galerkin semidiscretizations are of $O(10^{-5})$ while those of the `diagonal' scheme
  are much smaller. It is clear, at least for this example, that the `diagonal' Galerkin method has practically
  transparent boundary conditions. 	
  \clearpage 
  \captionsetup[subfloat]{labelformat=empty,position=bottom,singlelinecheck=false}
 \begin{figure}[!ht]
 	\begin{center}
 		\subfloat[]{\includegraphics[totalheight=2.7in,width=1.82in,angle=-90]{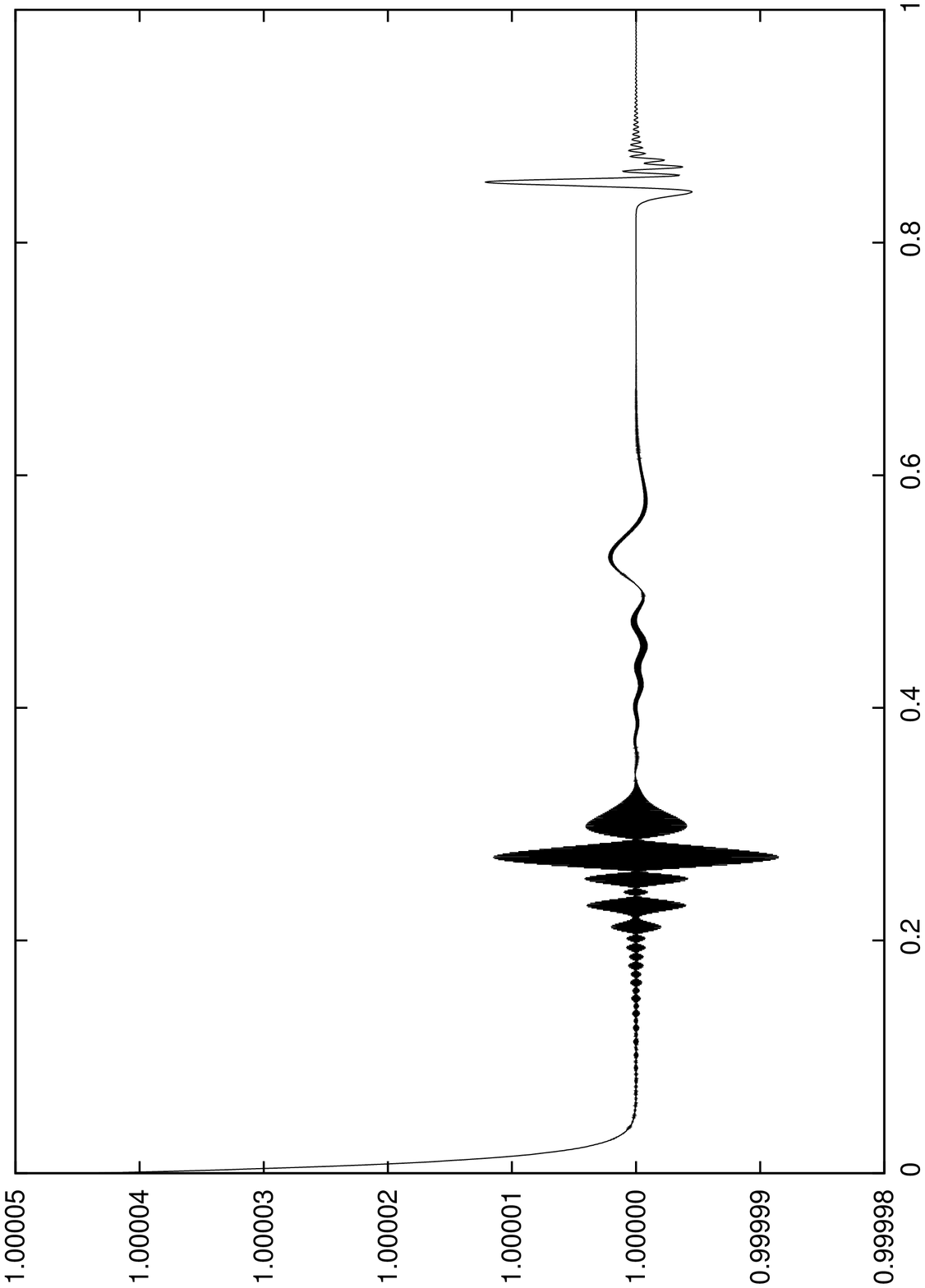}}  \quad \,
 		\subfloat[]{\includegraphics[totalheight=2.7in,width=1.82in,angle=-90]{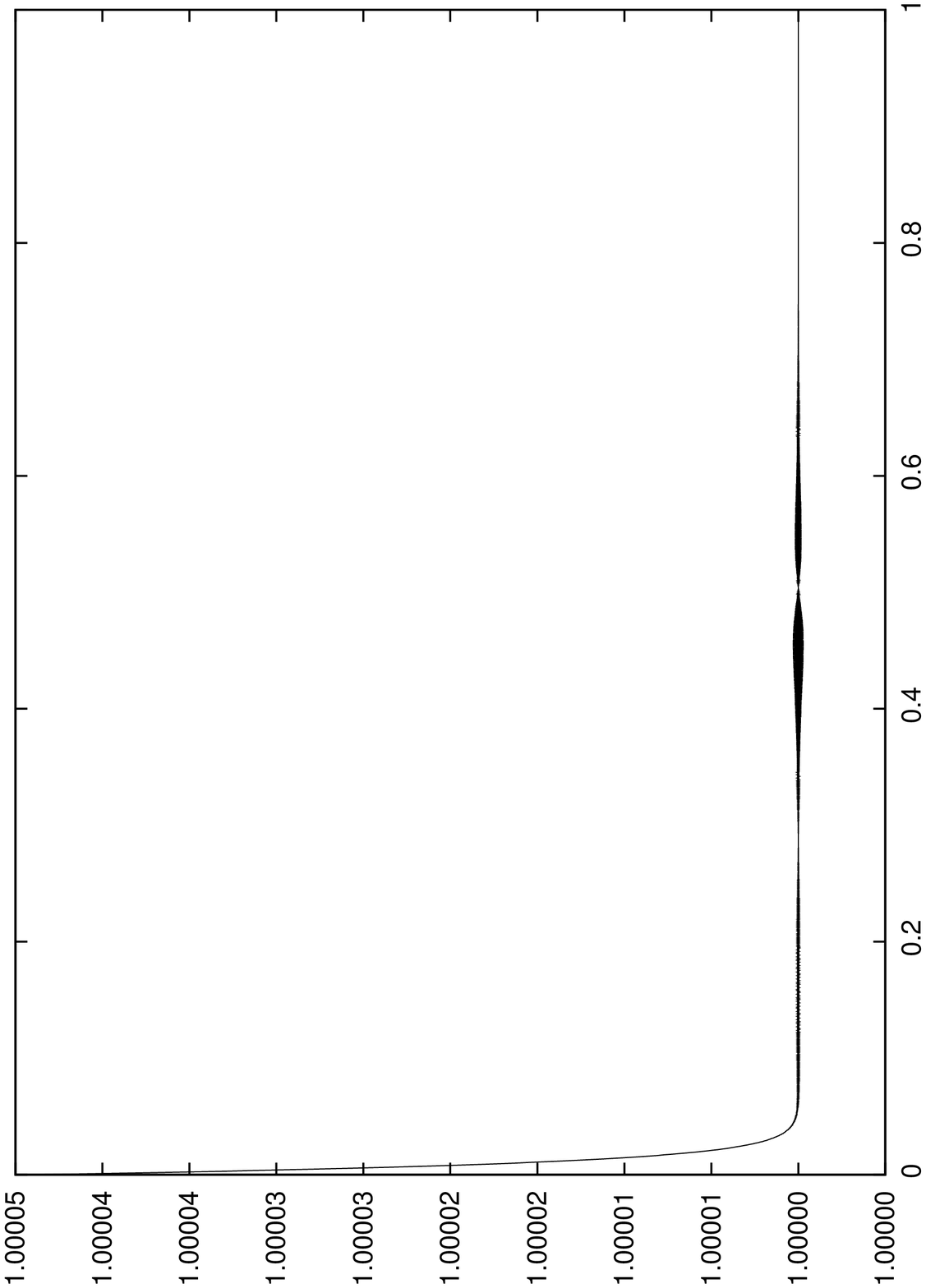}}\vspace{2pt} \\
 		(a)\,\,\,\,$\eta_{h}$ and $\eta_{hD}$, $t=1.5$ \vspace{1pt} \\
 		\subfloat[]{\includegraphics[totalheight=2.7in,width=1.82in,angle=-90]{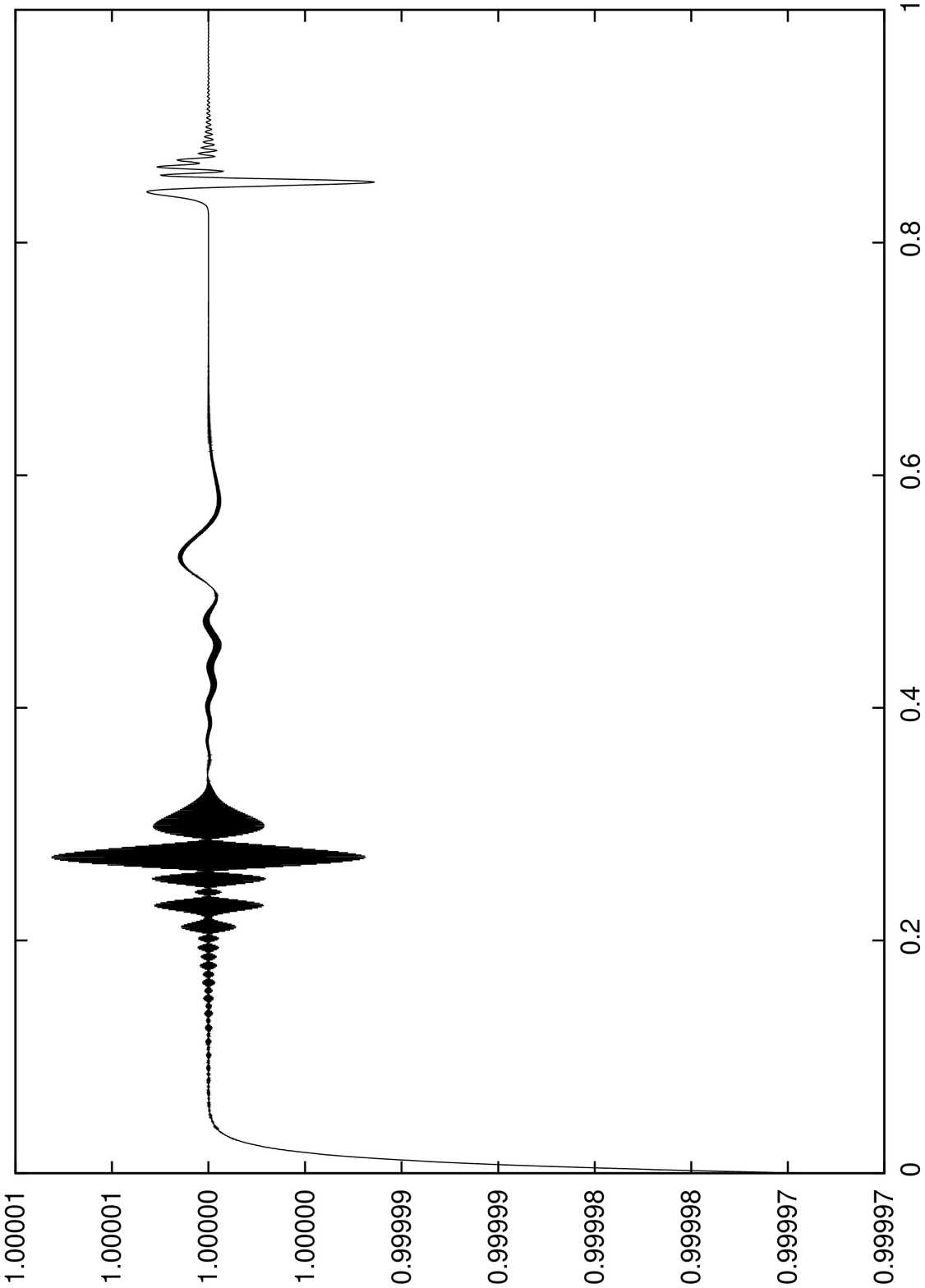}}  \quad \,
 		\subfloat[]{\includegraphics[totalheight=2.7in,width=1.82in,angle=-90]{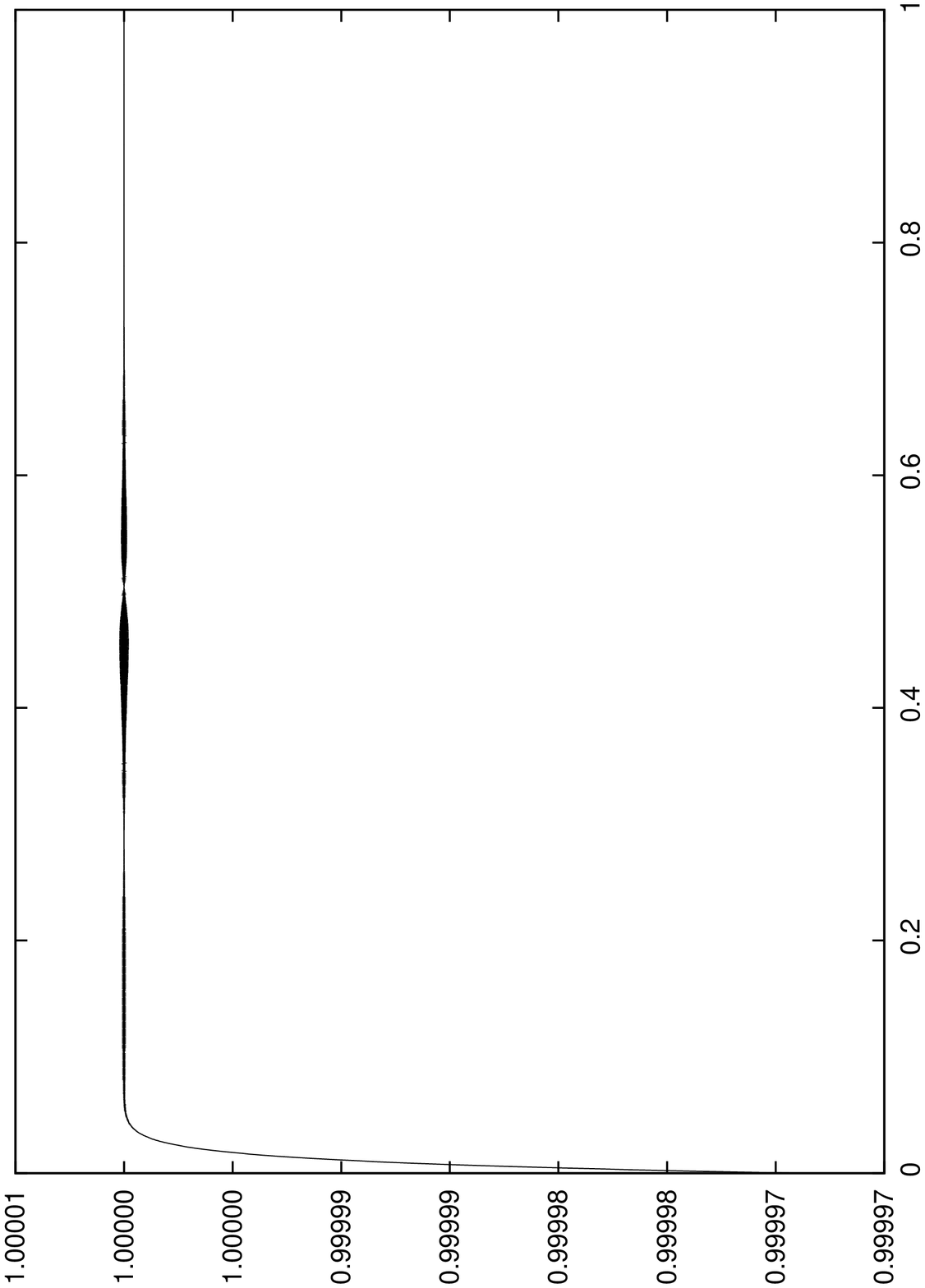}}\vspace{2pt}\\
 		(b)\,\,\,\,$u_{h}$ and $u_{hD}$, $t=1.5$ 
 	\end{center} 
 	\caption{Magnifications of the profiles of the numerical solutions generated by the methods 
 		(\ref{eq43})-(\ref{eq46}) (left) and (\ref{eq36})-(\ref{eq38}) (right) at $t=1.5$. Subcritical case, 
 		evolution as in Fig. \ref{fig42}.}
 	\label{fig43}
 \end{figure}
  \normalsize  	
 \subsection{Linearized vs. nonlinear characteristic boundary conditions in the subcritical case}
 If the elevation of the free surface $\eta$ is a small perturbation of the steady state $\eta_{0}$ one may
 derive \emph{linearized} approximations to the characteristic boundary conditions: Assume that the 
 wave height is given by $1 + \eta= 1 + \eta_{0} + \wt{\eta}$, where $\abs{\wt{\eta}}\ll 1 + \eta_{0}$. Then 
 $\sqrt{1 + \eta} = \sqrt{1 + \eta_{0}} + \tfrac{\eta - \eta_{0}}{2\sqrt{1 + \eta_{0}}} + O(\wt{\eta}^{2})$, 
 and, in the subcritical case (\ref{eqsw2}), from the characteristic boundary condition, for example at 
 $x=1$, it follows that 
 \[
 u(1,t) - \frac{\eta(1,t)}{\sqrt{1 + \eta_{0}}} = u_{0} - \frac{\eta_{0}}{\sqrt{1 + \eta_{0}}} + O(\wt{\eta}^{2}),
 \]
 from which neglecting the $O(\wt{\eta}^{2})$ term we obtain the linearized b.c.
 \begin{equation}
 	u(1,t) - \frac{\eta(1,t)}{\sqrt{1 + \eta_{0}}} = u_{0} - \frac{\eta_{0}}{\sqrt{1 + \eta_{0}}},
 	\label{eq47}
 \end{equation}
 and similarly at $x=0$ the b.c.
 \begin{equation}
 	u(0,t) + \frac{\eta(0,t)}{\sqrt{1 + \eta_{0}}} = u_{0} + \frac{\eta_{0}}{\sqrt{1 + \eta_{0}}}.
 	\label{eq48}
 \end{equation}
 \par
 The linearized boundary conditions (\ref{eq47}) and (\ref{eq48}) have often been used in the computational
 fluid dynamics literature, as was mentioned in the Introduction. It is not hard to see that (\ref{eq47}) and 
 (\ref{eq48}) are transparent for the \emph{linearized} shallow water equations obtained by linearizing 
 the system (\ref{eq11}) about the steady state $(u_{0},\eta_{0})$. This may be seen by diagonalizing the
 linearized system and explicitly computing its solutions propagating along the characteristics.
 In \cite{sltt} it is proved that the `energy' integral $\int_{0}^{1}(u^{2} + \tfrac{1}{1+\eta_{0}}\eta^{2})dx$ of
 the solution $(\eta,u)$ of the linearized system decreases with time in the presence of a class of
 boundary conditions that includes (\ref{eq47})-(\ref{eq48}), a fact that implies the well-posedness of the
 ibvp for the linearized system supplemented by (\ref{eq47}) and (\ref{eq48}). \par
 However, the linearized characteristic boundary conditions are not transparent for the nonlinear system (\ref{eq11}).
 So, there arises a need to study their absorption properties. In \cite{nmf} and \cite{sltt} the linearized
 conditions were compared with the nonlinear ones, appropriately discretized in the finite difference
 schemes used in these two references. The comparison was effected by means of numerical experiments,
 many of which in more complicated instances of hydraulic and geophysical interest, including single- and
 multi- layered flows in the presence of variable bottom topography, cross velocity terms, and Coriolis
 forces. Moreover, the finite difference schemes used in \cite{nmf} and \cite{sltt} allow in general the
 simulation of flows that develop steep fronts and discontinuities. It was confirmed that the linearized 
 conditions in many  examples give rise to spurious oscillations that are reflected backwards into the 
 computational domain.  \par
 In the sequel we will compare the two sets of boundary conditions in the case of smooth subcritical flows 
 discretized by the `direct' fully discrete Galerkin method with piecewise linear continuous functions
in space coupled
with `classical' 4$^{th}$-order RK time stepping. The spatial discretization in the 
 nonlinear b.c. case is given by (\ref{eq43})-(\ref{eq46}); in the linearized case (\ref{eq45})-(\ref{eq46})
 are replaced by their obvious linearized analogs resulting from (\ref{eq47})-(\ref{eq48}). We first check
 the $L^{2}$ errors and associated orders of convergence for the scheme with the linearized b.c.'s.
 Adding appropriate  right-hand sides to the pde's in (\ref{eqsw2}) so that the exact
 solution of the ibvp is $\eta(x,t) = (x+1)\mathrm{e}^{-xt}$, 
 $u(x,t) = (2x + \cos (\pi x) - 1)\mathrm{e}^{t} +x a(t) + (1-x)b(t)$, where 
 $a(t) = u_{0} + \frac{2\mathrm{e}^{-t} - \eta_{0}}{\sqrt{1 + \eta_{0}}}$,
 $b(t) = u_{0} + \frac{\eta_{0} - 1}{\sqrt{1 + \eta_{0}}}$, we take   $u_{0}=\eta_{0}=1$ and compute 
 with $h=1/N$ and $k=h/20$. 
 The resulting $L^{2}$ errors   of (essentially) the spatial discretization
 are shown, accompanied by the associated orders of convergence in Table \ref{tbl45}.
 Due to  the spatial
 uniform mesh the rates are again practically equal to 2. 
 \begin{table}[h]
 \setlength\tabcolsep{10.7pt}
 	\begin{tabular}{@{}ccccc@{}}
\tblhead{$N$  &  $\eta$          &   $order$   &  $u$               &  $order$}
$40$  &  $4.835002(-3)$  &             &  $2.930984(-3)$    &          \\ 
$80$  &  $1.204245(-3)$  &  $2.00539$  &  $7.408500(-4)$    &  $1.98413$ \\ 
$120$ &  $5.350289(-4)$  &  $2.00088$  &  $3.299726(-4)$    &  $1.99472$  \\ 
$160$ &  $3.008683(-4)$  &  $2.00099$  &  $1.858783(-4)$    &  $1.99497$  \\ 
$200$ &  $1.925597(-4)$  &  $1.99991$  &  $1.190431(-4)$    &  $1.99695$  \\ 
$240$ &  $1.337304(-4)$  &  $1.99966$  &  $8.269369(-5)$    &  $1.99835$  \\ 
$280$ &  $9.825114(-5)$  &  $1.99998$  &  $6.077487(-5)$    &  $1.99783$  \\ 
$320$ &  $7.523153(-5)$  &  $1.99920$  &  $4.653473(-5)$    &  $1.99936$  \\ 
$360$ &  $5.944094(-5)$  &  $2.00018$  &  $3.677599(-5)$    &  $1.99820$  \\ 
$400$ &  $4.814899(-5)$  &  $1.99964$  &  $2.979144(-5)$    &  $1.99908$  \\ 
$440$ &  $3.979234(-5)$  &  $2.00006$  &  $2.462384(-5)$    &  $1.99880$  \\ 
$480$ &  $3.343735(-5)$  &  $1.99975$  &  $2.069270(-5)$    &  $1.99898$  \\ 
$520$ &  $2.849223(-5)$  &  $1.99946$  &  $1.763197(-5)$    &  $1.99978$  \\ \hline
\end{tabular}\vspace{3.1pt}
 	\small
 	\caption{$L^{2}$ errors and spatial orders of convergence, subcritical case, semidiscretization 
 		(\ref{eq43}), (\ref{eq44}) with the linearized b.c. (\ref{eq47}), (\ref{eq48}).}
 	\label{tbl45}
 \end{table}
 \normalsize
 \captionsetup[subfloat]{labelformat=empty,position=bottom,singlelinecheck=false}
 \begin{figure}[h]
 	\begin{center}
 		\subfloat[]{\includegraphics[totalheight=2.7in,width=1.825in,angle=-90]{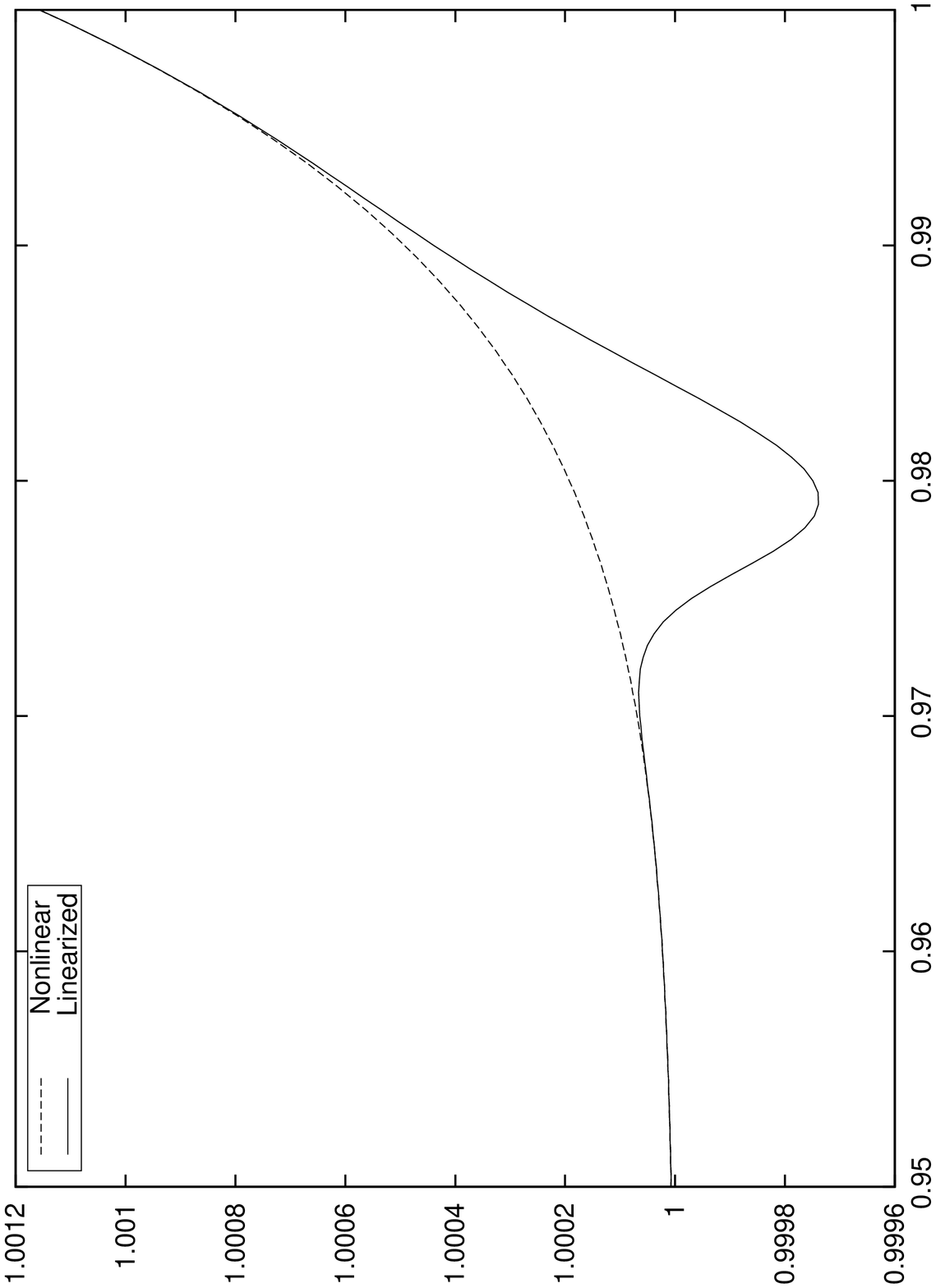}}  \quad \,
 		\subfloat[]{\includegraphics[totalheight=2.7in,width=1.825in,angle=-90]{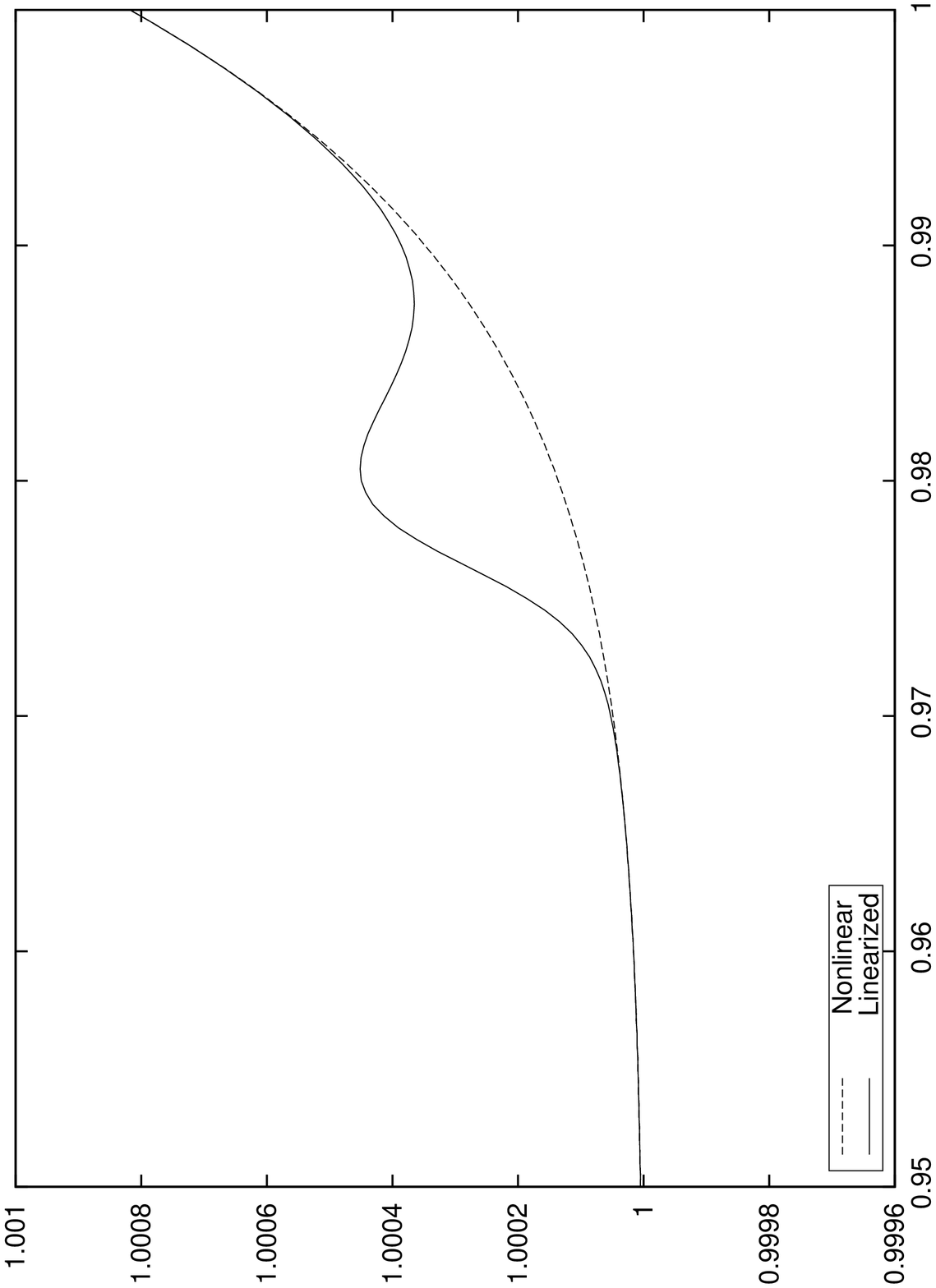}}\\
 		(a)\,\,\,\,$\eta_{lin.}$ vs. $\eta_{Nlin.}$ and $u_{lin.}$ vs. $u_{Nlin.}$ at $t=0.25$ \\ 
 		\subfloat[]{\includegraphics[totalheight=2.7in,width=1.825in,angle=-90]{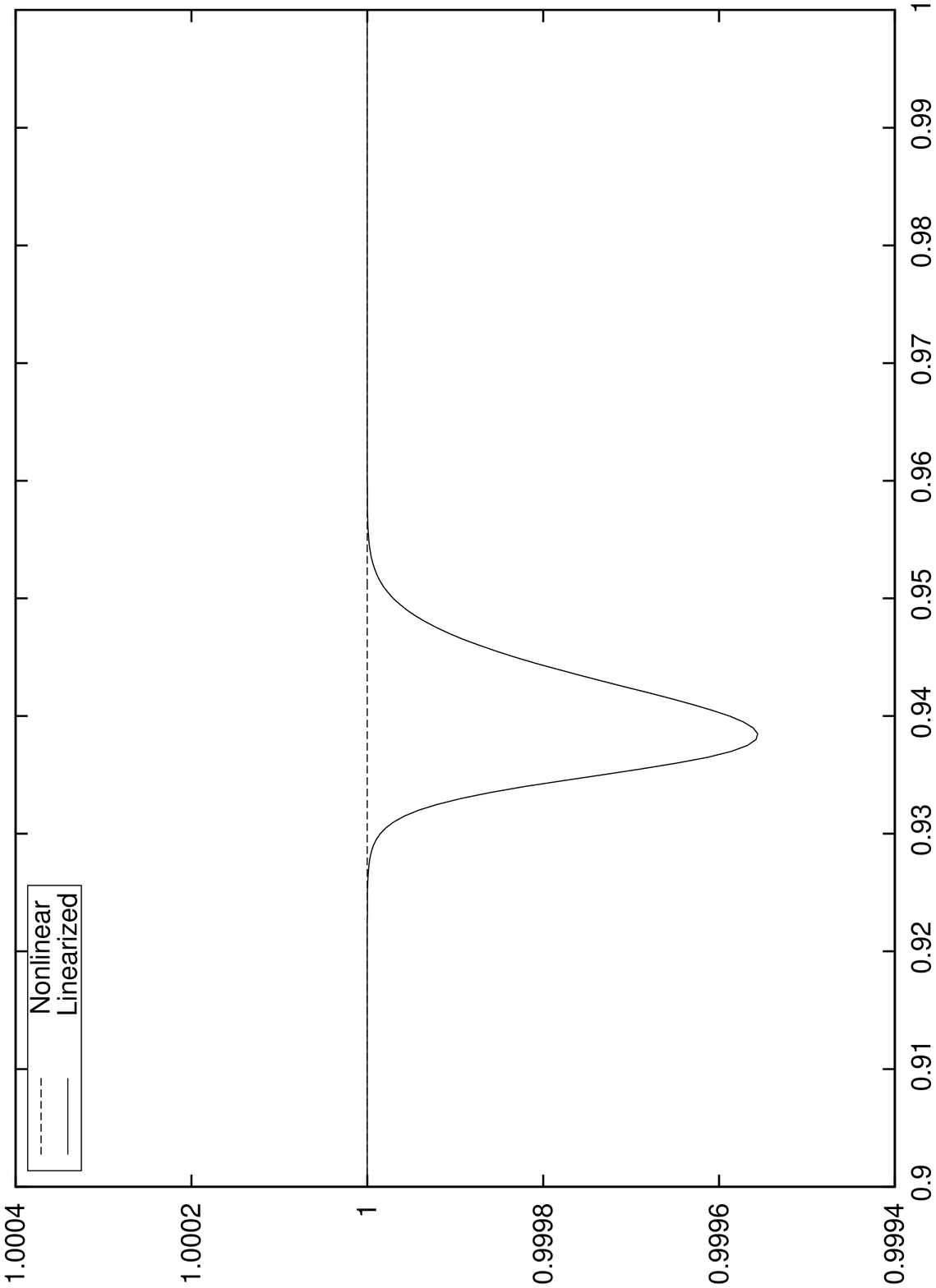}}  \quad \,
 		\subfloat[]{\includegraphics[totalheight=2.7in,width=1.825in,angle=-90]{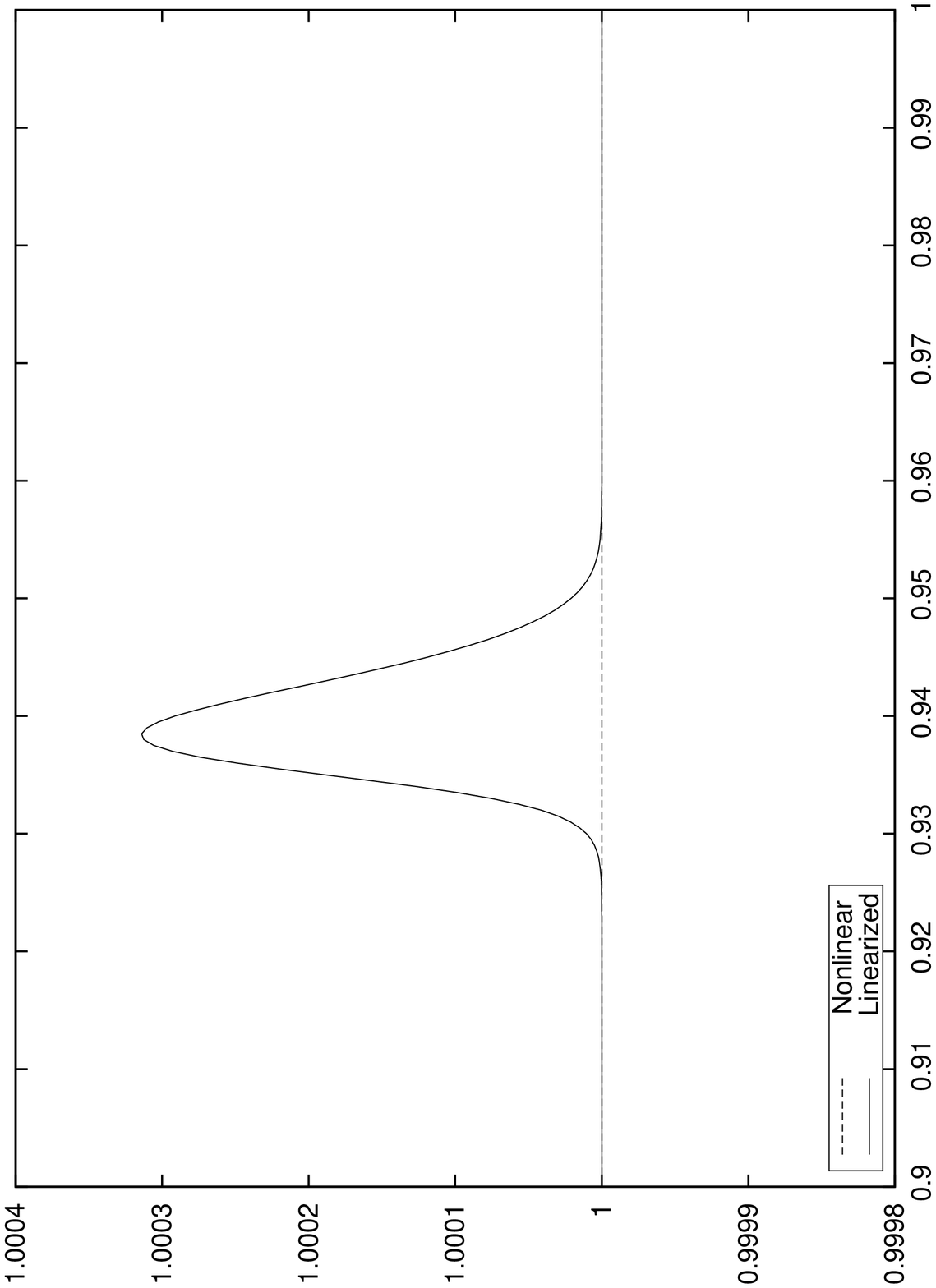}}\\
 		(b)\,\,\,\,$\eta_{lin.}$ vs. $\eta_{Nlin.}$ and $u_{lin.}$ vs. $u_{Nlin.}$ at $t=0.35$ 
 	\end{center}
 	\caption{Reflected spurious pulses due to the linearized b.c.'s (solid lines) superimposed 
 		on the solution of (\ref{eq43})-(\ref{eq46}) (dotted lines); evolution of Fig. \ref{fig42}.}
 	\label{fig44}
 \end{figure}
 \par
 We now consider a numerical experiment discussed in Section 4.2, wherein the fully discrete numerical
 scheme based on the semidiscretization (\ref{eq43})-(\ref{eq46}) gives the evolution depicted in 
 Figure \ref{fig42}. We repeat the experiment with the linearized boundary conditions (\ref{eq47}),
 (\ref{eq48}) instead of (\ref{eq45}),(\ref{eq46}).  In Figure \ref{fig44}  the  
 graphs of $\eta_{lin}$, $u_{lin}$ (solid line), i.e. the numerical solution corresponding to the linearized
 b.c.'s, are superimposed on those of the numerical solution $\eta_{Nlin}$, $u_{Nlin}$ (dotted line) 
 computed with the nonlinear b.c.'s. At $t=0.25$, when the rightwards-travelling pulse has almost
 completed its exit from the computational domain at $x=1$ (recall Fig. \ref{fig42}(g)), we observe 
 that the linearized b.c.'s give rise to a spurious pulse of amplitude about $4\times 10^{-4}$ that
 is reflected inwards and travels (as Fig. \ref{fig44}(b) confirms) to the left with a speed of 
 $-u_{0} + \sqrt{1 + \eta_{0}}\cong 0.41$ as predicted by the analysis of the linearized system of
 which this small-amplitude pulse is an approximate solution. A similar (but much smaller)
 rightwards-travelling pulse is created at $x=0$ when the smaller wave in Fig. \ref{fig42} exits the
 computational interval. (These pulses will eventually exit the interval since the linearized b.c.'s
 are transparent for the linearized system.) This may also be seen in the similar numerical experiment
 shown in Figure \ref{fig45}. We consider (\ref{eqsw2}) with $\eta_{0}=u_{0}=0$
 \begin{figure}[h]
 	\begin{center}
 		\subfloat[]{\includegraphics[totalheight=2.7in,width=1.825in,angle=-90]{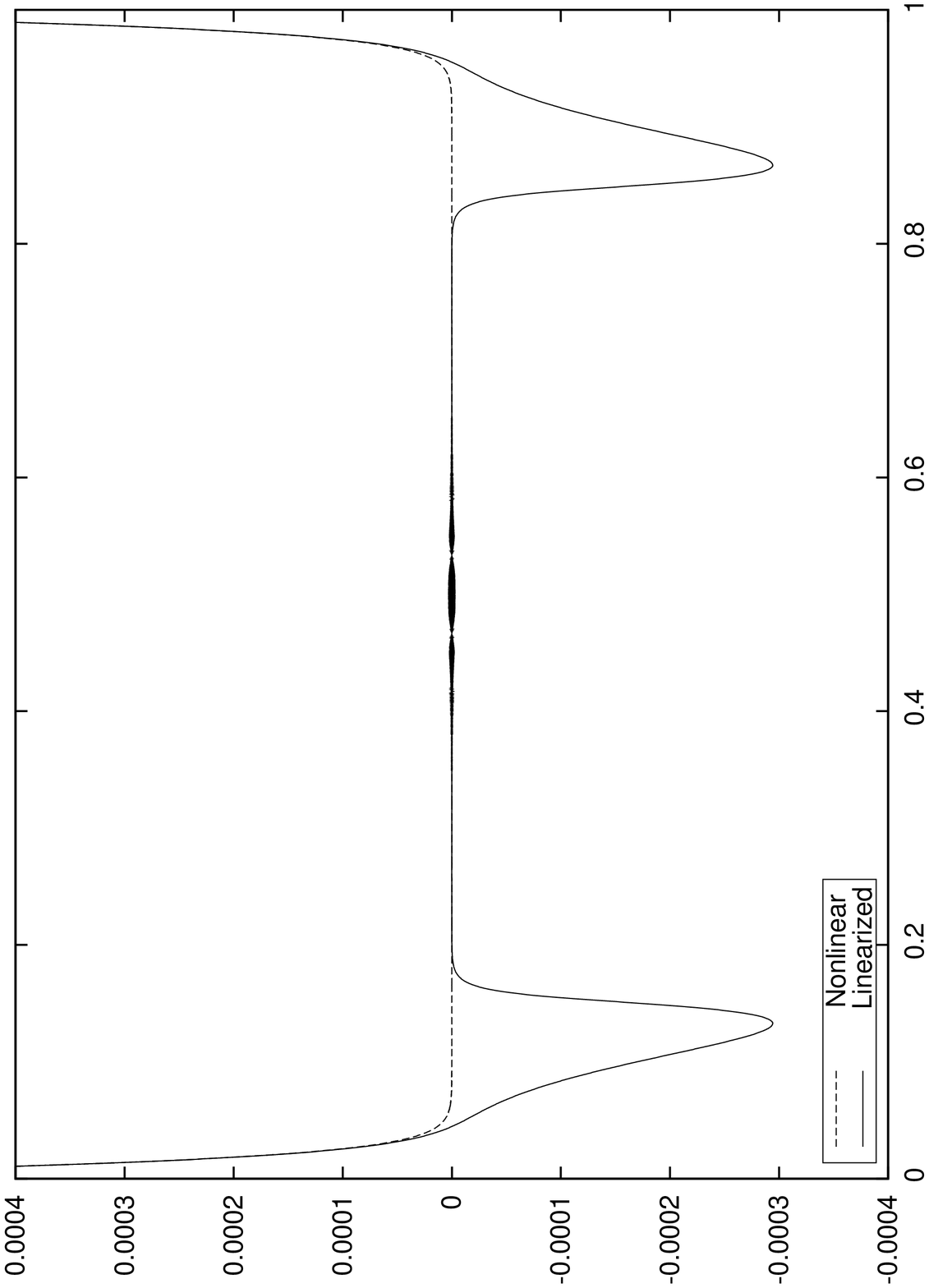}}  \quad \,
 		\subfloat[]{\includegraphics[totalheight=2.7in,width=1.825in,angle=-90]{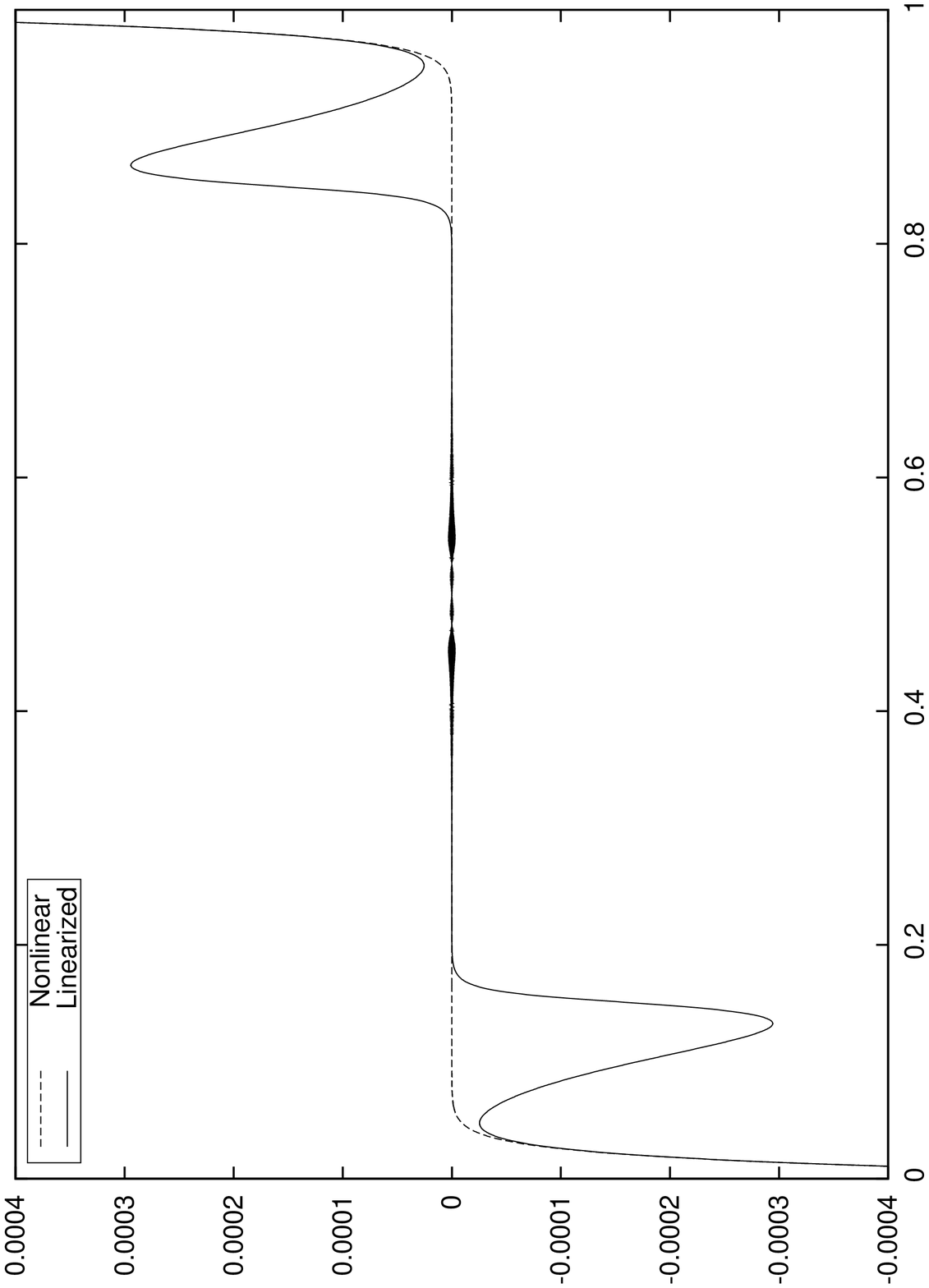}}\\
 		(a)\,\,\,\,$\eta_{lin.}$ vs. $\eta_{Nlin.}$ and $u_{lin.}$ vs. $u_{Nlin.}$ at $t=0.6$ 
 	\end{center}
 \end{figure}
 \clearpage
 \begin{figure}[h]
 	\begin{center}
		\subfloat[]{\includegraphics[totalheight=2.7in,width=1.825in,angle=-90]{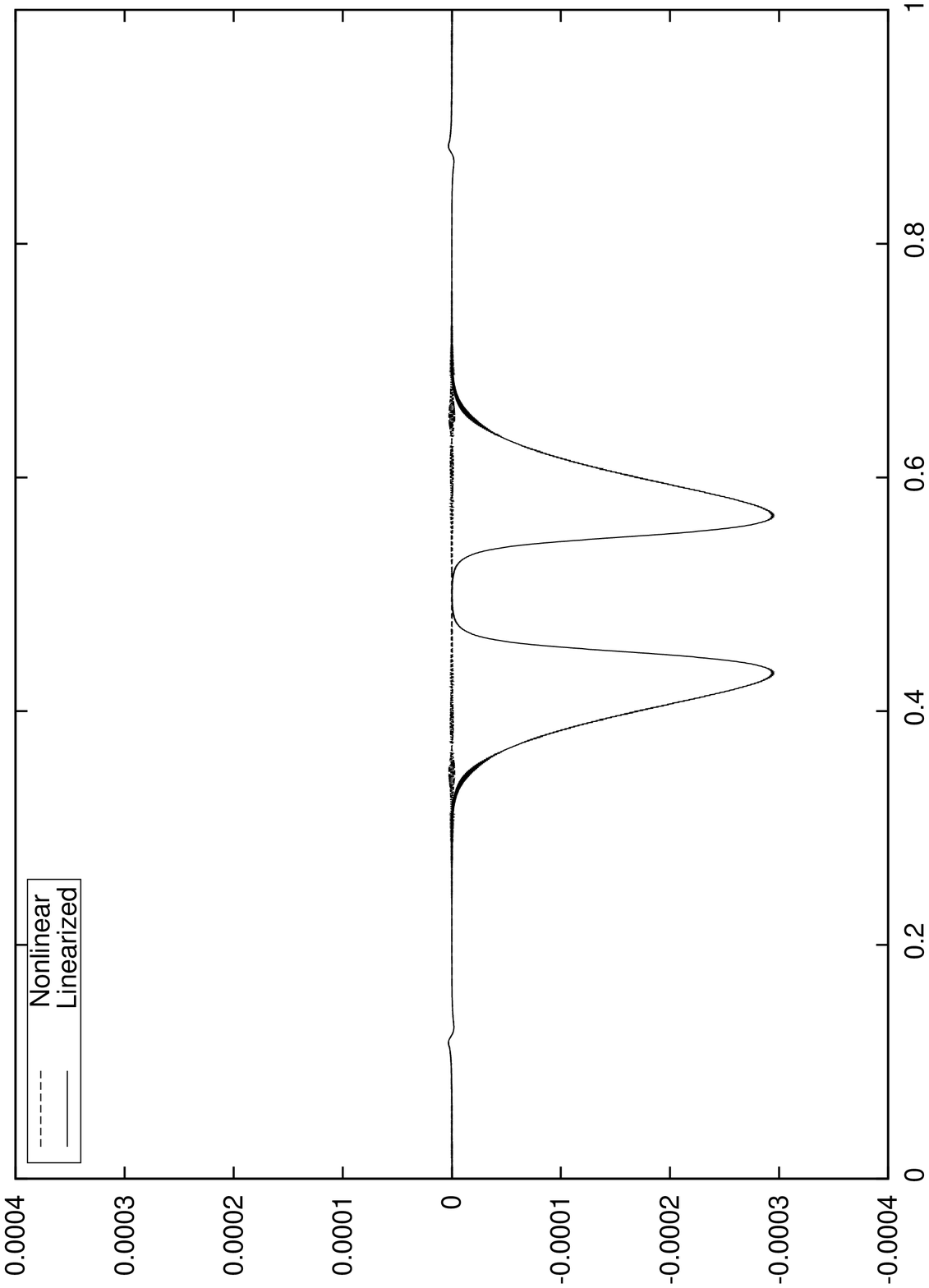}}  \quad \,
 		\subfloat[]{\includegraphics[totalheight=2.7in,width=1.825in,angle=-90]{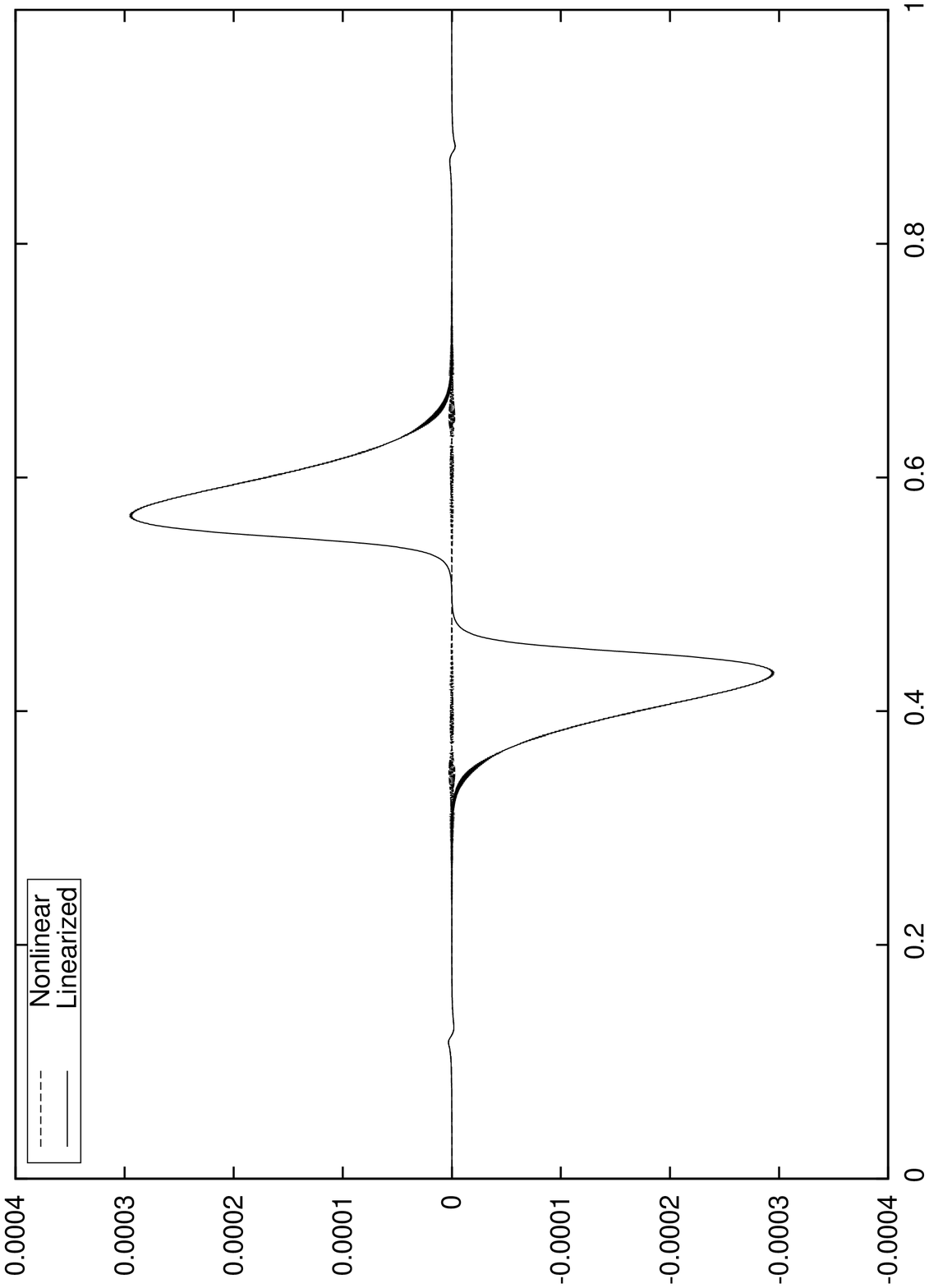}}\\
 		(b)\,\,\,\,$\eta_{lin.}$ vs. $\eta_{Nlin.}$ and $u_{lin.}$ vs. $u_{Nlin.}$ at $t=0.9$ \vspace{13pt} 	\\
 		\subfloat[]{\includegraphics[totalheight=2.7in,width=1.825in,angle=-90]{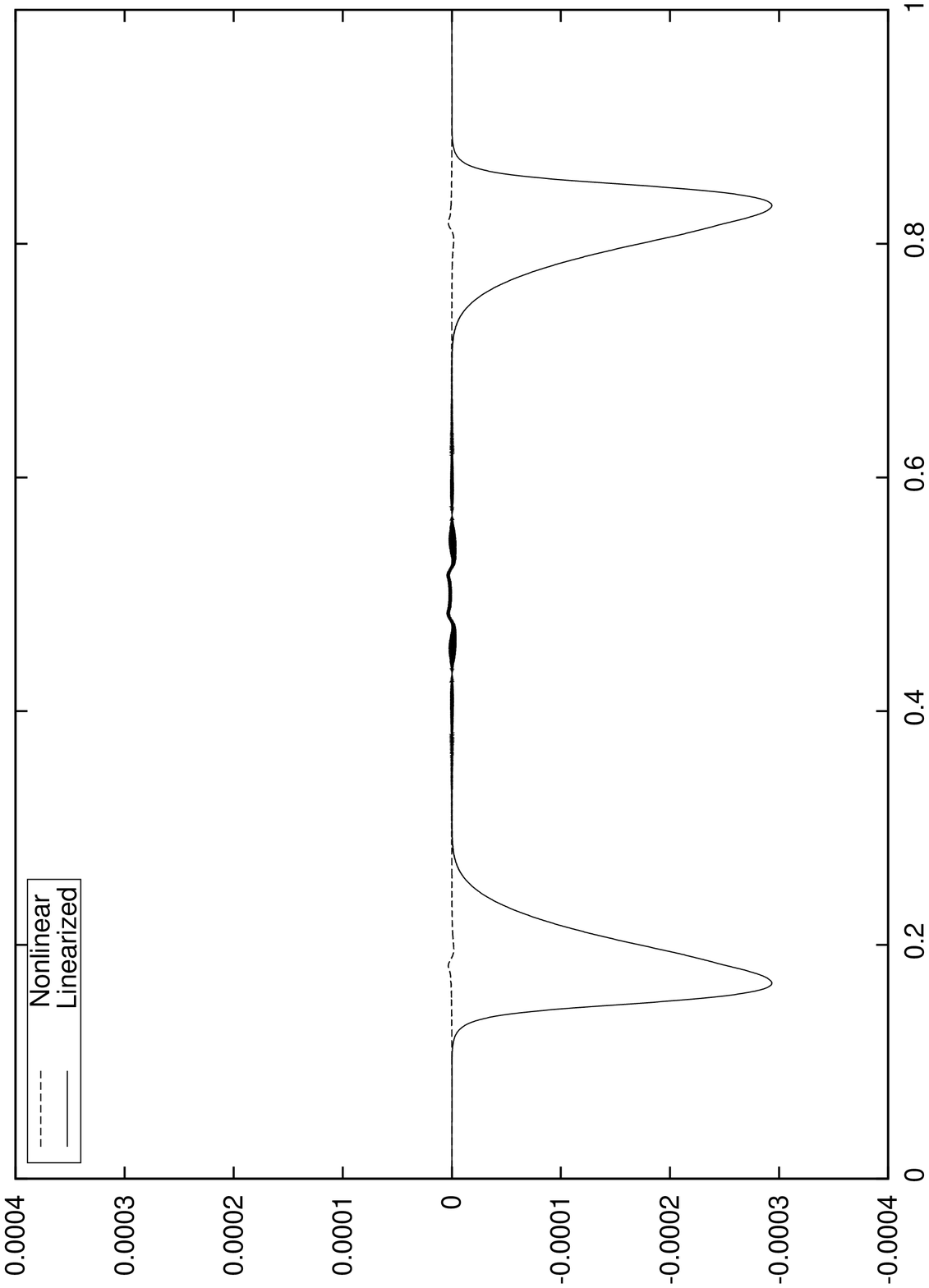}}  \quad \,
 		\subfloat[]{\includegraphics[totalheight=2.7in,width=1.825in,angle=-90]{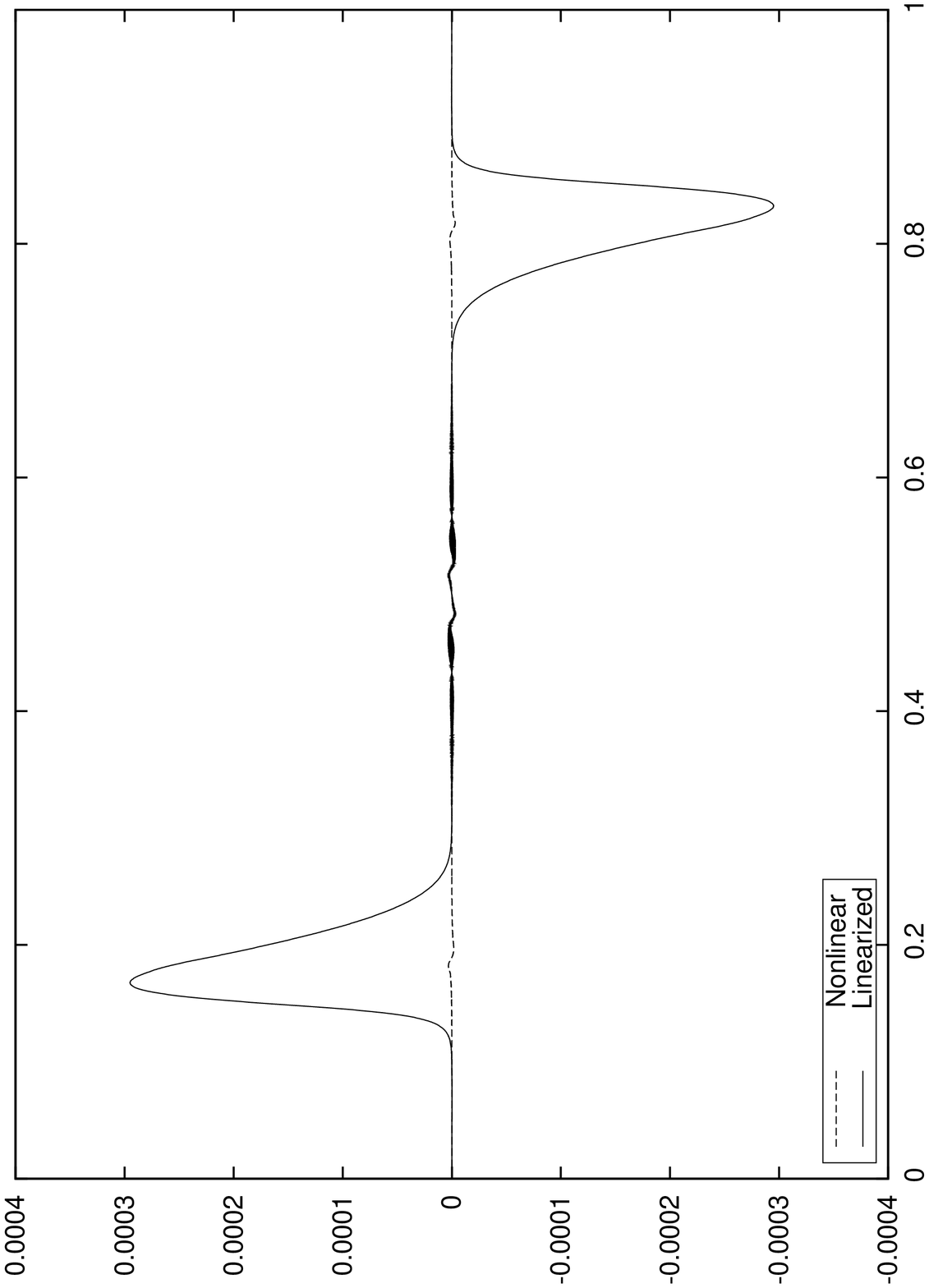}}\\
 		(c)\,\,\,\,$\eta_{lin.}$ vs. $\eta_{Nlin.}$ and $u_{lin.}$ vs. $u_{Nlin.}$ at $t=1.3$ \vspace{13pt} 	\\
 		\subfloat[]{\includegraphics[totalheight=2.7in,width=1.825in,angle=-90]{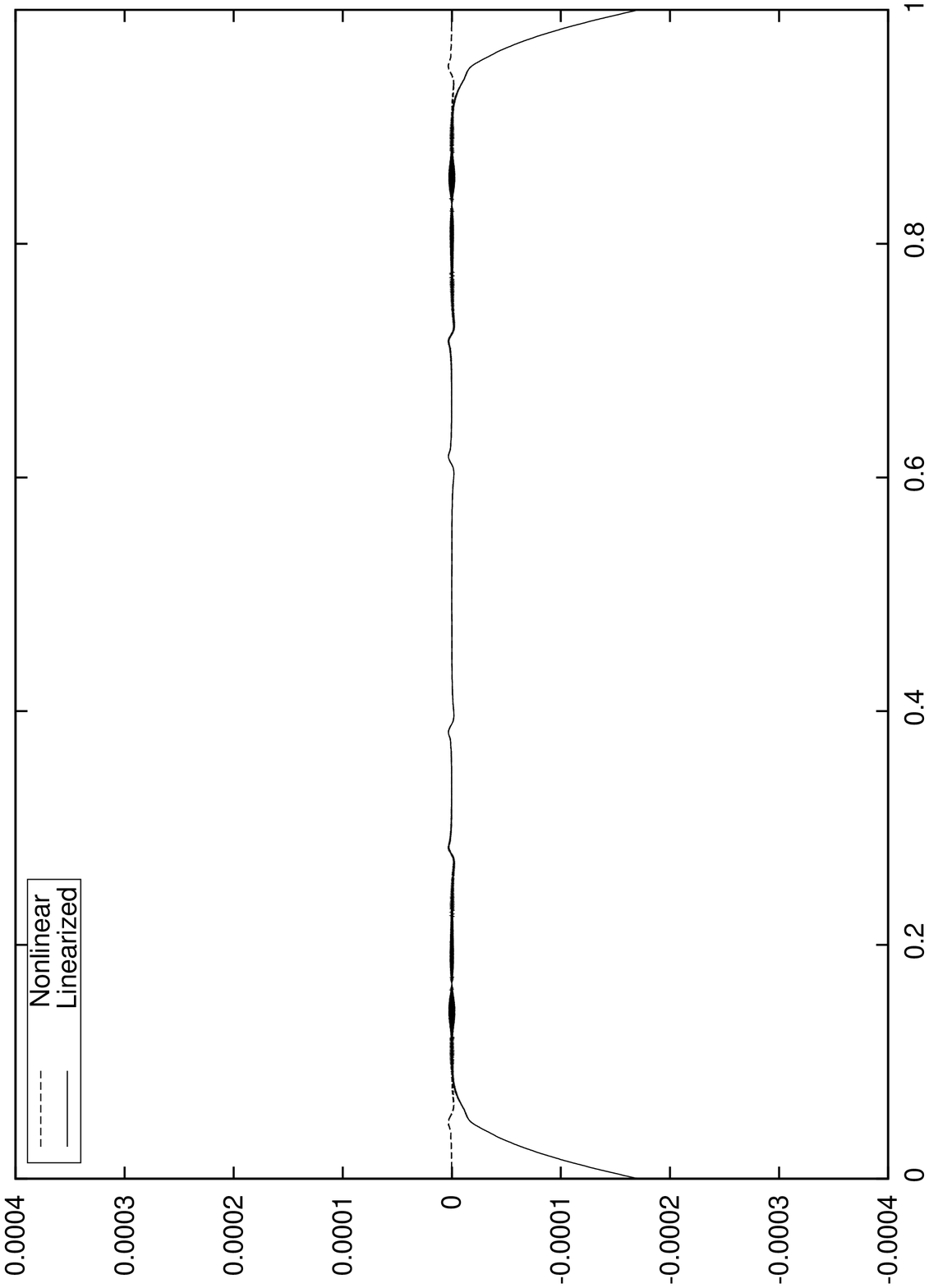}} \quad \,
 		\subfloat[]{\includegraphics[totalheight=2.7in,width=1.825in,angle=-90]{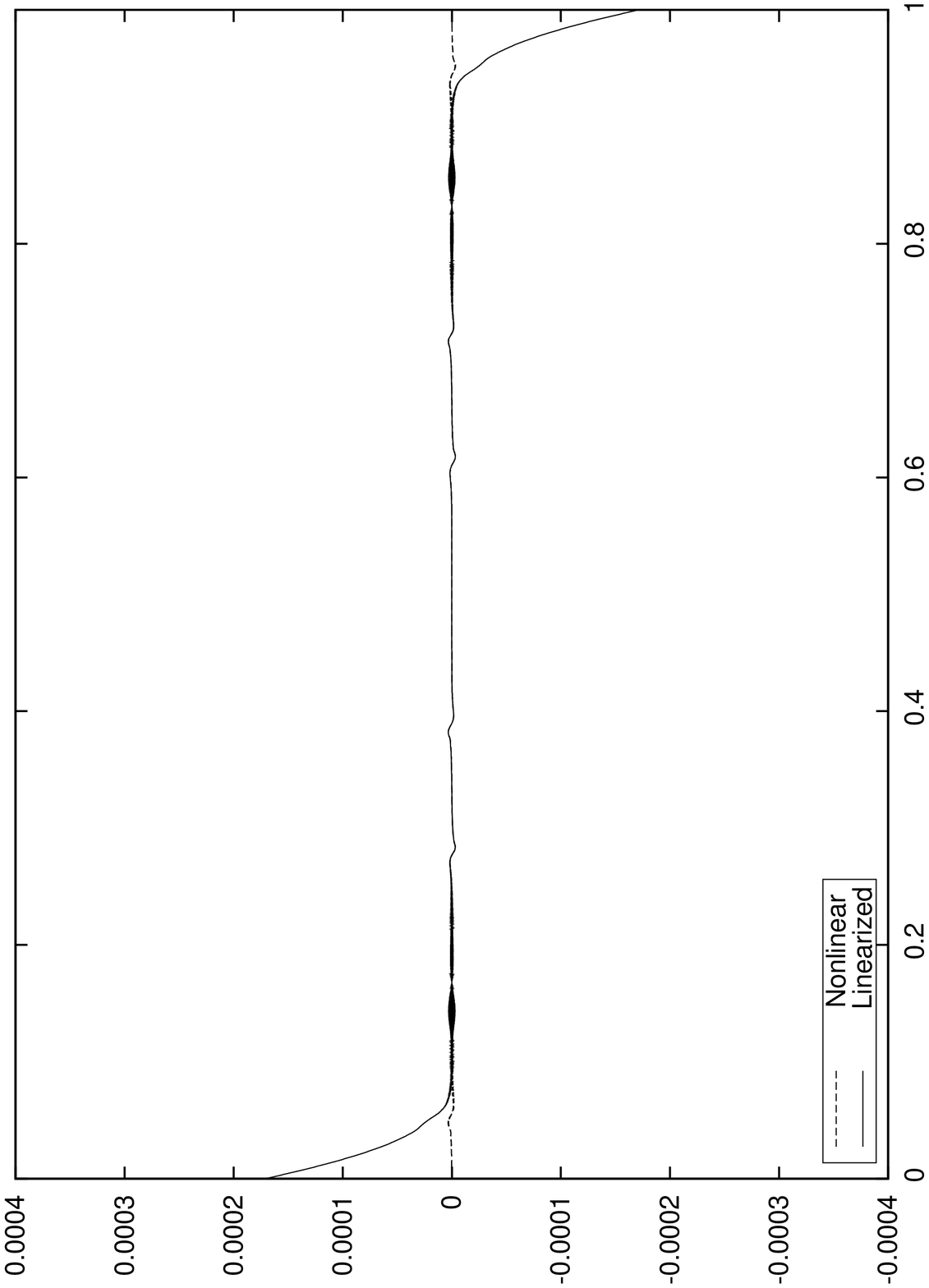}}\\
 		(d)\,\,\,\,$\eta_{lin.}$ vs. $\eta_{Nlin.}$ and $u_{lin.}$ vs. $u_{Nlin.}$ at $t=1.5$ 	
 	\end{center}
 	\end{figure}
 	\clearpage
\begin{figure}[h]
 	\begin{center} 	
 	\subfloat[]{\includegraphics[totalheight=2.7in,width=1.825in,angle=-90]{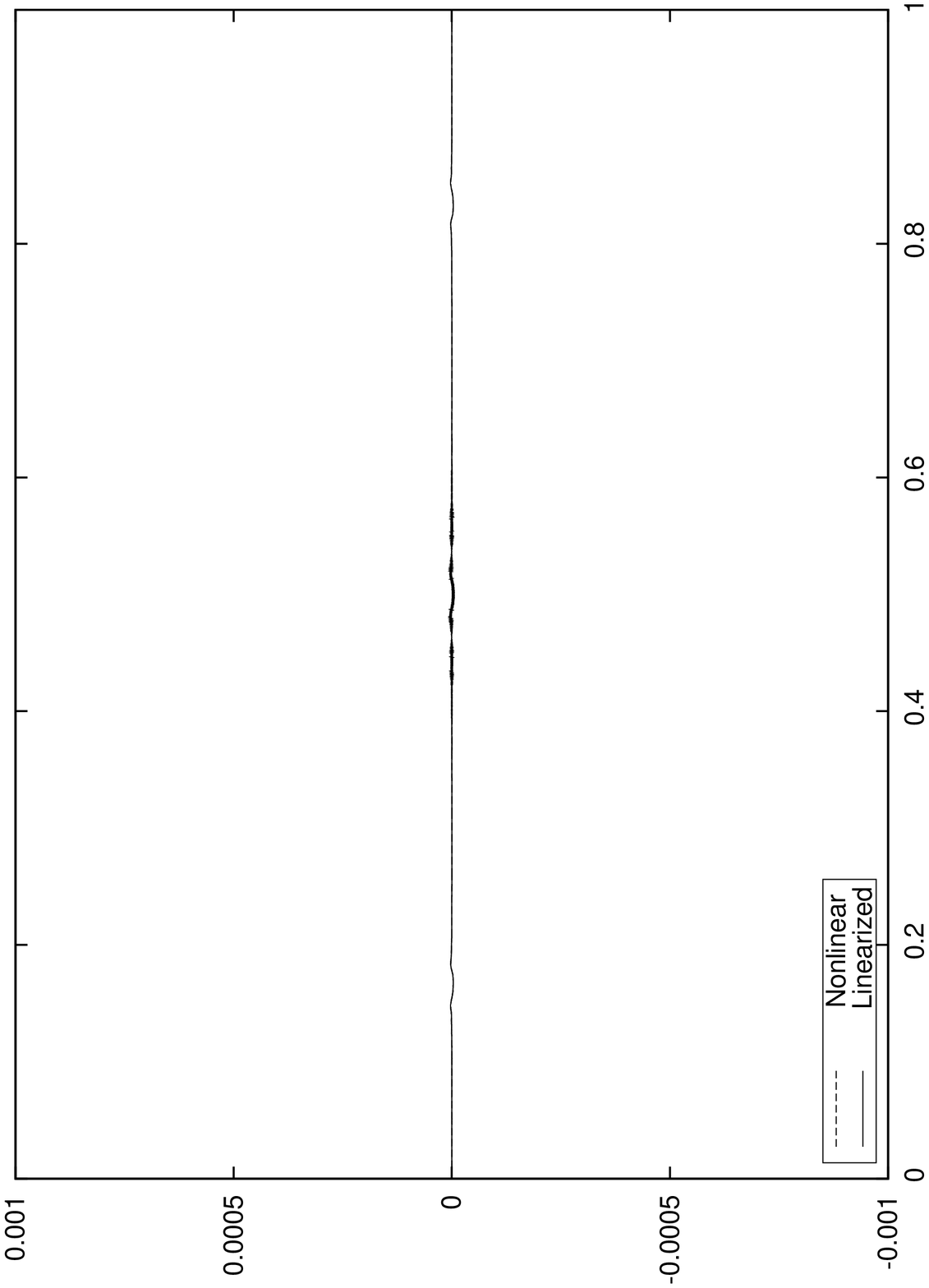}}  \quad \,
 		\subfloat[]{\includegraphics[totalheight=2.7in,width=1.825in,angle=-90]{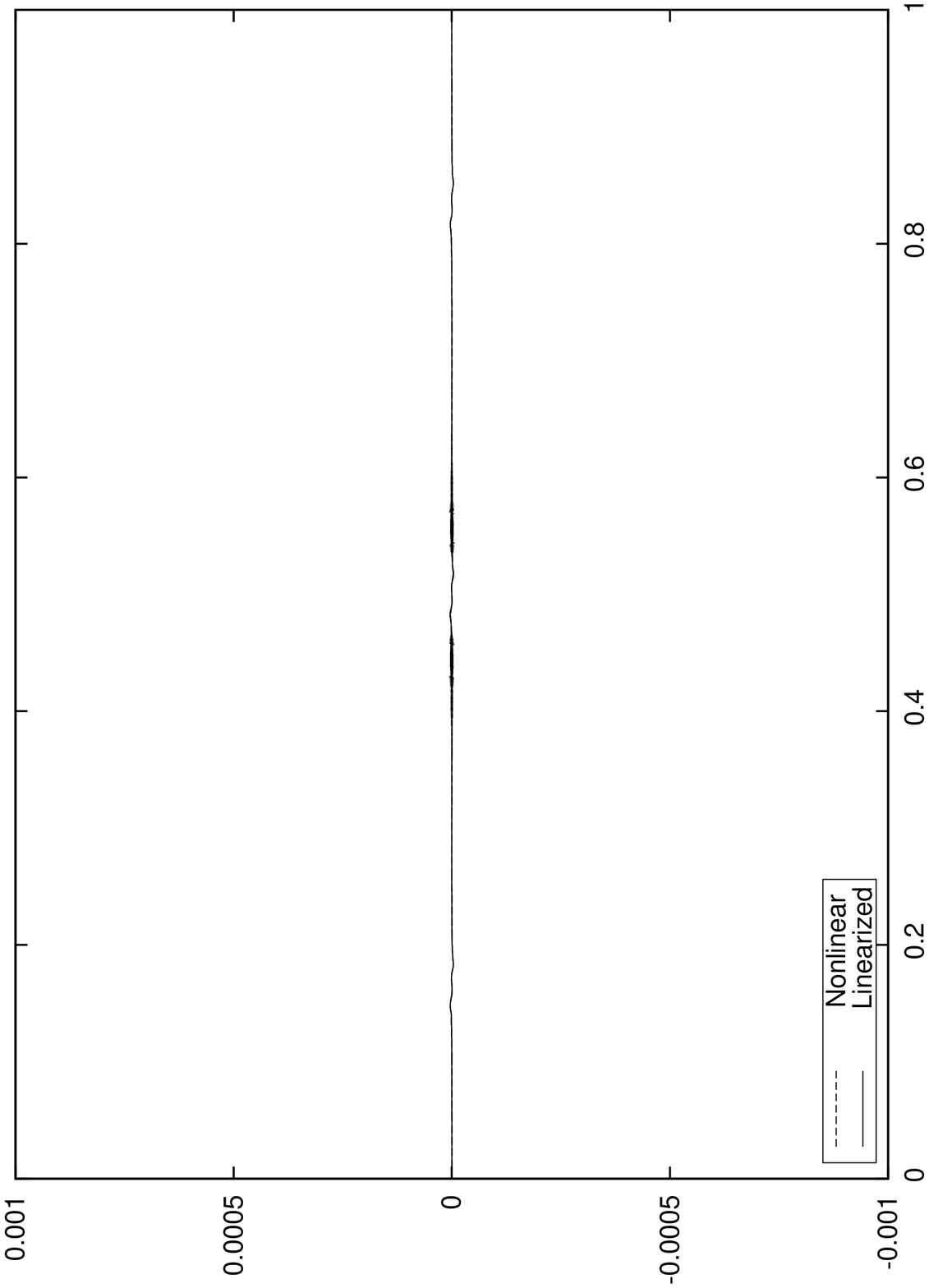}}\\
 		(e)\,\,\,\,$\eta_{lin.}$ vs. $\eta_{Nlin.}$ and $u_{lin.}$ vs. $u_{Nlin.}$ at $t=1.6$	
 	\end{center}
 	\caption{Reflected spurious pulses due to the linearized b.c.'s (solid lines) superimposed on the  
 		solution of (\ref{eq43})-(\ref{eq46}) (dotted lines); evolution resulting from $\eta^{0}=0.1\exp(-400(x-0.5)^{2})$, 
 		$u^{0}=0$, $\eta_{0}=u_{0}=0$.}
 	\label{fig45}
 \end{figure}
 \noindent
 and initial conditions
 $\eta^{0}(x)=0.1\exp (-400(x-0.5)^{2})$, $u^{0}(x)=0$, that we solve by the fully discrete Galerkin
 method corresponding to (\ref{eq43})-(\ref{eq44}) with nonlinear and linearized b.c.'s using $h=1/N$,
 $k=h/10$, $N=2000$. Due to symmetry the initial Gaussian $\eta^{0}(x)$ breaks up into two waves
 that travel with equal speeds in opposite directions and exit the computational domain at about
 $t=0.6$. Figure {\ref{fig45} show magnifications of the ensuing behavior of the numerical solution
 	computed with the linearized b.c.'s (solid line) superimposed on the correct steady state 
 	(dotted line), the result of computing with the nonlinear b.c.'s. Two equal spurious pulses are
 	reflected inwards at both ends and, according to linear theory, travel with unit speed, interact linearly, 
 	and exit cleanly by $t=1.6$. \par
 	We consider next the shallow water equations written in dimensional variables $(h,u)$ where $u$ 
 	is the horizontal velocity and $h$ denotes now the height of the water column above the bottom;
 	the latter is located at a depth $H$ below the level of rest. The system, posed on a channel of length
 	$2L$, is written, in the notation of \cite{nmf}, in the form
 	\begin{equation}
 		\begin{aligned}
 			h_{t} & + (hu)_{x} = 0, \\
 			u_{t} & + gh_{x} + uu_{x} = 0,
 		\end{aligned}
 		\quad -L \leq x\leq L, \quad t\geq 0, 
 		\label{eq49}
 	\end{equation}
 	with initial conditions
 	\begin{equation}
 		h(x,0) = f(x), \quad u(x,0)= v(x), \quad -L\leq x\leq L,
 		\label{eq410}
 	\end{equation}
 	where $g$ is the acceleration of gravity. The system is supplemented by the nonlinear characteristic 
 	boundary conditions now written as
 	\begin{align}
 		u(-L,t) & + 2\sqrt{gh(-L,t)} = a_{E}^{+}, 
 		\label{eq411} \\
 		u(L,t) & - 2\sqrt{gh(L,t)} = a_{E}^{-},
 		\label{eq412}
 	\end{align}
 	where $a_{E}^{\pm} = u_{0} \pm 2\sqrt{gh_{0}}$, and $u_{0}$, $h_{0}$ are the constant values 
 	of $u$ and $h$ outside $[-L,L]$, i.e. the `asymptotic' state of the flow. The linearized boundary
 	conditions (obtained by linearizing (\ref{eq411})-(\ref{eq412}) assuming that $h=H + \wt{h}$, where
 	$\wt{h}$ is small) are
 	\begin{align}
 		u(-L,t) & + \sqrt{\frac{g}{H}} h(-L,t) = b_{E}^{+},
 		\label{eq413}\\
 		u(L,t) & - \sqrt{\frac{g}{H}} h(L,t) = b_{E}^{+},
 		\label{eq414}
 	\end{align}
 	where $b_{E}^{\pm} = u_{0} \pm \sqrt{\frac{g}{H}}h_{0}$. We repeat the numerical experiment 
 	in section 4.1 of \cite{nmf}, using now the fully discrete Galerkin method with piecewise linear 
 	continuous functions in space and the 4$^{th}$-order classical RK method in time, and taking
 	$L=2\,\mathrm{m}$, $H=0.2\,\mathrm{m}$, $g=9.8\,\mathrm{m}/\mathrm{sec}^{2}$, 
 	$h_{0}=0.2\,\mathrm{m}$, $u_{0}=0$, i.e. $a_{E}^{\pm} = \pm 2.8\,\mathrm{m}/\mathrm{sec}$, 
 	$b_{E}^{\pm} = \pm 1.4\,\mathrm{m}/\mathrm{sec}$, and initial values $f(x) = 0.25\,\mathrm{m}$
 	and $v(x) = 0$, with discretization parameters $\Delta x=4/N$, $k=\Delta t=1/(10N)$, $N=8000$.
 	We first solve the problem with the nonlinear b.c.'s (\ref{eq411}), (\ref{eq412}). The evolution of the
 	waveheight $h$ is depicted in Fig. 4.6. Due to the difference of the initial profile $h(x,0)=f(x)$ 
 	\captionsetup[subfloat]{labelformat=empty,position=bottom,singlelinecheck=true}
 	\begin{figure}[h]
 		\begin{center}
 			\subfloat[$t=0.1$]{\includegraphics[totalheight=2.7in,width=1.825in,angle=-90]{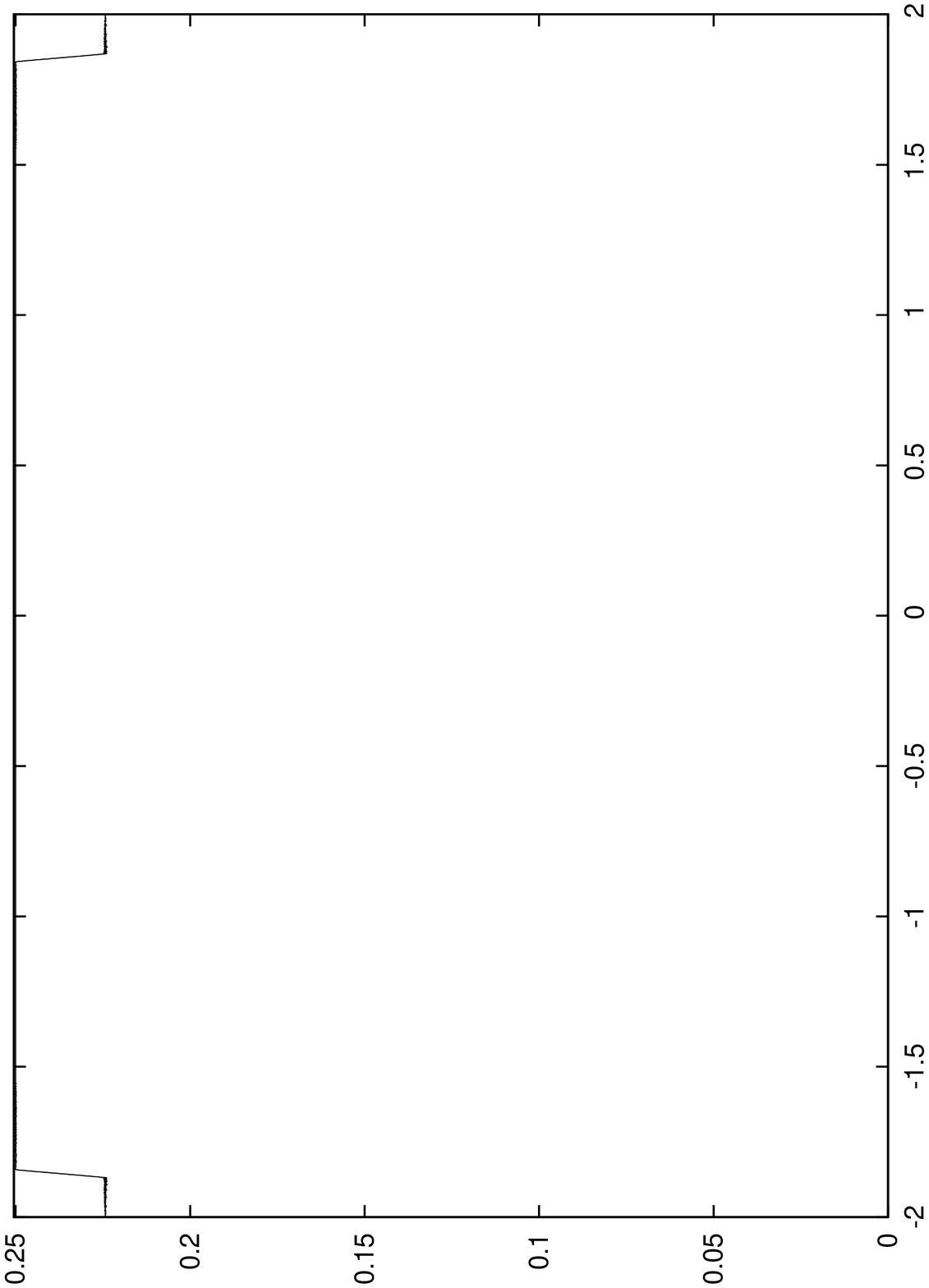}}\qquad
 			\subfloat[$t=0.5$]{\includegraphics[totalheight=2.7in,width=1.825in,angle=-90]{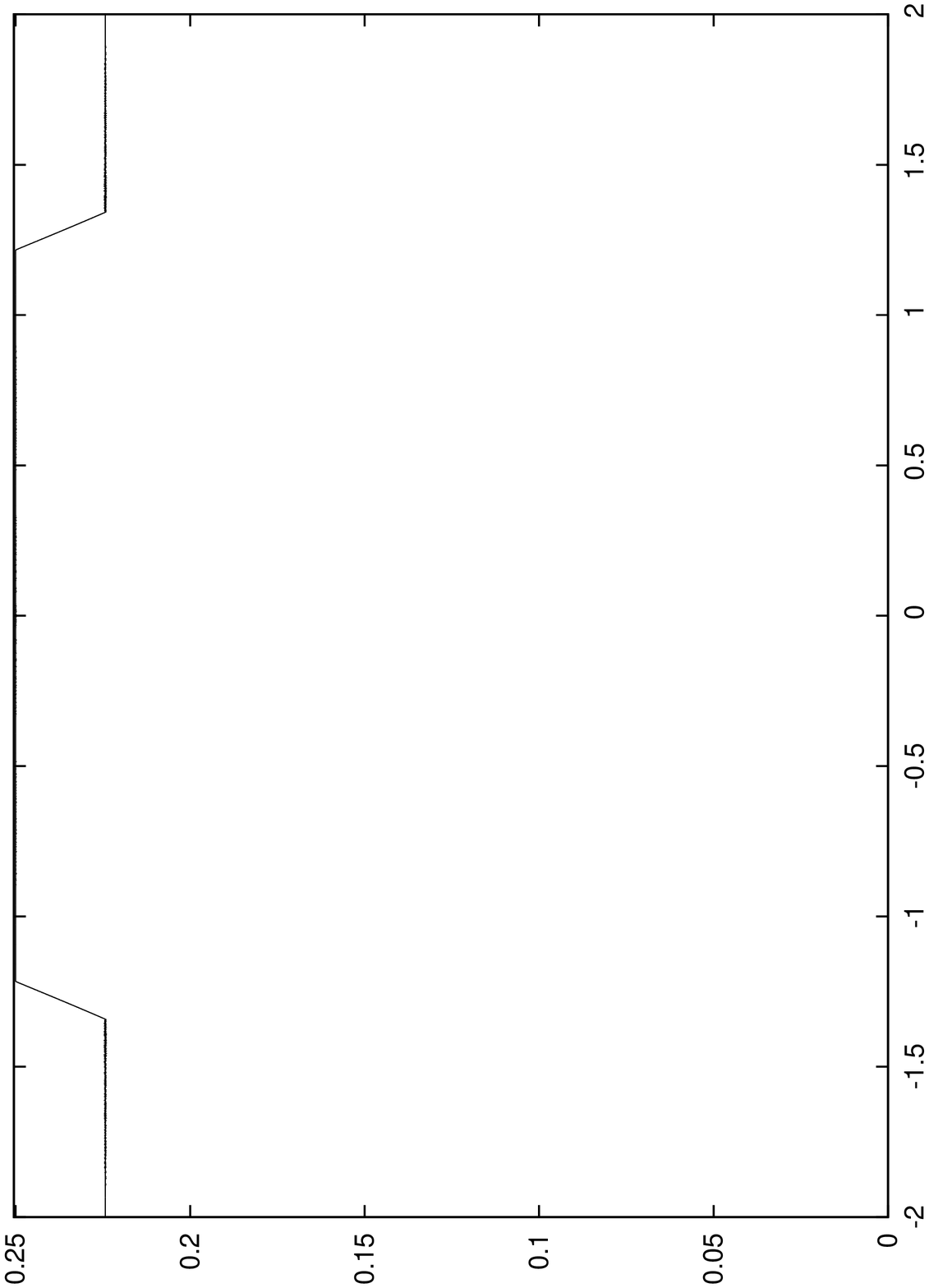}} \\
 			\subfloat[$t=1.0$]{\includegraphics[totalheight=2.7in,width=1.825in,angle=-90]{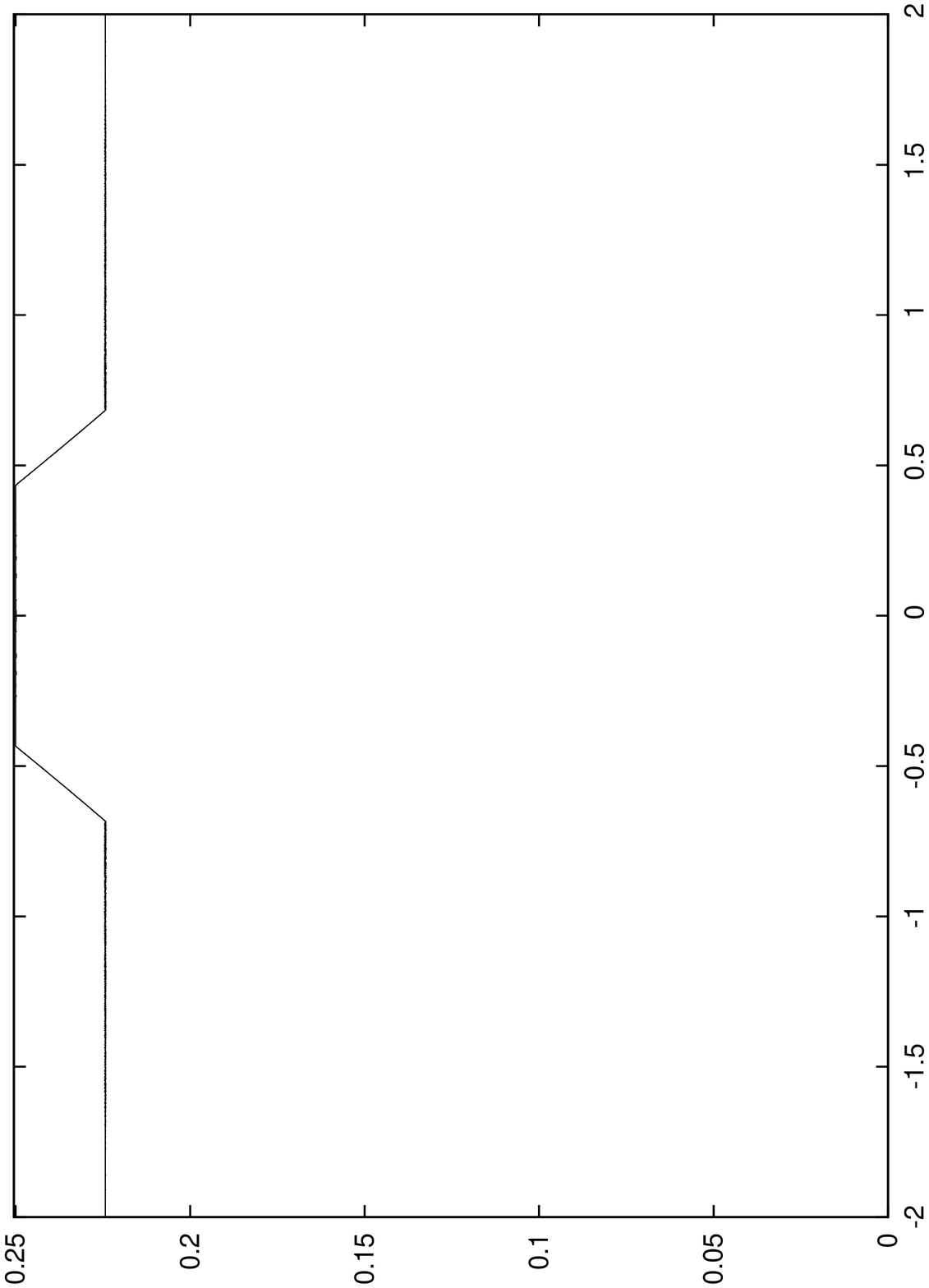}}\qquad
 			\subfloat[$t=1.5$]{\includegraphics[totalheight=2.7in,width=1.825in,angle=-90]{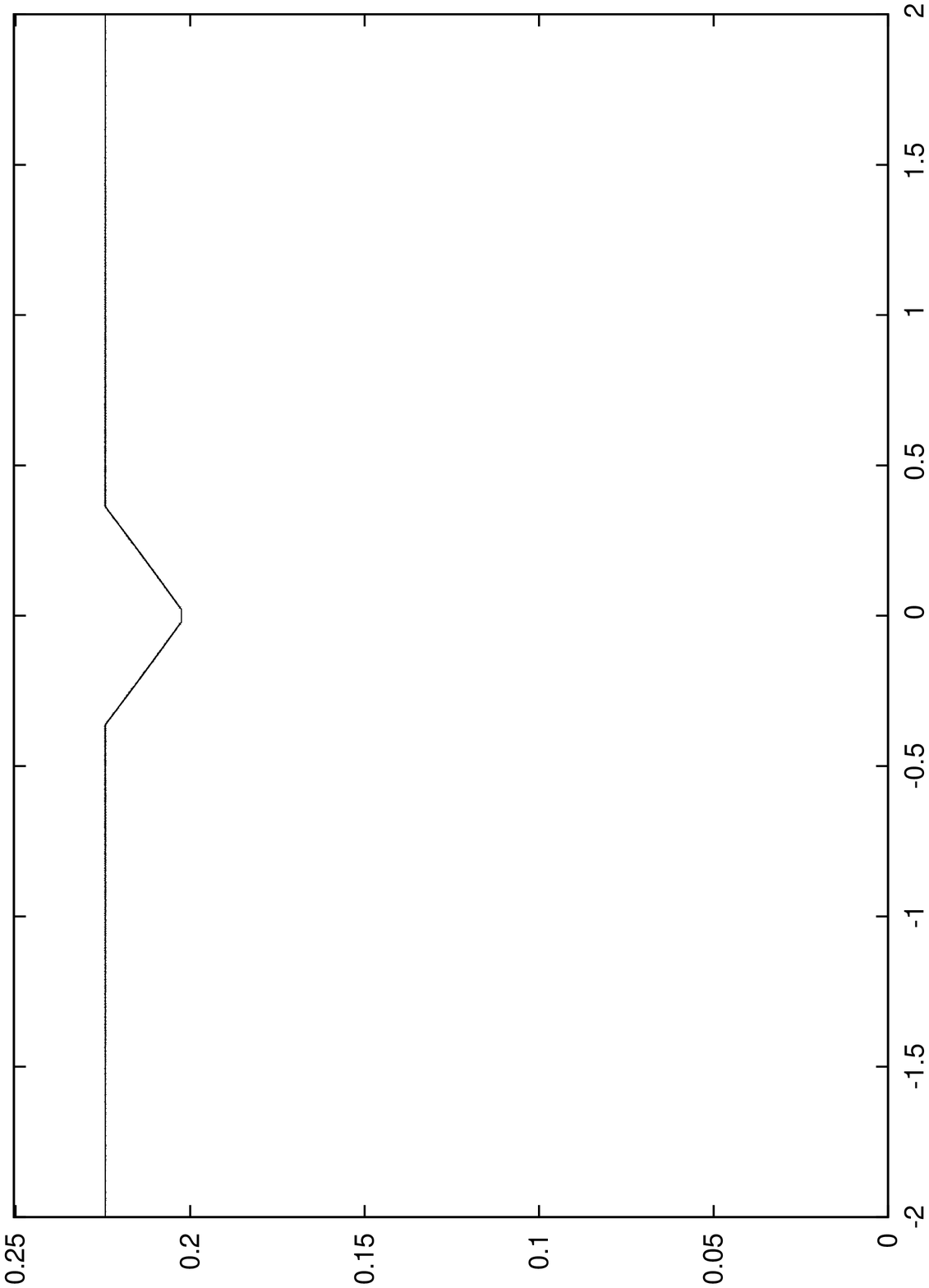}}	\\
 			\subfloat[$t=2.0$]{\includegraphics[totalheight=2.7in,width=1.825in,angle=-90]{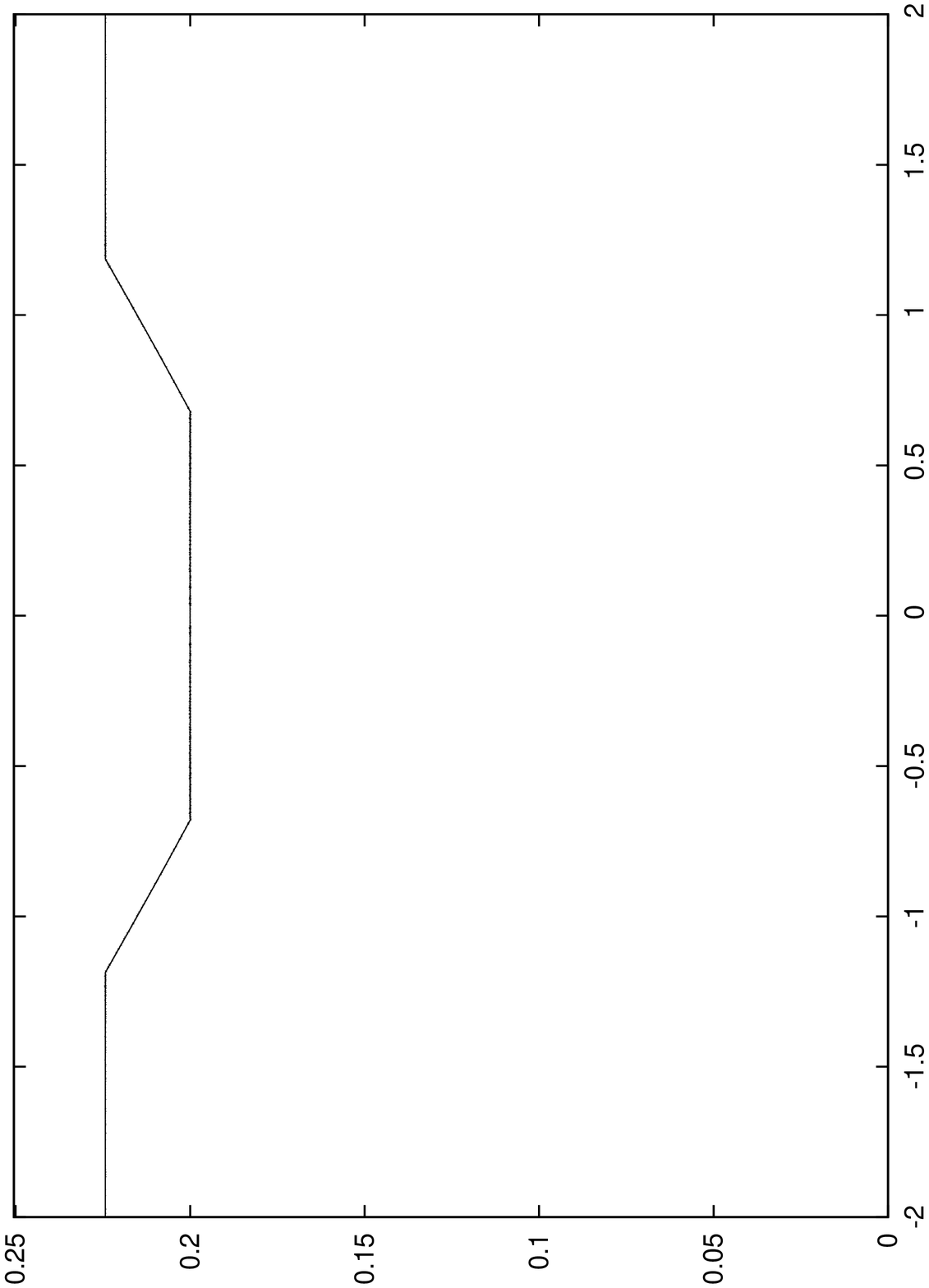}}\qquad
 			\subfloat[$t=3.0$]{\includegraphics[totalheight=2.7in,width=1.825in,angle=-90]{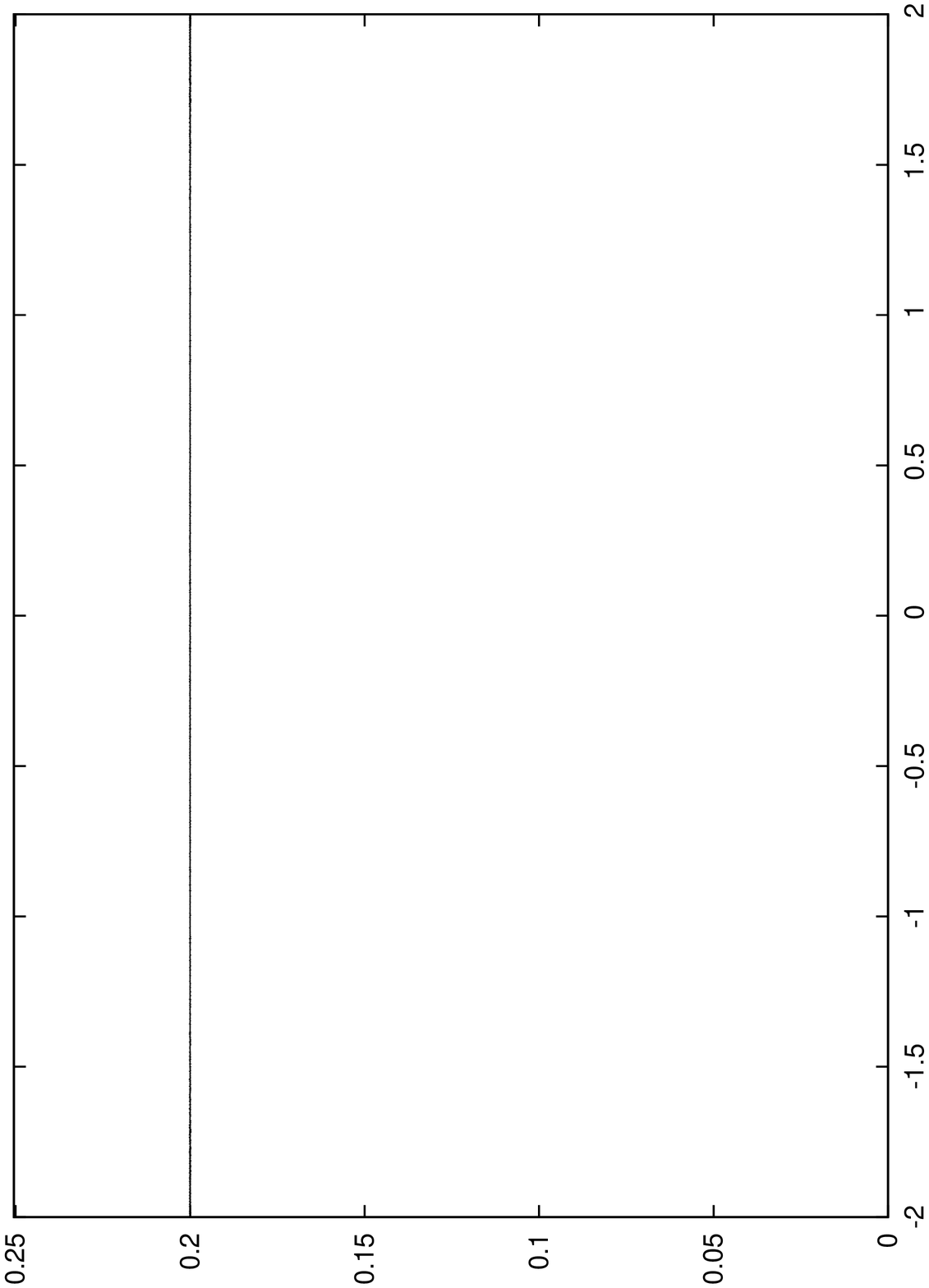}}	
 		\end{center}
 		\caption{Evolution of waveheight $h(x,t)$; numerical solution of (\ref{eq49})-(\ref{eq412}) with 
 			$f(x)=0.25\,\mathrm{m}$, $v(x)=0$, $h_{0}=0.2\,\mathrm{m}$, $u_{0}=0$.}
 		\label{fig46}
 	\end{figure}	
 	\clearpage
 	\begin{figure}[h]
 		\begin{center}		
 			\subfloat[]{\includegraphics[totalheight=2.7in,width=1.825in,angle=-90]{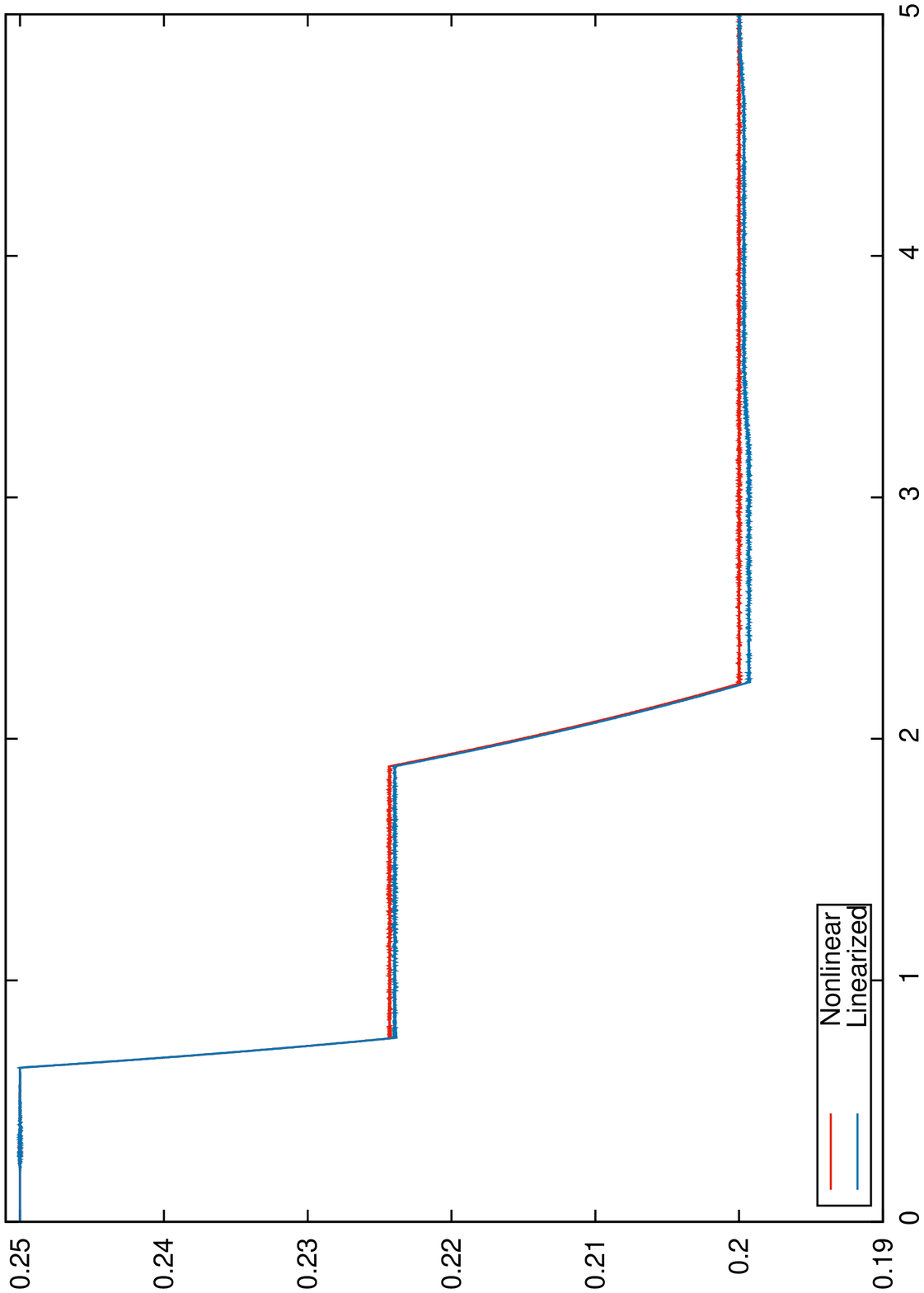}}\qquad
 			\subfloat[]{\includegraphics[totalheight=2.7in,width=1.825in,angle=-90]{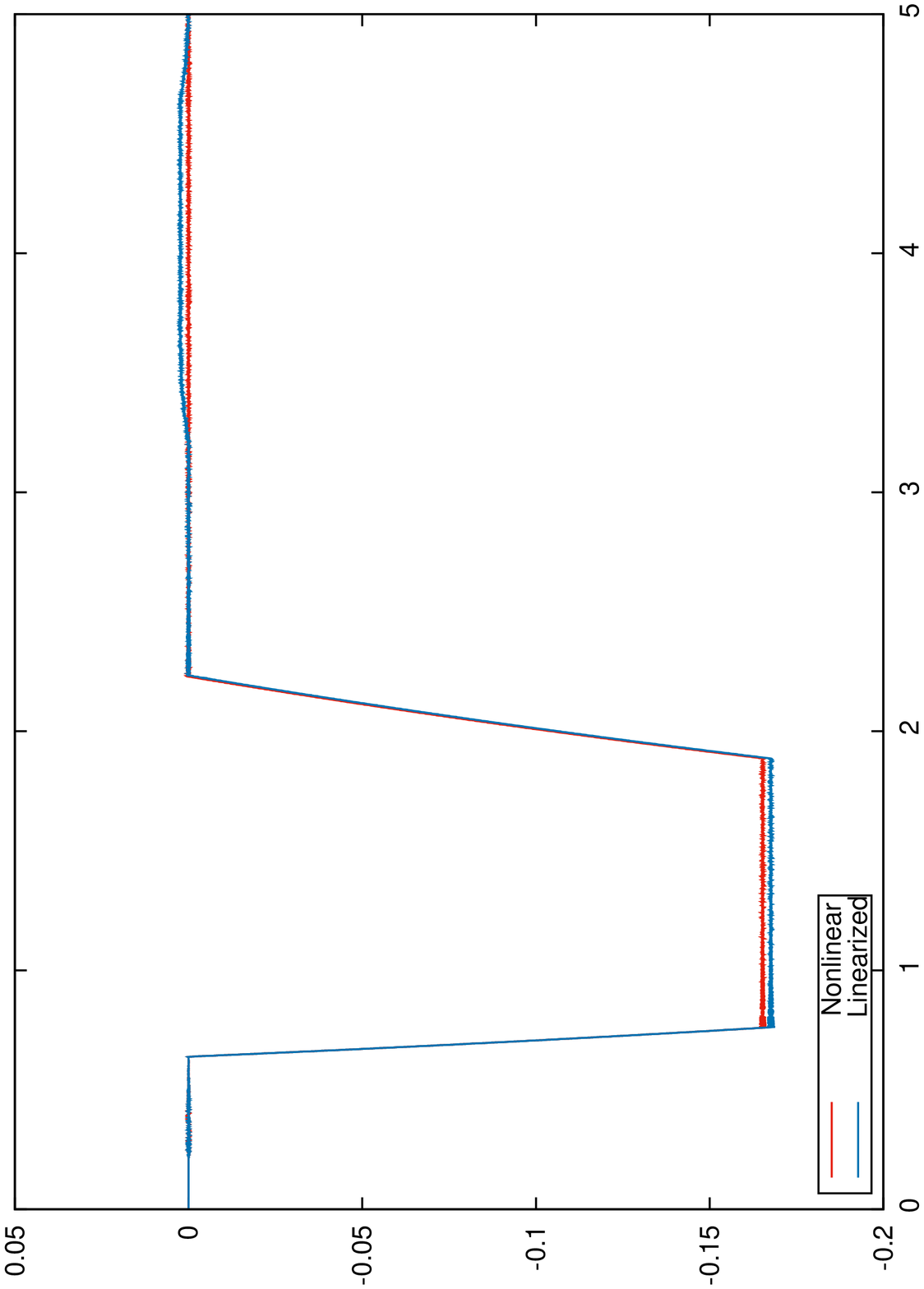}}
 		\end{center}
 		\caption{Graphs of $h(-1,t)$ and $u(-1,t)$ vs. time. Evolution of Figure \ref{fig46}, nonlinear and
 			linearized characteristic b.c.'s}
 		\label{fig47}
 	\end{figure}
 	\noindent 
 	at $x=\pm2$ with the asymptotic state $h_{0}$, fluid exits the domain from both boundary points 
 	and two wavefronts are created that travel in opposite directions, interact, and completely exit
 	the computational domain at about $t=2.25\,\mathrm{sec}$ with no apparent reflections. After this time
 	the asymptotic state  $h_{0}=0.2\,\mathrm{m}$ is achieved throughout the domain. The time history of the
 	solution $(h,u)$ at $x=-1$ is given in Figure \ref{fig47} and corresponds to the three stages of the
 	evolution deduced from Figure \ref{fig46}. The graph of the numerical solution corresponding to the 
 	linearized b.c.'s (\ref{eq413})--(\ref{eq414}) is superimposed on the previous graph. It may be seen 
 	that solution with the linearized b.c.'s is just slightly less absorbing. These results are in agreement 
 	with those of \cite{nmf}. \par
 	In our final numerical experiment we solve again the SW on $[-L,L]$ in their dimensional form 
 	(\ref{eq49}), (\ref{eq410}) with  the nonlinear b.c.'s (\ref{eq411}), (\ref{eq412}) and the linearized
 	ones (\ref{eq413}), (\ref{eq414}), taking now $L=1\,\mathrm{m}$, $H=0.2\,\mathrm{m}$,
 	$g=9.8\,\mathrm{m}/\mathrm{sec}^{2}$, $h_{0}=0.2\,\mathrm{m}$, $u_{0}=0$, i.e.
 	$a_{E}^{\pm}=\pm 2.8 \,\mathrm{m}/\mathrm{sec}$, $b_{E}^{\pm} = \pm 1.4\,\mathrm{m}/\mathrm{sec}$,
 	and as initial values in (\ref{eq410}) $v(x)=0$ and $f(x)$ given by a half-sine pulse centered at $x=0$ and
 	equal to $0.2 + 0.05\sin (\pi (x+0.3)/0/6)\,\mathrm{m}$ if $\abs{x}\leq 0.3\,\mathrm{m}$ and to
 	$0.2\,\mathrm{m}$ for $\abs{x}>0.3\,\mathrm{m}$. The mesh parameters are $\Delta x=2/N$,
 	$N=4000$ and $\Delta t=1/(10N)$. Figure \ref{fig48} shows the ensuing  evolution of the  numerical solution
 	($h$ is shown on the left and $u$ on the right at each temporal instance.) The graphs corresponding to the
 	nonlinear characteristic b.c.'s (dotted line) are superimposed on those computed with the linearized b.c.'s.
 	Two pulses are produced (with no oscillations apparent at the points where the spatial derivative is 
 	discontinuous) that travel to opposite directions and exit the computational interval $[-1,1]$ with no
 	reflections in the case of the nonlinear b.c.'s, shortly after $t=0.8\,\mathrm{sec}$. The linearized b.c.'s
 	give spurious reflected pulses with waveheights of amplitude of $O(10^{-4})$     
 	\begin{figure}[h]
 		\begin{center}
 			\subfloat[]{\includegraphics[totalheight=2.7in,width=1.825in,angle=-90]{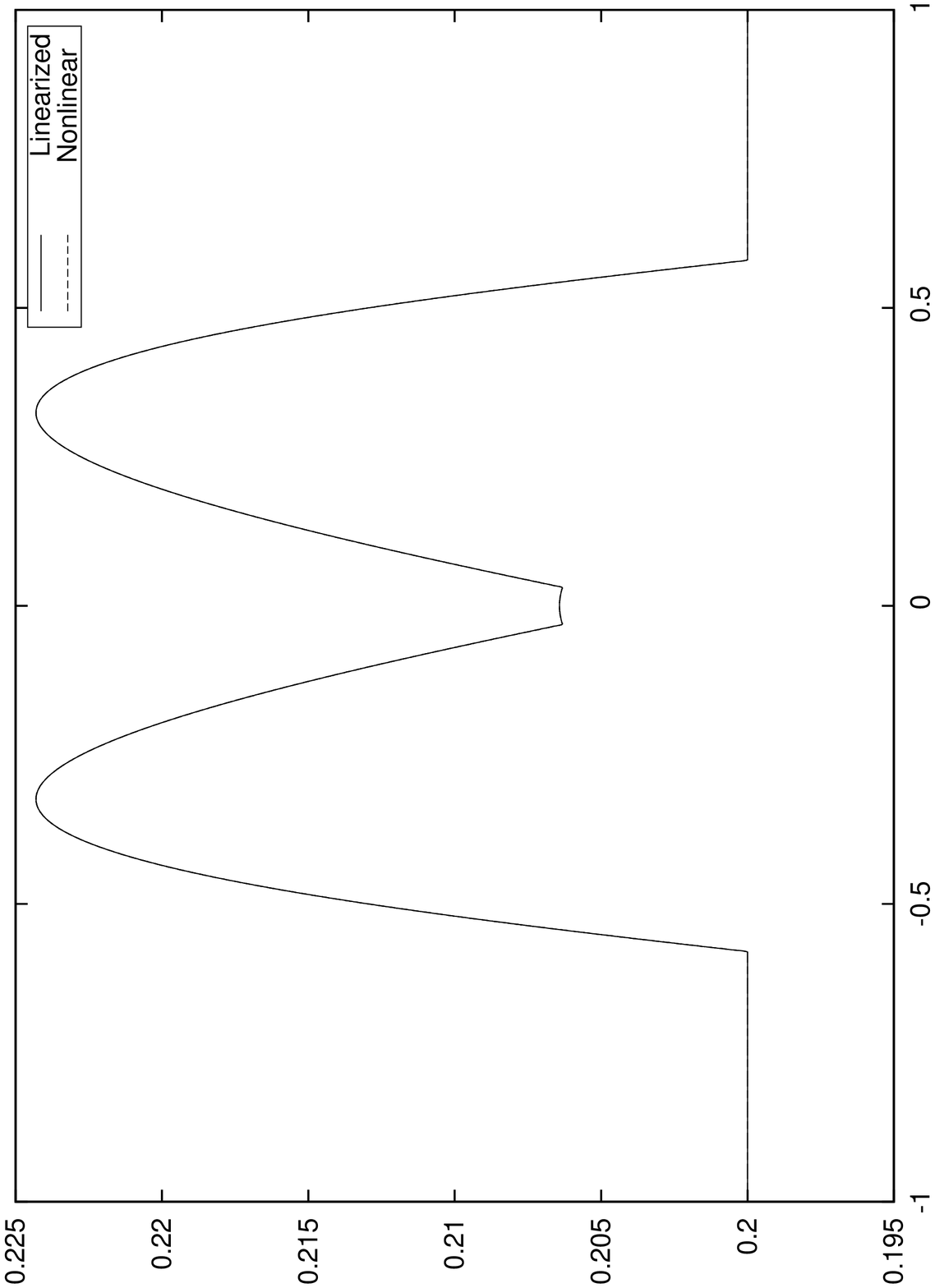}}\qquad
 			\subfloat[]{\includegraphics[totalheight=2.7in,width=1.825in,angle=-90]{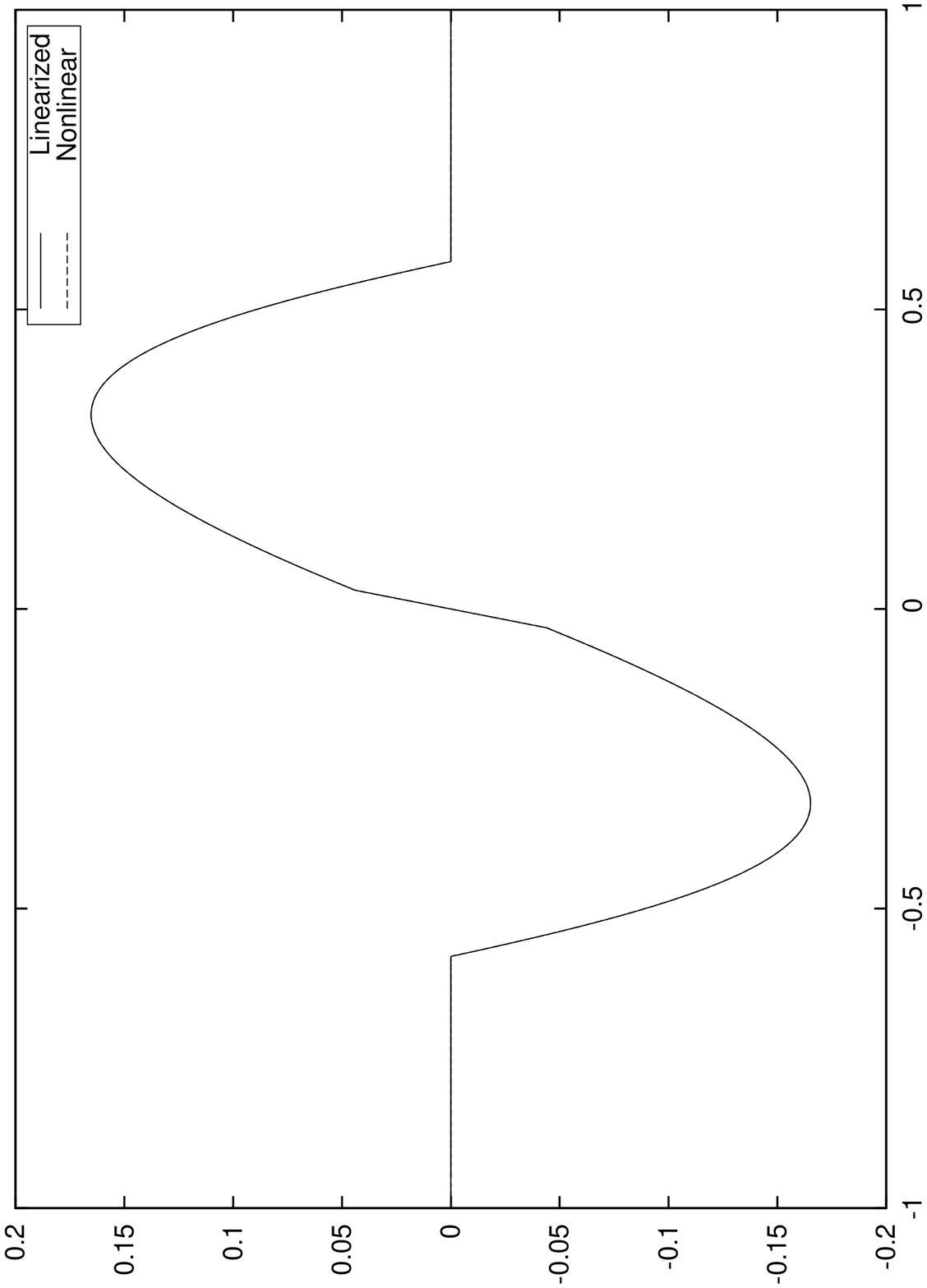}} \\
 			(a)\,\,\,\,$h$ and $u$ at $t=0.2$ %
 		\end{center}
 	\end{figure}
 	\clearpage
 	\begin{figure}[h]
 		\begin{center}
 	    	\subfloat[]{\includegraphics[totalheight=2.7in,width=1.825in,angle=-90]{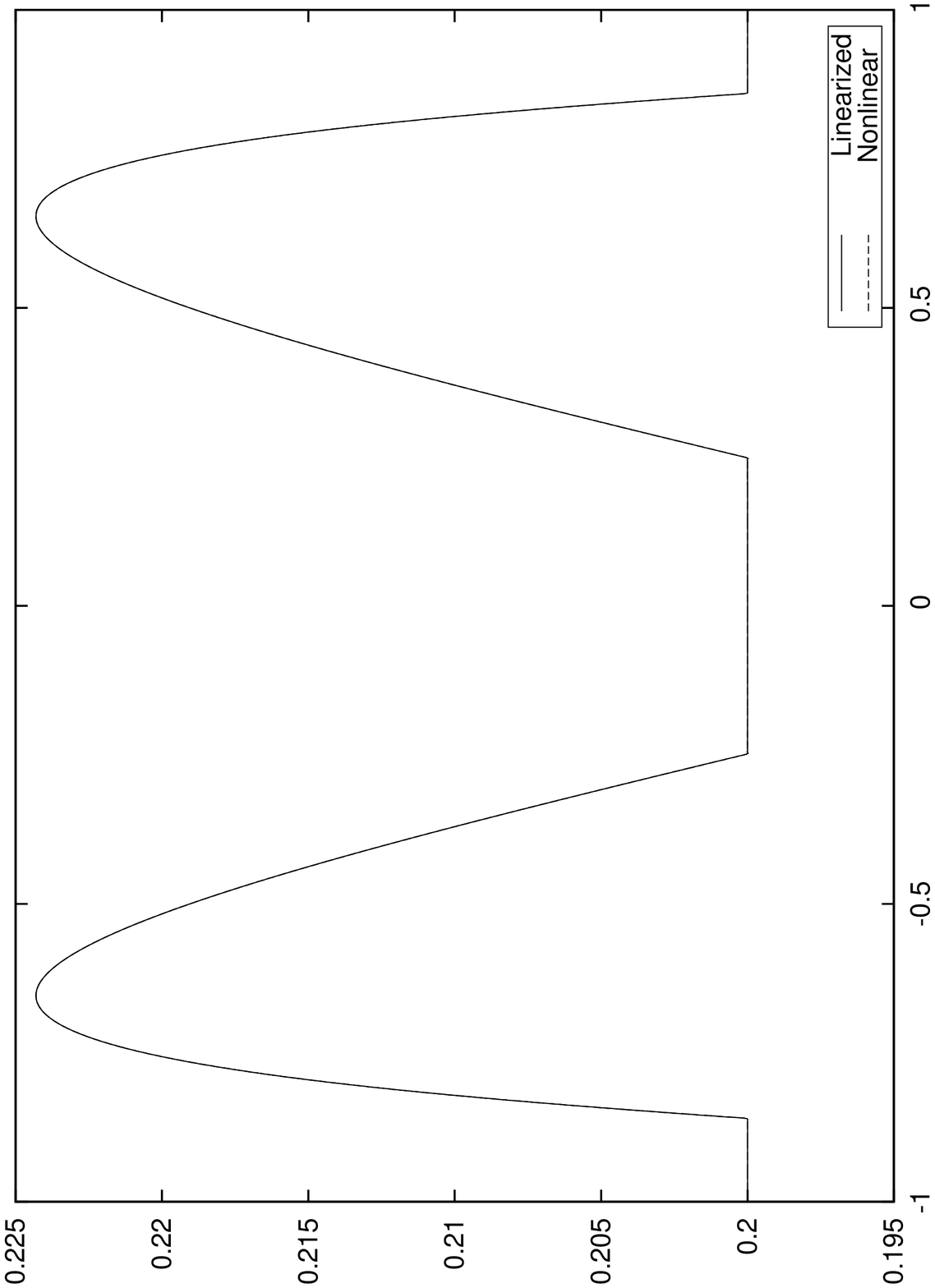}}\qquad
 			\subfloat[]{\includegraphics[totalheight=2.7in,width=1.825in,angle=-90]{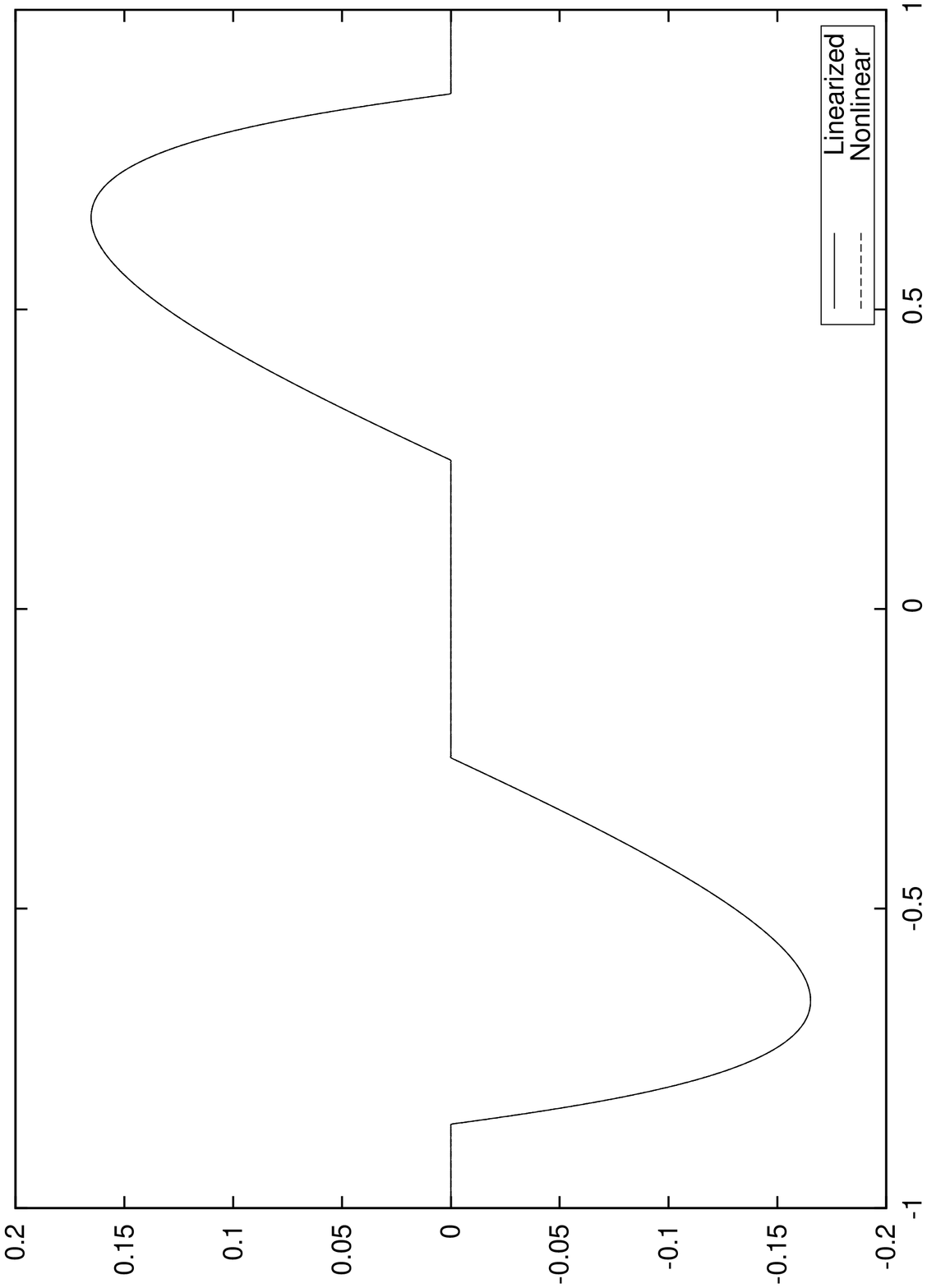}}\\
 			(b)\,\,\,\,$h$ and $u$ at $t=0.4$ \vspace{17pt} \\
 			\subfloat[]{\includegraphics[totalheight=2.7in,width=1.825in,angle=-90]{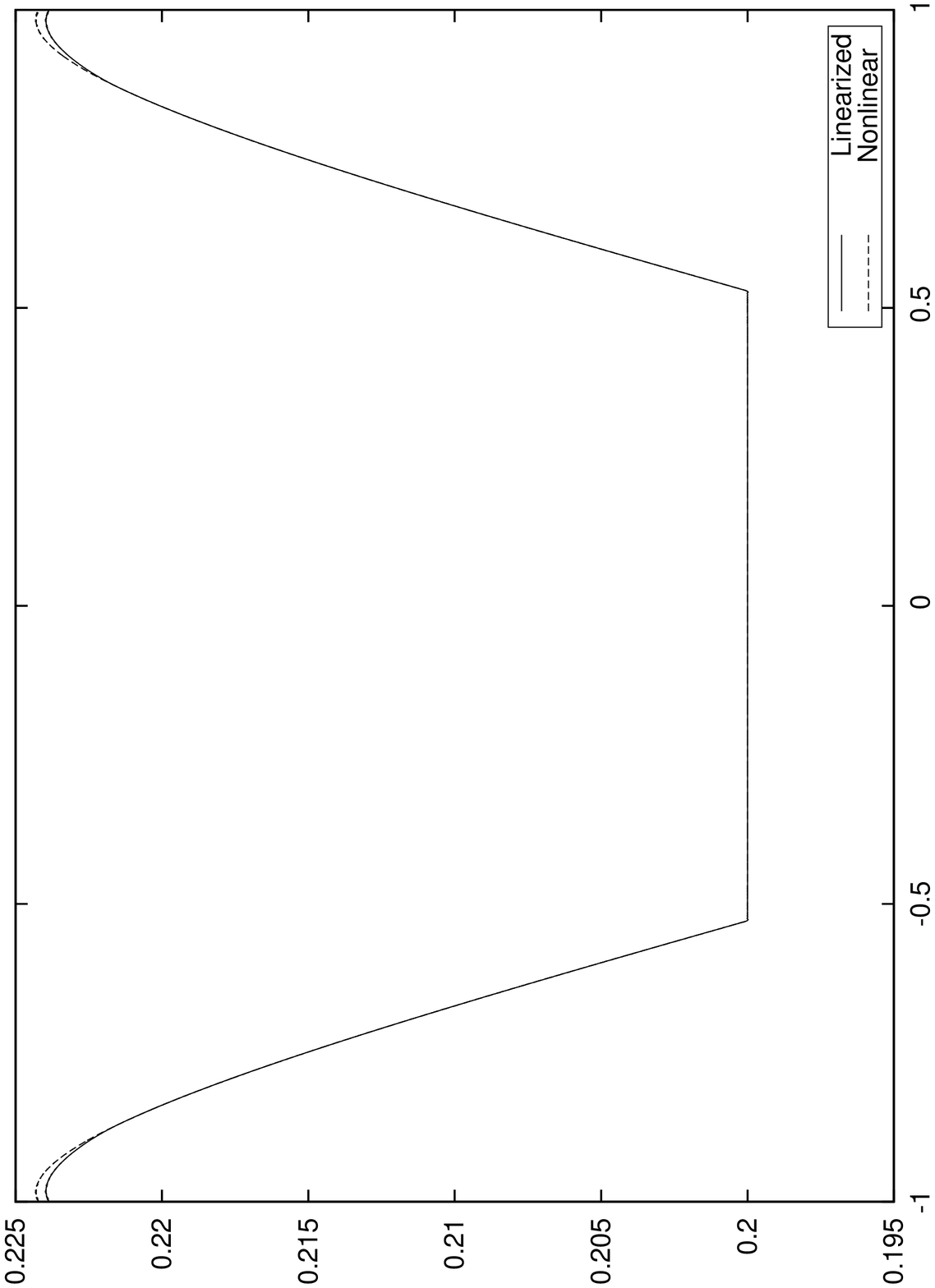}}\qquad
 			\subfloat[]{\includegraphics[totalheight=2.7in,width=1.825in,angle=-90]{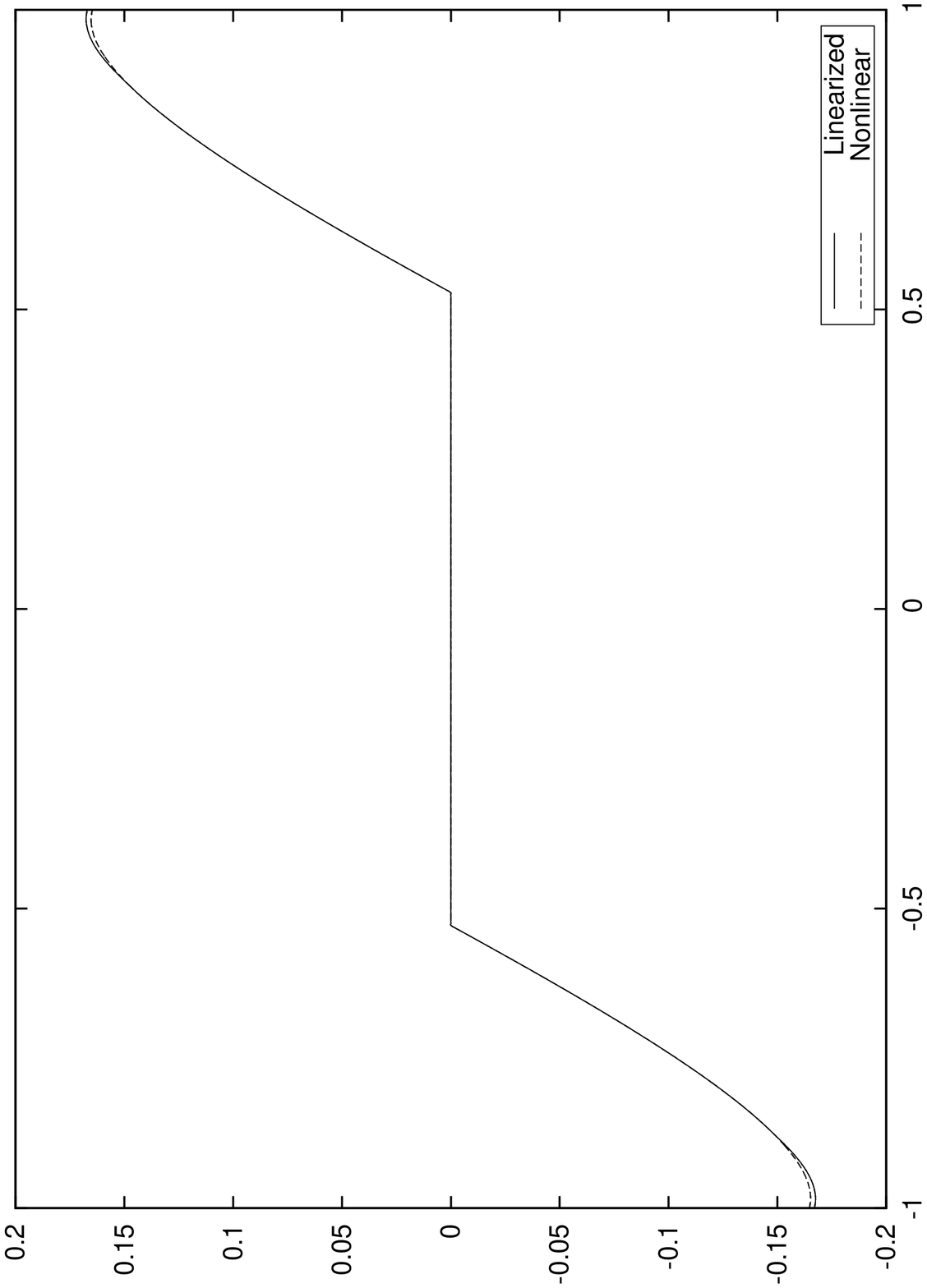}} \\
 			(c)\,\,\,\,$h$ and $u$ at $t=0.6$  	\vspace{17pt} \\
 			\subfloat[]{\includegraphics[totalheight=2.7in,width=1.825in,angle=-90]{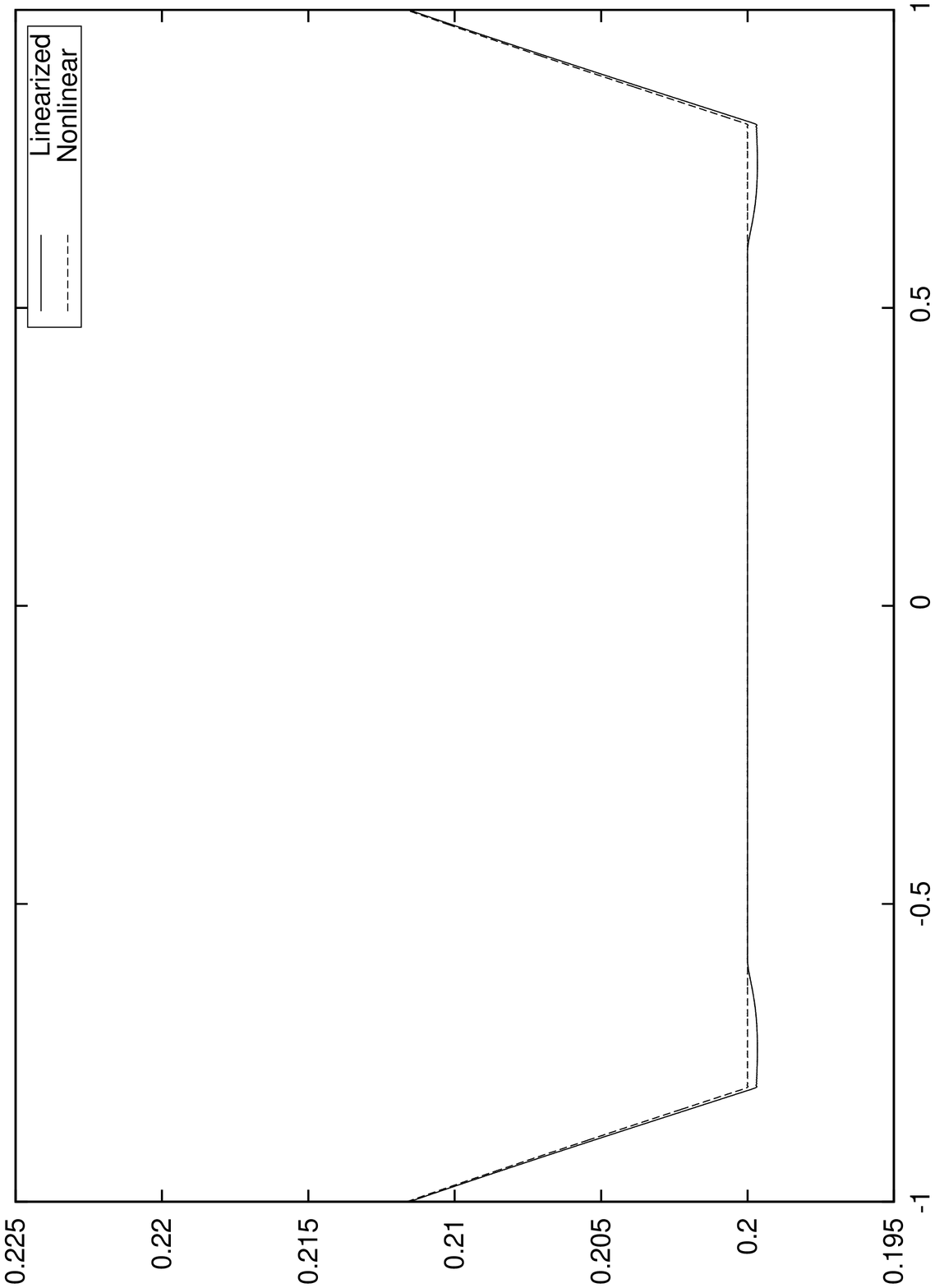}}\qquad
 			\subfloat[]{\includegraphics[totalheight=2.7in,width=1.825in,angle=-90]{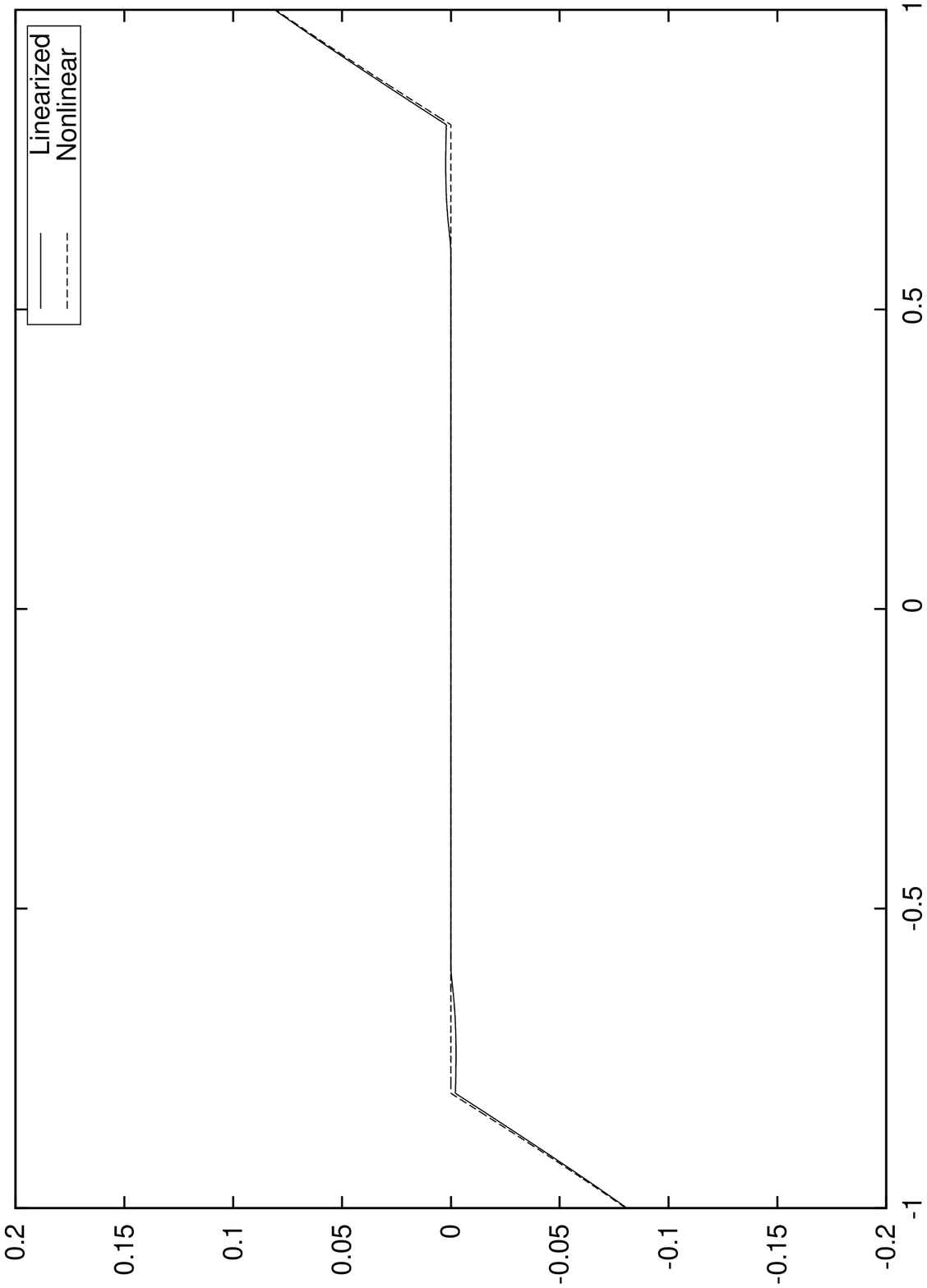}} \\
 			(d)\,\,\,\,$h$ and $u$ at $t=0.8$  %
 		\end{center}
 	\end{figure}
 	\clearpage
 	\begin{figure}[h]
 		\begin{center}
 			\subfloat[]{\includegraphics[totalheight=2.7in,width=1.825in,angle=-90]{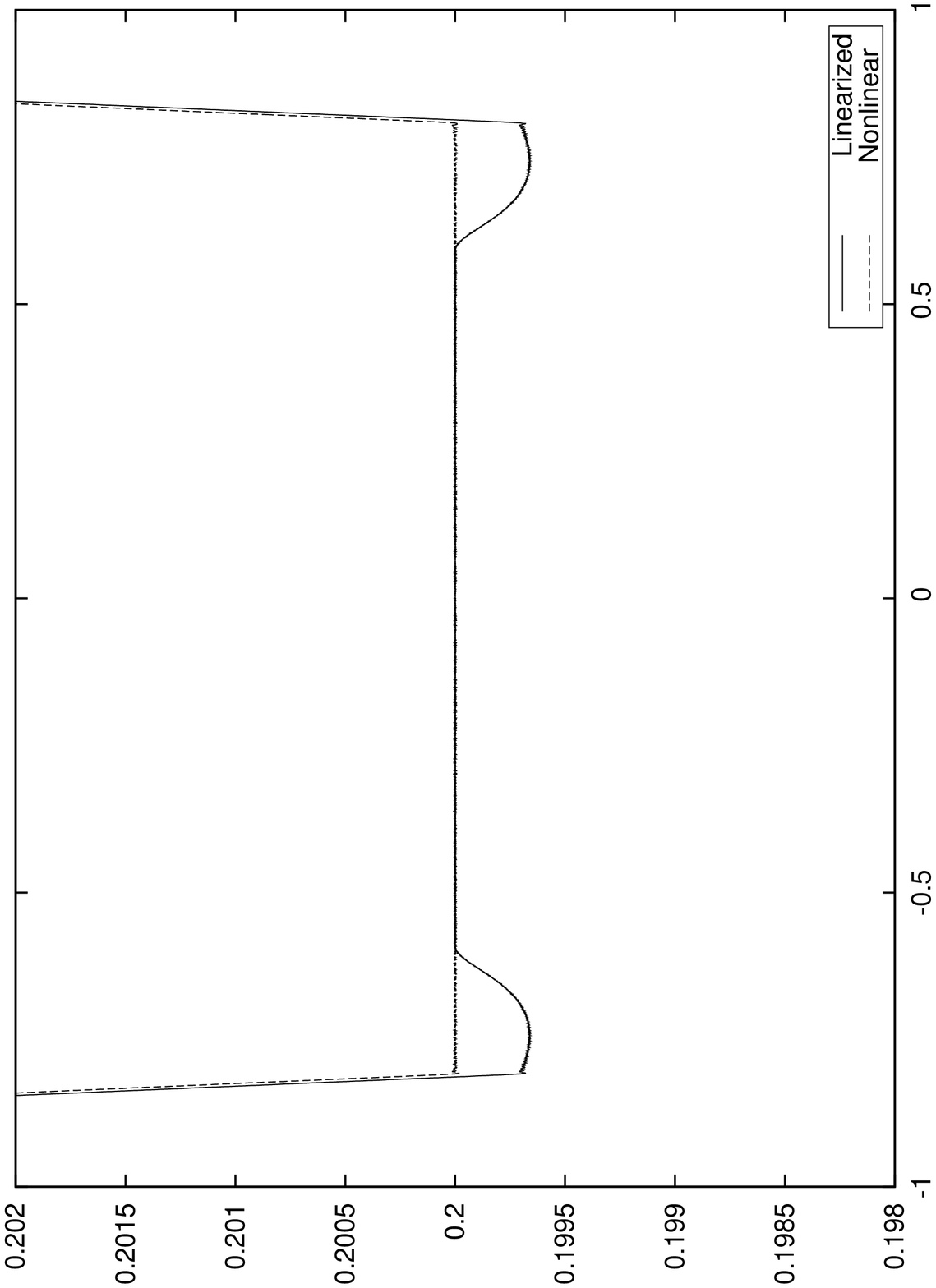}}\qquad
 			\subfloat[]{\includegraphics[totalheight=2.7in,width=1.825in,angle=-90]{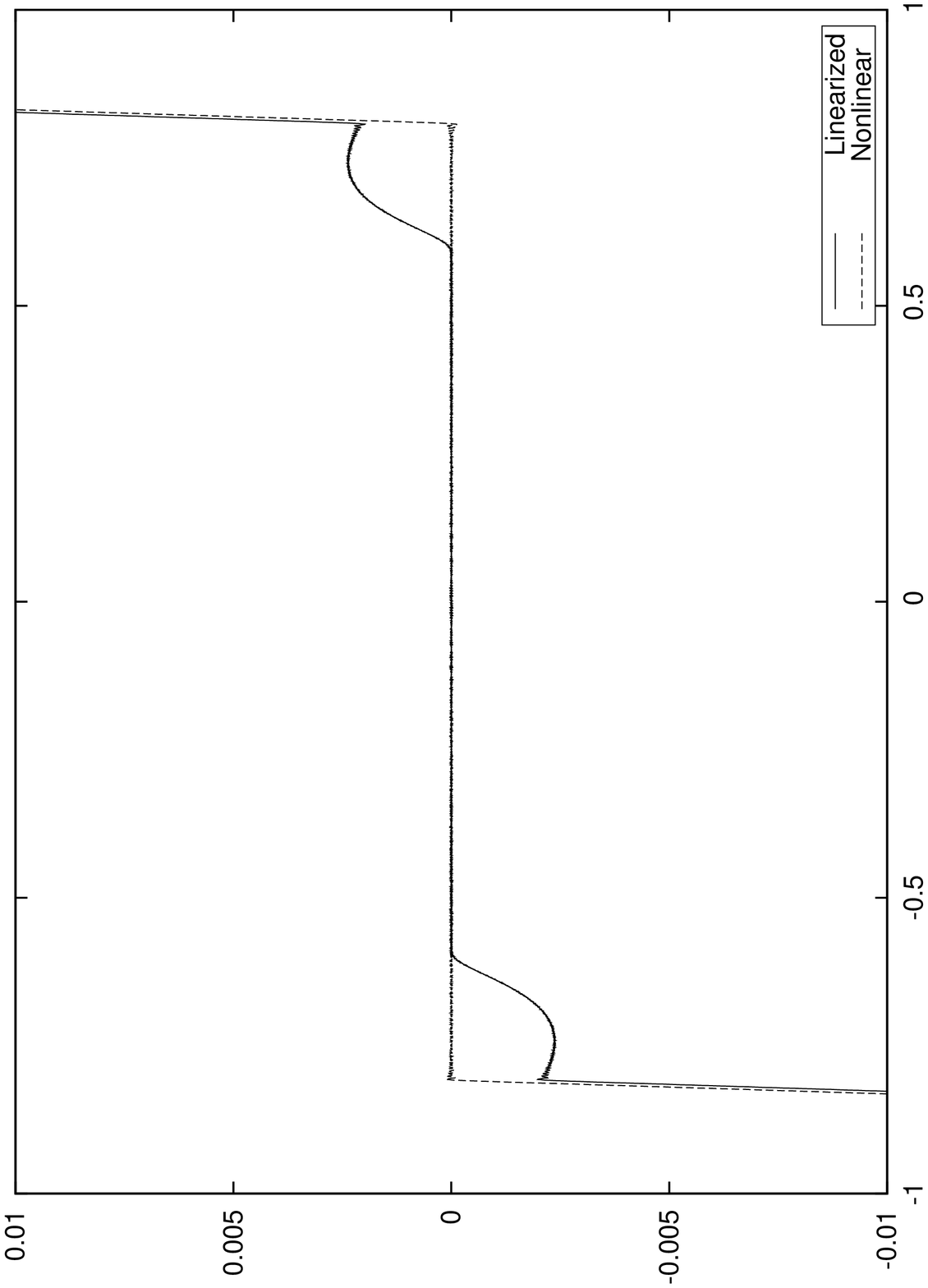}} \\
 			(e)\,\,\,\,magnification of (d) \vspace{17pt} \\
 			\subfloat[]{\includegraphics[totalheight=2.7in,width=1.825in,angle=-90]{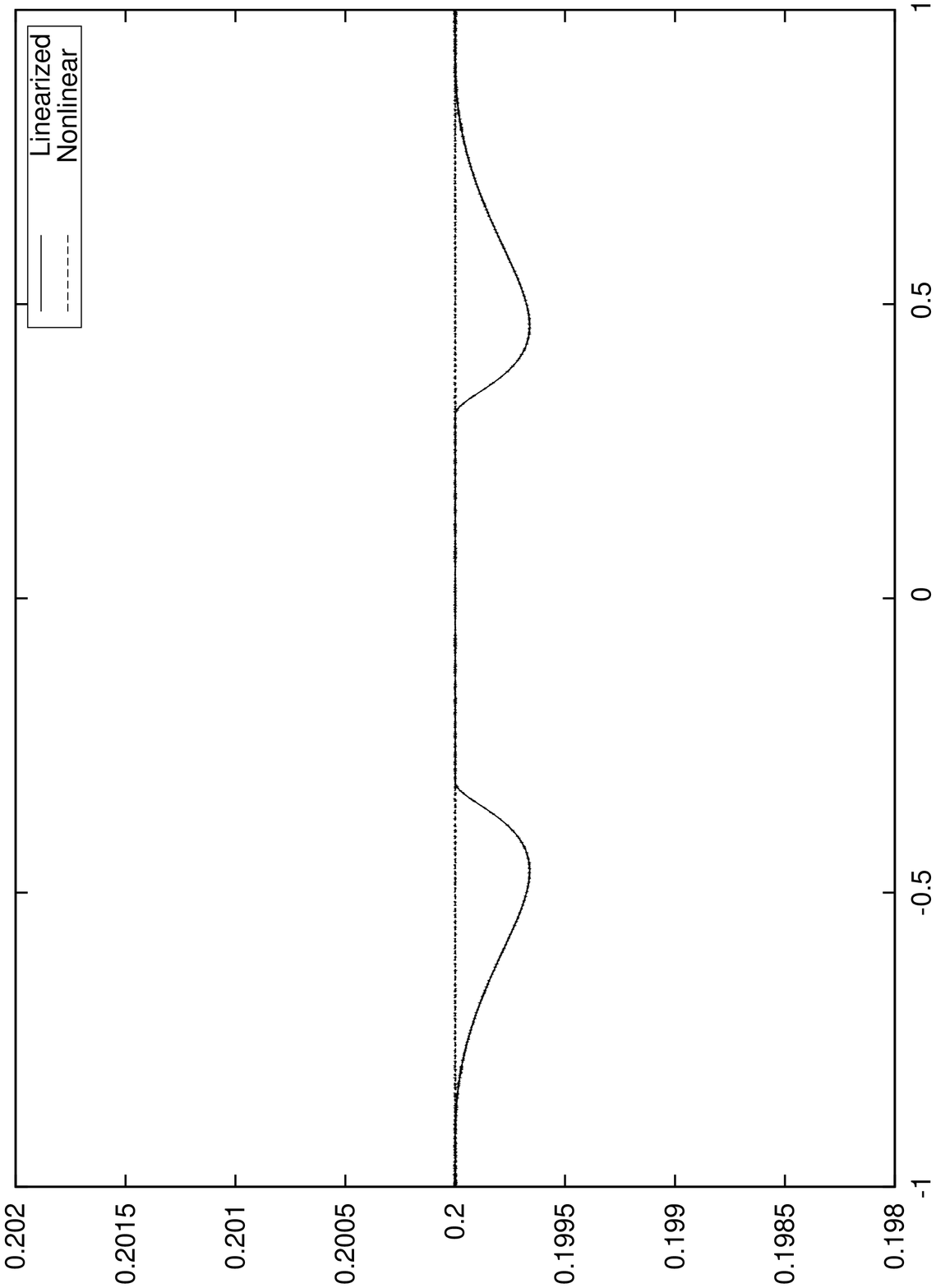}}\qquad
 			\subfloat[]{\includegraphics[totalheight=2.7in,width=1.825in,angle=-90]{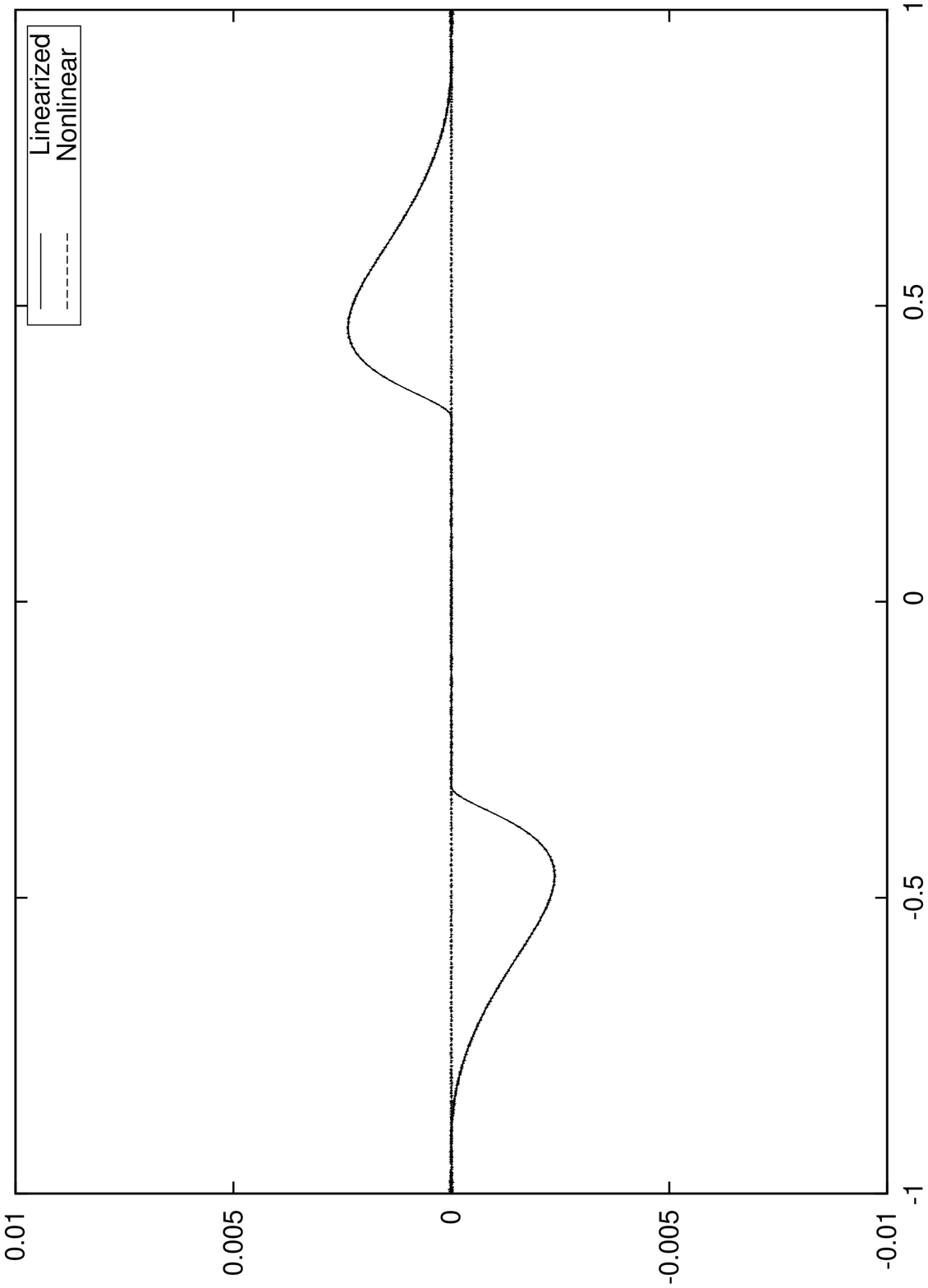}}\\
 			(f)\,\,\,\,$h$ and $u$ at $t=1.0$ 	\vspace{17pt} \\
 			\subfloat[]{\includegraphics[totalheight=2.7in,width=1.825in,angle=-90]{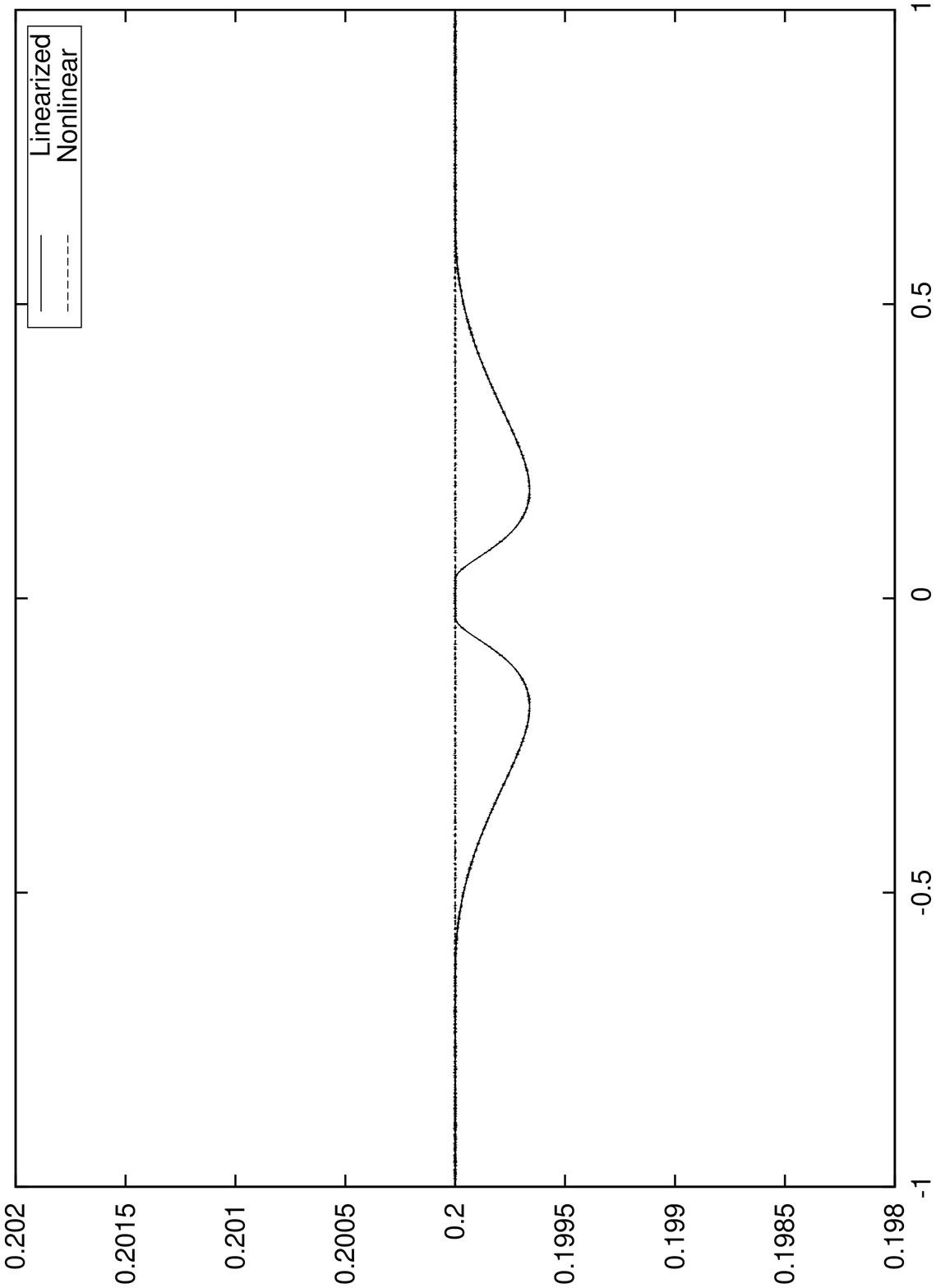}}\qquad
 			\subfloat[]{\includegraphics[totalheight=2.7in,width=1.825in,angle=-90]{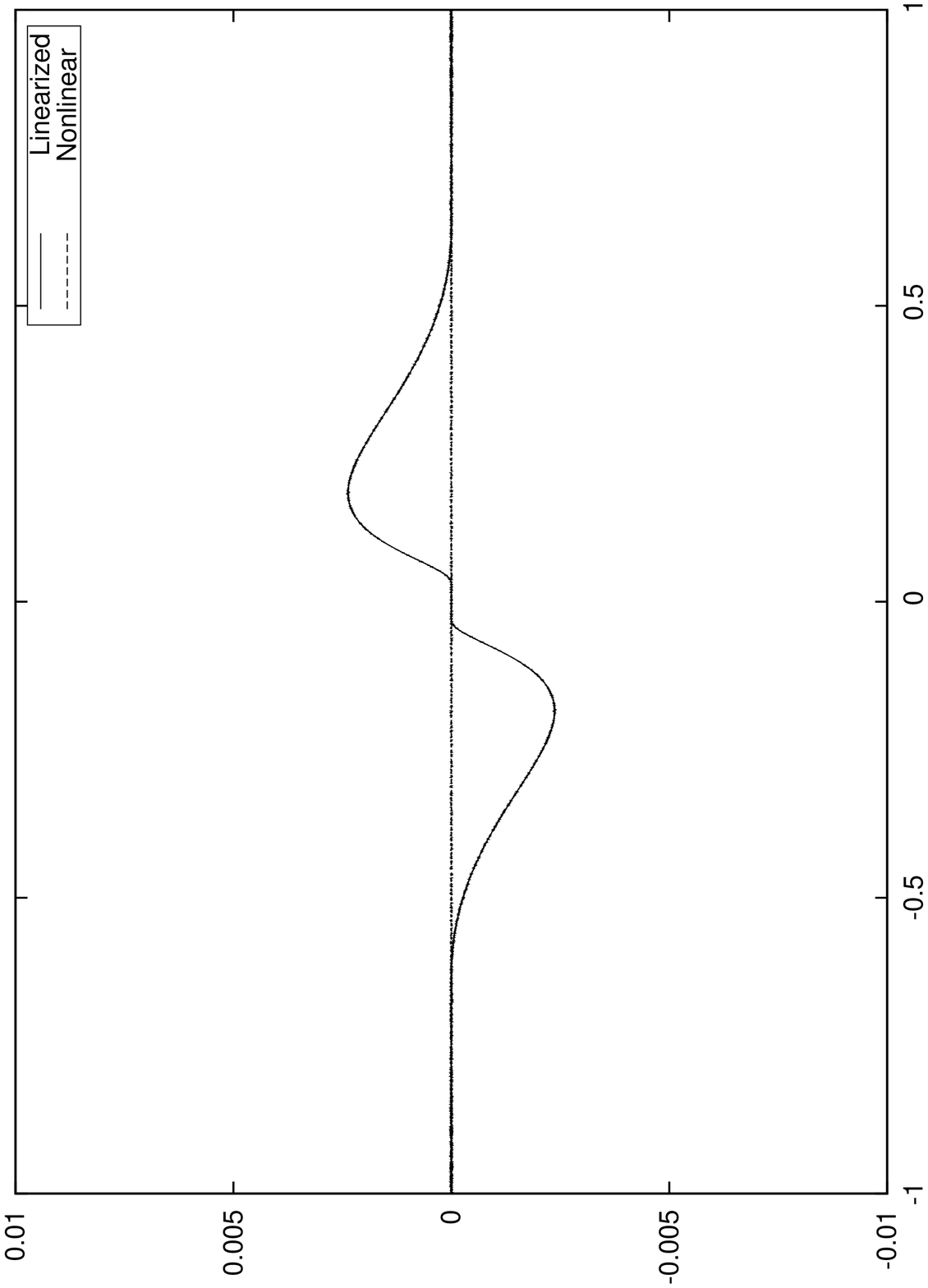}} \\
 			(g)\,\,\,\,$h$ and $u$ at $t=1.2$  %
 		\end{center}
 	\end{figure}
 	\clearpage
 	\begin{figure}[h]
 		\begin{center}
 			\subfloat[]{\includegraphics[totalheight=2.7in,width=1.825in,angle=-90]{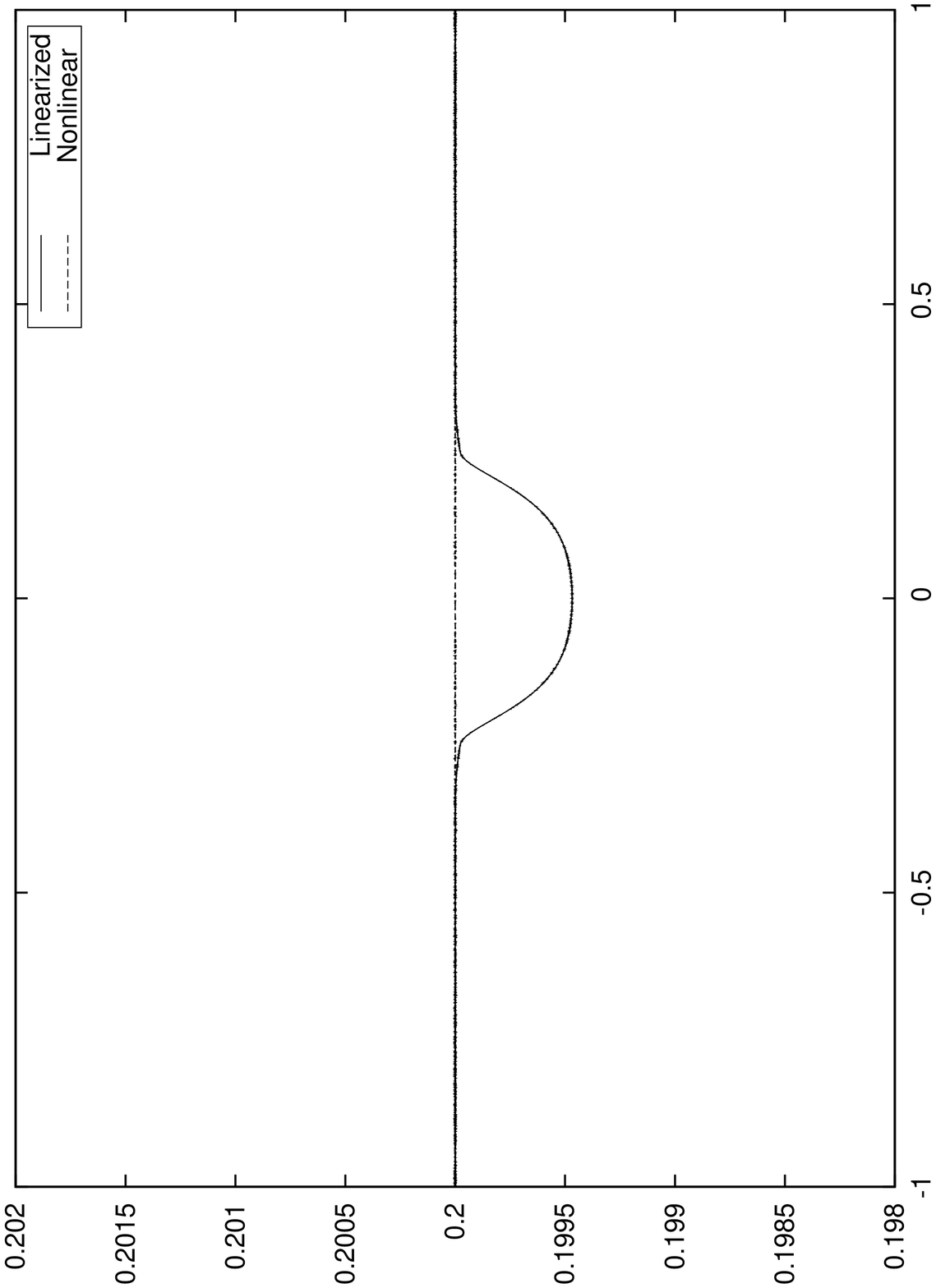}}\qquad
 			\subfloat[]{\includegraphics[totalheight=2.7in,width=1.825in,angle=-90]{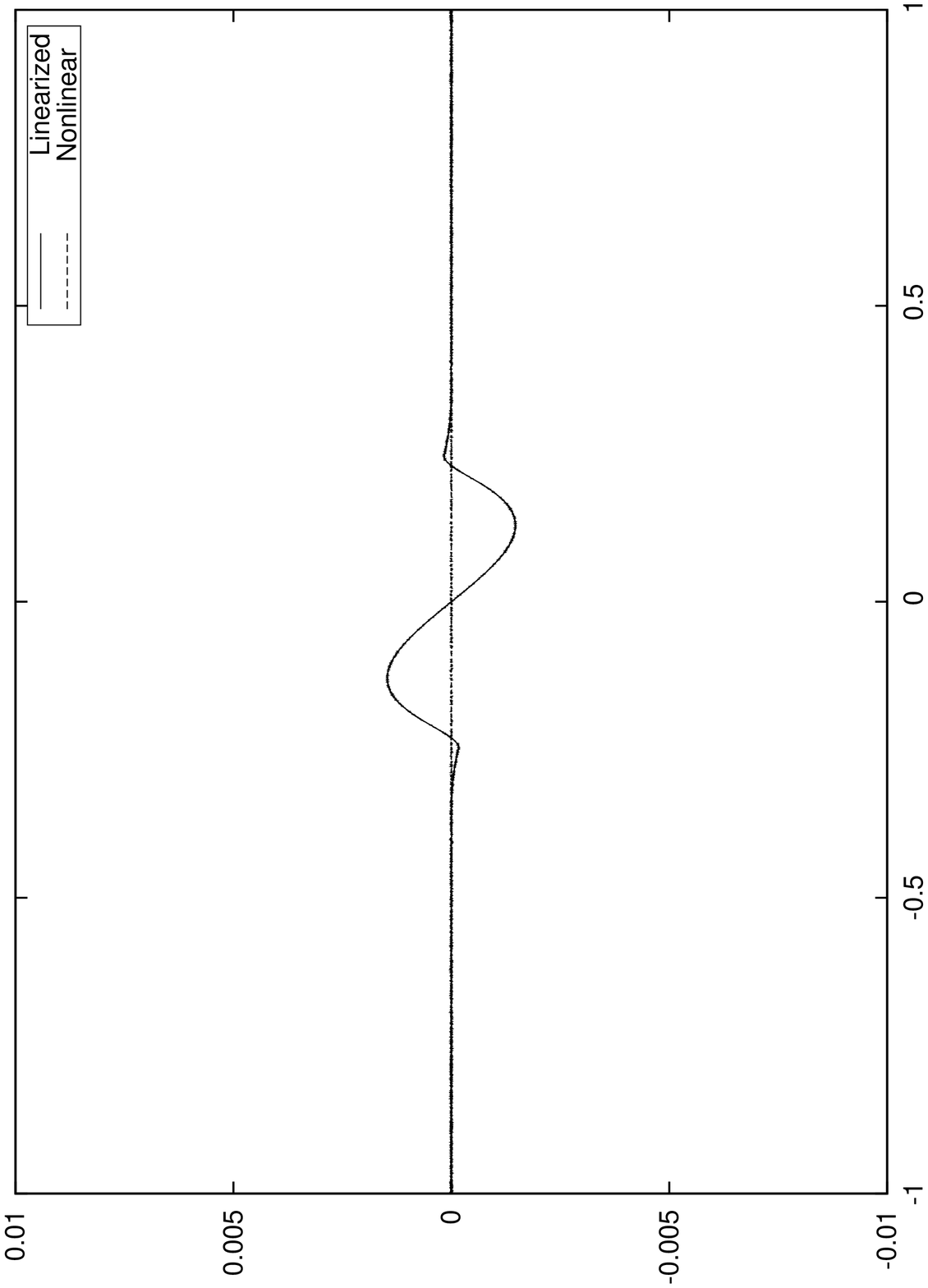}} \\
 			(h)\,\,\,\,$h$ and $u$ at $t=1.4$ \vspace{17pt} \\
 			\subfloat[]{\includegraphics[totalheight=2.7in,width=1.825in,angle=-90]{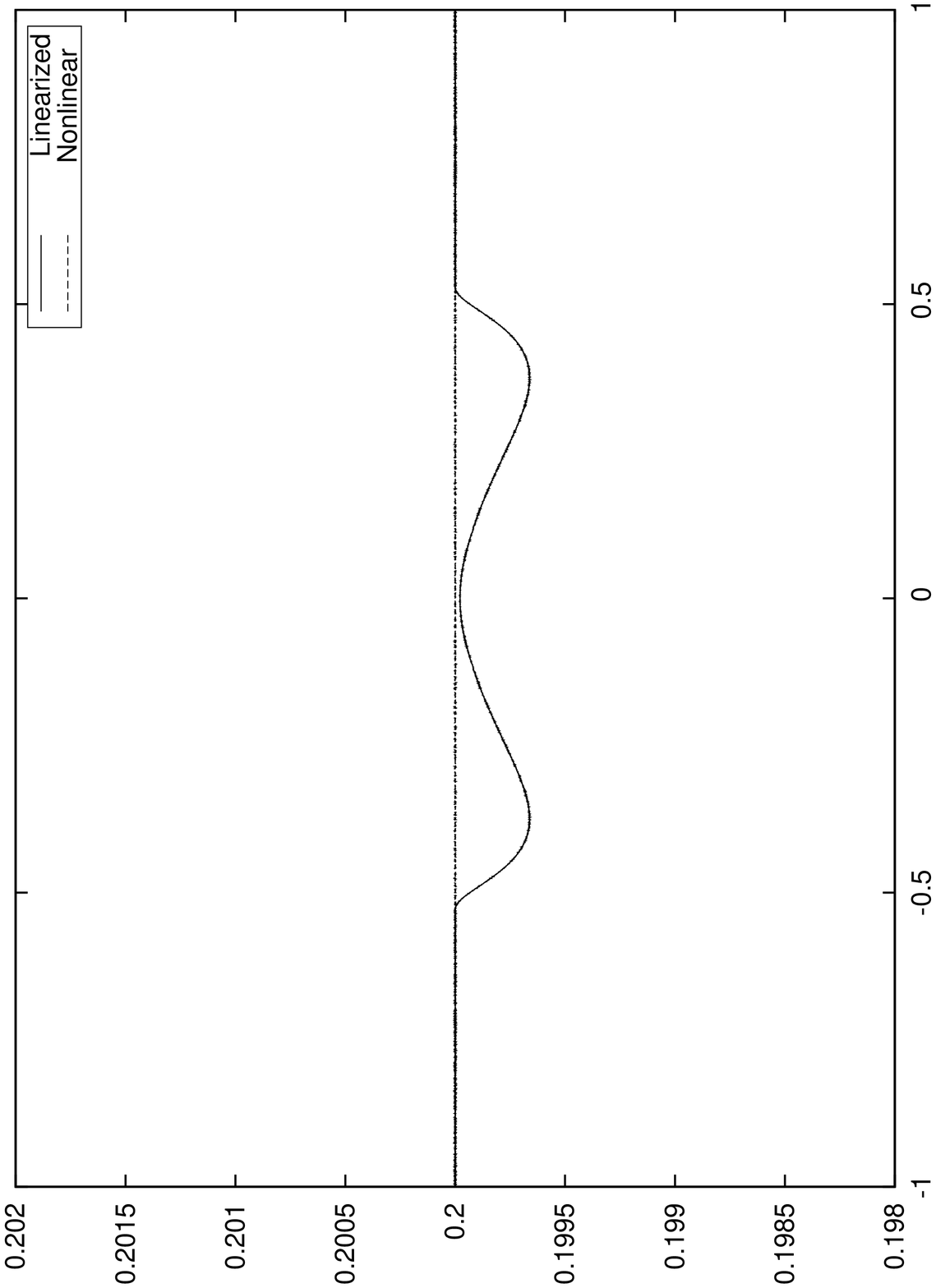}}\qquad
 			\subfloat[]{\includegraphics[totalheight=2.7in,width=1.825in,angle=-90]{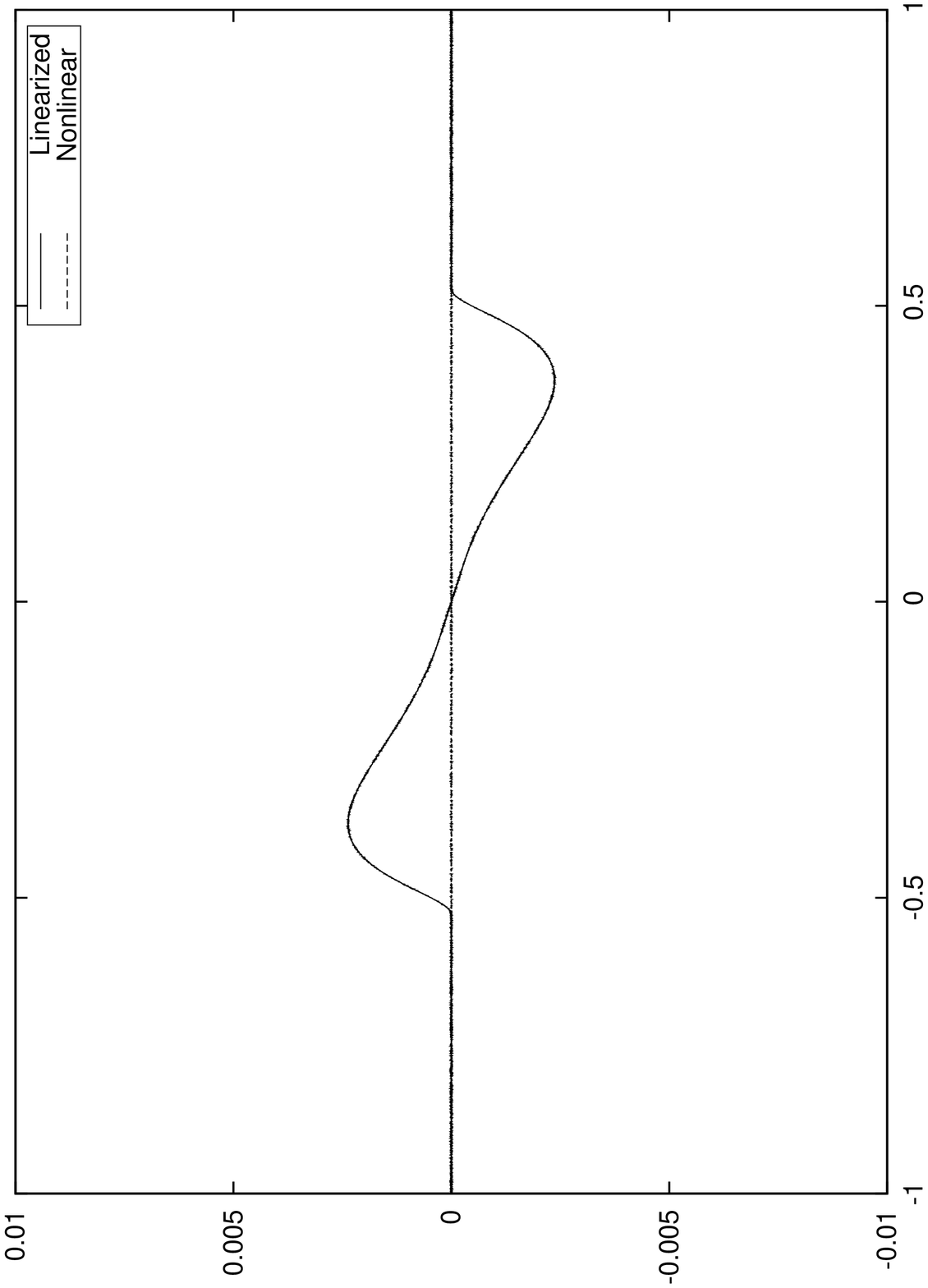}}\\
 			(i)\,\,\,\,$h$ and $u$ at $t=1.6$ \vspace{17pt} \\		
 			\subfloat[]{\includegraphics[totalheight=2.7in,width=1.825in,angle=-90]{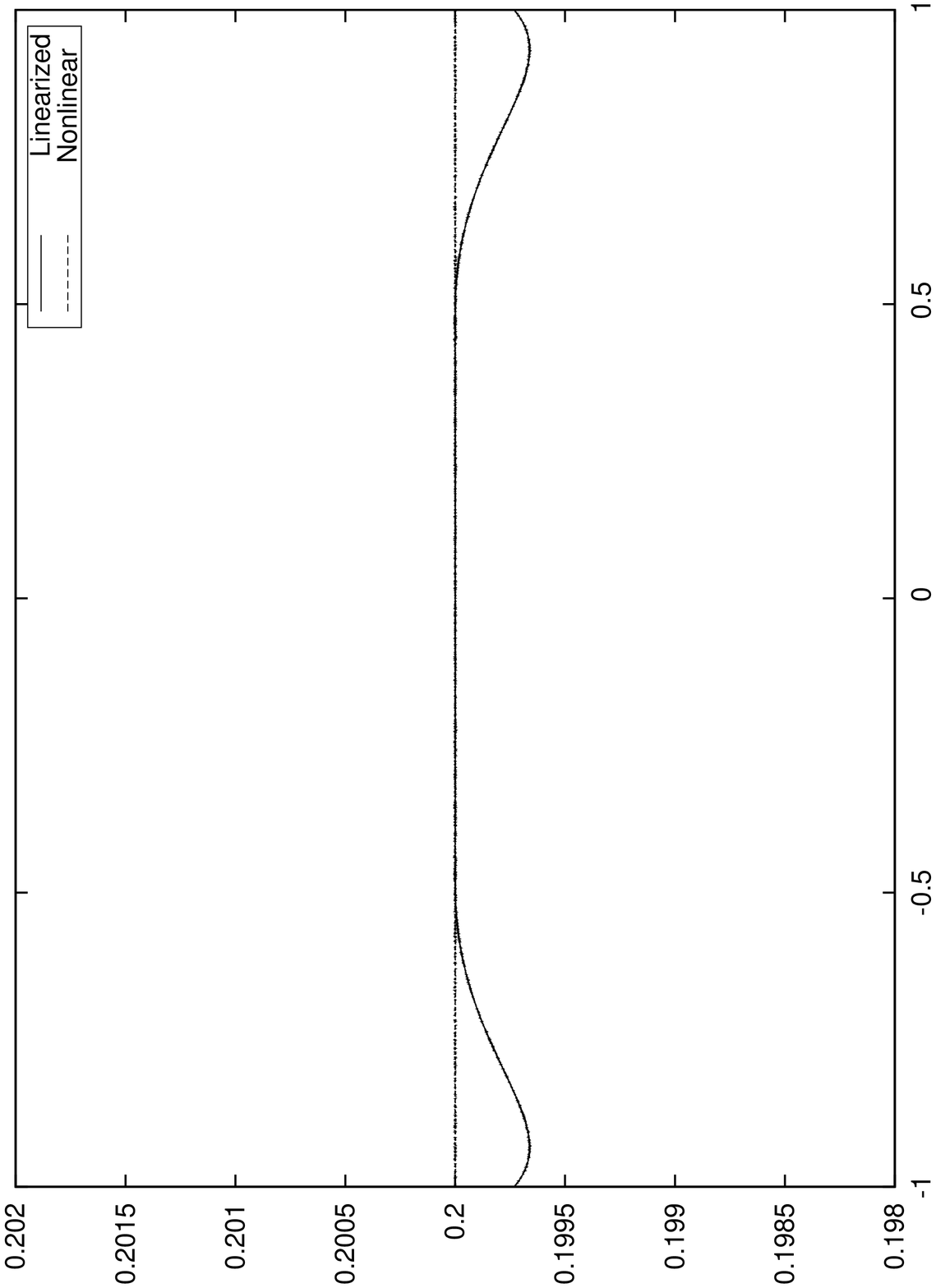}}\qquad
 			\subfloat[]{\includegraphics[totalheight=2.7in,width=1.825in,angle=-90]{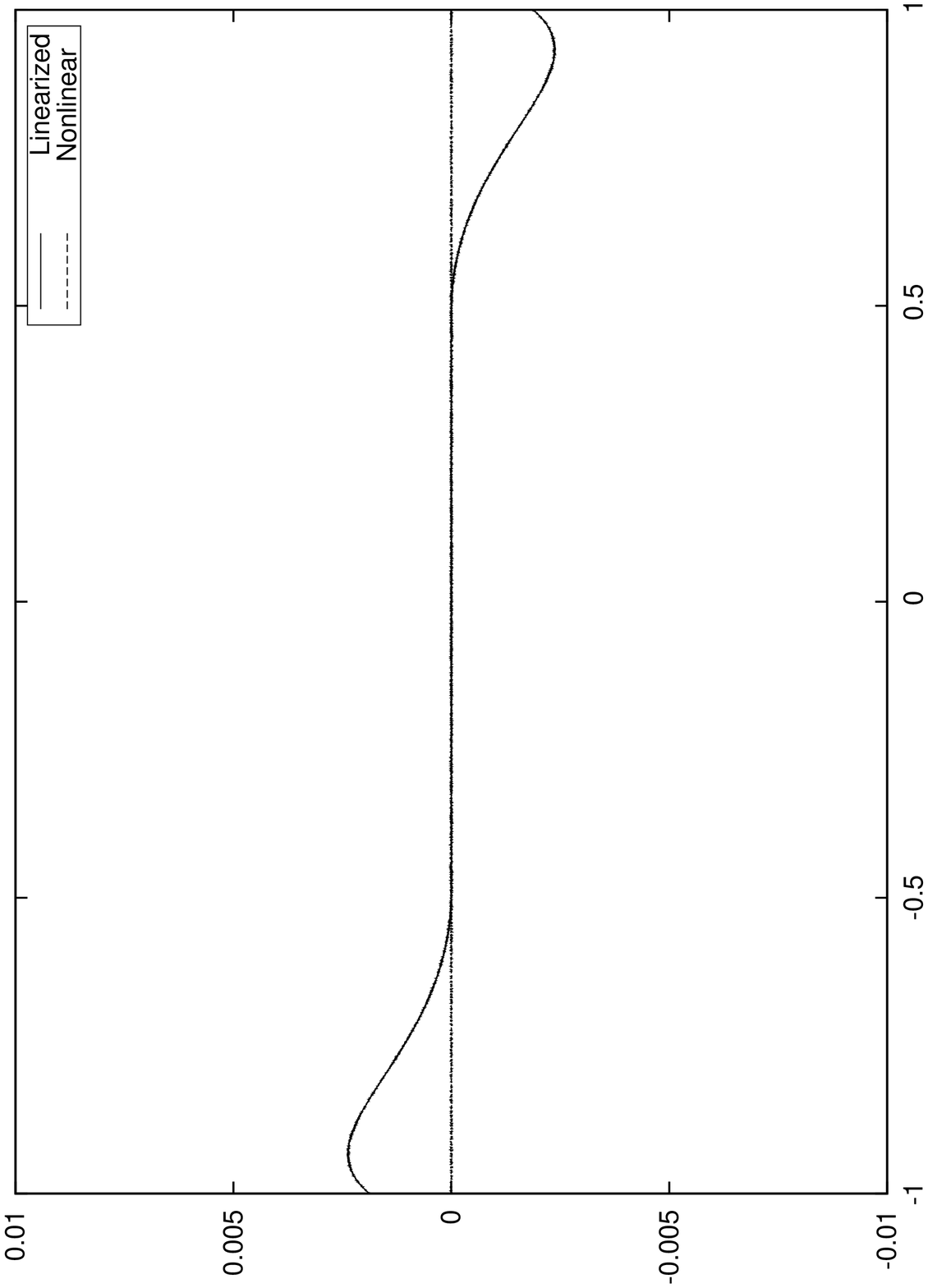}} \\
 			(j)\,\,\,\,$h$ and $u$ at $t=2.0$  
 		\end{center}
 		 	\end{figure}
 	\clearpage
 \begin{figure}[h]
 		\begin{center}
 			\subfloat[]{\includegraphics[totalheight=2.7in,width=1.825in,angle=-90]{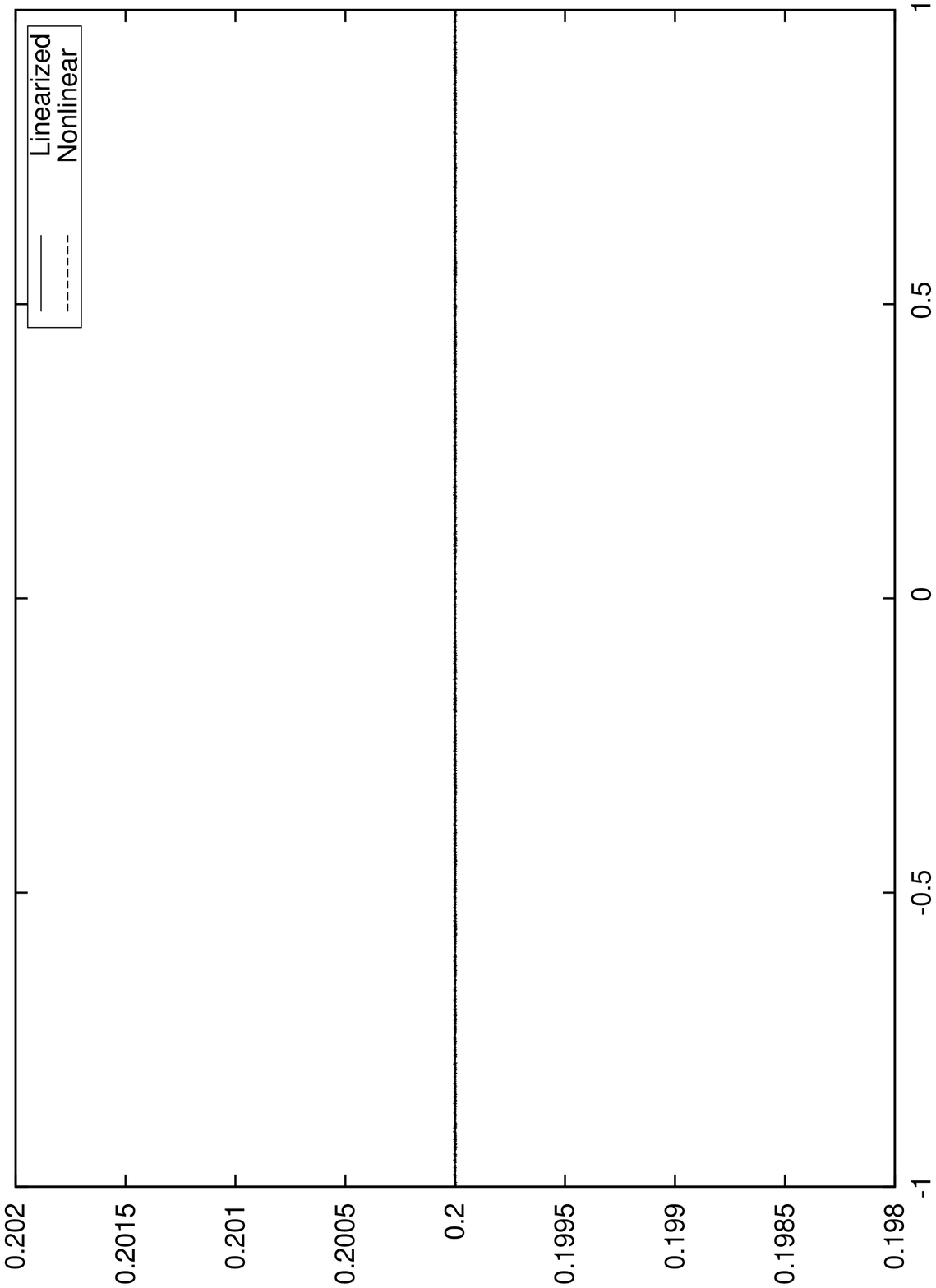}}\qquad
 			\subfloat[]{\includegraphics[totalheight=2.7in,width=1.825in,angle=-90]{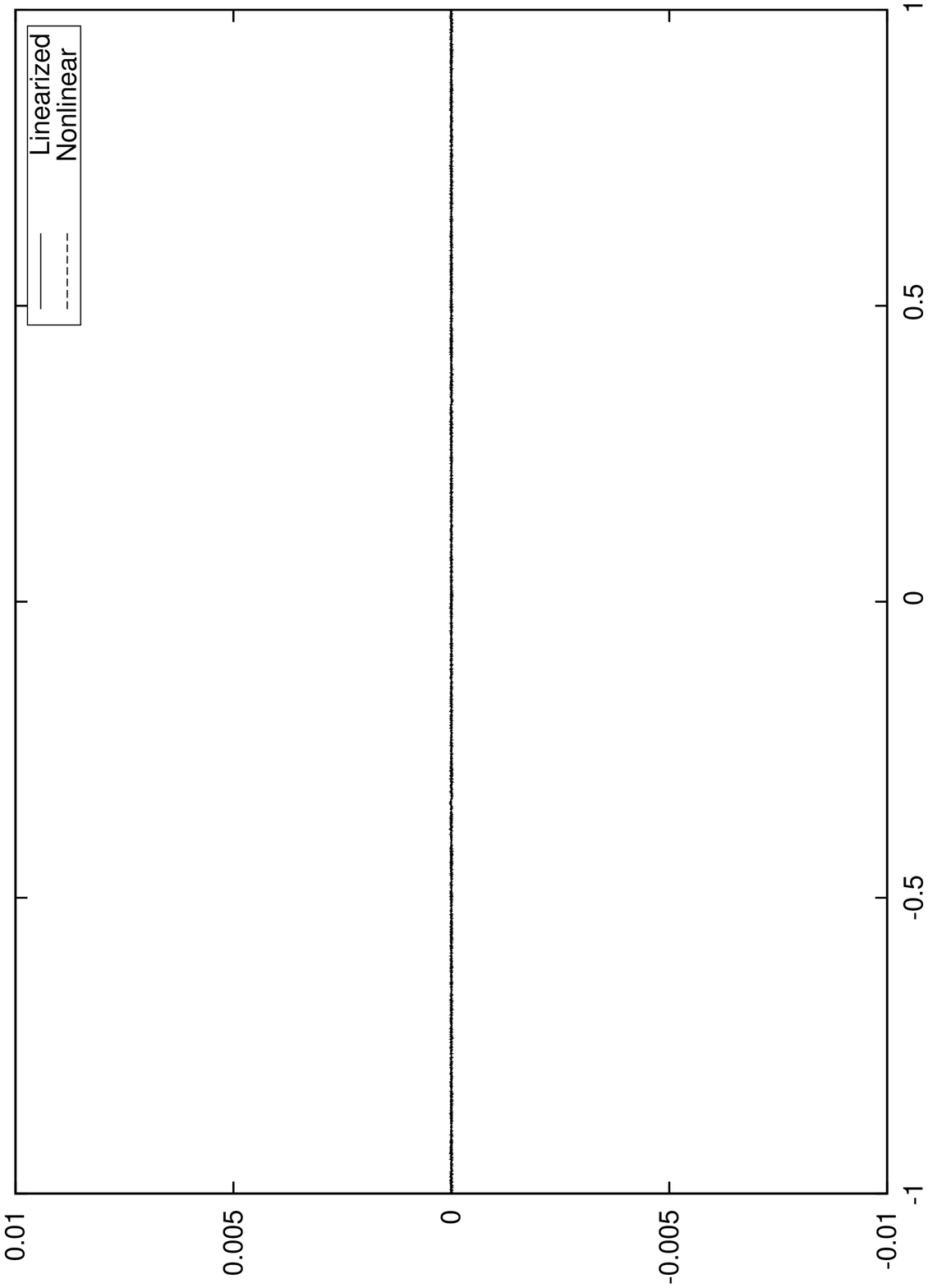}} \\
 			(k)\,\,\,\,$h$ and $u$ at $t=2.4$ 	
 		\end{center}
\small	
 		\caption{Evolution of $h(x,t)$ (left) and $u(x,t)$ (right); numerical solution of (\ref{eq49})-(\ref{eq410})
 			on $[-1,1]$ with nonlinear (dotted line) and linearized (solid line) outflow b.c.'s with $v(x)=0$,
 			$f(x)=0.2 + 0.05\sin (\pi (x+0.3)/0.6)$ if $\abs{x}\leq 0.3$, $f(x)=0.2$ if $\abs{x} \geq 0.3$.}
 		\label{fig48}
 	\end{figure}
 	\noindent  \normalsize
 	that travel and interact approximately linearly until they exit the domain at about $t=2.4\,\mathrm{sec}$. 
 	The spurious reflections may also be observed in the graphs of the temporal history of the waveheight 
 	at a gauge at  
 	\begin{figure}[h]
 		\begin{center}
 			\subfloat[]{\includegraphics[totalheight=2.7in,width=1.825in,angle=-90]{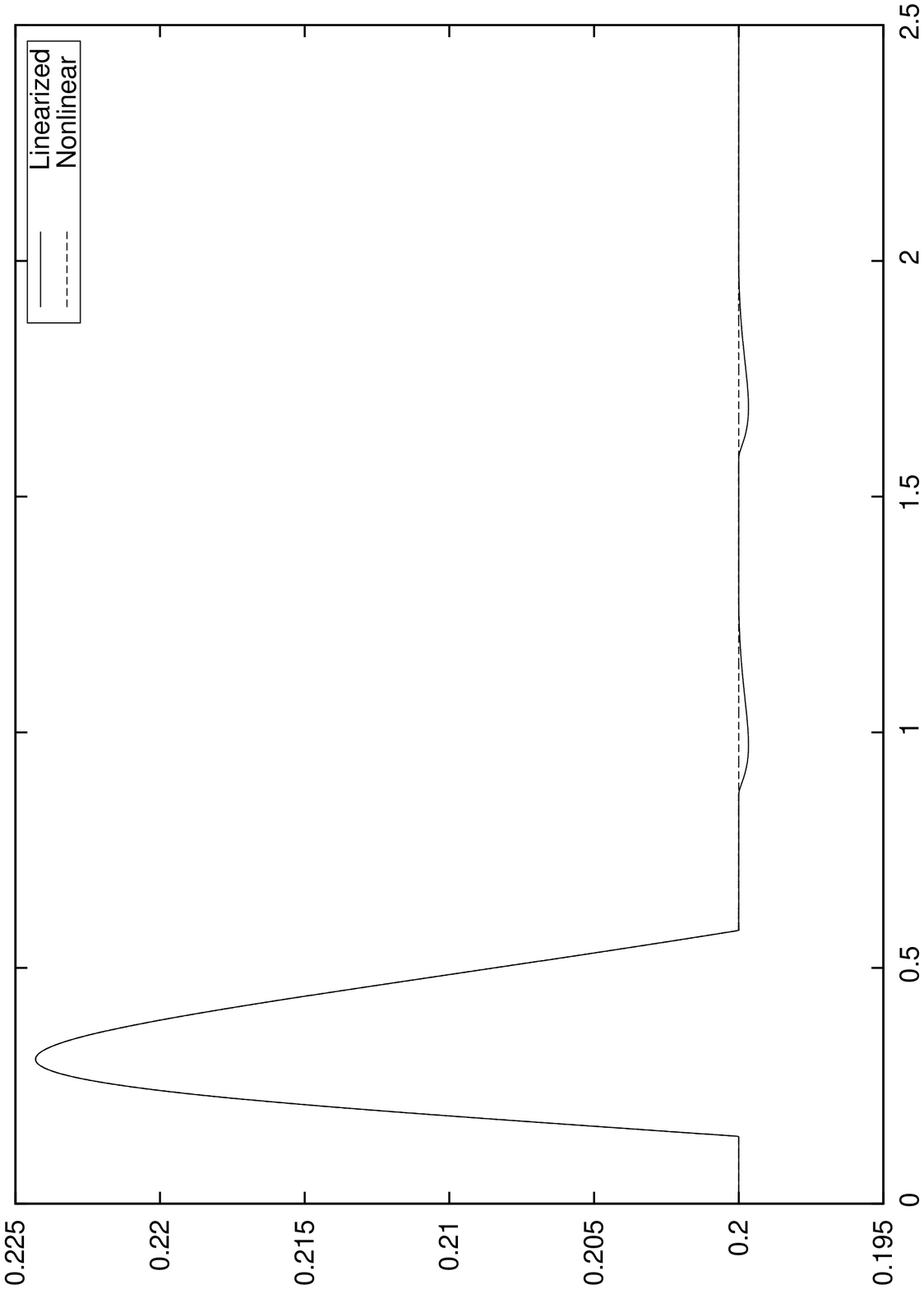}}\qquad
 			\subfloat[]{\includegraphics[totalheight=2.7in,width=1.825in,angle=-90]{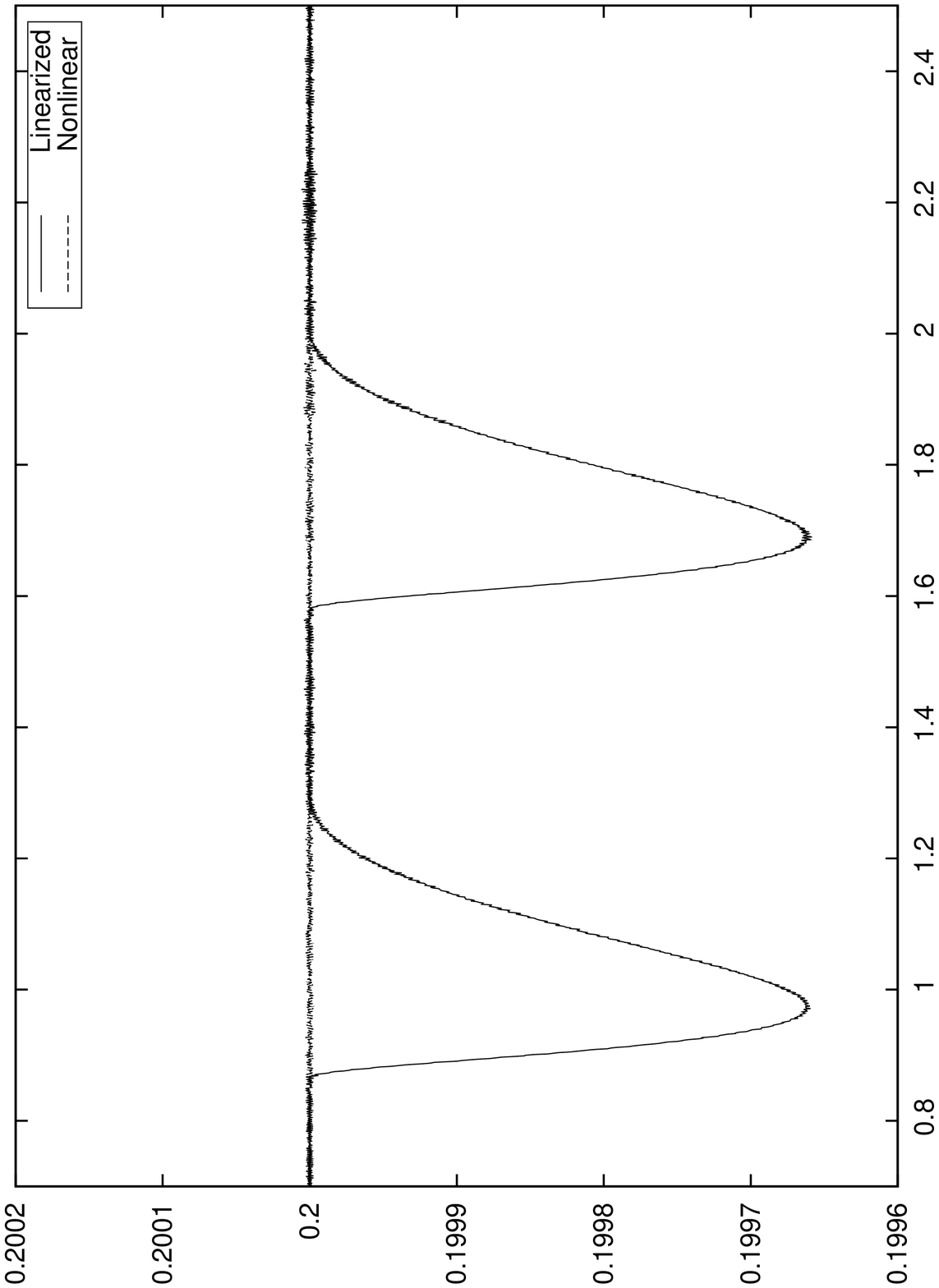}} \\
 			(a)\,\,\,\,$h$ at $x=-0.5$ vs. time  \hspace{45pt} (b)\,\,\,\, magnification of (a)  
 		\end{center}
 		\caption{Graphs of $h(-0.5,t)$ for $0\leq t\leq 2.5$;  evolution of Figure \ref{fig48}. Nonlinear
 			b.c.'s (dotted line) vs. linearized b.c.'s (solid line).}
 		\label{fig49}
 	\end{figure}
 	$x=-0.5\,\mathrm{m}$ computed with the nonlinear (dotted line) and the linearized (solid line) 
 	boundary conditions.
 	The passage of the main pulse at the gauge location is completed at about
 	$t = 0.6\,\mathrm{sec}$. Subsequently the rightwards-travelling reflected spurious small-amplitude
 	pulse due to the linearized b.c.'s is observed to be passing the gauge location at around 
 	$t=1\,\mathrm{sec}$, while the leftwards-travelling companion spurious pulse passes at around 
 	$t=1.7\,\mathrm{sec}$.\\
 	This experiment is qualitatively analogous to that described in Section 4.2 of \cite{nmf}. We simulated
 	here a smaller-amplitude wave to avoid the emergence of discontinuities during the temporal interval of the 
 	evolution of Fig. \ref{fig48}. In \cite{nmf} the analogous maximum waveheight was equal to about $0.3$ as the
 	finite-difference scheme used in that work had artificial viscosity (private communication by  
 	Dr. A. McC. Hogg), that smoothed out the emerging discontinuity. (A more correct simulation of the
 	numerical experiment in \cite{nmf} requires that an initial half-sinusoidal \emph{impulse} is imparted on
 	the water column. This may be modelled by an appropriate forcing term of the form $-F(x,t)/h$ on the
 	right-hand side of the momentum equation, i.e. the second pde in (\ref{eq49}),
 	where $F(x,t)$ may be taken as a multiple of $\sin (\pi t)$ if $(x,t)\in [-0.1,0.1]\times [0,1]$
 	and $F=0$ otherwise, while the initial conditions should now be $f(x) = 0.2\,\mathrm{m}$ and
 	$v(x)=0$. For small enough amplitude of the forcing term $F$ we observed that a qualitatively similar 
 	evolution took place.) 
\subsection*{Acknowledgement} The authors would like to thank Prof. J. Nycander, Dr. A.McC. Hogg, and Dr. L.M. 
Frankcombe for helpful comments on their numerical experiments in \cite{nmf}. 
\bibliographystyle{amsalpha} 

\end{document}